\documentclass[11pt]{amsart}
\usepackage[utf8]{inputenc}
\usepackage{bm}
\usepackage{fontenc}
\usepackage{comment}
\usepackage{amsfonts}
\usepackage{amssymb}
\usepackage{amsmath}
\usepackage{amsthm}\usepackage{mathtools}
\usepackage{enumerate}
\usepackage{enumitem}
\usepackage[pagebackref,colorlinks,linkcolor=blue,citecolor=blue,urlcolor=blue,hypertexnames=true]{hyperref}
\usepackage[pagebackref,hypertexnames=true]{hyperref}
\usepackage{enumitem}
\usepackage{mathrsfs}
\usepackage{tikz}
\usetikzlibrary{calc}
\usepackage{marginnote}
\usepackage{xcolor,enumitem}
\usepackage{soul}\usepackage{tikz}
\usepackage[all,cmtip]{xy} \usepackage{caption}

\newcommand{\Z}{\mathbb{Z}}

\newcommand{\N}{\mathbb{N}}
\newcommand{\C}{\mathbb{C}}

\newcommand{\eps}{\varepsilon}

\usepackage{bbm}
\newcommand{\Hawaii}{Hawai\kern.05em`\kern.05em\relax i}

\newcommand{\cstu}{\mathrm{C}^*_u}

\newtheorem*{rigprob*}{Rigidity Problem for uniform Roe Algebras}
\newtheorem*{rigprobcorona*}{Rigidity Problem for uniform Roe Coronas}

\newcommand{\cstar}{$\mathrm{C}^*$}

\newcommand{\cU}{\mathcal{U}}

\newcommand{\cF}{\mathcal{F}}

\newcommand{\cB}{\mathcal{B}}
\newcommand{\cK}{\mathcal{K}}

\newcommand{\cI}{\mathcal I}

\newcommand{\R}{\mathbb{R}}

\numberwithin{equation}{section}

\newtheorem{theorem}{Theorem}[section]
\newtheorem*{theorem*}{Theorem}
\newtheorem{proposition}[theorem]{Proposition}
\newtheorem{problem}[theorem]{Problem}
\newtheorem*{proposition*}{Proposition}
\newtheorem{lemma}[theorem]{Lemma}
\newtheorem*{lemma*}{Lemma}
\newtheorem{corollary}[theorem]{Corollary}
\newtheorem*{corollary*}{Corollar}

\newtheorem*{fact*}{Fact}
\theoremstyle{definition}
\newtheorem{definition}[theorem]{Definition}
\newtheorem*{definition*}{Definition}

\newtheorem*{acknowledgments}{Acknowledgments}
\newtheorem{claim}[theorem]{Claim}
\newtheorem*{claim*}{Claim}

\newtheorem*{conjecture*}{Conjecture}

\theoremstyle{remark}
\newtheorem{example}[theorem]{Example}
\newtheorem*{example*}{Example}
\newtheorem{remark}[theorem]{Remark}
\newtheorem*{remark*}{Remark}

\newtheorem*{note*}{Note}
\newtheorem*{question*}{Question}

\newcommand{\Orb}{\mathrm{Orb}}
\newcommand{\calOrb}{{\mathcal{O}rb}}
\newcommand{\qcalOrb}{{q\text{-}\mathcal{O}rb}}
\newcommand{\COrb}{\overline{\mathrm{Orb}}}
\newcommand{\qOrb}{\mathrm{q}\text{-}\mathrm{Orb}}
\newcommand{\pOrb}{\mathrm{p}\text{-}\mathrm{Orb}}
\newcommand{\HOrb}{\mathrm{H}\text{-}\mathrm{Orb}}

\DeclareMathOperator{\ran}{ran}
\DeclareMathOperator{\dom}{dom}

\newcommand{\cM}{\mathcal M}

\usepackage{enumitem}

\numberwithin{equation}{section}

\newcommand{\PT}{\mathrm{PT}}

\begin{document}

\title[Dynamics  in large scale geometry]{Dynamics  in large scale geometry}%

% \date{\today} 

\author[B. M. Braga]{Bruno M. Braga}
\address[B. M. Braga]{IMPA, Estrada Dona Castorina 110, 22460-320, Rio de Janeiro, Brazil}
\email{demendoncabraga@gmail.com}
\urladdr{https://sites.google.com/site/demendoncabraga}
\thanks{B. M. Braga  was partially supported by FAPERJ (Proc. E-26/200.167/2023), by CNPq (Proc. 303571/2022-5), and by  Serrapilheira (R-2501-51476).}

\author[A. Buss]{Alcides Buss}
\address[A. Buss]{Universidade Federal de Santa Catarina, 88040-970 Florianopolis SC, Brazil}
 \email{alcides.buss@ufsc.br}
 \urladdr{http://www.mtm.ufsc.br/~alcides}
 \thanks{A. Buss  was partially supported by CNPq and FAPESC}

\author[R. Exel]{Ruy Exel}
\address[R. Exel]{Universidade Federal de Santa Catarina, 88040-970 Florianopolis SC, Brazil}
 \email{ruyexel@gmail.com}
 \urladdr{http://www.mtm.ufsc.br/~exel}
 \thanks{R. Exel  was partially supported by  CNPq, Brazil}
 \maketitle

\begin{abstract}
 We investigate the large scale geometry of certain metric spaces through the lens
of dynamics. Our approach establishes a close connection  between large scale dynamical phenomena and operator algebras by characterizing various large scale dynamic behaviors in terms of GNS representations of the uniform Roe algebras  arising from natural canonical states.

Our dynamical systems are given by the Stone--\v{C}ech boundary of metric spaces
together with their inverse semigroup of partial translations. This defines a space
of orbits  and we  characterize Hausdorffness and $T_1$-ness of this space by   the failure of coarse embeddability of certain metric spaces. Surprisingly, while the orbit space has very weak separation properties, we show that it satisfies a certain ``localized version'' of Urysohn's lemma.

We show that the topology of the space of orbits and quasi-orbits are given by the space of irreducible representations of uniform   Roe algebras and by the space of their primitive ideals, respectively. As a highlight of the  theory developed herein, we provide classes of spaces such that the  prime ideals of their uniform Roe algebras are primitive. This is the case for instance of spaces whose orbit space is $T_1$.
\end{abstract}

\tableofcontents

\section{Introduction}

We study the large scale geometry of metric spaces through a
dynamical viewpoint. Our approach reveals a  close   connection  between coarse
geometry, topological dynamics on the Stone--\v{C}ech boundary, and the operator
algebraic structure of uniform Roe algebras. In a nutshell, we thoroughly
 and systematically study dynamical systems given by certain metric spaces together
with their inverse semi-group of partial translations. As orbits are usually not
closed, the \emph{space of orbits}, obtained by taking the quotient by the orbit
relation, satisfies only very weak separation properties; usually these spaces will not even be $T_0$. Our first results provide large scale characterizations of such phenomena in terms of coarse embeddability. The characterization of when orbits are closed becomes a cornerstone  for the theory.

As it turns out,   dynamics in large scale geometry is intrinsically related to
certain operator algebras defined over our metric spaces. Higson functions provide
the appropriate tools to detect the closure of the orbit relation and maximal orbit
closures. On the other hand, representations of uniform Roe algebras can   be used
to characterize orbit and quasi-orbit relations.   Moreover, our study of dynamics
is intrinsically  linked to the ideal structure on uniform Roe algebras and, more
specifically, to the spaces of primitive ideals and of irreducible representations
of these algebras. As a highlight of the theory developed herein, we
use  our results to study the celebrated problem of when prime ideals are primitive,
in the special case of uniform Roe algebras, solving it affirmatively in many
cases, including all examples whose   orbit space is $T_1$.

For the remainder of this introduction, we now describe the main findings  of our work in more detail.

\subsection{Our dynamical systems}
Large scale geometry is the study of the global  aspects of metric spaces. In this context, local properties   are not relevant. Instead, we are interested in events taking place outside every bounded subset. Moreover, events happening at a uniform distance from each other are identified and treated as being morally the same.

\begin{definition}
    Let $X$ be a   metric space, $A,B\subseteq X$, and $f\colon A\to B$ be a bijection. We say that $f$ is a \emph{partial translation} if
    \[\sup_{x\in A} d(x,f(x))<\infty.\]
   We denote by $\PT(X)$ the set of all partial translations of $X$.
\end{definition}

Given a metric space $X$, the set $\PT(X)$ has a natural inverse semigroup
structure: its product is given by composition of maps whenever the composition
is defined. Although $\PT(X)$ acts on $X$ and this yields the dynamical system
$(X,\PT(X))$, such an action is not particularly informative for understanding
large scale phenomena in $X$. Indeed, $\PT(X)$ acts transitively on $X$, and by
considering only this action we are unable to distinguish   events that
happen uniformly at once.

As our interest lies in events that escape every bounded subset of the space,
we must regard $\PT(X)$ as an inverse semigroup acting not only on $X$ but
also on the \emph{boundary} of $X$, where each point of this boundary encodes a
way of approaching infinity --- that is, a way of leaving all bounded subsets. Our model for the boundary of $X$ will be given by the Stone--\v{C}ech
compactification of $X$, denoted by $\beta X$. Since we are interested only in
large scale phenomena, we may assume that our metric spaces are discrete, by
replacing them with appropriate $\delta$-separated and $\varepsilon$-dense
subsets. We furthermore assume that our metric spaces are \emph{uniformly
locally finite} (abbreviated as \emph{u.l.f.}); that is, $X$ has the property
that for every $r>0$ there is an upper bound on the cardinality of the balls of
radius $r$ in $X$.

Since $X$ is discrete, the Stone--\v{C}ech compactification $\beta X$ can be
identified with the space of all ultrafilters on $X$ endowed with its canonical
topology. Precisely, for $A\subseteq X$ we define
\[
\overline A = \{\xi\in\beta X : A\in\xi\},
\]
and we equip $\beta X$ with the topology having the sets $(\overline A)_{A\subseteq
X}$ as a basis. Identifying $X$ with the principal ultrafilters, we regard $X$ as an open and
dense subset of $\beta X$. The \emph{Stone--\v{C}ech boundary of $X$} is
$\partial X = \beta X \setminus X$, the space of all nonprincipal ultrafilters
on $X$. It is immediate that if $A\subseteq X$, then the closure of $A$ in
$\beta X$ is precisely $\overline A$, justifying our choice of notation.

Given an ultrafilter $\xi \in \beta X$ and a partial translation $f$ of $X$ with
$\dom(f) \in \xi$, we define $\overline f(\xi) \in \beta X$ as the ultrafilter
determined by
\[
B \in \overline f(\xi)
\ \Longleftrightarrow\
\exists\, A \subseteq \dom(f)\text{ with } A\in \xi \text{ such that } f(A) \subseteq B.
\]
In other words, $\overline f$ is precisely the Stone--\v{C}ech extension of the partial
translation $f \colon \dom(f) \to \mathrm{im}(f)$ to a map
$\overline{\dom(f)} \to \overline{\mathrm{im}(f)}$. Thus we may regard the
inverse semigroup $\PT(X)$ as acting not only on $X$ but on the entire
Stone--\v{C}ech compactification $\beta X$.

\begin{definition}
Let $X$ be a u.l.f.\ metric space. The discussion above defines a dynamical
system $(\beta X,\PT(X))$. For $\xi \in \beta X$, the \emph{orbit} of $\xi$ is
\[
\Orb(\xi)
=
\left\{\overline f(\xi) \in \beta X : f \in \PT(X) \text{ and } \dom(f) \in \xi \right\}.
\]
Two elements of $\beta X$ are said to be \emph{orbit equivalent} if they lie in
the same orbit. This defines an equivalence relation, called the \emph{orbit
equivalence relation} and denoted by $\Orb$. Moreover, since $\PT(X)$ leaves
$\partial X$ invariant, the same construction yields the dynamical system
$(\partial X,\PT(X))$.
\end{definition}

The dynamical systems described above have already been extensively studied in
the literature, especially in the case where $X$ is a finitely generated group equipped with a word metric. We refer the reader to \cite{Protasov2005TopAndAppl,Protasov2008TopAndAppli,HindmanStrauss2012Book,
SpakulaWillett2017,Protasov2021TopProc}.

As the dynamical system  $(X,\PT(X))$ is    transitive, the dynamical system $(\beta X,\PT(X))$ is always \emph{topologically transitive} in the sense that for all open $U,V\subseteq \beta X$, there is $\xi\in U$ and a partial translation $f$ of $X$ such that $\overline f(\xi)\in V$. The picture changes however when we restrict our action to the boundary $\partial X$:

\begin{theorem*}[See Theorem \ref{ThmMixAction.InTheText} below]
Let $X$ be an infinite  u.l.f.\ metric space.  The dynamical system $(\partial X,\PT(X))$ is not topologically transitive.\label{ThmMixAction}
\end{theorem*}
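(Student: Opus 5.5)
To show that $(\partial X,\PT(X))$ is not topologically transitive, I will exhibit two nonempty open subsets $U,V\subseteq\partial X$ such that no point of $U$ is moved into $V$ by any $\overline f$, $f\in\PT(X)$. The basic open sets of $\partial X$ have the form $\overline A\cap\partial X$ with $A\subseteq X$ infinite. So it suffices to find infinite sets $A,B\subseteq X$ such that, for every $f\in\PT(X)$, the set $f^{-1}(B)\cap A$ (more precisely $\{x\in A\cap\dom(f): f(x)\in B\}$) is finite. Indeed, if $\xi\in\overline A\cap\partial X$, $\dom(f)\in\xi$ and $\overline f(\xi)\in\overline B$, then by the definition of $\overline f$ there is $C\in\xi$ with $C\subseteq \dom(f)$ and $f(C)\subseteq B$. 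Hence $C\cap A\in\xi$ lies in this preimage, so it is infinite because $\xi$ is nonprincipal, a contradiction.

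Because $f$ is a partial translation, if $f(x)\in B$ with $x\in A$, then $d(x,B)\le \sup_y d(y,f(y))<\infty$. The required condition therefore follows from choosing $A$ and $B$ infinite with $d(A\setminus F_r,B)>r$ for some finite $F_r$, for every $r$. In other words, $d(a_n,B)\to\infty$ along $A$. It is enough to arrange
\[
\lim_{n\to\infty} d(a_n,B)=\infty.
\]

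I construct the two sequences inductively. Since $X$ is infinite and u.l.f., every bounded set is finite, so $X$ is unbounded. Fix a base point $x_0$ and choose points $a_1,b_1,a_2,b_2,\dots$ alternately with distances from $x_0$ increasing extremely fast. At stage $n$, pick the next point $p$ so that $d(x_0,p)>2\,d(x_0,q)+n$ for every previously chosen point $q$; this is possible because $X$ is unbounded. Set $A=\{a_n\}$ and $B=\{b_n\}$, both infinite since the points are distinct.

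It remains to check $d(a_n,b_m)\ge n$ for all $m$, which I expect to be the only mildly fiddly step. If $b_m$ was chosen after $a_n$, the triangle inequality gives $d(a_n,b_m)\ge d(x_0,b_m)-d(x_0,a_n)> d(x_0,a_n)+n\ge n$. If $b_m$ was chosen before $a_n$, then $d(a_n,b_m)\ge d(x_0,a_n)-d(x_0,b_m)>n$. Hence $d(a_n,B)\ge n\to\infty$, and combining this with the first paragraph completes the proof.
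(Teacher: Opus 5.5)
Your proof is correct, and it takes a different, more elementary route than the paper's.

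\textbf{The paper's route.} The paper argues through the Higson compactification. Higson functions are constant on orbits in $\partial X$, so the canonical map $\varphi\colon\beta X\to hX$ is orbit-invariant on $\partial X$. The Higson corona $\nu X$ has at least two points; the paper cites the much stronger fact $|\nu X|=2^{2^{\aleph_0}}$. Pulling back disjoint open neighbourhoods of two distinct points of $\nu X$ then gives two disjoint nonempty open invariant subsets of $\partial X$.

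\textbf{Your route.} You build two infinite sets $A,B$ with $d(a,B)\to\infty$ along $A$. You then check directly from the definition of $\overline f$ that no partial translation sends a point of $\overline A\cap\partial X$ into $\overline B$. This is exactly the hypothesis of Lemma~\ref{LemmaExistenceOfHigsonFunctionSeparationSubsetsFromFarAwaySequences}. So you exhibit the obstruction that the Higson function would detect, without constructing that function, without Gelfand duality, and without the external result that $\nu X$ has more than one point. Your sets also give invariant sets if wanted: their orbit saturations are open and disjoint, since $\overline f(\xi)=\overline g(\eta)$ would force $\eta=\overline{g^{-1}f}(\xi)$.

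\textbf{What each buys.} The paper's route is conceptual. It identifies the Higson corona as the source of non-transitivity: any two points of $\partial X$ with different images in $\nu X$ are separated by disjoint invariant open sets. That viewpoint is reused later for the localized Urysohn lemma and the Higson-orbit characterizations. Your route gives a short, self-contained argument that needs only the definitions and the unboundedness of an infinite u.l.f.\ space.
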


 \subsection{Equivalence relations induced by dynamics} As our metric spaces are u.l.f., we can show that all orbits are countable (see Proposition \ref{PropOrbitsAreCountable}). Our next theorem shows that orbits are finite exactly when they are closed and, moreover, it characterizes when this happens geometrically in $X$. This geometric characterization is fundamental for the understanding of orbits and it will be essential throughout these notes. 
 
 %Below,  if $X$ is a metric space, $A\subseteq X$, and $r>0$, $N_r(A)$ denotes  the $r$-neighborhood of $A$, i.e., $N_r(A)=\{x\in X\colon d(x,A)\leq r\}$.

\begin{theorem*}[See Theorem \ref{ThmCharacterizationOrbClosed.IntheText} below]
    Let $X$ be a u.l.f.\ metric space and $\xi\in \beta X$. The following are equivalent.
    \begin{enumerate}
\item\label{Item1ThmCharacterizationOrbClosed} $\mathrm{Orb}(\xi)$ is closed.
\item\label{Item2ThmCharacterizationOrbClosed}   $\mathrm{Orb}(\xi)$ is finite.
\item\label{Item3ThmCharacterizationOrbClosed}
        There is  $r>0$ such that
    \[\lim_{x,\xi}d\left(N_r(x),X\setminus N_r(x)\right)=\infty.\footnote{Throughout these notes, if $X$ is a metric space, $A\subseteq X$, and $r>0$, $N_r(A)$ denotes  the $r$-neighborhood of $A$, i.e., $N_r(A)=\{x\in X\colon d(x,A)\leq r\}$. }\]\end{enumerate}\label{ThmCharacterizationOrbClosed}
\end{theorem*}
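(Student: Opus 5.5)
We will prove the cycle $(3)\Rightarrow(2)\Rightarrow(1)\Rightarrow(3)$. Throughout we use the standard description of orbits: $\eta\in\Orb(\xi)$ if and only if there is a partial translation $f$ with $\dom(f)\in\xi$ and $\overline f(\xi)=\eta$. By u.l.f.-ness, any partial translation of propagation at most $s$ decomposes into finitely many pieces (the number bounded by $\sup_x|N_s(x)|$). Combined with the fact that ultrafilters pick out one piece, this gives
\[
\Orb(\xi)=\bigcup_{s\in\N}\Orb_s(\xi),\qquad \Orb_s(\xi)=\{\overline f(\xi): \sup_x d(x,f(x))\le s\},
\]
and each $\Orb_s(\xi)$ is finite, with cardinality at most $\sup_x|N_s(x)|$. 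This also yields countability, in the spirit of Proposition \ref{PropOrbitsAreCountable}.

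\emph{$(3)\Rightarrow(2)$.} Fix $r$ as in (3) and let $\eta=\overline f(\xi)$ with $f$ of propagation $s$. By (3) the set $A=\{x: d(N_r(x),X\setminus N_r(x))>s\}$ belongs to $\xi$. For $x\in A\cap\dom(f)$, we show by induction along the path $x\mapsto f(x)$ that $f(x)\in N_r(x)$. The point is that any point within distance $s$ of $x\in N_r(x)$ must lie in $N_r(x)$, since otherwise it would be a point of $X\setminus N_r(x)$ at distance at most $s$ from $N_r(x)$. Hence $f$ restricted to $A$ has propagation at most $r$, and $\eta\in\Orb_r(\xi)$. So $\Orb(\xi)=\Orb_r(\xi)$, which is finite.

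\emph{$(2)\Rightarrow(1)$.} This is trivial, since $\beta X$ is Hausdorff and finite sets are closed.

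\emph{$(1)\Rightarrow(3)$.} This is the main step, which we prove by contrapositive. Assume (3) fails. Then for every $r$ and every $n$, the set
\[
B_{r,n}=\{x: d(N_r(x),X\setminus N_r(x))\le n\}
\]
lies in $\xi$. Note that (3) is monotone in the appropriate way: if $N_r(x)$ has a point within $n$ of its complement, then there is a point $y$ with $r<d(x,y)\le 2r+n$. Choose $r_k\to\infty$ rapidly, with $r_{k+1}>3r_k+k$. Then the sets $C_k=B_{r_k,k}\in\xi$ allow us to build, for $x\in C_k$, a point $g_k(x)$ with $r_k<d(x,g_k(x))\le 3r_k+k$.

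The delicate part is making $g_k$ injective, which is needed for $g_k$ to be a partial translation. Since fibres have uniformly bounded size by u.l.f.-ness, we can split $C_k$ into finitely many pieces on which $g_k$ is injective, and keep the piece lying in $\xi$. This yields partial translations $g_k$ with $\dom(g_k)\in\xi$ and $\eta_k=\overline{g_k}(\xi)\in\Orb(\xi)$.

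It remains to show the $\eta_k$ produce an accumulation point of $\Orb(\xi)$ outside $\Orb(\xi)$. First, the $\eta_k$ are distinct from $\xi$ and from one another: their displacements $d(x,g_k(x))$ lie in the disjoint windows $(r_k,3r_k+k]$, and two partial translations agreeing at $\xi$ would have to agree on a $\xi$-large set. Now take a cluster point $\zeta\in\overline{\{\eta_k\}}$. We claim $\zeta\notin\Orb(\xi)$. Suppose instead $\zeta=\overline h(\xi)$ with $h$ of propagation $s$. For large $k$, $g_k$ and $h$ differ by at least $r_k-s$ pointwise on a $\xi$-large set. We want a neighbourhood $\overline D$ of $\zeta$ excluding all but finitely many $\eta_k$. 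We try $D=h(E)$ with $E\in\xi$ chosen so that $g_k(E)\cap h(E)=\emptyset$ for all large $k$.

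We expect this disjointness to be the main obstacle: it requires a diagonal choice of $E$ working simultaneously for all $k$. We would attempt it by taking $E$ to be a sufficiently sparse $\xi$-large subset, exploiting that the displacements grow and that $E$ can be thinned. If the simultaneous choice fails, the fallback is to pass to a subsequence of $k$ along which it succeeds, which still yields a nonclosed orbit.
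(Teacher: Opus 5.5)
Your $(3)\Rightarrow(2)$ and $(2)\Rightarrow(1)$ are correct and match the paper's argument in substance: show $f(x)\in N_r(x)$ on the set where $d(N_r(x),X\setminus N_r(x))>s$, then count using $|\Orb_r(\xi)|\le\sup_x|N_r(x)|$. In $(1)\Rightarrow(3)$ there is a small slip. The failure of (3) gives, for each $r$, \emph{some} $n_r$ with $B_{r,n_r}\in\xi$, not every $n$ (in $\Z$, $B_{r,n}=\emptyset$ for $n<1$). Use $C_k=B_{r_k,n_{r_k}}$ and $r_{k+1}>3r_k+n_{r_k}$ instead. With that fix you correctly obtain infinitely many distinct $\eta_k\in\Orb(\xi)$.

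The genuine gap is the last step: you never produce a point of $\overline{\{\eta_k\}}$ outside $\Orb(\xi)$. Your contradiction needs a single $E\in\xi$ with $g_k(E)\cap h(E)=\emptyset$ for all large $k$, and neither of your suggestions supplies it. Sparse members of $\xi$ need not exist. For $X=\N$ and a nonprincipal idempotent $\xi=\xi+\xi$, every member of $\xi$ contains all finite sums of some sequence $(x_n)$ (Galvin--Glazer), hence infinitely many pairs at distance $x_1$. Moreover $\xi=\lim_{n\to\xi}(n+\xi)$ is a limit of other points of its own orbit, so there is no general reason for a cluster point of orbit points to leave the orbit. The subsequence fallback is circular: $h$ is determined by $\zeta$, which is a cluster point of the whole sequence, and disjointness along a subsequence only keeps those $\eta_k$ out of $\overline{h(E)}$.

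The missing idea is a cardinality argument, and you already have its main ingredient, the countability of $\Orb(\xi)$. No countably infinite subset of $\beta X$ is closed. Given distinct $\zeta_1,\zeta_2,\dots$, pass to a subsequence admitting pairwise disjoint $A_n\in\zeta_n$ (a standard step). Then the points $\lim_{n\to u}\zeta_n\in\overline{\{\zeta_n\}}$, for $u\in\beta\N$, are pairwise distinct, because $\bigcup_{n\in T}A_n\in\lim_{n\to u}\zeta_n$ if and only if $T\in u$. Hence $\overline{\{\zeta_n\}}$ has cardinality $2^{2^{\aleph_0}}$, and in particular is uncountable.

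Applied to $(\eta_k)$, this gives a point of $\COrb(\xi)\setminus\Orb(\xi)$ without your having to identify it. Equivalently, $(1)\Rightarrow(2)$ becomes immediate (closed and countable forces finite), and your window construction is then exactly a proof of $(2)\Rightarrow(3)$.

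This route differs from the paper's. The paper exhibits an explicit product ultrafilter in $\COrb(\xi)\setminus\Orb(\xi)$, built over blocks $\{x_n(j)\colon j\in F_n\}$ that become far apart. That construction requires $\xi$ to contain $\{x_n(1)\colon n\in\N\}$, a set whose points are mutually far apart, and the idempotent above has no such member. So the cardinality route is simpler and also the more robust one.
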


The dynamics on $\beta X$ induced by  the partial translations $\PT(X)$ give rise to many other  equivalence relations. In these notes, besides the orbit equivalence relation, we study
\begin{enumerate}
    \item[I.] quasi-orbit equivalence relation,
    \item[II.] Higson equivalence relation, and
    \item[III.] pseudo-orbit equivalence relation.
\end{enumerate}
For brevity, we only highlight some of our findings here and refer the reader to the precise statements in the text. Firstly, given $\xi\in \beta X$, the \emph{quasi-orbit of $\xi$} is
\[\qOrb(\xi)=\left\{\eta\in \beta X\colon \COrb(\eta)=\COrb(\xi)\right\}.\]
Elements of $\beta X$ are \emph{quasi-orbit equivalent} if they are in the same quasi-orbit. Clearly, the relation of quasi-orbit equivalence, $\qOrb$, contains $\Orb$.\footnote{We point out that $\qOrb(\xi)$ is in general smaller than $\COrb(\xi)$.}
We present a geometric characterization of when quasi-orbits are closed in Theorem \ref{ThmCharacterizationqOrbClosed}.
Higson-orbits and pseudo-orbits are automatically closed, so no such characterization is necessary. In fact, more than merely having closed orbits, these relations are closed equivalence relations.

Higson functions (see Definition \ref{DefiHigson} below) and Higson equivalence on $\partial X$  play a very important role throughout the text. Our main technical result in this direction  is Theorem \ref{ThmHigsonRel}, which characterizes Higson equivalence (see also Theorem \ref{ThmHigsonRel.Cont}). The Higson compactification is a main tool in the proof of Theorem \ref{ThmMixAction.InTheText} above as well as for Theorems \ref{ThmLocalUrysohnForOrb.InTheText} and \ref{ThmPrimoContainsPrimitive.InTheText} below. We refer the reader to Section \ref{SectionHigson} details.

Pseudo-orbits are dealt with in Section \ref{SectionPseudo}. Theorem \ref{ThmChainTransitiveSpaces} provides a geometric characterization of when the corona $\partial X$ has a single pseudo-orbit. This is the case for instance for $\N^n$, but not for $\mathbb F_n$ (see Corollaries \ref{CorPseudoOrbN} and \ref{CorPseudoOrbF}, respectively).

\subsection{The space of orbits}\label{SubsectionSpaceOfOrbits}
We can  endow the set of orbits with the quotient topology and obtain the \emph{orbit space of $\beta X$},
\[\calOrb(\beta X)=\beta X/\Orb.\]
However, since $\PT(X)$ acts transitively on $X$, $X$ collapses completely and becomes a single isolated point in this space. Moreover, as $X$ is dense in $\beta X$, this isolated point is itself dense in the whole space. In order to understand the
subtleties of  orbits, it is therefore better to restrict our study to the  orbits  of nonprincipal ultrafilters of $X$. We define the   \emph{space of orbits of $\partial X$} as the  topological quotient space
    \[\calOrb(\partial X)=\partial X/\Orb.\]
 So, $\calOrb(\partial X)$   is a topological subspace of $\calOrb(\beta X)$.

Since the relation $\Orb$ is not closed in general, the space of orbits has very
weak topological properties.\footnote{In contrast, if one considers the space of
Higson-orbits $(\partial X/\HOrb)$ or the space of pseudo-orbits $(\partial
X/\pOrb)$, these quotients are automatically Hausdorff, since both equivalence
relations are closed (and the quotient maps are open).}  In order to state our results in this direction, we first  introduce a family of spaces whose large scale geometric aspects translate dynamically in powerful ways. Let
\begin{equation}\label{Rq.Definition.M}\cM=\left\{(a_i)_i\in \{0,1\}^\N\colon |\{i\in \N\colon a_i=1\}|<  \infty\right\}
\end{equation}
  and endow it  with the metric
\begin{equation}\label{Eq.Definiion. Metric.M}
d(\bar a,\bar b)
   =\left|\sum_{i=1}^\infty a_i\,3^i \;-\; \sum_{i=1}^\infty b_i\,3^i\right|
\end{equation}
for all $\bar a=(a_i)_i,\bar b=(b_i)_i\in \cM$. Equivalently, $\cM$ can be identified with the subset of natural numbers   whose
\emph{ternary expansion} uses only the digits $0$ and $1$.\footnote{More precisely, $\cM$ is canonically isometric to the subset of natural numbers larger than or equal to $3$  whose
ternary expansion uses only the digits $0$ and $1$.} The space $\cM$ has
asymptotic dimension zero (see Definition \ref{Definition.Asym.Dim.d}), and it is the ``universal'' space of this
type: every u.l.f.\ metric space of asymptotic dimension zero coarsely
embeds into it (see
\cite[Theorem~3.11]{DranishnikovZarichnyi2004TopAndAppl}).  For any $k\in\N$, let
\begin{equation}\label{Rq.Definition.Mk}\mathcal M_k=\left\{(a_i)_i\in \cM\colon |\{i\in \N\colon a_i=1\}|=k\right\}.
\end{equation}
The spaces $\cM_k$ can be seen as subspaces of $\N^k$. Precisely, each $\cM_k$ is canonically coarsely equivalent to
\begin{equation}\label{Rq.Definition.Mk.concreta}\left\{(n_1^2,\ldots, n_k^2)\in \N^k\colon n_1<\ldots< n_k\right\},\end{equation}
where $\N^k$ is considered with its usual metric.

 As a consequence of Theorem \ref{ThmCharacterizationOrbClosed.IntheText} (and of Theorem \ref{ThmCharacterizationqOrbClosed}), we can characterize when the space of orbits of $\partial X$ is $T_1$. Precisely:

\begin{theorem*}[See Theorem \ref{CorClosedQuasiOrbitsCoarseDisjointUnionSing.Inthetext} below] 
   The following are equivalent for a u.l.f.\ metric space $X$:
   \begin{enumerate}
       \item\label{Item1CorClosedQuasiOrbitsCoarseDisjointUnionSing} $X$ does not contain a coarse copy of $\cM_2$.
       \item\label{Item2CorClosedQuasiOrbitsCoarseDisjointUnionSing}  $\mathrm{Orb}(\xi)$ is closed for all $\xi \in \partial X$.
       \item\label{Item3CorClosedQuasiOrbitsCoarseDisjointUnionSing} $\qOrb(\xi)$  is closed for all $\xi \in \partial X$.
       \item\label{Item4CorClosedQuasiOrbitsCoarseDisjointUnionSing} The space $\calOrb(\partial X)$ is $T_1$.
   \end{enumerate}
\label{CorClosedQuasiOrbitsCoarseDisjointUnionSing}
\end{theorem*}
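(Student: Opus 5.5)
We prove the cycle of equivalences by combining the geometric characterization of closed orbits (Theorem \ref{ThmCharacterizationOrbClosed.IntheText}), its quasi-orbit analogue (Theorem \ref{ThmCharacterizationqOrbClosed}), and a direct analysis of $\cM_2$.

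\emph{(2)$\Leftrightarrow$(4).} Here we use only point-set topology of the quotient map $\pi\colon\partial X\to\calOrb(\partial X)$. A point $\pi(\xi)$ is closed exactly when $\pi^{-1}(\pi(\xi))=\Orb(\xi)$ is closed in $\partial X$. Hence $T_1$-ness of the orbit space is literally (2).

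\emph{(2)$\Rightarrow$(3).} If every orbit is closed, then $\COrb(\eta)=\Orb(\eta)$ for all $\eta$. Two ultrafilters with equal orbit closures therefore have equal orbits, so $\qOrb(\xi)=\Orb(\xi)$, which is closed.

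\emph{(1)$\Rightarrow$(2).} Argue by contraposition. Suppose some $\xi\in\partial X$ has non-closed orbit. By Theorem \ref{ThmCharacterizationOrbClosed.IntheText}, for every $r>0$ the quantity $d(N_r(x),X\setminus N_r(x))$ does not tend to infinity along $\xi$. Taking $r=n$ and using u.l.f.-ness, we plan to extract the following data:
\begin{itemize}
\item a sequence $x_n\to\infty$;
\item for each $n$, points $y_n\in N_{R_n}(x_n)$ with $d(x_n,y_n)\ge n$, where $R_n$ is some radius;
\item the pairs $\{x_n,y_n\}$ chosen so far apart from one another, and from earlier choices, that distances between different pairs dominate everything.
\end{itemize}
The "ladder" structure of nested neighbourhoods that fail to be coarsely isolated should then be arranged, by passing to subsequences and a diagonal argument, into a copy of $\{(m^2,n^2):m<n\}$, that is, a coarse copy of $\cM_2$ as in \eqref{Rq.Definition.Mk.concreta}.

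A cleaner route is to iterate. Failure of (3) in Theorem \ref{ThmCharacterizationOrbClosed.IntheText} at scale $r$ yields, $\xi$-often, a point at distance $>r$ but bounded distance $s(r)$ from $N_r(x)$. Iterating $r_1<r_2<\dots$ with $r_{k+1}\gg s(r_k)$ produces, near each $x$, points at distances roughly $r_1,r_2,\dots$. Choosing for each $n$ a base point $x_n$ far from all previous ones and placing the points $x_n^{(m)}$ for $m<n$ gives a subset whose metric mimics $d((m,n),(m',n'))\approx r_{\max}$ when $n=n'$, and $\infty$-like otherwise. This is exactly the coarse geometry of $\cM_2$ (elements $3^m+3^n$), after reindexing $r_m\approx 3^m$. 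The main obstacle is making the distance estimates two-sided, since the embedding must be both bornologous and effectively proper. To get the lower bounds, I would take each new point outside $N_{r_k}$ of the previous configuration.

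\emph{(3)$\Rightarrow$(1).} Again argue by contraposition. Given a coarse embedding $\cM_2\hookrightarrow X$, first exhibit in $\cM_2$ itself a $\xi$ whose quasi-orbit is not closed. Take $\xi$ a nonprincipal ultrafilter concentrating on the points $3^{m}+3^{n}$ with $m$ fixed and $n\to\infty$. Let $\eta$ be an ultrafilter in which $m\to\infty$ as well, along a diagonal.

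The points $3^m+3^n$ with $m$ fixed lie in the orbit of the ultrafilter $\zeta$ supported on $\{3^n\}$-like sets. Varying $m$ yields distinct orbits $\Orb(\xi_m)$ whose closures all coincide, each being the orbit together with its limit points. We then check via Theorem \ref{ThmCharacterizationqOrbClosed} that some quasi-orbit accumulates at an ultrafilter with strictly smaller orbit closure. Finally, push this forward along the coarse embedding: coarse embeddings between u.l.f. spaces induce maps on $\partial$ that intertwine the relevant translation structure well enough to preserve the failure of the geometric condition in Theorem \ref{ThmCharacterizationqOrbClosed}. This direction and the construction in (1)$\Rightarrow$(2) are where the real work lies; the remaining implications are formal.
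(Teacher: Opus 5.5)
You have the same architecture as the paper. (2)$\Leftrightarrow$(4) and (2)$\Rightarrow$(3) are formal and correct, and your ``cleaner route'' for (1)$\Rightarrow$(2) works. Take $r_{k+1}\ge 2(r_k+s(r_k))$. Choose each base point $x_n$ in the $\xi$-large set where the scales $r_1,\dots,r_n$ all fail, and far from the earlier columns. Then you get points with $r_m<d(x_n,x_n^{(m)})\le r_m+s(r_m)$, and the triangle inequality gives $r_{m'}/2<d(x_n^{(m)},x_n^{(m')})\le 2(r_{m'}+s(r_{m'}))$ for $m<m'$. That is exactly the two-sided control you were worried about. It is the same ladder the paper extracts from the proof of Theorem \ref{ThmCharacterizationOrbClosed.IntheText} and feeds into Lemma \ref{Lemma.Coarse.Embedding.of.M2}, just built directly.

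The gap is in (3)$\Rightarrow$(1), which you never actually carry out and which contains two false steps.

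\emph{First}, the row ultrafilters $\xi_m$ do not give distinct orbits. The map $3^m+3^n\mapsto 3^{m'}+3^n$ is a partial translation, so moving $\xi$ to another row stays inside $\Orb(\xi)$; in fact $\Orb(\xi)$ is exactly the set of these row copies. Row ultrafilters built from different ultrafilters on the column index have different orbit closures. So ``distinct orbits with equal closures'' never happens here. Also, there are no points $3^n$ in $\cM_2$.

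\emph{Second}, suppose ``$m\to\infty$ along a diagonal'' means $\eta$ contains $D=\{3^{m_k}+3^{n_k}\}$ with $m_k\to\infty$ and distinct $n_k$. Then $N_r(D)$ meets the row carrying $\xi$ in a finite set for every $r$, so $\eta\notin\COrb(\xi)$ by Proposition \ref{PropOrbitsAreCountable}.

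The correct $\eta$ is a cluster point of the row copies $\xi_m$. The fact that actually proves the implication, which you never state, is this: a partial translation of displacement at most $R$ fixes every point $3^m+3^n$ of its domain (with smaller index $m$) satisfying $2\cdot 3^{m-1}>R$. Hence $\Orb(\eta)=\{\eta\}$, so $\eta\in\COrb(\xi)\setminus\qOrb(\xi)$. Since $\Orb(\xi)\subseteq\qOrb(\xi)$, it follows that $\qOrb(\xi)$ is not closed.

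Alternatively, as in the paper, verify that condition (4) of Theorem \ref{ThmCharacterizationqOrbClosed} fails with $A$ the row of $\xi$. If $3^k-3^m>r$, every $3^m+3^n$ with $n>k$ lies within $3^k$ of $3^k+3^n\notin N_r(A)$.

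Finally, the push-forward to $X$ needs a line of justification rather than an appeal to ``intertwining''; the paper is equally brief here. One option: failure of condition (4) passes from a subspace $Y\subseteq X$ to $X$, because $Y\setminus N_r(A)\subseteq X\setminus N_r(A)$, and it is clearly invariant under coarse equivalence. Another option is Proposition \ref{PropCoarseEmbOrbSpaceHomeo}. Quasi-orbits are the classes of topologically indistinguishable points of $\calOrb(\partial X)$, and an open embedding preserves these classes and their failure to be closed.
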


The Hausdorffness of the orbit space can also be characterized in terms of coarse embeddability, but this requires a space which lies    in between $\cM_1$ and $\cM_2$. For this, let $\N=\bigsqcup_n\N_n$ be a partition into infinite subsets  and let $\cM_{3/2}$ be the union over all $n\in\N$ of all elements in $\cM_2$ whose first coordinate with a $1$   is either its $1$st or its  $m_n$th coordinate and its second coordinate with a $1$ is in $\N_n$. Alternatively, it is readily seen that this space is coarsely equivalent to
\[\{(1,k^2),(n,k^2)\in \N^2\colon n\in\N,\ k\in \N_n\}.\]
In other words, $\cM_{3/2}$ is the coarse disjoint union  over $n$ of $X_n=\cM_1\sqcup\cM_1$ where the Hausdorff distance in each $X_n$ between the disjoint copies of $\cM_1$ is finite but goes to infinite as $n\to \infty$ (see Definition \ref{Defi.Coarse.Disj.Union} for details on coarse disjoint unions).

\begin{theorem*}[See Theorem \ref{Thm.Hausdorfness.Charac.InTheText} below]
 The following are equivalent for a u.l.f.\ metric space $X$:
     \begin{enumerate}
         \item\label{Prop.HausdorffSSS.Item1} $X$ does not contain a coarse copy  of $\cM_{3/2}$.
         \item\label{Prop.HausdorffSSS.Item2} $\calOrb(\partial X)$ is Hausdorff.
     \end{enumerate}\label{Thm.Hausdorfness.Charac}
\end{theorem*}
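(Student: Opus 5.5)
We prove both directions by contraposition, using the $T_1$ characterization (Theorem \ref{CorClosedQuasiOrbitsCoarseDisjointUnionSing}) and the geometric criterion for closed orbits (Theorem \ref{ThmCharacterizationOrbClosed}).

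\emph{(2)$\Rightarrow$(1).} Suppose $X$ contains a coarse copy $Y$ of $\cM_{3/2}=\bigsqcup_n X_n$, where $X_n=\cM_1^{(n,1)}\sqcup\cM_1^{(n,2)}$. The two copies are at finite Hausdorff distance $h_n$, with $h_n\to\infty$. For each $n$ fix a bijection $g_n\colon\cM_1^{(n,1)}\to\cM_1^{(n,2)}$ moving points by at most $h_n$. Fix a nonprincipal ultrafilter $\mathcal U$ on $\N$. For each $n$ choose $\xi_n\in\partial X$ concentrated on $\cM_1^{(n,1)}$ and set $\eta_n=\overline{g_n}(\xi_n)$, so that $\eta_n\in\Orb(\xi_n)$. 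Let $\xi=\lim_{\mathcal U}\xi_n$ and $\eta=\lim_{\mathcal U}\eta_n$. We claim that the orbits $[\xi]$ and $[\eta]$ cannot be separated in $\calOrb(\partial X)$.

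Recall that the quotient map $\partial X\to\calOrb(\partial X)$ is open, since saturations of open sets are unions of images under the partial homeomorphisms $\overline f$. Hence it suffices to show that for all open $U\ni\xi$ and $V\ni\eta$ there is an orbit meeting both. Since $\xi_n\to\xi$ and $\eta_n\to\eta$ along $\mathcal U$, for $\mathcal U$-many $n$ we have $\xi_n\in U$ and $\eta_n\in V$, and $\Orb(\xi_n)=\Orb(\eta_n)$. So the two points are not separated.

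It remains to check that $\xi$ and $\eta$ are not orbit equivalent. Suppose a partial translation $f$ of displacement at most $r$ sent $\xi$ to $\eta$. Then on a set in $\xi$ meeting $\cM_1^{(n,1)}$ for $\mathcal U$-many $n$ with $h_n>r$, the map $f$ moves points by at most $r$. Because each $\cM_1$ has coarse isolated points (its points $3^i$ are far apart), the $r$-neighbourhoods of $\cM_1^{(n,1)}$ and $\cM_1^{(n,2)}$ essentially do not meet along a large set. One has to choose $\xi_n$ and the copy inside $X$ carefully so that the $r$-ball structure forces $f$ to land in a set not belonging to $\eta$. \textbf{This is the step I expect to be the main obstacle}: a coarse embedding only controls distances up to the control functions. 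I would handle it by passing to subsequences on which the images are $R$-separated, using u.l.f.-ness.

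\emph{(1)$\Rightarrow$(2).} Assume $X$ contains no coarse copy of $\cM_{3/2}$. In particular it contains no coarse copy of $\cM_2$: if $\cM_2$ embedded, $\cM_{3/2}\subseteq\cM_2$ would embed too. By Theorem \ref{CorClosedQuasiOrbitsCoarseDisjointUnionSing} all orbits in $\partial X$ are then closed and finite, and Theorem \ref{ThmCharacterizationOrbClosed} supplies for each $\xi$ an $r$ with $d(N_r(x),X\setminus N_r(x))\to\infty$ along $\xi$.

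Given distinct orbits $\Orb(\xi)$ and $\Orb(\eta)$, we separate them by open saturated sets. Take sets $A\in\xi$ and $B\in\eta$ on which the $r$-clusters $N_r(x)$ are uniformly isolated. Refine so that every cluster meeting $A$ is disjoint from every cluster meeting $B$; this is possible because the finitely many orbit elements are distinct. Then let $U$ and $V$ be the saturations of $\overline{N_r(A)}$ and $\overline{N_r(B)}$, which are open. If $U\cap V\neq\emptyset$, then some $\zeta\in\overline{N_r(A)}$ has a translate in $\overline{N_r(B)}$ with some displacement $s$.

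If this happened for every choice of $A$ and $B$, a diagonal argument would produce pairs of clusters at finite distance $s_n$, with $s_n\to\infty$, forming two copies of a $\cM_1$-like set at Hausdorff distance $s_n$. Gluing these over $n$ gives a coarse copy of $\cM_{3/2}$, a contradiction. So some choice of $A$ and $B$ works, and $\calOrb(\partial X)$ is Hausdorff. The delicate part here is building the coarse embedding of $\cM_{3/2}$ from the failure of separation: the distances $s_n$ must go to infinity while clusters within each level stay uniformly isolated.
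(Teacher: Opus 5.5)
Your (2)$\Rightarrow$(1) is the paper's argument. The paper's $\eta_1,\eta_2$ are your iterated limits $\lim_{\mathcal U}\xi_n$ and $\lim_{\mathcal U}\eta_n$, built on the concrete model $\{(1,k^2),(n,k^2)\colon n\in\N,\ k\in\N_n\}$. Non-separation follows as you say; continuity of the quotient map already suffices.

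The step you flag as the main obstacle disappears once you use that concrete model instead of the ``Hausdorff distance $h_n$'' description. There the partner map $(1,k^2)\mapsto(n,k^2)$ moves \emph{every} point by exactly $n-1$. Let $\phi$ be the coarse embedding with lower control $\rho_-$, and let $f$ be a partial translation of displacement $r$. Choose $N$ and $K$ large, and let $A$ be the image of the first-copy points with level $n\geq N$ and $k\geq K$, and $B$ the image of the second-copy points with level $n\geq N$. Then $A\in\xi$, $B\in\eta$, and $A\cap f^{-1}(B)=\emptyset$, so $\overline f(\xi)\neq\eta$. No passage to subsequences is needed. A mere Hausdorff-distance bound would not suffice here: if most points of the second copy were small shifts of the first, $\eta$ would land in $\Orb(\xi)$.

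Your (1)$\Rightarrow$(2) takes a genuinely different, pairwise route, and it can be completed. The paper argues globally:
\begin{itemize}
\item absence of $\cM_{3/2}$ gives one $r$ such that, for every $s$, only finitely many pairs have distance in $(r,s]$;
\item hence $X$ is a coarse disjoint union of uniformly bounded finite blocks;
\item orbits in $\partial X$ are then determined by the push-forward ultrafilter on the block index, and distinct orbits are separated by preimages of disjoint index sets.
\end{itemize}
You instead fix $\xi,\eta$, get finite orbits and an isolation radius from Theorems~\ref{CorClosedQuasiOrbitsCoarseDisjointUnionSing.Inthetext} and~\ref{ThmCharacterizationOrbClosed.IntheText}, and derive $\cM_{3/2}$ from the failure of separation.

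Two points need tightening.
\begin{itemize}
\item Disjointness of the clusters should be justified by Proposition~\ref{PropOrbitsAreCountable}(1) with radius $2r$, not by ``the finitely many orbit elements are distinct''. Since $\eta\notin\Orb(\xi)$, some $B\in\eta$ has $N_{2r}(B)\notin\xi$; take $A\in\xi$ inside its complement.
\item What matters is the \emph{lower} bound on the connecting distances, not $s_n\to\infty$. Shrink $A$ so that $d(N_r(a),X\setminus N_r(a))>n$ for all $a\in A$. Then every connecting pair $(x,f(x))$ with $x\in N_r(A)$ and $f(x)\in N_r(B)$ satisfies $n<d(x,f(x))\le s_n$. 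This gives, for each $n$, infinitely many distinct pairs with distances in $(n,s_n]$. That is precisely the criterion the paper uses to recognize a coarse copy of $\cM_{3/2}$.
\end{itemize}

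So both routes rest on the same combinatorial fact. The paper's route yields an explicit structure theorem for such $X$ and makes separation transparent. Yours stays local but relies on the heavier closed-orbit results.
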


 The question of when $\calOrb(\partial X)$ is $T_0$ appears to be
considerably subtler. By Theorem~\ref{CorClosedQuasiOrbitsCoarseDisjointUnionSing.Inthetext},
every u.l.f.\ space that does not contain a coarse copy of $\cM_2$ has this property. However,  $\calOrb(\partial \mathcal M_2)$ is still $T_0$
(see Proposition~\ref{PropSimpleSpaceT_0}).  A natural conjecture would be that $\calOrb(\partial X)$ is $T_0$
whenever $X$ has asymptotic dimension zero. The next theorem shows
that this is false. It follows again from
Theorems~\ref{ThmCharacterizationOrbClosed.IntheText} and
\ref{ThmCharacterizationqOrbClosed} and provides a sufficient
geometric condition ensuring that $\calOrb(\partial X)$ fails to be
$T_0$.

\begin{theorem*}[See Theorem \ref{CorAsymDomQuaseLargerOrb.Inthetext} below] 
Let $X$ be a u.l.f.\ metric space.  If $\cM$ coarsely embeds into $X$,
then $\calOrb(\partial X)$ is not $T_0$. In particular, if the
asymptotic dimension of $X$ is at least $1$, then $\calOrb(\partial X)$
is not $T_0$.\label{CorAsymDomQuaseLargerOrb}
\end{theorem*}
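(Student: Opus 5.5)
To prove the statement, I will first reduce to $X=\cM$ and then exhibit in $\partial\cM$ two distinct orbits that neither of which can be separated from the other by an open set of the orbit space.

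\emph{Reduction.} Let $\iota\colon\cM\to X$ be a coarse embedding. The image $\iota(\cM)$ is coarsely equivalent to $\cM$, and for $Y\subseteq X$ the space $\partial Y=\overline Y\cap\partial X$ is clopen in $\partial X$. Partial translations of $Y$ are partial translations of $X$. Conversely, partial translations of $X$ between subsets of $Y$ restrict to partial translations of $Y$. Hence orbits of points of $\partial Y$ computed in $Y$ and in $X$ agree on $\partial Y$, up to points of $\partial X\setminus\partial Y$. A cleaner route is to use that $T_0$ fails as soon as there are $\xi\neq\eta$ with $\xi\in\COrb(\eta)$, $\eta\in\COrb(\xi)$ and $\Orb(\xi)\neq\Orb(\eta)$. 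Indeed, saturated open sets are unions of orbits, so an open set containing one of the orbits meets the other orbit's closure and therefore contains it. This condition passes along coarse equivalences and inclusions. A coarse equivalence $\cM\to\iota(\cM)$ induces a homeomorphism of boundaries that maps orbits to orbits, since it intertwines partial translations up to finite distance; this is essentially the content of Theorems~\ref{ThmCharacterizationOrbClosed.IntheText} and \ref{ThmCharacterizationqOrbClosed} being coarse invariants. So it suffices to treat $X=\cM$.

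\emph{Construction in $\cM$.} I will find $\xi,\eta\in\partial\cM$ in the same quasi-orbit but in different orbits, i.e.\ $\qOrb(\xi)\neq\Orb(\xi)$. The idea is to use the nested structure of $\cM$. The block $B_n=\{\bar a: a_i=0 \text{ for } i>n\}$ has $2^n$ points and sits at distance about $3^{n+1}$ from everything else.

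Choose an ultrafilter $\xi$ concentrating on points whose $1$-digits are ``spread out''. Concretely, take points $\bar a$ with $1$s exactly at positions $p_1<p_2<\dots<p_k$, where $k\to\infty$ and the gaps $p_{j+1}-p_j\to\infty$ along $\xi$. From such a point, a partial translation of bounded displacement can only flip the lowest few digits. Flipping finitely many low digits, uniformly in a way that depends on $\xi$, reaches many other ultrafilters. Conversely, any point whose lowest $m$ digits agree with those of $\xi$-typical points can be moved back within bounded distance. I would define $\eta$ to be obtained from $\xi$ by pushing a growing number of low digits: along $\xi$, map $\bar a$ to $\bar a$ with its lowest $\ell(\bar a)$ ones deleted, where $\ell\to\infty$ slowly. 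This map has unbounded displacement, so $\eta\notin\Orb(\xi)$. The proof of that uses criterion \eqref{Item3ThmCharacterizationOrbClosed} of Theorem~\ref{ThmCharacterizationOrbClosed.IntheText}, or directly that bounded displacement changes only boundedly many low digits. On the other hand, deleting the lowest $m$ ones for fixed $m$ is a partial translation when restricted to points where those ones lie below a fixed position. The resulting ultrafilters $\overline{f_m}(\xi)$ converge to $\eta$, which gives $\eta\in\COrb(\xi)$. Symmetrically, I want $\xi\in\COrb(\eta)$. To get this, choose $\xi$ self-similar, namely invariant in type under deleting low ones: say $\xi$ is a limit along an ultrafilter of the $\overline{g_m}(\eta)$, where $g_m$ reinserts ones at fixed low positions.

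\emph{Main obstacle.} The delicate step is arranging both closure memberships simultaneously while keeping the orbits distinct. I would build $\xi$ and $\eta$ as limits along a single auxiliary ultrafilter $\cU$ on $\N$ of principal-point sequences indexed so that the ``deleted-prefix'' operation is shift-invariant. For instance, let $\xi$ concentrate on points whose ones are at positions in $\{n^2\}$, and let $\eta$ be the image of $\xi$ under the shift-type map that drops the first $\ell$ ones, with $\cU$-limits making the families $\overline{f_m}(\xi)$ and $\overline{g_m}(\eta)$ accumulate at each other. Checking that the closures contain each other will be routine. The hard part is verifying $\eta\notin\Orb(\xi)$: I would argue that any partial translation $f$ with $\overline f(\xi)=\eta$ would have displacement at least $3^{p_{\ell}}$ on a $\xi$-large set, which is unbounded.
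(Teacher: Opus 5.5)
\textbf{Reduction.} Your reduction is essentially sound. Two distinct orbits, each lying in the other's closure, are topologically indistinguishable in $\calOrb(\partial X)$, and such a pair transfers along a coarse embedding. If $\iota$ is not injective, pass to a cobounded subset on which it is, as in Proposition~\ref{PropCoarseEmbOrbSpaceHomeo}. Alternatively, check with Proposition~\ref{PropOrbitsAreCountable} that $\beta\iota$ preserves membership in orbit closures and sends distinct orbits to distinct orbits.

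\textbf{The gap: the pair in $\partial\cM$ is never constructed.} This construction is the entire content of the theorem. Defining $\xi$ as a limit of the $\overline{g_m}(\eta)$, while $\eta$ is itself defined from $\xi$, is circular, and no fixed-point argument supports it. The mutual closure membership you call routine is exactly the hard point.

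The concrete recipe also fails. View points of $\cM$ as finite subsets of $\N$. Let $\xi$ contain $\{S_n\colon n\in\N\}$ with $S_n=\{1^2,2^2,\dots,n^2\}$, and let $g$ delete the lowest $\lfloor\sqrt n\rfloor$ ones of $S_n$. Note that $d(\bar a,\bar b)\le r$ forces $\bar a$ and $\bar b$ to agree in every position $k$ with $3^k>2r$. Hence, for $B=\{g(S_n)\colon n\in\N\}\in\overline g(\xi)$ and every $r$, the set $N_r(B)$ contains only finitely many $S_n$. By Proposition~\ref{PropOrbitsAreCountable}(2), $\overline g(\xi)\notin\COrb(\xi)$. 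So images under your unbounded-displacement map need not lie in the orbit closure at all, and the $\overline{f_m}(\xi)$ do not converge to $\overline g(\xi)$. Likewise, condition (3) of Theorem~\ref{ThmCharacterizationOrbClosed.IntheText} only tells you that $\Orb(\xi)$ is not closed. It cannot certify that a specific $\eta$ lies outside $\Orb(\xi)$.

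\textbf{The missing idea: get mutual closure membership from minimality, not by construction.} By compactness of $\partial\cM$ and Zorn's lemma, there is a nonempty closed invariant $F\subseteq\partial\cM$ that is minimal under inclusion. Then $F=\COrb(\xi)$ for every $\xi\in F$, equivalently $F=\qOrb(\xi)$ by Theorem~\ref{ThmCharacterizationqOrbClosed}. Your digit observation then finishes the job. Given $r$, let $k$ be least with $3^k>r$. Flipping the $k$th digit of any $x$ gives a point outside $N_r(x)$ at distance $3^k$, so $d(N_r(x),\cM\setminus N_r(x))\le 3^k$ for all $x$. By Theorem~\ref{ThmCharacterizationOrbClosed.IntheText}, no orbit in $\partial\cM$ is closed (Proposition~\ref{PropMnotT0}). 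Any $\eta\in F\setminus\Orb(\xi)$, together with $\xi$, is then the pair you need. Your explicit scheme is in effect an attempt to write down a point of a minimal subsystem by hand, and nothing in it ensures minimality.

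\textbf{The asymptotic-dimension clause.} You do not address it. It needs the fact that $\mathrm{asdim}(X)\ge 1$ yields a coarse copy of $\cM$. One route is through the coarse disjoint union of the intervals $\{1,\dots,n\}$ (cf.\ Lemma~\ref{lemeta.tarde.cansado}). $\cM$ embeds into that union because it is itself a coarse disjoint union of finite subsets of $\N$.
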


Surprisingly, while $\calOrb(\partial X)$ is typically far from being $T_0$, we show
that the following ``localized'' version of Urysohn's lemma always
holds.

\begin{theorem*}[See Theorem \ref{ThmLocalUrysohnForOrb.InTheText} below]
Let $X$ be a u.l.f.\ metric space and suppose that
$E,F\subseteq \calOrb(\partial X)$ are closed subsets that can be
separated by open sets. Then there exists a continuous function
$f\colon \calOrb(\partial X)\to [0,1]$ such that $f|_E=0$ and $f|_F=1$.\label{ThmLocalUrysohnForOrb}
\end{theorem*}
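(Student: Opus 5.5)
Continuous functions $f\colon \calOrb(\partial X)\to[0,1]$ are exactly the continuous functions $g\colon\partial X\to[0,1]$ that are constant on orbits, i.e.\ $\PT(X)$-invariant: $g(\overline h(\xi))=g(\xi)$ whenever $h\in\PT(X)$ and $\dom(h)\in\xi$. So the plan is to build such an invariant $g$ on $\partial X$ and push it down to the quotient. The natural source of invariant functions is the Higson corona. If $\varphi\colon X\to[0,1]$ is a Higson function (its variation on $r$-balls tends to $0$ at infinity for every $r$), then its Stone--\v{C}ech extension satisfies $\overline\varphi(\overline h(\xi))=\overline\varphi(\xi)$ for every partial translation $h$. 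Indeed, $|\varphi(h(x))-\varphi(x)|\to 0$ as $x\to\infty$, since $\sup_x d(x,h(x))<\infty$. Hence $\overline\varphi|_{\partial X}$ descends to a continuous function on $\calOrb(\partial X)$. It therefore suffices to produce a Higson function $\varphi$ with $\overline\varphi=0$ on $\pi^{-1}(E)$ and $\overline\varphi=1$ on $\pi^{-1}(F)$, where $\pi\colon\partial X\to\calOrb(\partial X)$ is the quotient map.

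Let $U\supseteq E$ and $V\supseteq F$ be disjoint open sets in $\calOrb(\partial X)$, and set $\tilde E=\pi^{-1}(E)$, $\tilde F=\pi^{-1}(F)$, $\tilde U=\pi^{-1}(U)$, $\tilde V=\pi^{-1}(V)$. These are closed, respectively open, invariant subsets of $\partial X$, and $\tilde E\subseteq\tilde U$, $\tilde F\subseteq\tilde V$, $\tilde U\cap\tilde V=\emptyset$. By compactness of the closed set $\tilde E$, finitely many basic clopen sets $\overline{A}\cap\partial X$ cover $\tilde E$ inside $\tilde U$. Taking unions gives $A\subseteq X$ with $\tilde E\subseteq\overline A\cap\partial X\subseteq\tilde U$, and similarly $B$ with $\tilde F\subseteq\overline B\cap\partial X\subseteq\tilde V$.

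The key claim is that for every $r>0$ the set $N_r(A)\cap N_r(B)$ is bounded (finite), i.e.\ $A$ and $B$ are coarsely disjoint. Suppose not. Then for some $r$ there are infinitely many $x_n\in A$ and $y_n\in B$ with $d(x_n,y_n)\le 2r$. By u.l.f.\ we may pass to a subsequence on which $x_n\mapsto y_n$ is a partial translation $h$. Take a nonprincipal ultrafilter $\xi$ containing $\{x_n\}$. Then $\xi\in\overline A\cap\partial X\subseteq\tilde U$ and $\overline h(\xi)\in\overline B\cap\partial X\subseteq\tilde V$. But $\overline h(\xi)$ lies in $\Orb(\xi)$, and $\tilde U$ is invariant, so $\overline h(\xi)\in\tilde U\cap\tilde V=\emptyset$, a contradiction. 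Here we use that saturated open sets contain whole orbits.

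Given coarse disjointness, we take the standard Higson separation
\[\varphi(x)=\min\Bigl\{1,\ \frac{\log(1+d(x,A))}{\log(1+d(x,A))+\log(1+d(x,B))}\Bigr\}\]
(with a suitable convention where the denominator vanishes), or any similar construction. This is the step I expect to require the most care: verifying that such a $\varphi$ has vanishing variation. The check uses that $d(x,A)+d(x,B)\to\infty$ as $x\to\infty$, which is exactly coarse disjointness, together with the slow growth of $\log$. If available, I would instead invoke the characterization of Higson equivalence in Theorem~\ref{ThmHigsonRel}, which presumably says that coarsely disjoint sets have disjoint closures in the Higson corona. Then $\varphi$ vanishes on $A$ and equals $1$ on $B$, so $\overline\varphi=0$ on $\overline A\supseteq\tilde E$ and $\overline\varphi=1$ on $\overline B\supseteq\tilde F$. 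The function $\overline\varphi|_{\partial X}$ descends to the required $f$.
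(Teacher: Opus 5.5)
Your proof is correct. Its skeleton matches the paper's: pull $E,F$ and the separating open sets back to closed, respectively open, invariant subsets of $\partial X$, find a Higson function that is $0$ on the first preimage and $1$ on the second, and push it down using that Higson functions are constant on orbits (Proposition~\ref{PropHigsonFunctionsAreConstantOnOrbits}).

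The difference is in how the Higson function is built. The paper (Theorem~\ref{ThmHigsonRel.Closed.Sets}) uses compactness only on the $E$ side, producing a single set $B$, and then argues pointwise on the $F$ side:
\begin{enumerate}
\item For each $\xi$ in the preimage of $F$ it finds $A_0\in\xi$ with $d(x,B)\to\infty$ along $A_0$. This uses the same partial-translation-plus-invariance argument you give.
\item It builds a bump-type Higson function by partitioning $A_0$ into finite, increasingly separated pieces (Lemma~\ref{LemmaExistenceOfHigsonFunctionSeparationSubsetsFromFarAwaySequences}).
\item It uses compactness of $F$ and takes a maximum of finitely many such functions (Lemma~\ref{LemmaHigsonLattice}).
\item It descends through the isomorphism $C(\nu X)\cong C(\calOrb(\partial X))$ of Proposition~\ref{PropCnuXCOrbX}.
\end{enumerate}
You instead apply compactness on both sides at once, show that $A$ and $B$ are coarsely disjoint, and use the classical ratio formula. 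This removes the bump-function lemma and the max step. For descent you only need the easy direction together with the universal property of the quotient topology.

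What the paper's route buys is reusability. The one-sided lemma is used again later (for instance in Proposition~\ref{PropCardOrbStableHigsonEquiv}). Proposition~\ref{PropCnuXCOrbX} also gives the stronger fact that every continuous function on $\calOrb(\partial X)$ comes from a Higson function, which your argument does not need.

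The step you flagged as delicate is routine, and the logarithm is unnecessary. Take $a=d(\cdot,A)$, $b=d(\cdot,B)$ and $\varphi=a/(a+b)$. For $d(x,y)\le r$,
\[
|\varphi(x)-\varphi(y)|=\frac{|a(x)(b(y)-b(x))-b(x)(a(y)-a(x))|}{(a(x)+b(x))(a(y)+b(y))}\le\frac{r}{a(y)+b(y)},
\]
and this tends to $0$ as $x\to\infty$ by coarse disjointness. The convention where the denominator vanishes only affects the finite set $A\cap B$, which does not matter on $\partial X$.

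Your guess about Theorem~\ref{ThmHigsonRel} is also off: it is a pointwise statement about when two points are Higson equivalent, and it does not directly give separation of sets. The closed-set version you would want is Theorem~\ref{ThmHigsonRel.Closed.Sets}.
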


The proof of Theorem \ref{ThmLocalUrysohnForOrb.InTheText} heavily relies on the analysis of  Higson functions and on a characterization of when elements in $\partial X$ can be   distinguished by Higson functions (see Theorems \ref{ThmHigsonRel} and \ref{ThmHigsonRel.Cont}).

\subsection{Uniform Roe algebras and GNS representations} We now describe the interactions between the dynamics of $\partial X$ and the uniform Roe algebra of $X$. We start introducing some necessary notation.

Given a metric space $X$, $\ell_2(X)$ denotes   the Hilbert space of complex-valued families indexed by $X$ which are square-summable and we denote the canonical orthonormal basis of $\ell_2(X)$ by $(\delta_x)_{x\in X}$. The space of bounded operators on $\ell_2(X)$ is denoted by $\cB(\ell_2(X))$ and $\ell_\infty(X)$ is the \cstar-subalgebra of $\cB(\ell_2(X))$ of diagonal operators. Given a partial translation $f$ of $X$, we can define an operator $v_f$ on $\ell_2(X)$ by letting
\[v_f\delta_x=\left\{\begin{array}{ll}
 \delta_{f(x)} ,   &\ x\in \dom(f)  \\
   0,  & x\not\in \dom(f),
\end{array}\right. \]
so, each $v_f$ is a partial isometry of $\ell_2(X)$.

\begin{definition}
    Let $X$ be a u.l.f.\  metric space. The \emph{uniform Roe algebra of $X$}, denoted by $\cstu(X)$, is the \cstar-subalgebra of $\cB(\ell_2(X))$ generated by  $\{v_f\in \cB(\ell_2(X))\colon f\in \PT(X)\}$.
\end{definition}

Let $E\colon \cB(\ell_2(X))\to \ell_\infty(X)$ denote the canonical conditional expectation, i.e., given $a\in \cB(\ell_2(X))$,  $E(a)$ is simply the diagonal operator whose diagonal coordinates coincide with the ones of $a$. As $X$ is discrete, $\ell_\infty(X)$ is canonically identified with $C(\beta X)$, the \cstar-algebra of continuous functions on $\beta X$.   Given $\xi\in \beta X$, this identification allows us to  define a state $\varphi_\xi$ on $\cstu(X)$ by letting
\[\varphi_\xi(a)=E(a)(\xi)\]
for all $a\in \cstu(X)$.

\begin{definition}\label{DefiGNSRep}
    Let $X$ be a u.l.f.\   metric space and $\xi\in \beta X$. We denote the GNS representation given by the state $\varphi_\xi$ defined above by $\pi_\xi$. We refer to Section \ref{SectionGNSRep} for the precise definition of $\pi_\xi$.
\end{definition}

 As we show in Proposition \ref{PropGNSIrreducible}, the representations $\pi_\xi$ are always irreducible (Definition \ref{DefiIrreducibleRepPrimitive}). Hence, all $\ker(\pi_\xi)$ are primitive ideals.  The following
 theorem characterizes orbit and quasi-orbit equivalence in terms of the GNS representations above.

\begin{theorem*}[See Theorems \ref{ThmOrbitEquivIFFGNSEquiv.Inthetext}  and \ref{ThmOrbitEquivIFFGNSEquiv.Inthetext2}  below]
  Let $X$ be a u.l.f.\ metric space and   $\xi,\eta\in \beta X$. Let $\pi_\xi,\pi_\eta$ be the GNS representations given in Definition \ref{DefiGNSRep}. \begin{enumerate}
      \item\label{ThmOrbitEquivIFFGNSEquiv.Item1}   $\Orb(\xi )=\Orb(\eta )$ if and only if    $\pi _\xi $ and $\pi _\eta $ are unitarily equivalent.
      \item\label{ThmOrbitEquivIFFGNSEquiv.Item2} $\qOrb(\xi )=\qOrb(\eta )$ if and only if   $\ker(\pi _\xi)=\ker(\pi _\eta)$.
  \end{enumerate}\label{ThmOrbitEquivIFFGNSEquiv}
  \end{theorem*}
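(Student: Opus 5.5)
The plan is to realize every $\pi_\xi$ concretely on $\ell_2(\Orb(\xi))$ and reduce both items to statements about orbits and orbit closures.

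\emph{The GNS model.} I will show that $\pi_\xi$ is unitarily equivalent to the representation $\lambda_\xi$ of $\cstu(X)$ on $\ell_2(\Orb(\xi))$ given by
\[\lambda_\xi(v_f)\delta_\eta=\delta_{\overline f(\eta)}\ \text{ if } \dom(f)\in\eta, \qquad \lambda_\xi(v_f)\delta_\eta=0 \text{ otherwise},\]
with $\ell_\infty(X)=C(\beta X)$ acting by evaluation at $\eta$. Local finiteness ($X$ u.l.f.) makes $\lambda_\xi$ well defined and bounded. Every $a\in\cstu(X)$ is approximated by finite sums $\sum_f g_f v_f$ with $g_f\in\ell_\infty(X)$ and the $f$ of uniformly bounded displacement. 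For such sums one checks $\langle\lambda_\xi(a)\delta_\xi,\delta_\xi\rangle=E(a)(\xi)=\varphi_\xi(a)$. Since $\lambda_\xi(v_f)\delta_\xi=\delta_{\overline f(\xi)}$, the vector $\delta_\xi$ is cyclic, so uniqueness of the GNS construction gives $\pi_\xi\cong\lambda_\xi$.

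\emph{Item (1).} Suppose $\eta=\overline f(\xi)$. Then $\Orb(\eta)=\Orb(\xi)$, so $\lambda_\xi$ and $\lambda_\eta$ act on the same space by the same formulas and are literally equal. Hence $\pi_\xi\cong\pi_\eta$.

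Conversely, suppose $\pi_\xi\cong\pi_\eta$, say via a unitary $U\colon\ell_2(\Orb(\xi))\to\ell_2(\Orb(\eta))$. The restriction of $\lambda_\xi$ to $\ell_\infty(X)$ is diagonal in the basis $(\delta_\zeta)_{\zeta\in\Orb(\xi)}$, with joint eigencharacters given by evaluation at the points of $\Orb(\xi)$. Take $A\in\xi$ and let $p_A=\chi_A\in\ell_\infty(X)$. Then $\lambda_\xi(p_A)$ is the projection onto $\overline{\mathrm{span}}\{\delta_\zeta\colon \zeta\in \Orb(\xi)\cap\overline A\}$. Intersecting over $A\in\xi$ (ultrafilters separate points), the eigenvector $\delta_\xi$ for the character $\mathrm{ev}_\xi$ must be sent by $U$ to a nonzero vector on which every $\lambda_\eta(p_A)$, $A\in\xi$, acts as the identity. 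Such a vector exists only if $\xi\in\Orb(\eta)$. So the orbits coincide.

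\emph{Item (2).} Kernels of representations are determined by the closed invariant sets they ``see''. Let $\theta\colon\cstu(X)\to$ (restriction to $\COrb(\xi)$) be the map whose kernel is the ideal
\[I_{\COrb(\xi)}=\{a\colon E(a^*a)|_{\COrb(\xi)}=0\}.\]
I claim that $\ker\pi_\xi=I_{\COrb(\xi)}$.

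For "$\supseteq$": if $E(a^*a)$ vanishes on $\Orb(\xi)$, then $\|\lambda_\xi(a)\delta_\zeta\|^2=E(a^*a)(\zeta)=0$ for every $\zeta\in\Orb(\xi)$, so $\lambda_\xi(a)=0$. By continuity, vanishing on $\Orb(\xi)$ is the same as vanishing on $\COrb(\xi)$.

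For "$\subseteq$": if $\lambda_\xi(a)=0$, then $E(a^*a)(\zeta)=0$ on $\Orb(\xi)$, hence on its closure.

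Thus $\ker\pi_\xi=\ker\pi_\eta$ if and only if $I_{\COrb(\xi)}=I_{\COrb(\eta)}$. It remains to recover the closed invariant set from the ideal. Its intersection with $\ell_\infty(X)=C(\beta X)$ is exactly $C_0(\beta X\setminus\COrb(\xi))$, and a closed set is determined by its vanishing ideal in $C(\beta X)$. So equality of kernels is equivalent to $\COrb(\xi)=\COrb(\eta)$, i.e.\ $\qOrb(\xi)=\qOrb(\eta)$.

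\emph{Main obstacle.} The delicate step is the converse in (1): passing from abstract unitary equivalence to equality of orbits. There one must justify that $U$ intertwines the spectral projections of the diagonal subalgebra, including the limit over the ultrafilter. I would handle this through the atoms of the bicommutant $\lambda_\eta(\ell_\infty(X))''$. This algebra is the full diagonal algebra $\ell_\infty(\Orb(\eta))$ because points of $\Orb(\eta)$ are separated by sets. A unitary intertwining the two representations must carry minimal projections of one diagonal algebra onto minimal projections of the other, and it must match their characters. This yields a bijection $\Orb(\xi)\to\Orb(\eta)$ that preserves the evaluation characters, hence the identity map, so $\xi\in\Orb(\eta)$.
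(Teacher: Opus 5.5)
Your proposal is correct and follows essentially the paper's route: model $\pi_\xi$ on $\ell_2(\Orb(\xi))$; for (1), recover $\Orb(\xi)$ as the set of joint eigencharacters of $\ell_\infty(X)$; for (2), use $\ker\pi_\xi=\{a\colon E(a^*a)|_{\Orb(\xi)}=0\}$ together with the fact that its intersection with $\ell_\infty(X)$ is the vanishing ideal of $\COrb(\xi)$, which is the paper's Urysohn argument in Corollary~\ref{Kernels}. The one difference is that you build $\lambda_\xi$ externally and invoke GNS uniqueness, so you must justify that $\lambda_\xi$ extends boundedly to $\cstu(X)$, whereas the paper finds the basis $e_\eta=v_f+N_\xi$ inside $H_\xi$ and gets this for free; in either version, your ``one checks'' that $\langle\lambda_\xi(a)\delta_\xi,\delta_\xi\rangle=E(a)(\xi)$ relies on the fact that $\overline f(\xi)=\xi$ forces $\{x\colon f(x)=x\}\in\xi$ (the \v{S}pakula--Willett lemma used in Proposition~\ref{Propeetawelldef}), and you should state it explicitly.
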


 For our next theorem, we need the   \emph{space of quasi-orbits of $\partial X$}, i.e., the   topological quotient space
    \[\qcalOrb(\partial X)=\partial X/\mathrm{q}\text{-}\Orb.\]
 As being quasi-orbit equivalent is weaker than being orbit equivalent, there is a canonical quotient map \[ \calOrb(\partial X)\mapsto \qcalOrb(\partial X)\]
 which, moreover, is an  open  map (see Corollary \ref{CorQuotientOpen}).

We prove that both $\calOrb(\beta X)$ and $\qcalOrb(\beta X) $ can be canonically seen as topological subspaces of certain analytical objects related to $\cstu(X)$. Precisely, in the next statement, $\mathrm{Prim}(\cstu(X))$  denotes the space of primitive ideals of $\cstu(X)$
 endowed with the hull-kernel topology (or Jacobson topology) and $\mathrm{Irr}(\cstu(X))$ denotes the space of the irreducible representations of $\cstu(X)$ (see Definition \ref{DefiIrreducibleRepPrimitive} and Section \ref{SubsectionTopOrbSpacePrimitive} for definitions).

\begin{theorem*}[See Theorem \ref{ThmMapFromOrbToPrimIsNice.InTheText} below]
    Let $X$ be a u.l.f.\ metric space. Both the maps \begin{equation*}  \Orb(\xi)\in \calOrb(\beta X) \mapsto [\pi_\xi]\in \mathrm{Irr}(\cstu(X))
    \end{equation*}
    and \begin{equation}\label{EqMapOrbToPrim.2Intro} \qOrb(\xi)\in \qcalOrb(\beta X) \mapsto \ker(\pi_\xi)\in \mathrm{Prim}(\cstu(X))
    \end{equation}
    are homeomorphisms onto their images.\label{ThmMapFromOrbToPrimIsNice}
\end{theorem*}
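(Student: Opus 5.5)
By the previous theorem (orbit equivalence $\Leftrightarrow$ unitary equivalence of $\pi_\xi,\pi_\eta$, and quasi-orbit equivalence $\Leftrightarrow$ equality of kernels), both maps are well defined and injective. It therefore remains to show that each map is continuous and open onto its image. I would first treat the primitive ideal map and then deduce the statement for $\mathrm{Irr}$. The topology on $\mathrm{Irr}(\cstu(X))$ is the pullback of the hull-kernel topology under $[\pi]\mapsto\ker\pi$. The quotient $\calOrb(\beta X)\to\qcalOrb(\beta X)$ is open (Corollary \ref{CorQuotientOpen}), and open sets of $\calOrb(\beta X)$ are saturated with respect to $\qOrb$. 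The last point holds because an open set containing $\eta$ meets $\Orb(\xi)$ whenever $\eta\in\COrb(\xi)$.

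The key analytic step is to compute kernels. For $a\in\cstu(X)$ I claim that
\[
a\in\ker(\pi_\xi)\iff E(b^*a^*ab)(\eta)=0 \text{ for all } \eta\in\COrb(\xi),\ b\in\cstu(X),
\]
or more simply that $\ker(\pi_\xi)$ is the largest ideal $I$ with $E(I)\subseteq C_0(\beta X\setminus \COrb(\xi))$. To prove this, recall that $\|\pi_\xi(a)[b]\|^2=\varphi_\xi(b^*a^*ab)$. Approximating $b$ by finite sums of $v_f g$ with $g\in\ell_\infty(X)$, the vectors $\pi_\xi(v_f)[1]$ correspond to the points $\overline f(\xi)$. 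Hence $a\in\ker\pi_\xi$ if and only if $\varphi_{\overline f(\xi)}(c^*a^*ac)=0$ for the relevant $f$ and $c$, and by continuity in $\eta$ this is equivalent to vanishing on $\COrb(\xi)$. Thus $\ker\pi_\xi$ equals the ideal $I_{\COrb(\xi)}$ attached to the closed invariant set $\COrb(\xi)$.

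Conversely, for every closed invariant $K\subseteq\beta X$, the ideal $I_K=\{a : E(a^*a)|_K=0\}$ has $E(I_K)\cap\ell_\infty(X)=C_0(\beta X\setminus K)$. I expect to justify this with the standard fact that ideals of $\cstu(X)$ containing $C_0(U)$ and generated by their diagonal are determined by the open invariant set $U$.

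With this description, continuity and openness become statements about hulls. A closed set of $\mathrm{Prim}$ has the form $\mathrm{hull}(J)$. Then $\ker\pi_\xi\supseteq J$ if and only if $\COrb(\xi)\subseteq K_J$, where $K_J=\{\eta : E(J)(\eta)=0\}$ is closed and invariant. This holds if and only if $\xi\in K_J$. The preimage of $\mathrm{hull}(J)$ is therefore the image of the closed saturated set $K_J$, which gives continuity. For openness, take an open set of $\qcalOrb(\beta X)$, i.e.\ an invariant open $U\subseteq\beta X$. Put $J=I_{\beta X\setminus U}$, or equivalently the ideal generated by $C_0(U)$. Then $\ker\pi_\xi\not\supseteq J$ if and only if $\xi\in U$, so the image of $U$ is the trace of an open subset of $\mathrm{Prim}$.

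The main obstacle is the inclusion $I_K\subseteq\ker\pi_\xi$ combined with $E(I_{\COrb(\xi)})$ not vanishing at points outside $\COrb(\xi)$. More precisely, one needs that $\xi\notin K$ implies $\ker\pi_\xi\not\supseteq I_K$. I would prove it by picking $A\subseteq X$ with $\xi\in\overline A$ and $\overline A\cap K=\emptyset$, which is possible because $K$ is closed and $\beta X$ is zero-dimensional. Then $\chi_A\in I_K$ but $\varphi_\xi(\chi_A)=1$. The subtle part is the earlier identification of $\ker\pi_\xi$ as exactly the vanishing ideal on $\COrb(\xi)$, especially handling elements of $\cstu(X)$ that are only norm limits of finite propagation operators. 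For that step I would use the faithfulness of $E$ on positive elements together with the density of finite propagation operators.
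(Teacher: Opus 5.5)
Your proof is correct and is essentially the paper's argument. Both rest on the kernel formula $\ker(\pi_\xi)=I_{\COrb(\xi)}$ from Corollary~\ref{Kernels}, and your hull computations are net-free versions of Proposition~\ref{PropMapFromBetaXToPrim} and of the paper's openness step: the preimage of $\mathrm{hull}(J)$ in $\beta X$ is the closed invariant set $K_J=F_J$, and an open invariant $U$ is carried onto the image minus $\mathrm{hull}(I_{\beta X\setminus U})$. You also make explicit the $\qOrb$-saturation of open subsets of $\calOrb(\beta X)$, which the paper's passage to $\mathrm{Irr}(\cstu(X))$ uses tacitly.

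One aside is false but harmless. Without property A, $I_{\beta X\setminus U}$ can be strictly larger than the ideal generated by $C_0(U)$: for $U=X$ these are the ghost ideal and $\cK(\ell_2(X))$, which differ for expanders. So the ``standard fact'' you cite does not describe $I_K$. It is also not needed: your openness argument works with either choice of $J$, and $E(I_K)=C_0(\beta X\setminus K)$ follows directly from $|E(a)(\eta)|^2\le E(a^*a)(\eta)$.
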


We point out that the assignments $\xi\in \beta X\mapsto \ker(\pi_\xi)\in \mathrm{Prim}(\cstu(X))$ are related to other maps studied in the literature (e.g., \cite{ChenWang2004JFA,ChenWang2005}) connecting  open orbit-invariant subsets of $\beta X$ with the lattice of ideals of $\cstu(X)$. We discuss this in detail in Sections \ref{SubSubSectionIdeals} and   \ref{SubsectionTopOrbSpacePrimitive} below. For now, we simply mention that  our study has a fundamental difference since the map in \eqref{EqMapOrbToPrim.2Intro} has as image primitive ideals:  this allows us to deal with topological properties of these maps and not only lattice properties as done in \cite{ChenWang2004JFA,ChenWang2005} (see Remark \ref{RemarkTopLattice}).

\subsection{Prime and primitive ideals of uniform Roe algebras}
Primitive ideals are always prime,\footnote{Recall that an ideal $I$ of a $\mathrm C^*$-algebra $A$ is \emph{prime} if
    for all ideals $I_0,I_1\subseteq A$ we have that $I_0 I_1\subseteq I$ implies
 that    either $I_0\subseteq I$ or $I_1\subseteq I$.} and the converse holds for separable
$\mathrm C^*$-algebras (\cite[Corollary~1]{Dixmier1960BullSMF}).  For nonseparable
$\mathrm C^*$-algebras, however, prime ideals need not be primitive
(\cite[Theorem on p.~360]{Weaver2003JFA}). Since uniform Roe algebras always
contain $\ell_\infty(X)$, they are never separable (unless $X$ is finite).
On the other hand, they are generated by $\ell_\infty(X)$ together with a
countable family of partial isometries $v_f$ coming from $\PT(X)$. This
intermediate position between the separable and nonseparable worlds makes
it plausible that prime ideals of $\cstu(X)$ might still be primitive
under suitable geometric assumptions on $X$. Our paper culminates with applications of the theory built herein to this problem. We briefly describe them now.

In an arbitrary topological space, a nonempty closed subset is called \emph{irreducible} if
it cannot be written as a nontrivial union of two closed subsets (see Definition
\ref{Definition.Irreducible.in.Orb}). In case the u.l.f.\ metric space $X$ has Yu's
property A,\footnote{In terms of uniform Roe algebras, a u.l.f.\ metric space has
\emph{property A} if and only if $\cstu(X)$ is a nuclear $\mathrm C^*$-algebra (see
\cite[Theorem 5.5.7]{BrownOzawa}.} we show that irreducible subsets of
$\calOrb(\partial X)$ are in correspondence with the prime ideals of $\cstu(X)$
(see  Proposition \ref{Prop.Irrd.Gives.Prime}). Moreover, still in the presence of
property A, we show that if irreducible subsets of the orbit space of $\partial X$
are the closure of singletons, then prime ideals in $\cstu(X)$ must be primitive
(see Theorem \ref{Thm.IrredCLosureofOrbImplisPrimeIsPrimitive}).  For Hausdorff
spaces, it is immediate that irreducible subsets are the closure of
singletons, i.e., they are simply singletons.  However, for non-Hausdorff spaces, this is far from the case in general.  The next theorem shows that, in our orbit spaces, $T_1$-ness is already enough.

\begin{theorem*}[See Theorem \ref{Cor.T1.Prime=Primitive.InTheText} below]
    Let $X$ be a u.l.f.\ metric space for which $\calOrb(\partial X)$ is $T_1$. Then every  irreducible subset of $\calOrb(\partial X)$ is the closure of a singleton.  In particular,  the prime and primitive ideals of $\cstu(X)$ coincide.
\end{theorem*}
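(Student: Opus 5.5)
The plan is to reduce the statement to a separation argument inside $\partial X$. The ``in particular'' part then follows from Theorem~\ref{Thm.IrredCLosureofOrbImplisPrimeIsPrimitive}, together with the fact that $T_1$-ness already forces good behaviour of orbits. Since $\calOrb(\partial X)$ is $T_1$, singletons are closed, so ``closure of a singleton'' just means ``singleton''. Thus we must show that an irreducible closed set $F\subseteq\calOrb(\partial X)$ consists of a single orbit.

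Let $K\subseteq\partial X$ be the preimage of $F$; it is closed and $\PT(X)$-invariant. Suppose $\xi_1,\xi_2\in K$ have $\Orb(\xi_1)\neq\Orb(\xi_2)$. We will write $K=C_1\cup C_2$ with $C_1,C_2$ closed and invariant, $\xi_1\notin C_2$ and $\xi_2\notin C_1$. This contradicts irreducibility of $F$.

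By Theorem~\ref{CorClosedQuasiOrbitsCoarseDisjointUnionSing.Inthetext}, every orbit in $\partial X$ is closed. By Theorem~\ref{ThmCharacterizationOrbClosed.IntheText}, every orbit is finite, and for each $\xi$ there is $r>0$ with $\lim_{x,\xi} d(N_r(x),X\setminus N_r(x))=\infty$.

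\textbf{Building $C_1$.} Fix such an $r$ for $\xi_1$. For $A\in\xi_1$ and $n\in\N$, set
\[
B_n=\bigcup\{N_r(x): x\in A,\ d(N_r(x),X\setminus N_r(x))\ge n\}.
\]
The key observation is that each $B_n$ is a union of ``islands'' separated by distance $\ge n$. Consequently, a partial translation $f$ with displacement $<n$ maps $B_n\cap\dom(f)$ into $B_n$, and in fact maps each island into itself. It follows that $C_1:=K\cap\bigcap_n\overline{B_n}$ is closed and $\PT(X)$-invariant, and it contains $\xi_1$.

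To get $\xi_2\notin C_1$, choose $A\in\xi_1$ so that $N_r(A)\notin\eta$ for every $\eta\in\Orb(\xi_2)$. This is possible for the following reason. By uniform local finiteness, $N_r(A)$ is a finite union of images of $A$ under partial translations. Hence $N_r(A)\in\eta$ forces $A\in\eta'$ for some $\eta'\in\Orb(\eta)=\Orb(\xi_2)$. The orbit $\Orb(\xi_2)$ is finite and does not contain $\xi_1$, so finitely many conditions suffice to choose $A$.

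\textbf{Building $C_2$.} Set
\[
C_2=K\setminus\Bigl(\bigcup_{g}\overline g\bigl(\overline{B_m}\cap\overline{\dom g}\bigr)\Bigr)^{\circ},
\]
that is, $K$ minus an open invariant set containing $\xi_1$ but not $\xi_2$. The natural candidate for this open set is the saturation of the clopen set $\overline{B_m}\cap\partial X$ for a large $m$; saturations of open sets are open because the quotient map is open (Corollary~\ref{CorQuotientOpen}). Then $C_2$ is closed and invariant, $\xi_1\notin C_2$, and $\xi_2\in C_2$ provided that saturation misses $\Orb(\xi_2)$. The latter holds because $\Orb(\xi_2)$ is invariant and disjoint from $\overline{B_m}$.

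\textbf{Main obstacle.} The hardest step is to verify $K\subseteq C_1\cup C_2$. Concretely, every $\eta\in K$ lying in the saturation of $\overline{B_m}$ must lie in $\overline{B_n}$ for all $n$. In other words, the island structure must propagate from level $m$ to all levels $n$ along ultrafilters in $K$.

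One cannot prove this for every ultrafilter, since some ultrafilter in $\overline{B_m}$ may have bounded separation. At such an $\eta$, however, the orbit of $\eta$ is infinite or the separation condition fails along $\eta$. I would try to show that this produces a coarse copy of $\cM_2$ inside $X$: islands whose separation stays bounded but whose diameters grow, assembled via Theorem~\ref{CorClosedQuasiOrbitsCoarseDisjointUnionSing.Inthetext}. That contradicts the $T_1$ hypothesis.

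If this direct route fails, the fallback is to exploit the closedness of quasi-orbits, again via Theorem~\ref{CorClosedQuasiOrbitsCoarseDisjointUnionSing.Inthetext} and Theorem~\ref{ThmCharacterizationqOrbClosed}, to replace the family $B_n$ by a single $B_m$ whose closure meets $\partial X$ in an invariant set.
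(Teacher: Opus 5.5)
Your setup is sound as far as it goes. $C_1$ is closed and invariant by the island argument. Finiteness of $\Orb(\xi_2)$ lets you choose $A$ with $\xi_2\notin C_1$. And $C_2=K\setminus\Orb(\overline{B_m})$ is closed, invariant, contains $\xi_2$ and misses $\xi_1$.

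The gap is the containment $K\subseteq C_1\cup C_2$, which carries the whole theorem. Your plan for it never uses that $K$ is irreducible, and without irreducibility the containment is false. If it held for $K=\partial X$, then $\partial X\setminus C_2$ and $\partial X\setminus C_1$ would be disjoint open invariant sets containing $\xi_1$ and $\xi_2$. So $T_1$ would imply Hausdorff, contradicting Theorem~\ref{Thm.Hausdorfness.Charac.InTheText}.

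Concretely, let $X=\{(1,k^2),(n,k^2)\colon n\in\N,\ k\in\N_n\}$. This is a copy of $\cM_{3/2}$, and all its orbits are closed. Take $\xi_1=\eta_1$ and $\xi_2=\eta_2$ from the proof of that theorem.
\begin{itemize}
\item Given $A\in\xi_1$ and $m$, choose $j>\max\{m,r\}+1$ with $\{k\in\N_j\colon (1,k^2)\in A\}\in\zeta_j$, and let $\theta_j$ be $\zeta_j$ carried to the points $(1,k^2)$.
\item For large $k$ these points are their own $r$-islands, isolated at distance exactly $j-1$. Hence $\theta_j\in\overline{B_m}\setminus\overline{B_j}$.
\item Yet $\Orb(\theta_j)$ has two elements, and the separation condition holds at $\theta_j$ with radius $j$.
\end{itemize}
So your assertion that such an $\eta$ has an infinite orbit or violates separation is false, and no copy of $\cM_2$ can be extracted. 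The fallback fails for the same reason: no invariant clopen set separates $\Orb(\eta_1)$ from $\Orb(\eta_2)$.

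The missing idea is to use irreducibility, via Baire category, to make the radius uniform on $K$.
\begin{itemize}
\item For $R\in\N$, let $H_{R,t}$ be the union of all $S\subseteq X$ with $\mathrm{diam}(S)\le R$ and $d(S,X\setminus S)>t$. Set $K_R=K\cap\bigcap_t\overline{H_{R,t}}$.
\item Your $C_1$ argument shows each $K_R$ is closed and invariant. Also $K=\bigcup_R K_R$, since all orbits are closed (Theorem~\ref{ThmCharacterizationOrbClosed.IntheText}).
\item By Baire, some $K_R$ has nonempty interior $W$ in $K$. $W$ is invariant: $\Orb(W)$ is a relatively open subset of $K_R$, because saturations of relatively open subsets of the invariant set $K$ are relatively open (Proposition~\ref{PropOrbUOpenAndEqualqOrbU}).
\item Since $K=(K\setminus W)\cup\overline W$, irreducibility gives $K=K_R$.
\end{itemize}

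Now run your construction with $m>R$ and take $\eta\in K\cap\overline{B_m}$; by invariance this case suffices. Then $\eta$-almost every point lies both in an $m$-isolated $N_r(x)$ and in a $t$-isolated $S$ of diameter at most $R$. For $t>2r$ this forces $S=N_r(x)$. Hence $\eta\in\bigcap_n\overline{B_n}$, which is exactly your containment.

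More directly: on $K$ the orbit relation is the kernel of $\beta\sigma$, where $\sigma(x)$ is the unique $S\ni x$ with $\mathrm{diam}(S)\le R$ and $d(S,X\setminus S)>R$. So $K/\Orb$ is compact Hausdorff, and irreducibility leaves a single orbit.

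This differs from the paper, which places $F$ in one Higson class and invokes Zelenyuk's theorem. The Baire argument is the safer route. The identification $\HOrb(\xi)=\COrb(\eta)$ of Corollary~\ref{Corollary.ThmHigRel.Ze} fails for the example above, even though $X\subseteq\Z^2$. There $\eta_1$ and $\eta_2$ are Higson equivalent, since any Higson $h$ has $|h(1,k^2)-h(n,k^2)|\to0$ along each $\N_n$, yet they lie in distinct closed orbits.

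Finally, Theorem~\ref{Thm.IrredCLosureofOrbImplisPrimeIsPrimitive} requires property A. You should say why it holds: $T_1$ forces asymptotic dimension zero (Lemma~\ref{lemeta.tarde.cansado}).
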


The next theorem provides examples of u.l.f.\ metric spaces whose orbit spaces are not even $T_1$ but still their irreducible subsets are the closure of singletons.

\begin{theorem*}
    [See Theorem \ref{Thm.M_k.Irreducible.Inthetext} below]
Let $X$ be a u.l.f.\ metric space which coarsely embeds into $\bigsqcup_{i=1}^k\cM_{i}\subseteq \cM$
   for some
$k\in\N$. Then every  irreducible subset of $\calOrb(\partial X)$ is the closure of a singleton. In particular, the prime and primitive ideals of $\cstu(X)$ coincide.
\end{theorem*}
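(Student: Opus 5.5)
The plan is to reduce to a concrete combinatorial model and then argue by induction on a ``cluster complexity'' of ultrafilters. First I pass to the model. Orbit spaces, their topology and irreducibility of closed subsets should be invariant under coarse equivalence and should behave well under passing to subspaces: a coarse embedding $X\to Y$ induces a homeomorphism of $\calOrb(\partial X)$ onto the closed subspace of $\calOrb(\partial Y)$ given by orbits of ultrafilters containing the image. So I may assume $X\subseteq\bigsqcup_{i=1}^k\cM_i$, realized concretely inside $\bigsqcup_{i\le k}\N^i$ via \eqref{Rq.Definition.Mk.concreta}. For $\xi\in\partial X$ concentrated on $\cM_j$, write $\xi$-a.e.\ point as $(n_1^2,\ldots,n_j^2)$. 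Each consecutive gap $n_{l+1}^2-n_l^2$ is then either $\xi$-bounded or tends to $\infty$ along $\xi$; the bounded gaps glue the coordinates into \emph{clusters}. Let $c(\xi)\in\{1,\ldots,k\}$ be the number of clusters, together with the coordinate $n_1^2$ escaping to infinity. Using Theorem~\ref{ThmCharacterizationOrbClosed.IntheText}, I would check the following facts.
\begin{itemize}
\item $c$ is $\PT(X)$-invariant, since partial translations move points a bounded amount.
\item If $\eta\in\COrb(\xi)\setminus\Orb(\xi)$, then $c(\eta)>c(\xi)$. A limit of translates can only let bounded gaps become unbounded, never the converse, and if no new gap opens then $\eta$ is a bounded translate of $\xi$.
\end{itemize}
Consequently, for each $m$, the set $G_{\ge m}=\{\xi : c(\xi)\ge m\}$ is closed and invariant, and on the locally closed piece $\{c=m\}$ orbits are relatively closed. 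So each level stratum is a $T_1$ space.

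Now let $F\subseteq\calOrb(\partial X)$ be irreducible, and let $m$ be the minimal complexity occurring in $F$. Write $F_m$ for the set of orbits in $F$ of complexity exactly $m$, which is nonempty by minimality. Then
\[F=\overline{F_m}\cup (F\cap G_{\ge m+1}),\]
and both pieces are closed. By irreducibility, and since $F_m\not\subseteq G_{\ge m+1}$, we must have $F=\overline{F_m}$. It remains to show that $F_m$ consists of a single orbit $\Orb(\xi)$; then $F=\COrb(\xi)$ is the closure of a singleton.

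This is the main obstacle. Suppose $\Orb(\xi)\neq\Orb(\eta)$ both lie in $F_m$. I want disjoint open invariant sets $U\ni\xi$ and $V\ni\eta$ such that
\[F\subseteq (F\setminus U)\cup(F\setminus V).\]
This would exhibit $F$ as a union of two proper closed subsets and contradict irreducibility. In terms of clopen sets, I need invariant sets in $\partial X$ that contain $\xi$, miss the closure of the orbit of $\eta$, and vice versa. For this I would first exploit the rigid structure of $\cM_j$: two same-complexity ultrafilters in distinct orbits must differ either in the pattern of clusters or, for the same pattern, along a gap between clusters that escapes to infinity. I would then try to construct subsets $A\in\xi$ and $B\in\eta$ whose $r$-neighbourhoods are coarsely separated for every $r$, for instance by choosing the $n_l$ in sparse disjoint subsequences. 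With such sets, the invariant open sets $\bigcup_f\overline{f(A)}$ and $\bigcup_f\overline{f(B)}$ should be disjoint on $\{c\le m\}$. Any remaining overlaps lie in $G_{\ge m+1}$, and these must be absorbed using the induction on $k-m$ applied to the closed pieces $F\cap G_{\ge m+1}$. If the direct combinatorial separation proves awkward, I would instead invoke the localized Urysohn lemma, Theorem~\ref{ThmLocalUrysohnForOrb.InTheText}, together with the Higson-function characterization of Theorems~\ref{ThmHigsonRel} and~\ref{ThmHigsonRel.Cont}, to produce the separating sets.

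Finally, $X$ coarsely embeds into $\cM$, so $X$ has asymptotic dimension zero and hence property~A. Theorem~\ref{Thm.IrredCLosureofOrbImplisPrimeIsPrimitive} then converts ``irreducible subsets are closures of singletons'' into ``prime ideals of $\cstu(X)$ are primitive.''
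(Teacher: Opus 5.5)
Your framework is sound, and its first half is a slightly cleaner organization than the paper's. In the squares model a bounded gap forces both adjacent coordinates to be bounded, so your $c$ is just the number of coordinates escaping to infinity, i.e.\ the paper's $\ell$. It is $\PT(X)$-invariant, and $\{c\le m\}$ is open; this is immediate from the definition rather than a consequence of your two bullets. The reduction $F=\overline{F_m}$ is correct.

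The gap is the step you flag as the main obstacle, that $F_m$ is a single orbit. It is not proved, and the sketch aims at the wrong target. First, sets missing $\COrb(\eta)$ and $\COrb(\xi)$ exist trivially by your second bullet and do not suffice; irreducibility is contradicted only by open invariant $U\ni\xi$, $V\ni\eta$ with $U\cap V\cap F=\emptyset$. Second, sets $A\in\xi$, $B\in\eta$ with coarsely separated neighbourhoods at every scale would separate $\Orb(\xi)$ and $\Orb(\eta)$ in all of $\calOrb(\partial X)$, which is impossible in general. In $\cM_2\cong\{(a^2,b^2)\colon a<b\}$, let $\xi=\omega_1\otimes\omega_2$ and $\eta=\omega_1'\otimes\omega_2$ (iterated limits, first in $a$, then in $b$) with $\omega_1\neq\omega_1'$. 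These have complexity $2$ and distinct orbits. Yet for $\zeta$ concentrated on $\{(1,b^2)\}$ along $\omega_2$, one has $\zeta\in\Orb(\overline A)\cap\Orb(\overline B)$ for every $A\in\xi$, $B\in\eta$. So overlaps are unavoidable, but they sit at complexity \emph{below} $m$, where $F$ has no points. With $A,B$ chosen as below there are none in $G_{\ge m+1}$, so your induction on $k-m$ addresses a non-issue, and it is not actually formulated ($F\cap G_{\ge m+1}$ need not be irreducible). The fallback cannot work either. By Proposition~\ref{PropIrreducibleOpenInv} all points of an irreducible set are Higson equivalent, so Higson functions are constant on $F$. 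Moreover, Theorem~\ref{ThmLocalUrysohnForOrb.InTheText} presupposes the very open separation you need.

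The missing idea is to choose $A$ and $B$ inside a level set. Let $M_m(i)$ be the set of $x\in X$ with exactly $m$ ones beyond position $i$, and fix $i_0$ with $M_m(i)\in\xi\cap\eta$ for all $i\ge i_0$. Let $\tau(x)$ be the set of positions of the ones of $x$ beyond $i_0$. Since $\xi,\eta$ have the same complexity and distinct orbits, $\tau_*\xi\neq\tau_*\eta$; otherwise, replacing the a.e.\ constant prefix up to $i_0$ is a partial translation carrying $\xi$ to $\eta$. Take disjoint $S\in\tau_*\xi$, $T\in\tau_*\eta$, and put $A=M_m(i_0)\cap\tau^{-1}(S)$ and $B=M_m(i_0)\cap\tau^{-1}(T)$. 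Every nonprincipal ultrafilter containing $A$ has complexity at most $m$, hence by invariance so does every point of $\Orb(\overline A)\cap\partial X$. Therefore a point $\zeta\in F\cap\Orb(\overline A)\cap\Orb(\overline B)$ has complexity exactly $m$, with orbit-mates $\zeta_1\in\overline A$ and $\zeta_2\in\overline B$ of complexity $m$. A partial translation changes only coordinates below a fixed index, while the $m$ ones beyond $i_0$ escape to infinity, so $\tau_*\zeta_1=\tau_*\zeta_2$. This contradicts $S\in\tau_*\zeta_1$, $T\in\tau_*\zeta_2$, $S\cap T=\emptyset$. Hence $F\cap\Orb(\overline A)$ and $F\cap\Orb(\overline B)$ are disjoint nonempty relatively open subsets of $F$, contradicting irreducibility, and no induction is needed.

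This is the paper's tail-matching device ($A_0$ versus $A_1$) in its case $\ell(\eta)=\ell(\xi)$. The paper does not first reduce to $F=\overline{F_m}$, so it must also exclude $\ell(\eta)>\ell(\xi)$; your reduction makes that unnecessary. A minor point: by Proposition~\ref{PropCoarseEmbOrbSpaceHomeo}, a coarse embedding gives an \emph{open}, not closed, copy of $\calOrb(\partial X)$. Your reduction still works by replacing $X$ with its image, as the paper does.
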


Although we
cannot show that prime ideals are primitive in general, we show that, for finitely generated groups with property A,  no nonzero prime ideal can
``avoid'' the primitive spectrum. More precisely, we obtain the following
result.

\begin{theorem*}[See Theorem \ref{ThmPrimoContainsPrimitive.InTheText} below]
    Let $X$ be a u.l.f.\ metric space which coarsely embeds into a  finitely generated group with property~A. Then every nonzero
    prime ideal of $\cstu(X)$ contains a nonzero primitive ideal.
\end{theorem*}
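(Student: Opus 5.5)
Here $X$ coarsely embeds into a finitely generated group $\Gamma$ with property A, and $P$ is a nonzero prime ideal of $\cstu(X)$; we want a nonzero primitive ideal inside $P$. The plan is to translate $P$ into an orbit-invariant open subset of $\beta X$, find inside it a point whose orbit closure avoids a large piece of $\partial X$, and then use the GNS representations $\pi_\xi$. Since $X$ has property A (it is inherited by coarse subspaces), the results of Sections \ref{SubSubSectionIdeals} and \ref{SubsectionTopOrbSpacePrimitive} apply. In particular, ideals of $\cstu(X)$ correspond to open $\PT(X)$-invariant subsets $U\subseteq\beta X$ via $U\mapsto I_U$, where $I_U$ is the ideal generated by $C_0(U)\subseteq\ell_\infty(X)$. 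Also, $\ker(\pi_\xi)=I_{\beta X\setminus\COrb(\xi)}$.

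First, suppose $P\cap\ell_\infty(X)$ contains $c_0(X)$. Then $P\supseteq\cK(\ell_2(X))\cap\cstu(X)$. If $X$ is finite there is nothing to prove. Otherwise the compact operators form a nonzero primitive ideal: it is the kernel of $\pi_x$ for a principal ultrafilter $x$, which is the identity representation. So we are done in this case.

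In general, write $P=I_U$ with $U$ open, invariant and nonempty. Since $U$ is open, $U\cap X\neq\emptyset$, and invariance plus transitivity of $\PT(X)$ on $X$ force $X\subseteq U$. Hence $c_0(X)\subseteq P$ automatically, and the statement follows from the previous paragraph.

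The one point to double-check is the claimed identity $\ker(\pi_x)=\cK(\ell_2(X))$ for $x\in X$, i.e.\ that $\pi_x$ is the canonical representation on $\ell_2(X)$. By Proposition~\ref{PropGNSIrreducible} $\pi_x$ is irreducible; its cyclic vector is $\delta_x$ and $\varphi_x(a)=\langle a\delta_x,\delta_x\rangle$, so $\pi_x$ is the identity representation and its kernel is $0$, not the compacts. So this shortcut fails.

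The main obstacle is therefore to find a nonzero ideal $\ker(\pi_\xi)$ inside $P$. This requires $\xi\in\partial X$ such that $\beta X\setminus\COrb(\xi)\subseteq U$, i.e.\ $\COrb(\xi)\supseteq\beta X\setminus U$, while $\COrb(\xi)\neq\beta X$. Primeness of $P$ says that $F=\beta X\setminus U$ is an irreducible closed invariant set. Let $F$ be a nonempty closed invariant subset of $\partial X$ that is irreducible and not all of $\partial X$ (since $P\neq c_0$-trivial). I would use Higson functions on $\Gamma$ as in the proof of Theorem~\ref{ThmMixAction.InTheText}: pull back Higson functions via the coarse embedding, and combine with the amenability of the action of $\Gamma$ on $\beta\Gamma$. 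From these I would build $\xi\in\partial X$ whose orbit closure is a proper subset of $\partial X$ yet contains $F$. The candidate is an ultrafilter concentrating on larger and larger balls around points of sets in $F$'s ultrafilters, via a diagonal (growing balls) construction. Its orbit closure then contains $F$, while a Higson function vanishing near $\xi$ but not identically on $\partial X$ shows that $\COrb(\xi)\neq\partial X$. Then $0\neq\ker(\pi_\xi)\subseteq P$ by Theorem~\ref{ThmOrbitEquivIFFGNSEquiv.Inthetext2}.
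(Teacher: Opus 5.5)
\paragraph{Verdict.} Your proposal reduces the theorem correctly, but it leaves the only substantial step, containing $F$ in one orbit closure, unproved.

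\paragraph{What is sound.} Under property A, $P=I_{F}$ with $F=F_P\subseteq\partial X$ irreducible, by Propositions \ref{PropBijPropA} and \ref{PropPropAPrimIdealsIrrSubsets}; here I use the paper's notation, with $F$ closed rather than your open $U$. It then suffices to find $\xi\in\partial X$ with $F\subseteq\COrb(\xi)$. Indeed, then $\ker(\pi_\xi)=I_{\COrb(\xi)}\subseteq I_F=P$ by Corollary \ref{Kernels}, and $c_0(X)\subseteq\ker(\pi_\xi)$ because $\xi\in\partial X$. Two side remarks: you were right to discard the $\pi_x$ shortcut, and $F\neq\partial X$ comes from Theorem \ref{ThmMixAction.InTheText}, not from anything about $c_0(X)$.

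\paragraph{The gap.} Finding such a $\xi$ is the whole content of the theorem, and your last paragraph gives no argument for it. By Proposition \ref{PropOrbitsAreCountable}\eqref{PropOrbitsAreCountable.Item1.5}, $F\subseteq\COrb(\xi)$ says exactly that $\xi$ lies in $\Orb(\overline A)$ for \emph{every} $A\subseteq X$ with $\overline A\cap F\neq\emptyset$. Irreducibility makes each $\Orb(\overline A)\cap F$ dense and open in $F$. So by Baire you can meet any countably many of these requirements, but there are uncountably many. Moreover, $F$ may contain $2^{\aleph_0}$ pairwise disjoint orbit closures (Proposition \ref{prop:uncountably_many_disjoint_orbits}). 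A diagonal ``growing balls'' ultrafilter handles only countably many sets, and you never say why it would meet the rest. The Higson function you mention only witnesses $\COrb(\xi)\neq\partial X$, which is not what is needed. Amenability of the $\Gamma$-action on $\beta\Gamma$ plays no role beyond the ideal correspondence you already used.

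\paragraph{How the paper handles this step, and a warning.} The paper combines Proposition \ref{PropIrreducibleOpenInv}, which puts $F$ inside one Higson class, with Corollary \ref{Corollary.ThmHigRel.Ze}. That corollary, derived via Zelenyuk's maximal orbit closures, asserts that every Higson class is a single $\COrb(\eta)$. Be aware that this corollary fails as stated.
\begin{itemize}
\item Take $Y=\{(1,k^2),(n,k^2)\colon n\geq 2,\ k\in\N_n\}\subseteq\N^2$. It has asymptotic dimension zero, so it coarsely embeds into $\Z$.
\item Take the ultrafilters $\eta_1,\eta_2$ from the proof of Theorem \ref{Thm.Hausdorfness.Charac.InTheText}.
\item Every orbit in $\partial Y$ has at most two points, hence is closed. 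Moreover $\Orb(\eta_1)=\{\eta_1\}$ and $\Orb(\eta_2)=\{\eta_2\}$.
\item Yet $h(\eta_1)=h(\eta_2)$ for every Higson function $h$. This is because, for each fixed $n$, $|h(1,k^2)-h(n,k^2)|\to 0$ as $k\to\infty$ in $\N_n$.
\item Hence no orbit closure in $\partial Y$ contains $\HOrb(\eta_1)$.
\end{itemize}
The faulty step is \eqref{ItemThmHigsonRel2.5}$\Rightarrow$\eqref{ItemThmHigsonRel6} of Theorem \ref{ThmHigsonRel}. An equivalence relation whose classes are closed need not be closed in $\partial X\times\partial X$, so $R\subseteq E$ does not follow.

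\paragraph{Consequence.} The step you left open is genuinely the delicate one. It must use the irreducibility of $F$ essentially, not merely Higson equivalence. Neither your sketch nor the Higson-class route, as written, supplies that argument.
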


Higson functions  and a recent breakthrough of
Zelenyuk on increasing chains of orbit closures in $\beta X$
(\cite{Zelenyuk2022FundMath}) play an essential role in the proof of the previous theorem. The hypothesis of nontriviality of the ideals in Theorem \ref{ThmPrimoContainsPrimitive.InTheText} is important so that the result is not trivial. Indeed,  the zero ideal of $\cstu(X)$ is primitive and contained in all ideals.

\begin{remark}
The phenomenon appearing in Theorem~\ref{ThmMapFromOrbToPrimIsNice.InTheText}
parallels the general philosophy in the theory of $\mathrm C^*$-algebras arising
from dynamical systems, where primitive ideals are often described in
terms of quasi-orbits of the underlying action; see for instance
\cite{Renault1991,Clark2005,SimsWilliams2016}. In many of these works,
second countability plays an essential role, typically through
arguments going back to Green \cite{Green1978}, which imply that every
irreducible closed invariant set is the closure of a singleton. This is
precisely the step that allows one to pass from quasi-orbit data to
primitive ideals.

In our setting, however, the action takes place on the large compact
space $\beta X$, which is not second countable as our metric space $X$ of interest is infinite. From the
groupoid viewpoint, the inverse semigroup $\PT(X)$ acting on $\beta X$
gives rise to the semi-direct product groupoid
\(
G(X)=\PT(X)\ltimes \beta X,
\)
whose unit space is $\beta X$ and whose reduced $\mathrm C^*$-algebra coincides
with the uniform Roe algebra $\mathrm C_u^*(X)$. Since the arguments based on
Green's lemma are not directly available in this non-second countable
setting, we do not know in general whether every irreducible closed invariant subset of $\calOrb(\partial X)$ is the closure of a singleton. This is
closely related to our open question of whether every prime ideal of
$\mathrm C_u^*(X)$ is primitive. To overcome these technical hurdles, we develop our arguments in this paper by working directly with the dynamical properties of the action
\(
\PT(X)\curvearrowright \beta X.
\)
\end{remark}

\subsection{Paper organization}
The paper was organized so that uniform Roe algebras only appear in Section \ref{SectionGNSRep}. Sections \ref{SectionBasicProp} and  \ref{SectionPseudo} are purely about dynamics and  make no usage of   $\mathrm C^*$-algebras. Section \ref{SectionHigson} contains our results about Higson functions.

At last, as this paper should be of interest to researches interested in different fields such as dynamics, large scale geometry, and operator algebras, we chose to present it so as
not to exclude  any of these groups. We apologize to the experts in all these areas for any possible pedantry.

  \section{The orbits and quasi-orbits of $(\beta X,\PT(X))$}\label{SectionBasicProp}

We start setting some notation which will be used throughout. Given a u.l.f.\ metric space $X$ and $F\subseteq \beta X$, the \emph{orbit saturation of $F$} is
\[\Orb(F)=\bigcup_{\xi\in F}\Orb(\xi).\]
Given $\xi\in \beta X$, we define
\[ \PT(X)_\xi=\{f\in \PT(X)\colon \dom(f)\in \xi\}.\]
So, $\PT(X)_\xi$ are precisely the partial translations of $X$ for which writing $\overline f(\xi)$ makes sense.

\subsection{Basic properties}
We start with some simple observations: (1) we provide characterization of orbit equivalence, (2) a characterization of being in the closure of an orbit, and (3) we show that the orbits of elements in $\beta X$ are always countable. Besides  serving  as  a warm up, the fact that orbits are countable will also be used later in Section \ref{SubsectionRestrictionGNSHigFunc}. These observations are  consequences of the fact that our metric spaces are u.l.f.. Such spaces can always be partitioned into finitely many subsets which are arbitrarily well separated. Precisely,   the following is a well-known property of u.l.f.\ metric spaces: if $(X,d)$ is u.l.f., then, for all $r>0$, there is a partition
 \[X=X_1\sqcup\ldots \sqcup X_k\]
 such that each $X_i$ is \emph{$r$-separated}, i.e., $d(x,y)>r$ for all $i\in \{1,\ldots, k\}$ and all distinct $x,y\in X_i$. As a consequence, if $E\subseteq X\times X$ is such that $\sup_{(x,y)\in E}d(x,y)<\infty$, then there is a partition
 \[E=E_1\sqcup \ldots \sqcup E_m\]
such that each $E_i$ is the graph of a partial translation of $X$.

 The next proposition provides a concrete description of orbit equivalence in terms of coarse neighborhoods.

\begin{proposition}\label{PropOrbitsAreCountable}
    Let $X$ be a u.l.f.\ metric space and  $\xi,\eta\in \beta X$. The following holds.
    \begin{enumerate}
    \item\label{PropOrbitsAreCountable.Item1}   $\xi\in \Orb(\eta)$  if and only if there is $r>0$ such that $N_r(A)\in \eta$ for all $A\in \xi$.
\item\label{PropOrbitsAreCountable.Item1.5} $\eta\in \COrb(\xi)$ if and only if for all   $ A\in \eta$ there is    $r>0$ such that  $ N_r(A)\in \xi$.
\item\label{PropOrbitsAreCountable.Item2} $\Orb(\xi)$ is countable.
    \end{enumerate}
\end{proposition}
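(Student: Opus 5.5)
The plan is to work directly with ultrafilters, using the partition facts recalled just before the statement: every $r$-bounded set $E\subseteq X\times X$ splits into finitely many graphs of partial translations, and $X$ splits into finitely many $r$-separated pieces.

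For \eqref{PropOrbitsAreCountable.Item1}, suppose first that $\xi=\overline f(\eta)$ with $f\in\PT(X)_\eta$, and let $r=\sup_x d(x,f(x))$. Given $A\in\xi$, there is $B\in\eta$ with $B\subseteq\dom(f)$ and $f(B)\subseteq A$. Then $B\subseteq N_r(f(B))\subseteq N_r(A)$, so $N_r(A)\in\eta$.

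Conversely, assume $N_r(A)\in\eta$ for all $A\in\xi$. Set $E=\{(x,y)\in X\times X: d(x,y)\le r\}$ and partition it as $E=E_1\sqcup\dots\sqcup E_m$ with each $E_i$ the graph of some $f_i\in\PT(X)$, regarded as maps $y\mapsto x$ from $\eta$-side points to $\xi$-side points. For each $i$ we have $\dom(f_i)\cap\ldots$; it is cleaner to argue by contradiction.
\begin{itemize}
\item Suppose $\xi\neq\overline{f_i}(\eta)$ for every $i$ with $\dom(f_i)\in\eta$.
\item Then for each such $i$ pick $A_i\in\xi$ with $A_i\notin\overline{f_i}(\eta)$. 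This means $f_i^{-1}(A_i)\notin\eta$, using that $f_i$ is injective, so $\overline{f_i}(\eta)$ is the pushforward ultrafilter.
\item For the $i$ with $\dom(f_i)\notin\eta$, the set $\dom(f_i)$ is already $\eta$-null.
\item Let $A=\bigcap_i A_i\in\xi$. Every $y\in N_r(A)$ lies in $\dom(f_i)$ with $f_i(y)\in A$ for some $i$, so $N_r(A)\subseteq\bigcup_i f_i^{-1}(A)\cup\bigcup_i\dom(f_i)$, where only $\eta$-null sets occur in the relevant terms.
\end{itemize}
This contradicts $N_r(A)\in\eta$, so some $f_i$ gives $\xi=\overline{f_i}(\eta)$. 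The one place needing care is orienting the graphs correctly, namely $f_i$ going from points near the $\eta$-side into $A$.

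For \eqref{PropOrbitsAreCountable.Item1.5}, a basic neighborhood of $\eta$ is $\overline A$ with $A\in\eta$. So $\eta\in\COrb(\xi)$ if and only if for each $A\in\eta$ there is $f\in\PT(X)_\xi$ with $A\in\overline f(\xi)$.

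If such an $f$ exists with displacement $r$, then some $B\in\xi$ has $f(B)\subseteq A$, hence $B\subseteq N_r(A)$ and $N_r(A)\in\xi$.

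Conversely, if $N_r(A)\in\xi$, partition the $r$-bounded set $\{(x,a): x\in N_r(A),\ a\in A,\ d(x,a)\le r\}$ into finitely many graphs of partial translations $g_1,\dots,g_m$. Their domains cover $N_r(A)$, so by the ultrafilter property some $\dom(g_i)\in\xi$. Then $\overline{g_i}(\xi)\in\overline{A}$ because $g_i$ maps into $A$.

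For \eqref{PropOrbitsAreCountable.Item2}, I expect this to be the main obstacle, since $\PT(X)$ is uncountable. The route is through \eqref{PropOrbitsAreCountable.Item1}: $\eta\in\Orb(\xi)$ means $\eta=\overline f(\xi)$ with $f$ of displacement at most some $r\in\N$. Fix $r$ and the partition $E_r=E_1\sqcup\dots\sqcup E_{m_r}$ of the $r$-entourage into graphs of $f_1,\dots,f_{m_r}$.

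Any $f$ with displacement $\le r$ has its graph contained in $\bigcup_i E_i$. Hence $\dom(f)=\bigsqcup_i D_i$ with $f=f_i$ on $D_i$. Exactly one $D_i$ lies in $\xi$, and then $\overline f(\xi)=\overline{f_i}(\xi)$, because the two maps agree on a $\xi$-large set. So the displacement-$\le r$ part of the orbit has at most $m_r$ elements, and taking the union over $r\in\N$ shows that $\Orb(\xi)$ is countable.
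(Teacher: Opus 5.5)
Your proof is correct. For items (2) and (3) it is essentially the paper's argument. For (2), the paper takes a map $N_r(A)\to A$ of displacement at most $r$ and splits its domain into finitely many pieces on which it is injective, which is the same as your decomposition of the relation into graphs. For (3), the paper uses the same decomposition of the $n$-entourages. The only real difference is the backward direction of (1).

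The paper builds the partial translation explicitly. It partitions $X$ into $2r$-separated pieces and takes the piece $X_i\in\xi$. It then splits $N_r(X_i)$ into finitely many sets $B_j$, each meeting every ball $N_r(x)$, $x\in X_i$, in at most one point, and takes $B_j\in\eta$. Finally it checks that the resulting bijection carries $\xi$ to $\eta$. This check uses the symmetry recorded in Remark~\ref{Remark.parallel.rel}, namely that $N_r(B)\in\xi$ for all $B\in\eta$.

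You instead fix one decomposition $f_1,\dots,f_m$ of the $r$-entourage and argue by contradiction from the finite-union property of $\eta$. This needs neither the separated partition nor the symmetry remark. It also proves slightly more: every $\xi$ satisfying the condition for a given $r$ is one of the finitely many ultrafilters $\overline{f_i}(\eta)$, so (1) and the counting in (3) come from one device. The paper's route has the advantage of showing how the translation is read off from sets in $\xi$ and $\eta$, and it stresses the symmetry of the ``parallel'' relation, which the paper wants for context.

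Two small cleanups in your write-up of (1):
\begin{itemize}
\item The displayed inclusion should just read $N_r(A)\subseteq\bigcup_i f_i^{-1}(A)$. Each term is $\eta$-null: $f_i^{-1}(A)\subseteq f_i^{-1}(A_i)\notin\eta$ when $\dom(f_i)\in\eta$, and $f_i^{-1}(A)\subseteq\dom(f_i)\notin\eta$ otherwise. As you wrote it, the union $\bigcup_i\dom(f_i)$ over all $i$ contains $\eta$-large sets, so the inclusion proves nothing.
\item Delete the stray half-sentence beginning ``For each $i$ we have $\dom(f_i)\cap\ldots$''.
\end{itemize}
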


\begin{remark}\label{Remark.parallel.rel}
 Notice that if $\xi,\eta\in  \beta X$ are so that there is $r>0$   such that $N_r(A)\in \eta$ for all $A\in \xi$, then $N_{r}(B)\in \xi$ for all $B\in \eta$. Indeed, this follows from the fact that $\xi$ and $\eta$ are ultrafilters. So, the latter condition in Proposition \ref{PropOrbitsAreCountable}\eqref{PropOrbitsAreCountable.Item1}  defines an equivalence relation on $\beta X$ (which is obviously necessary for it to be equal to the orbit equivalence relation). While we will not explicitly rely on this characterization, this is relevant  since it shows that   orbit equivalence is the same as   \emph{parallel equivalence}, defined in \cite{Protasov2003MatStud} and further studied in  \cite{Protasov2005TopAndAppl,Protasov2008TopAndAppli,Protasov2014TopAppl}.
\end{remark}

\begin{proof}[Proof of Proposition \ref{PropOrbitsAreCountable}]
\eqref{PropOrbitsAreCountable.Item1} The forward implication is immediate. For the backward, let $r>0$ be as above. As $X$ is u.l.f., the discussion preceding the proposition gives us a partition
 \[X=X_1\sqcup\ldots \sqcup X_k\]
 such that each $X_i$ is  $2r$-separated. As $\xi$ is an ultrafilter, there is $i\in \{1,\ldots, k\}$ such that
$X_i\in \xi$.  As $X$ is u.l.f., there exists $m \in \mathbb N$ such that $|N_r(x)| \leq m$ for all $x \in X$. So, there are $B_1,\ldots, B_m\subseteq X$ such that
\begin{itemize}
      \item $N_r(X_i)=B_1\cup\ldots\cup B_m$ and
      \item $|B_j\cap N_r(x)|\leq 1$ for all $x\in X_i$ and all $j\in \{1,\ldots, m\}$.
\end{itemize}
As $X_i\in \xi$, the   hypothesis of the item give that  $N_r(X_i)\in \eta$. Hence, as $\eta$ is an ultrafilter, there is   $j\in \{1,\ldots, m\}$ with $B_j\in \eta$.

Let $f\colon X_i\to B_j$ be the bijection such that $f(x)\in N_r(x)$ for all $x\in X_i$. So, $f $ is a partial translation. We are left to observe that $\overline f(\xi)=\eta$. For that it is enough to show that if   $B\in \eta$ then there is $A\in \xi$ such that $f(A)\subseteq B$. Fix such $B$; without loss of generality, assume $B\subseteq B_j$. By Remark \ref{Remark.parallel.rel}, $N_r(B)\in \xi$. So, $X_i\cap N_r(B)\in \xi$. Clearly, $f(X_i\cap N_r(B))=B$.

\eqref{PropOrbitsAreCountable.Item1.5}    Suppose $\eta\in\COrb(\xi)$ and let $A\in \eta$. Then, $\overline A\cap \Orb(\xi)\neq \emptyset$ and we can pick $f\in \PT(X)_\xi$ such that $\overline f(\xi)\in \overline A$. So, there is $B\in \xi$ such that $f(B)\subseteq A$. Then, letting
    \[r=\sup_{x\in \dom(f)}d(x,f(x)),\]
    we have $B\subseteq N_r(A)$, which gives $N_r(A)\in \xi$.

The backwards implication is an easy consequence of the u.l.f.-ness of $X$. Indeed, let $A\in \eta$ and let us show $\overline A\cap \Orb(\xi)\neq\emptyset$. By the hypothesis, there is $r>0$ such that $N_r(A)\in \xi$. Let $f\colon N_r(A)\to A$ be a map such that
\[\sup_{x\in N_r(A)}d(x,f(x))\leq r.\]
As $X$ is u.l.f., there is a partition
\[N_r(A)=B_1\sqcup\ldots\sqcup B_k\]
such that $f\restriction B_i$ is injective for all $i\in\{1,\ldots, k\}$. So, each $f\restriction B_i$ is a partial translation of $X$. As $\xi$ is an ultrafilter and $N_r(A)\in \xi$, there is $i\in \{1,\ldots, k\}$ such that $B_i\in \xi$. So, if $g=f\restriction B_i$, we have $\overline g(\xi)\in \overline  A$ and we are done.

\eqref{PropOrbitsAreCountable.Item2} Fix $\xi\in \beta X$. For each $n\in \N$, let $E(n)=\{(x,y)\in X\times X\colon d(x,y)\leq n\}$ and let
\[E(n)=E_1\sqcup \ldots \sqcup E_{m(n)}\]
be a partition such that  each $E_i$ is the graph of a partial translation $f^n_i$ of $X$. Let us notice that
\[\Orb(\xi)=\left\{\overline{f_i^n}(\xi)\colon n\in\N, \ i\in \{1,\ldots, m(n)\}, \ f^n_i\in \PT(X)_\xi\right\}.\]

Let $f$ be a partial translation of $X$ with $\dom(f)\in \xi$ and let us show that $\overline f(\xi)=\overline{f^n_i}(\xi)$ for some $n\in\N$ and $i\in \{1,\ldots, m(n)\}$. For that, pick $n\in \N$ large enough so that $d(x,f(x))\leq n$ for all $x\in \dom(f)$ and let \[M=\left\{i\in \{1,\ldots, m(n)\}\colon \dom(f^n_i)\in \xi\right\}.\] Since the graphs of $(f^n_{i})_{i\in \{1,\ldots, m(n)\}}$ form a partition of  $E$, it is clear that $M$ is nonempty. Let \[A=\dom(f)\cap\bigcap_{i\in M}\dom(f^n_i),\]
so, $A\in \xi$. For each $i\in M$, let
\[A_i=\{x\in A\colon f(x)=f^n_i(x)\}.\]
So, $A=\bigsqcup_{i\in M}A_i$. As $\xi$ is an ultrafilter, there must be $i\in M$ such  that $A_i\in \xi$. Since $f\restriction A_i=f^n_i\restriction A_i$ and $A_i\in \xi$, it follows that  $\overline f(\xi)=\overline{f^n_i}(\xi)$.
\end{proof}

It will be important for us to look at the orbit saturation of open subsets of $\beta X$. Moreover, the quasi-orbit saturation will also play an important role: given a u.l.f.\ metric space $X$ and $F\subseteq \beta X$, the \emph{quasi-orbit saturation of $F$} is
\[\qOrb(F)=\bigcup_{\xi\in F}\qOrb(\xi).\]
\begin{proposition}\label{PropOrbUOpenAndEqualqOrbU}
    Let $X$ be a u.l.f.\ metric space and $U\subseteq \beta X$ be open. Then $\Orb(U)$ is open and
    \[\Orb(U)=\qOrb(U)= \{\xi\in \beta X\colon \exists \eta \in U,\ \eta\in \COrb(\xi)\}.\]
\end{proposition}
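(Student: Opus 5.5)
The plan is to prove a chain of inclusions
\[\Orb(U)\subseteq \qOrb(U)\subseteq W\subseteq \Orb(U),\qquad W:=\{\xi\in \beta X\colon \exists \eta \in U,\ \eta\in \COrb(\xi)\},\]
and then deduce openness of $\Orb(U)$ from a separate description of it as a union of open sets.

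The first inclusion $\Orb(U)\subseteq\qOrb(U)$ is immediate, since $\Orb(\eta)\subseteq\qOrb(\eta)$ for every $\eta$.

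For $\qOrb(U)\subseteq W$: if $\xi\in\qOrb(\eta)$ with $\eta\in U$, then $\COrb(\xi)=\COrb(\eta)\ni\eta$, so $\xi\in W$.

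The main step is $W\subseteq\Orb(U)$. Let $\xi\in W$ and pick $\eta\in U$ with $\eta\in\COrb(\xi)$. Since $U$ is open, choose $A\subseteq X$ with $\eta\in\overline A\subseteq U$. As $\overline A$ is an open neighborhood of $\eta$, which lies in $\COrb(\xi)$, it meets $\Orb(\xi)$. So there is $f\in\PT(X)_\xi$ with $\overline f(\xi)\in\overline A\subseteq U$. This gives $\xi\in\Orb(\overline f(\xi))\subseteq\Orb(U)$.

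For openness, I would observe that
\[\Orb(U)=\bigcup_{f\in\PT(X)}\overline f^{-1}\bigl(U\cap\overline{\mathrm{im}(f)}\bigr),\]
where $\overline f\colon\overline{\dom(f)}\to\overline{\mathrm{im}(f)}$ is the Stone--\v{C}ech extension. Each $\overline f$ is a homeomorphism between the clopen sets $\overline{\dom(f)}$ and $\overline{\mathrm{im}(f)}$: it is a continuous bijection between compact Hausdorff spaces, whose inverse is $\overline{f^{-1}}$. Hence each set in the union is open in $\overline{\dom(f)}$, and therefore open in $\beta X$. The displayed equality holds because $\xi\in\Orb(U)$ exactly when $\overline f(\xi)\in U$ for some $f$ with $\dom(f)\in\xi$, using that orbit equivalence is symmetric via $f^{-1}$.

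Everything here is elementary. The only point needing care is this homeomorphism property of $\overline f$, together with the fact that orbit equivalence is an equivalence relation, so that $\xi\in\Orb(\eta)$ iff $\eta\in\Orb(\xi)$.
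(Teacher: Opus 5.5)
Your proof is correct. For the set equalities it matches the paper's: the inclusions $\Orb(U)\subseteq\qOrb(U)\subseteq W$ are immediate, and the real work is $W\subseteq\Orb(U)$, done by choosing a basic clopen $\overline A\subseteq U$ around $\eta$ and a partial translation moving $\xi$ into it.

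The only difference is how you get openness. The paper obtains it as a by-product of that same step. It takes $f\in\PT(X)_\xi$ with $\mathrm{im}(f)\subseteq A$ (shrinking the domain), so the whole clopen neighbourhood $\overline{\dom(f)}$ of $\xi$ lies in $\Orb(\overline{\mathrm{im}(f)})\subseteq\Orb(U)$. You instead prove openness separately, writing $\Orb(U)$ as the union of the sets $\overline f^{-1}(U\cap\overline{\mathrm{im}(f)})$. That argument needs only continuity of each $\overline f$; the full homeomorphism property is more than you use. It is the standard fact that saturations of open sets are open, valid for any action by partial homeomorphisms with open domains. The paper's version is slightly more economical, since one argument gives both the equality and openness.
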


\begin{proof}
The fact that $\Orb(U)$ is open is almost immediate and, moreover, it will also be a subproduct of the proof of the equalities above. So, we start proving these subset equalitites. Since the inclusions ``$\subseteq$'' are immediate, it is enough to notice that
 \[\{\xi\in \beta X\colon \exists \eta \in U,\ \eta\in \COrb(\xi)\}\subseteq \Orb(U).\]
 Fix $\xi$ in the set on the left-hand side above and $\eta\in U$ such that $\eta\in \COrb(\xi)$. As $U$ is open, there is $A\subseteq X$ such that $\eta\in \overline A\subseteq U$. Hence, as $\eta\in \COrb(\xi)$, there is $f\in \mathrm{PT}(X)_\xi$ such that  $\mathrm{im}(f)\subseteq A$. In particular,   \[\overline{\mathrm{im}(f)}\subseteq \overline A\subseteq U.\]
Therefore, since $\overline{\dom(f)}\subseteq \Orb(\overline{\mathrm{im}(f)})$, $\overline{\dom(f)}$ is an open subset containing $\xi$ which is itself contained in $\Orb(U)$. So, $\Orb(U)$ is open and  $\xi\in \Orb(U)$ as desired.
\end{proof}

\begin{corollary}\label{CorQuotientOpen}
    Let $X$ be a u.l.f.\ metric space. Both quotient maps
    \[\beta X\to \calOrb(\beta X)\ \text{ and }\ \beta X\to \qcalOrb(\beta X)\]
    are open. In particular, the quotient map
    \[\calOrb(\beta X)\mapsto \qcalOrb(\beta  X)\]
    is also open.
\end{corollary}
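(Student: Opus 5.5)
The plan is to use the standard criterion: a quotient map $q\colon Y\to Y/{\sim}$ is open exactly when the saturation of every open subset of $Y$ is open. Both quotient maps in the statement can then be handled directly with Proposition \ref{PropOrbUOpenAndEqualqOrbU}.

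First, let $q_1\colon\beta X\to\calOrb(\beta X)$ be the orbit quotient and let $U\subseteq\beta X$ be open. We have $q_1^{-1}(q_1(U))=\Orb(U)$. By Proposition \ref{PropOrbUOpenAndEqualqOrbU}, $\Orb(U)$ is open. By the definition of the quotient topology, $q_1(U)$ is therefore open, so $q_1$ is open.

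Second, let $q_2\colon\beta X\to\qcalOrb(\beta X)$ be the quasi-orbit quotient. Here $q_2^{-1}(q_2(U))=\qOrb(U)$. The same proposition gives $\qOrb(U)=\Orb(U)$, which is open. Hence $q_2(U)$ is open, and $q_2$ is open.

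For the last assertion, consider the map $p\colon\calOrb(\beta X)\to\qcalOrb(\beta X)$. It is well defined because $\Orb\subseteq\qOrb$, and it satisfies $p\circ q_1=q_2$. Let $V\subseteq\calOrb(\beta X)$ be open. Since $q_1$ is surjective, $V=q_1(q_1^{-1}(V))$, and therefore
\[
p(V)=q_2\bigl(q_1^{-1}(V)\bigr).
\]
The set $q_1^{-1}(V)$ is open by continuity of $q_1$, so $p(V)$ is open because $q_2$ is open. Continuity of $p$ follows from the universal property of the quotient $q_1$, since $p\circ q_1=q_2$ is continuous.

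None of these steps is a real obstacle: once the saturation equalities of Proposition \ref{PropOrbUOpenAndEqualqOrbU} are available, the argument is formal.
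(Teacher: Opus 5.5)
Your proof is correct and follows the paper's own argument. Openness of both quotient maps reduces to openness of the saturations $\Orb(U)$ and $\qOrb(U)$, which Proposition \ref{PropOrbUOpenAndEqualqOrbU} supplies; your derivation of the final clause via $p(V)=q_2\bigl(q_1^{-1}(V)\bigr)$ fills in a step the paper leaves implicit.
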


\begin{proof}
In order to notice that the quotient map $\beta X\to \calOrb(\beta X)$ (resp., $\beta X\to \qcalOrb(\beta X)$) is open, it is equivalent to show that $\Orb(U)$ (resp., $\qOrb(U)$) is open  for all open $U\subseteq \beta X$. This is the case from Proposition  \ref{PropOrbUOpenAndEqualqOrbU}.
\end{proof}

The next proposition shows that, while  orbit closures do not form a partition of $\beta X$,   we still have that any given  $\COrb(\xi)$  and $\COrb(\eta)$ are either disjoint or comparable with respect to inclusion. We point out that   this was already known: it was proven in \cite[Theorem 3.4]{Protasov2014TopAppl} using \cite[Theorem 3.40]{HindmanStrauss2012Book}. However, since this will be important for some of our main results (e.g., Theorem \ref{Cor.T1.Prime=Primitive.InTheText}),  we chose to present a self-contained proof here for the readers convenience.

\begin{proposition}\label{PropClosureOrbDisjointOrComparable}
     Let $X$ be a u.l.f.\ metric space and $\xi,\eta\in \beta X$. If $\COrb(\xi)\cap \COrb(\eta)\neq \emptyset$, then either $\xi\in \COrb(\eta)$ or $\eta\in \COrb(\xi)$. Equivalently, either $\COrb(\xi)\subseteq \COrb(\eta)$ or $\COrb(\eta)\subseteq  \COrb(\xi)$.
\end{proposition}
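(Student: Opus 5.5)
The plan is to argue by contradiction, translating everything into the combinatorial description of orbit closures in Proposition \ref{PropOrbitsAreCountable}\eqref{PropOrbitsAreCountable.Item1.5}: $\eta\in\COrb(\xi)$ if and only if for every $A\in\eta$ there is $r>0$ with $N_r(A)\in\xi$.

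\textbf{Setting up the contradiction.} Fix $\zeta\in\COrb(\xi)\cap\COrb(\eta)$, and suppose that $\eta\notin\COrb(\xi)$ and $\xi\notin\COrb(\eta)$. By the characterization above, there are:
\begin{itemize}
\item a set $A\in\eta$ with $N_r(A)\notin\xi$ for every $r>0$, that is, $X\setminus N_r(A)\in\xi$ for all $r>0$;
\item a set $B\in\xi$ with $X\setminus N_r(B)\in\eta$ for all $r>0$.
\end{itemize}
Both $A$ and $B$ are nonempty, so the distances below are finite.

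\textbf{Splitting $X$ by nearness.} Define
\[E=\{z\in X\colon d(z,A)\le d(z,B)\}.\]
Since $\zeta$ is an ultrafilter, either $E\in\zeta$ or $X\setminus E\in\zeta$. The two cases are symmetric: swap the roles of $(\xi,A)$ and $(\eta,B)$; the strict inequality defining $X\setminus E$ causes no problem. So I will treat only the case $E\in\zeta$.

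\textbf{Deriving the contradiction.} Since $\zeta\in\COrb(\xi)$, the characterization gives $r>0$ with $N_r(E)\in\xi$.

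The key geometric step is the inclusion
\[N_r(E)\cap B\subseteq N_{2r}(A).\]
To see it, take $x\in B$ and $z\in E$ with $d(x,z)\le r$. Then $d(z,B)\le r$, so $d(z,A)\le d(z,B)\le r$, and hence $d(x,A)\le 2r$.

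Now $N_r(E)$, $B$ and $X\setminus N_{2r}(A)$ all belong to $\xi$. Their intersection is empty by the inclusion just proved, which contradicts $\xi$ being a filter. This step is where the choice of $E$ has to be right. Splitting by comparing distances to $A$ and to $B$ is what makes a uniform neighbourhood of $E$ stay near $A$ on $B$, and I expect finding this splitting to be the main obstacle; the rest is routine.

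\textbf{The equivalent formulation.} It remains to note that $\xi\in\COrb(\eta)$ implies $\COrb(\xi)\subseteq\COrb(\eta)$; the converse is trivial since $\xi\in\COrb(\xi)$.

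First, $\COrb(\eta)$ is $\PT(X)$-invariant. For $f\in\PT(X)$, the map $\overline f$ is continuous on the clopen set $\overline{\dom(f)}$, so
\[\overline f\bigl(\COrb(\eta)\cap\overline{\dom(f)}\bigr)\subseteq\overline{\overline f\bigl(\Orb(\eta)\cap\overline{\dom(f)}\bigr)}\subseteq\COrb(\eta).\]
Here I use that $\Orb(\eta)\cap\overline{\dom(f)}$ is dense in $\COrb(\eta)\cap\overline{\dom(f)}$, because $\overline{\dom(f)}$ is open, and that $\Orb(\eta)$ is invariant.

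Hence $\Orb(\xi)\subseteq\COrb(\eta)$, and taking closures gives $\COrb(\xi)\subseteq\COrb(\eta)$.
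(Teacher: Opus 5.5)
Your proof is correct, and it takes a shorter route than the paper's.

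\textbf{The paper's route.} Both arguments start from the same data, namely the sets furnished by Proposition~\ref{PropOrbitsAreCountable}\eqref{PropOrbitsAreCountable.Item1.5}. Both produce a partition of $X$ into two pieces, with $\COrb(\xi)$ avoiding the closure of one piece and $\COrb(\eta)$ avoiding the closure of the other. The paper builds its partition inductively:
\begin{itemize}
\item it chooses decreasing sequences $A_n\in\xi$ and $B_n\in\eta$ with $N_n(A_n)\cap N_m(B_m)=\emptyset$ for all $n,m$;
\item it sets $C=\bigcup_n N_n(A_n)$;
\item it appeals to Lemma~\ref{LemmaSeparatingSet}.
\end{itemize}
The point of the construction is that $\xi$ sits arbitrarily deep inside $C$ and $\eta$ arbitrarily deep inside $X\setminus C$. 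Hence no partial translation can move either ultrafilter across the partition.

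\textbf{Your route.} You use the Voronoi-type set $E=\{z\colon d(z,A)\le d(z,B)\}$, and the triangle inequality supplies the depth directly. In your notation $A\in\eta$ and $B\in\xi$. Your inclusion $N_r(E)\cap B\subseteq N_{2r}(A)$ says exactly that $B\setminus N_{2r}(A)\subseteq X\setminus N_r(E)$. Since $B\setminus N_{2r}(A)\in\xi$, you get $X\setminus N_r(E)\in\xi$ for every $r$. Symmetrically, $A\setminus N_{2r}(B)\subseteq X\setminus N_r(X\setminus E)$, so $X\setminus N_r(X\setminus E)\in\eta$ for every $r$. Your argument therefore shows $\overline E\cap\COrb(\xi)=\emptyset$ and $\overline{X\setminus E}\cap\COrb(\eta)=\emptyset$. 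This is the same clopen separation the paper obtains, but without the induction or the auxiliary lemma.

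You also give the short invariance argument for the ``equivalently'' clause, which the paper leaves implicit.
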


We will need the following  lemma for the proof of Proposition \ref{PropClosureOrbDisjointOrComparable}.

\begin{lemma}\label{LemmaSeparatingSet}
    Let $X$ be a u.l.f.\ metric space and $\xi,\eta\in \beta X$. Suppose  there is a partition $X=X_\xi\sqcup X_\eta$ such that   $f^{-1}(X_\eta)\not\in \xi$ and $f^{-1}(X_\xi)\not\in \eta$ for all partial translations $f$ of $X$. Then, $\COrb(\xi)\cap \COrb(\eta)=\emptyset$.
\end{lemma}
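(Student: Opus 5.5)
We argue by contradiction: assuming some $\zeta\in\COrb(\xi)\cap\COrb(\eta)$, we produce a single partial translation violating the hypothesis of Lemma \ref{LemmaSeparatingSet}. The tool is Proposition \ref{PropOrbitsAreCountable}\eqref{PropOrbitsAreCountable.Item1.5}. Since $\zeta$ is an ultrafilter and $X=X_\xi\sqcup X_\eta$, exactly one of $X_\xi,X_\eta$ belongs to $\zeta$. The two cases are symmetric, so we assume $X_\eta\in\zeta$.

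Because $\zeta\in\COrb(\xi)$ and $X_\eta\in\zeta$, Proposition \ref{PropOrbitsAreCountable}\eqref{PropOrbitsAreCountable.Item1.5} gives $r>0$ with $N_r(X_\eta)\in\xi$. We now turn this into a partial translation defined on a $\xi$-large set and mapping into $X_\eta$, exactly as in the backward implication of that proposition. Choose a map $g\colon N_r(X_\eta)\to X_\eta$ with $d(x,g(x))\le r$ for all $x$. By uniform local finiteness, partition
\[N_r(X_\eta)=B_1\sqcup\cdots\sqcup B_k\]
so that each restriction $g\restriction B_i$ is injective. Since $\xi$ is an ultrafilter, some $B_i\in\xi$. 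Then $f=g\restriction B_i$ is a partial translation with $\dom(f)=B_i\in\xi$.

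Every point of $\dom(f)$ is mapped into $X_\eta$, so $f^{-1}(X_\eta)=\dom(f)\in\xi$. This contradicts the hypothesis $f^{-1}(X_\eta)\notin\xi$. In the symmetric case $X_\xi\in\zeta$, the same argument applied with $\eta$ in place of $\xi$ yields a partial translation $f$ with $f^{-1}(X_\xi)\in\eta$, again a contradiction. Hence $\COrb(\xi)\cap\COrb(\eta)=\emptyset$.

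There is no serious obstacle. The only step that needs care is the passage from ``$N_r(X_\eta)\in\xi$'' to an actual partial translation, which is the injectivity splitting via the u.l.f.\ property. That step is already carried out in the proof of Proposition \ref{PropOrbitsAreCountable}\eqref{PropOrbitsAreCountable.Item1.5}, so one could also cite it directly: it produces $f\in\PT(X)_\xi$ with $\mathrm{im}(f)\subseteq X_\eta$.
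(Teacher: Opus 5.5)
Your proof is correct and is essentially the paper's argument: pick $\zeta$ in the intersection, see which piece of the partition belongs to $\zeta$, and produce a partial translation whose domain lies in the other ultrafilter and whose image lies inside that piece, contradicting the hypothesis. The one difference is that the paper gets this partial translation directly from the fact that the open neighborhood $\overline{X_\eta}$ of $\zeta$ meets $\Orb(\xi)$: this gives $f\in\PT(X)_\xi$ with $X_\eta\in\overline f(\xi)$, which by definition of $\overline f$ already yields $f^{-1}(X_\eta)\in\xi$. Your detour through Proposition~\ref{PropOrbitsAreCountable}\eqref{PropOrbitsAreCountable.Item1.5} and the u.l.f.\ splitting is therefore harmless but unnecessary.
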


\begin{proof}
    Suppose towards a contradiction that there is $\zeta\in \COrb(\xi)\cap \COrb(\eta)$. Since $\zeta$ is an ultrafilter, either $X_\xi\in \zeta$ or $X_\eta\in \zeta$. Suppose the former happens. Then, as $\zeta\in \COrb(\eta)$, there is a partial translation $f$ of $X$ with $\dom(f)\in \eta$ and such that $\mathrm{im}(f)\subseteq X_\xi$. But then $f^{-1}(X_\xi)=\dom(f)\in \eta$; contradiction. In case $X_\eta\in \zeta$, the contradiction is obtained analogously.
\end{proof}

\begin{proof}[Proof of Proposition  \ref{PropClosureOrbDisjointOrComparable}]
    Suppose $\xi\not\in \COrb(\eta)$ and $\eta\not\in \COrb(\xi)$ and let us show $\COrb(\xi)\cap \COrb(\eta)= \emptyset$.  The conditions on $\xi$ and $\eta$ give us $A\in \xi$ and $B\in \eta$ such that
    \begin{equation}\label{EqPropFechoOrbs}
        X\setminus N_n(A)\in \eta \ \text{ and }\ X\setminus  N_n(B) \in \xi
    \end{equation}
    for all $n\in\N$ (Proposition \ref{PropOrbitsAreCountable}\eqref{PropOrbitsAreCountable.Item1.5}).

    \begin{claim}There are decreasing sequences $(A_n)_n$ and $(B_n)_n$ of subsets of $X$ such that
    \begin{enumerate}
        \item $A_n\in \xi$ and $B_n\in \eta$ for all $n\in\N$, and
        \item $N_n(A_n)\cap N_m(B_m)=\emptyset$ for all $n,m\in\N$.
    \end{enumerate}
\end{claim}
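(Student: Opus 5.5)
Write $A\in\xi$ and $B\in\eta$ for the sets from \eqref{EqPropFechoOrbs}, so that $X\setminus N_n(A)\in\eta$ and $X\setminus N_n(B)\in\xi$ for all $n\in\N$. I would build the sequences by a diagonal shrinking where each new set is cut down only by finitely many of the conditions in \eqref{EqPropFechoOrbs}, so it remains in the appropriate ultrafilter. Set $A_1=A\cap(X\setminus N_{2}(B))$ and $B_1=B\cap(X\setminus N_{2}(A_1))$. Inductively, given $A_{k-1}$ and $B_{k-1}$, put
\[A_k=A_{k-1}\cap \bigl(X\setminus N_{2k}(B)\bigr),\qquad B_k=B_{k-1}\cap\bigl(X\setminus N_{2k}(A)\bigr).\]
Each $A_k$ is a finite intersection of members of $\xi$, hence $A_k\in\xi$; likewise $B_k\in\eta$. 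Both sequences are clearly decreasing, and $A_k\subseteq A$, $B_k\subseteq B$.

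For the separation condition, fix $n,m$ and let $k=\max(n,m)$. Since the sequences are decreasing, $A_k\subseteq A_n$ and $B_k\subseteq B_m$, but the inclusion goes the wrong way for the smaller index, so I would use the construction directly instead of monotonicity. Suppose $z\in N_n(A_n)\cap N_m(B_m)$. Then there are $a\in A_n$ and $b\in B_m$ with $d(a,b)\le n+m$.
\begin{itemize}
\item If $n\ge m$, then $a\in A_n\subseteq X\setminus N_{2n}(B)$, so $d(a,b)>2n\ge n+m$ because $b\in B$. This is a contradiction.
\item If $m>n$, then $b\in B_m\subseteq X\setminus N_{2m}(A)$, so $d(a,b)>2m> n+m$ because $a\in A$. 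This is again a contradiction.
\end{itemize}
Hence $N_n(A_n)\cap N_m(B_m)=\emptyset$ for all $n,m$.

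There is no real obstacle here. The one point that needs care is choosing the radius $2k$ at stage $k$. It must dominate $n+m$ whenever the stage equals $\max(n,m)$, and it must be defined relative to the \emph{fixed} sets $A$ and $B$ rather than the shrinking ones, so that the hypotheses \eqref{EqPropFechoOrbs} apply verbatim. I would double-check that the case split by which index is larger covers all pairs, which it does.
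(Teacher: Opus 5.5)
Your proof is correct and follows the paper's construction: the paper also sets $A_1=A\setminus N_2(B)$, $B_1=B\setminus N_2(A)$, $A_{n+1}=A_n\setminus N_{2n+2}(B)$ and $B_{n+1}=B_n\setminus N_{2n+2}(A)$, and your case split on whether $n\ge m$ simply writes out the separation check that the paper calls clear. Your $B_1$ uses $N_2(A_1)$ rather than $N_2(A)$, but this does no harm: $N_2(A_1)\subseteq N_2(A)$ still gives $B_1\in\eta$, and your case $m>n$ never involves $m=1$.
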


\begin{proof}
We proceed by induction. Let $A_1=A\setminus N_2(B)$ and $B_1=B\setminus N_2(A)$. Clearly, $N_1(A_1)$ and $N_1(B_1)$ are disjoint and, since $A\in \xi$ and $B\in \eta$, \eqref{EqPropFechoOrbs} implies that $A_1\in \xi$ and $B_1\in \eta$. If $(A_i)_{i=1}^n$ and $(B_i)_{i=1}^n$ have been chosen satisfying the conclusion of the lemma, we then let  $A_{n+1}=A_n\setminus N_{2n+2}(B)$   and $B_{n+1}=B_n\setminus N_{2n+2}(A)$.  As $A_n\in \xi$ and $ B_n\in \eta$, it follows again by   \eqref{EqPropFechoOrbs}  that $A_{n+1}\in \xi$ and $B_{n+1}\in \eta$. It is also clear that $N_{n+1}(A_{n+1})\cap N_i(B_i)=\emptyset$ and $N_i(A_{i})\cap N_{n+1}(B_{n+1})=\emptyset$ for all $i\leq n$. \end{proof}

Let now $C=\bigcup_{n\in\N}N_n(A_n)$. Let us notice that the partition $X=C\sqcup (X\setminus C)$ satisfies the condition in Lemma  \ref{LemmaSeparatingSet}. For that, let $f$ be a partition of $X$. So,   \[r=\sup_{x\in \dom(f)}d(x,f(x))<\infty.\] Pick $n>r$. Since $d(A_n,X\setminus C)\geq n$, the preimage $f^{-1}(X\setminus C)$ is disjoint from $A_n$. As $A_n\in \xi$,  $f^{-1}(X\setminus C)\not\in \xi$. On the other hand, since $\bigcup_{n\in\N}N_n(B_n)\subseteq X\setminus C$, an analogous argument shows that $f^{-1}(C)\not\in \xi$. As the partial translation $f$ was arbitrary, Lemma \ref{LemmaSeparatingSet} implies that $\COrb(\xi)\cap \COrb(\eta)= \emptyset$.
\end{proof}

\subsection{Closed orbits}
We now  characterize when the orbits of $(\beta X,\PT(X))$ are closed (Theorem \ref{ThmCharacterizationOrbClosed.IntheText}). Among other things, this will be used  to characterize the coarse containment of     $\{(j^2,n^2)\colon n\geq j\}$ in terms of dynamics (Theorem \ref{CorClosedQuasiOrbitsCoarseDisjointUnionSing.Inthetext}).

\begin{theorem} 
    Let $X$ be a u.l.f.\ metric space and $\xi\in \beta X$. The following are equivalent.
    \begin{enumerate}
\item\label{Item1ThmCharacterizationOrbClosed} $\mathrm{Orb}(\xi)$ is closed.
\item\label{Item2ThmCharacterizationOrbClosed}   $\mathrm{Orb}(\xi)$ is finite.
\item\label{Item3ThmCharacterizationOrbClosed}
        There is  $r>0$ such that
    \[\lim_{x,\xi}d\left(N_r(x),X\setminus N_r(x)\right)=\infty.\]\end{enumerate}\label{ThmCharacterizationOrbClosed.IntheText}
\end{theorem}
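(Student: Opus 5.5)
I will prove the cycle (3)$\Rightarrow$(2)$\Rightarrow$(1)$\Rightarrow$(3). Throughout I will use Proposition \ref{PropOrbitsAreCountable}: $\zeta\in\Orb(\xi)$ if and only if there is $s>0$ with $N_s(A)\in\zeta$ for all $A\in\xi$, and $\zeta\in\COrb(\xi)$ if and only if every $B\in\zeta$ has some $s$ with $N_s(B)\in\xi$. I read the limit in (3) as follows: for every $R>0$ the set $\{x\colon d(N_r(x),X\setminus N_r(x))\geq R\}$ belongs to $\xi$.

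\textbf{(3)$\Rightarrow$(2).} Fix $r$ as in (3), and let $m$ bound $|N_r(x)|$. I claim $\Orb(\xi)\subseteq\{\overline{f}(\xi)\colon f\in\PT(X)_\xi,\ d(x,f(x))\leq r\}$. The right-hand side has at most $m$ elements: as in the proof of Proposition \ref{PropOrbitsAreCountable}\eqref{PropOrbitsAreCountable.Item2}, the relation $\{d\leq r\}$ is partitioned into finitely many graphs, at most $m$ of which pass through each point.

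Now let $g\in\PT(X)_\xi$ with displacement at most $s$. Take $R>s$ and set $A=\dom(g)\cap\{x\colon d(N_r(x),X\setminus N_r(x))\geq R\}\in\xi$. For $x\in A$, the point $g(x)$ lies within $s<R$ of $x\in N_r(x)$. It therefore cannot lie in $X\setminus N_r(x)$, so $g(x)\in N_r(x)$. Hence $g\restriction A$ has displacement at most $r$ and $\overline g(\xi)=\overline{g\restriction A}(\xi)$, which proves the claim.

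\textbf{(2)$\Rightarrow$(1).} This is trivial, since $\beta X$ is Hausdorff and finite sets are closed.

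\textbf{(1)$\Rightarrow$(3).} This is the main obstacle. I argue by contrapositive: assume (3) fails for every $r$, and build $\zeta\in\COrb(\xi)\setminus\Orb(\xi)$.

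Failure for $r=n$ gives $R_n$ with $C_n=\{x\colon d(N_n(x),X\setminus N_n(x))<R_n\}\in\xi$. For $x\in C_n$ choose $y_n(x)$ with $n<d(x,y_n(x))\leq n+R_n$. Since the displacement is bounded, the u.l.f.\ partition trick splits $C_n$ into finitely many pieces on which $y_n$ is injective. Restricting to a piece in $\xi$ gives $g_n\in\PT(X)_\xi$ with $d(x,g_n(x))>n$ for all $x\in\dom(g_n)$.

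The idea is that $\zeta_n=\overline{g_n}(\xi)$ are orbit points at ``distance greater than $n$'' from $\xi$. Let $\zeta$ be any cluster point of $(\zeta_n)$. Then $\zeta\in\COrb(\xi)$ automatically. The difficulty is to guarantee $\zeta\notin\Orb(\xi)$, i.e., that for each $s$ there is $A\in\xi$ with $N_s(A)\notin\zeta$. A naive cluster point need not work, since $\zeta_n$ could be far from $\xi$ yet close to some other $\zeta_k$.

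To fix this, I will choose the $g_n$ iteratively so that the new point is far from all earlier orbit points. Using Proposition \ref{PropOrbitsAreCountable}\eqref{PropOrbitsAreCountable.Item2} to enumerate $\Orb(\xi)=\{\omega_1,\omega_2,\dots\}$ via finitely many partial translations $f^k_i$ at each scale, at stage $n$ I apply the failure of (3) at radius $r\gg n$. The point $g_n(x)$ then escapes $N_r(x)$, and by u.l.f.\ plus pigeonhole it must avoid $\{f^k_i(x)\colon k\leq n\}$ by more than $n$. This ensures $\zeta_n\notin\bigcup_{k\leq n}\{\omega\colon\omega\text{ within }n\text{ of }\omega_k\}$, in the sense of the neighborhood criterion.

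Finally I take $\zeta$ a nonprincipal limit of $(\zeta_n)$ along an ultrafilter on $\N$. For fixed $\omega_k$ and $s$, I then need a set $B\in\zeta$ with $N_s(B)\notin\omega_k$, which I plan to obtain from sets $B_n\in\zeta_n$ witnessing the separation, chosen compatibly across $n$. Making this diagonal choice of witnessing sets coherent is where I expect the real work to lie. Once it is done, $\zeta\in\COrb(\xi)\setminus\Orb(\xi)$, so $\Orb(\xi)$ is not closed.
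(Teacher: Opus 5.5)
Your (2)$\Rightarrow$(1) is fine. Your (3)$\Rightarrow$(2) is essentially the paper's argument: you cut $f$ down so that $f(x)\in N_r(x)$, and then count graphs of displacement at most $r$, where the paper instead enumerates disjoint balls explicitly.

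The gap is in (1)$\Rightarrow$(3). The ``coherent diagonal choice'' you defer is the whole content of this direction, and your requirements on the $g_n$ cannot supply it. Each of them concerns a single $\zeta_n$ and finitely many $\omega_k$, and such conditions do not survive ultrafilter limits. Here is a concrete instance in $\beta\N$.
\begin{itemize}
\item Take an idempotent $e$, so $e+e=e$. Idempotents can be chosen to contain arbitrarily sparse sets, e.g.\ finite-sum sets of fast-growing sequences.
\item Let $t_1<t_2<\cdots$ enumerate a sufficiently sparse $T\in e$, and put $g_n(x)=x+t_n$. These $g_n$ meet every requirement you impose.
\item For the nonprincipal ultrafilter $\mathcal U=\{S\subseteq\N\colon\{t_n\colon n\in S\}\in e\}$ one gets $\lim_{\mathcal U}\overline{g_n}(e)=\lim_{\mathcal U}(t_n+e)=e+e=e\in\Orb(e)$.
\end{itemize}
So the limit lands back in the orbit. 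The paper's $\eta$ is also of the form $\lim_\zeta\overline{h_j}(\xi)$, with $h_j\colon x_n(1)\mapsto x_n(j)$. The difference is that the $h_j$ are given a rigid shape: finite, pairwise disjoint clusters $\{x_n(j)\colon j\in F_n\}$ that drift apart. Then every member of $\eta$ meets the clusters in unboundedly many points along $\xi$, while a single partial translation meets each cluster at most once. That is what makes every nonprincipal $\zeta$ work.

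The simplest repair uses only what you have built and needs no coherence at all.
\begin{itemize}
\item If $f\in\PT(X)_\xi$ has displacement at most $s<n$, then $f$ and $g_n$ agree nowhere. By \cite[Lemma~3.4]{SpakulaWillett2017} (the fact behind Proposition~\ref{Propeetawelldef}), $\overline{g_n}(\xi)\neq\overline f(\xi)$. Hence $\Orb(\xi)$ is infinite.
\item $\Orb(\xi)$ is countable by Proposition~\ref{PropOrbitsAreCountable}\eqref{PropOrbitsAreCountable.Item2}.
\item The closure of an infinite subset of $\beta X$ is never countable. Choose distinct $\zeta_k\in\Orb(\xi)$ with pairwise disjoint $A_k\in\zeta_k$ (a routine induction). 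If $p\neq q$ in $\beta\N$ with $S\in p$ and $\N\setminus S\in q$, then $\bigcup_{k\in S}A_k\in\lim_p\zeta_k$ and $\bigcup_{k\notin S}A_k\in\lim_q\zeta_k$. So $p\mapsto\lim_p\zeta_k$ injects $\beta\N$ into $\COrb(\xi)$, and $\Orb(\xi)$ is not closed.
\end{itemize}

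This route is also more robust than the paper's grid. Since $1\in F_n$ for every $n$, the grid needs $\{x_n(1)\colon n\in\N\}\in\xi$ with $d(x_n(1),x_m(1))\to\infty$, i.e.\ a coarsely sparse set in $\xi$. The idempotent $e$ above has no such set, since every $A\in e$ contains infinitely many pairs at some fixed distance. Yet (3) fails for every $\xi\in\partial\N$.
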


\begin{proof}
It is straightforward to check the result holds if $\xi\in X$. So, we assume $\xi\in \partial X$ throughout.

\eqref{Item3ThmCharacterizationOrbClosed}$\Rightarrow$\eqref{Item2ThmCharacterizationOrbClosed}:
Suppose first that there is $r>0$ as in the statement and let us show that $\mathrm{Orb}(\xi)$ is finite. Firstly, by our choice of $r$, we can fix   $A\in \xi$ such that
\[d(N_r(x),X\setminus N_r(x))>2r\ \text{ for all }\ x\in A.\]
So, for all $x,y\in A$, either $N_r(x)=N_r(y)$ or $N_r(x)\cap N_r(y)=\emptyset$. As $X$ is u.l.f.,
 \[N=\sup_{x\in A} |N_r(x)|<\infty.\]
 Therefore, since $\xi$ is an ultrafilter, replacing $A$ by a subset of it still in $\xi$, we can assume furthermore that
$(N_r(x))_{x\in A}$ are disjoint.  By the definition of $N$, for each $x\in A$,  we can enumerate  \[N_r(x) =\{y^x_1,\ldots, y^x_{j(x)}\},\] where $j(x)\leq N$ and $y^x_1=x$.

\begin{claim}
$    \mathrm{Orb}(\xi)$ has at most $N$ elements.
\end{claim}

\begin{proof}
    For each $j\leq N$, let
    \[A_j=\bigcup_{x\in A}\{y^x_j\},\]
    where $\{y^x_j\}$ is interpreted as the empty set if $j>j(x)$. So, $A_1=A$ and
    \begin{equation}\label{Eqfinb0}\bigsqcup_{j\leq N}A_j=\bigsqcup_{x\in A}N_r(x).\end{equation} For each $j\leq N$, consider  the partial translation    $g_j\colon  A_j\to \mathrm{im}(g_j) $ such that $g_j(y^x_j)=y^x_1$ for all $x\in X$ so that $y^x_j$ is defined. We shall now notice that
    \[\mathrm{Orb}(\xi)=\left\{\overline{g_j^{-1}}(\xi)\colon j\leq N\right\}.\]

Let $f$ be a partial translation on $X$ with $\mathrm{dom}(f)\in \xi$. As $A\in \xi$, we can assume $\dom(f)\subseteq A$. Let
\[s=\sup_{x\in \dom(f)}d(x,f(x)).\] Since
\[\lim_{x,\xi}d(N_r(x),X\setminus N_r(x))=\infty,\] we can shrink the domain of $f$ if needed and assume that
\[d(N_r(x),X\setminus N_r(x))>s\]
for all  $x\in \dom(f)$. By the definition of $s$, this implies that
\begin{equation}\label{Eqfinb}
f(x)\in N_r(x)\ \text{ for all }\ x\in \dom(f).
\end{equation}
By \eqref{Eqfinb0} and \eqref{Eqfinb}, we have a partition
\[\dom(f)=f^{-1}(A_1)\sqcup\ldots \sqcup f^{-1}(A_N).\]
As $\xi$ is an ultrafilter, there must be $j\leq N$ such that $f^{-1}(A_j)\in \xi$. But then, as $f\restriction f^{-1}(A_j)=g_j^{-1} \restriction f^{-1}(A_j)$, we conclude that $\overline{f}(\xi)=\overline{g_j^{-1}}(\xi)$.
\end{proof}

The previous claim gives us the implication \eqref{Item3ThmCharacterizationOrbClosed}$\Rightarrow$\eqref{Item2ThmCharacterizationOrbClosed}. The implication \eqref{Item2ThmCharacterizationOrbClosed}$\Rightarrow$\eqref{Item1ThmCharacterizationOrbClosed} is immediate.

\eqref{Item1ThmCharacterizationOrbClosed}$\Rightarrow$\eqref{Item3ThmCharacterizationOrbClosed}: Suppose now that there is no $r>0$ as in the statement of the theorem and let us show that $\mathrm{Orb}(\xi)$ is not closed. Since no such $r$ exists and $\xi$ is nonprincipal,   there are a decreasing sequence $(\N_j)_{j\in\N}$ of infinite subsets of $\N$ with $\N_1=\N$ and $\bigcap_j\N_j=\emptyset$, and sequences  $((x_n(j))_{n\in \N_j})_{j\in\N}$ in $X$ such that, letting for each $n\in\N$,
\[F_n=\{j\in \N\colon n\in \N_j\},\]
we have
\begin{enumerate}
      \item\label{Item1.orbitnotclosed} $\{x_n(1)\colon n\in \N_j\}\in \xi$ for all $j\in\N$,
        \item\label{Item4.orbitnotclosed} $\sup_{n\in \N_j}d(x_n(1),x_n(j))<\infty$ for all $j\in\N$,
    \item\label{Item2.orbitnotclosed} $x_n(i)\neq x_{n}(j)$ for all $n\in\N$ and all distinct $i,j\in F_n$,
     \item\label{Item2.5orbitnotclosed} the subsets $(\{x_n(j)\colon j\in F_n\})_{n\in\N}$ are disjoint, and
     \item\label{Item3.orbitnotclosed} $d(\{x_n(j)\colon j\in F_n\},\{x_m(j)\colon j\in F_m\})\to \infty$ as $n,m\to \infty$ with $n\neq m$.
     \end{enumerate}
Notice that, since $\bigcap_j\N_j=\emptyset$, each $F_n$ is finite.

We  now introduce a system of coordinates to facilitate our visualization and writing. Let
\[X'=\{x_n(j)\colon j\in\N,  \ n\in \N_j\}\]
and  identify $X'$ with the subset of $\N\times \N$ given by the injection
\begin{align}\label{EqIdentificationTriangle}
 X'&\to \N\times \N\\
x_{n}(j)&\mapsto (j,n).\notag
\end{align}
Notice that the fact that this map is an injection follows from \eqref{Item2.orbitnotclosed} and \eqref{Item2.5orbitnotclosed} above.
This identification allows us to talk about vertical and horizontal sections of subsets of $X'$. Let $\pi\colon\N\times \N\to \N$ denote the projection onto the second coordinate. As $\{x_n(1)\colon n\in \N\}\in \xi$, the restriction of this projection to $\{x_n(1)\colon n\in \N\}$ (a
vertical section of $X'$) allows us to identity $\xi$ with an ultrafilter on $\N$.

Fix a nonprincipal ultrafilter $\zeta$ on $\N$ (one can just use $\xi$ once again). Inspired by the product of ultrafilters, we now define an ultrafilter $\eta$ on $X$ by letting
\begin{equation*}
    A\in \eta \ \Longleftrightarrow\ \Big\{j\in \N\colon \pi\big(A\cap \{x_{n}(j)\colon n\in \N_j\}\big)\in \xi \Big\}\in \zeta .
    \footnote{The fact $\eta$ is indeed an ultrafilter follows exactly as why the product of ultrafilters is an ultrafilter since $\eta$ can be seen as the product between $\xi$ and $\eta$.}
\end{equation*}

    \begin{claim}
        For each $A\in \eta$, we have
    \[\lim_{n,\xi}|A\cap\{x_{n}(j)\colon j\in F_n\}|=\infty.\]
    \end{claim}

    \begin{proof}
Fix $A\in \eta$. We must show that
\begin{equation}\label{EqSubsetkmanyelements}\Big\{n\in \N\colon |A\cap\{x_{n}(j)\colon j\in F_n\}|>k\Big\}\in \xi
\end{equation}
for all $k\in\N$. Fix $k\in\N$. As $\{j\colon \pi(A\cap \{x_{n}(j)\colon n\in \N_j\})\in \xi \}$ is in the nonprincipal ultrafilter $\zeta$, this set must be infinite. In particular, it has at least $k$ many elements, say $j_1,\ldots, j_k$. As $\xi$ is a filter, we have
\[\bigcap_{i=1}^k\pi(A\cap \{x_{n}(j_i)\colon n\in \N_{j_i}\})\in \xi.
\]
Since the horizontal fibers of each $n$ in the subset above have at least $k$ elements, this subset is contained in the subset in \eqref{EqSubsetkmanyelements}. This proves the latter is indeed in $\xi$.
    \end{proof}

    The previous claim implies that $\eta$ cannot be in $\mathrm{Orb}(\xi)$.
Indeed, suppose $f$ is a partial translation of $X$ such that $\overline{f}(\xi)=\eta$. By \eqref{Item1.orbitnotclosed} and \eqref{Item3.orbitnotclosed} above, going to a subsequence if necessary, we can assume that \[f(x_n(1))\in \{x_n(j)\colon j\in F_n\}\] for all $n\in\N$. In particular, the horizontal sections of $f(\{x_n(1)\colon n\in\N\})$ are singletons. On the other hand, as $\overline{f}(\xi)=\eta$, $f(\{x_n(1)\colon n\in\N\})$ is in  $\eta$. However, since the horizontal sections of $f(\{x_n(1)\colon n\in\N\})$ are singletons, this set does not satisfy the conclusion of the previous claim; contradiction.

We are left to notice that $\eta$ is in the closure of $\mathrm{Orb}(\xi)$. For that, pick $A\in \eta$ and let us find a partial translation $f$ of $X$ such that $\mathrm{dom}(f)\in \xi$ and $\mathrm{im}(f)\subseteq A$. As $A\in \eta$, it follows from the definition of $\eta$ that there must be $j\in\N$ such that $\pi(A\cap \{x_{n}(j)\colon n\in \N_j\})\in \xi$. Let now  $f$ be the map from \[\{1\}\times \pi(A\cap \{x_{n}(j)\colon n\in \N_j\})\subseteq X'\to     A\cap \{x_{n}(j)\colon n\in \N_j\}\] given by  letting $f(1,m)=x_m(j)$ for all $m\in\N$ so  $(1,m)$ is in the defined domain of $f$ (under the identification \eqref{EqIdentificationTriangle}, $(1,m)=x_m(1)$). It is clear that  $\mathrm{dom}(f)\in \xi$ and $\mathrm{im}(f)\subseteq A$. The fact that $f$ is  a partial translation follows from \eqref{Item4.orbitnotclosed}.
\end{proof}

 We finish this subsection with an application of the ideas in Theorem \ref{ThmCharacterizationqOrbClosed}. In Section \ref{SubsectionIrreducibleSet}, this will be used to explain the difficulty in   solving an important problem about irreducible sets, see Problem \ref{Prob.Irred.FORB} and  Remark \ref{RemarkIrreducibleCORB}  below.

\begin{proposition}\label{prop:uncountably_many_disjoint_orbits}
Let $X$ be a u.l.f.\ metric space containing a coarse copy of  $\cM_2$. Then, there is $\xi\in \partial X$ and a family $(\xi_i)_{i\in I}$ in $  \COrb(\xi)$, with $|I|=2^{\aleph_0}$,  such that $ \COrb(\xi_i)\cap \COrb(\xi_j)=\emptyset$ for all distinct $i,j\in I$.
\end{proposition}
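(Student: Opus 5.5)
Since orbit closures, orbits, and disjointness of orbit closures are all preserved under coarse embeddings (a coarse embedding $\phi\colon \cM_2\to X$ induces $\overline\phi\colon\beta\cM_2\to\beta X$ that intertwines the characterizations in Proposition \ref{PropOrbitsAreCountable}\eqref{PropOrbitsAreCountable.Item1.5}, at least after passing to a subspace where $\phi$ is injective and the image is a coarse copy), I will first reduce to working inside the concrete model of $\cM_2$. I take it to be $Y=\{(j^2,n^2)\in\N^2\colon j<n\}$, with the family constructed in $\partial Y$ and then pushed forward. The one point to check in this reduction is that disjointness of orbit closures transfers to $X$. By Proposition \ref{PropOrbitsAreCountable}\eqref{PropOrbitsAreCountable.Item1.5}, $\COrb$ in $X$ of a point supported on $\phi(Y)$ is controlled by $N_r$-neighborhoods in $X$. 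These pull back to $N_{r'}$-neighborhoods in $Y$ because $\phi$ is a coarse embedding (expansive as well as bornologous), so membership in orbit closures is detected within $Y$.

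In $Y$, the vertical columns $C_j=\{(j^2,n^2)\colon n>j\}$ are pairwise at distances tending to infinity, and points in a column are far apart. Fix a nonprincipal ultrafilter $\zeta$ on $\N$. For each nonprincipal ultrafilter $\mu$ on $\N$, define $\xi_\mu$ as the "product" ultrafilter used in the proof of Theorem \ref{ThmCharacterizationOrbClosed.IntheText}: $A\in\xi_\mu$ iff $\{j\colon \{n\colon (j^2,n^2)\in A\}\in\zeta\}\in\mu$. Put $\xi=\zeta$ transported to the horizontal row $\{(1,n^2)\}$. Exactly as in that proof, each $\xi_\mu$ lies in $\COrb(\xi)$: given $A\in\xi_\mu$, some column $j$ has its section in $\zeta$, and the map $(1,n^2)\mapsto(j^2,n^2)$ is a partial translation of displacement at most $j^2$. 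There are $2^{2^{\aleph_0}}\ge 2^{\aleph_0}$ choices of $\mu$, so I index by a family of $2^{\aleph_0}$ distinct nonprincipal ultrafilters $\mu$.

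The main step, which I expect to be the obstacle, is showing $\COrb(\xi_\mu)\cap\COrb(\xi_\nu)=\emptyset$ for $\mu\neq\nu$. I will use Lemma \ref{LemmaSeparatingSet}. Pick $S\subseteq\N$ with $S\in\mu$ and $\N\setminus S\in\nu$, and partition $X$ (or $Y$) by $X_\mu=\bigcup_{j\in S}C_j$ and its complement. For a partial translation $f$ of displacement at most $r$, a point $(j^2,n^2)$ with $n$ large can only be sent to a point in the same column: horizontal neighbours are at distance at least $2j+1$, but this is not yet beyond $r$ for small $j$, so I must handle it carefully. The fix is that $\mu$ is nonprincipal, so $\{j\in S\colon j>r\}\in\mu$. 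On columns with $j>r$, $f$ preserves columns (vertical neighbours are at distance at least $2n+1>r$ as well). Hence $f^{-1}(X\setminus X_\mu)\cap\bigcup_{j\in S,j>r}C_j=\emptyset$, and that union is in $\xi_\mu$, so $f^{-1}(X\setminus X_\mu)\notin\xi_\mu$. Symmetrically, $f^{-1}(X_\mu)\notin\xi_\nu$, and Lemma \ref{LemmaSeparatingSet} concludes the argument. To transfer this to $X$, I will instead use the $X$-separation via the $r$-neighborhood criterion directly, or apply the Lemma in $X$ with $X_\mu:=N_1(\phi(\bigcup_{j\in S}C_j))$, taking $\phi$ to have well-separated columns after passing to a subsequence of rows and columns.
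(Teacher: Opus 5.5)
Inside the model your argument follows the paper's own construction. The paper uses the same $\xi$ supported on the first column and the same product-type ultrafilters. It indexes them by an almost disjoint family $(A_i)_{i\in I}$, while you index them by distinct nonprincipal $\mu$; both give $2^{\aleph_0}$ distinct points. The paper also uses the same partial translations $(1,n^2)\mapsto(j^2,n^2)$ to show these points lie in $\COrb(\xi)$.

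The difference is in how disjointness is obtained. The paper notes that a partial translation of displacement at most $r$ is the identity on the columns with index $j>r$, because each such point is at distance at least $2j-1$ from every other point. Hence $\Orb(\xi_i)=\{\xi_i\}$ and $\COrb(\xi_i)=\{\xi_i\}$, so disjointness reduces to distinctness. You instead feed the column partition into Lemma~\ref{LemmaSeparatingSet}. That is correct. However, your own observation that $f$ preserves far columns already shows $f$ fixes them pointwise, so the lemma is superfluous in the model. What your route buys is robustness: it does not need the orbits to be singletons, and an ambient $X$ is exactly the setting where they are not.

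This is where the transfer to $X$ needs care; the paper suppresses this step by assuming $X$ is the model.

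\begin{itemize}
\item Your first paragraph correctly shows, via Proposition~\ref{PropOrbitsAreCountable}\eqref{PropOrbitsAreCountable.Item1.5}, that $\overline\phi(\xi_\mu)\in\COrb_X(\overline\phi(\xi_\nu))$ if and only if $\xi_\mu\in\COrb_Y(\xi_\nu)$. That alone does not give disjoint closures in $X$. The set $\COrb_X(\overline\phi(\xi_\mu))$ typically contains points outside $\overline{\phi(Y)}$; already for $X=\N^2$ these orbits are infinite. You need Proposition~\ref{PropClosureOrbDisjointOrComparable} to reduce disjointness of closures to mutual non-membership. With that citation added, your first route is complete.
\item Your alternative, applying Lemma~\ref{LemmaSeparatingSet} in $X$ with $X_\mu=N_1(\phi(\bigcup_{j\in S}C_j))$, fails as stated. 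In $X=\N^2$ the translation $(x,y)\mapsto(x+2,y)$ pushes every point of the far columns indexed by $S$ out of $X_\mu$. Then $f^{-1}(X\setminus X_\mu)\in\overline\phi(\xi_\mu)$, so the lemma's hypothesis fails.
\item Passing to subsequences of rows and columns does not help, since $f$ ranges over all partial translations of $X$. You would need thickenings with growing radii, $X_\mu=\bigcup_{j\in S}N_{\rho(j)}(\phi(C_j))$ with $\rho(j)\to\infty$. The growth must be slow enough that thickenings of distinct columns stay disjoint, which is possible because the images of distinct columns drift apart; this is exactly the device in the proof of Proposition~\ref{PropClosureOrbDisjointOrComparable}.
\end{itemize}
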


\begin{proof}
As $\cM_2$ is bijectively coarsely equivalent to $\{(m^2, n^2) \in \mathbb{N}^2 \colon n\geq m\}$, for simplicity of notation, assume \[X= \{(m^2, n^2) \in \mathbb{N}^2 \colon n\geq m\}\] and let $\xi$ be a nonprincipal ultrafilter on $X$ containing
$A=\{(1,n^2)\colon n\in\N\}$. Let \( (A_i)_{i \in I} \) be a family of infinite, pairwise almost disjoint subsets of $\N$ with $|I|=2^{\aleph_0}$.\footnote{Here, \emph{almost disjoint} means that $A_i\cap A_j$ is finite for all distinct $i,j\in I$. } Proceeding as in the proof of Theorem \ref{ThmCharacterizationOrbClosed.IntheText}, for each $i\in I$, we can find an ultrafilter $\xi_i$ on $X$ such that
\begin{equation}\label{Eq.IAmNotINGranada}
\{(m^2, n^2) \in X \colon m \in A_i,\ m > k \}\in \xi_i
\end{equation}
for all $k\in\N$. Then, each \( \xi_i \) is in the closure of the orbit of \( \xi \), since there are partial translations that map the vertical set \( A \) into these subsets. On the other hand, as any partial translation of $X$ must be the identity on $  \{(m^2, n^2) \in  X \colon  m>k\}$ for $k\in\N$ large enough, it follows from \eqref{Eq.IAmNotINGranada}  that $\Orb(\xi_i)=\{\xi_i\}$ for all $i\in I$. In particular, $\COrb(\xi_i)$ are pairwise disjoint as desired.
\end{proof}

\subsection{Closed quasi-orbits}
In this subsection, we turn our focus to quasi-orbits. Besides characterizing when quasi-orbits are closed  (Theorem  \ref{ThmCharacterizationqOrbClosed}), we also  characterize the coarse type of   u.l.f.\ metric spaces containing a coarse copy of   $\cM_2$ in terms of quasi-orbits  (Theorem \ref{CorClosedQuasiOrbitsCoarseDisjointUnionSing.Inthetext}).

Our next theorem characterizes when quasi-orbits are closed.

\begin{theorem}\label{ThmCharacterizationqOrbClosed}
    Let $X$ be a u.l.f.\  metric space and $\xi \in \beta X$. The following are equivalent
    \begin{enumerate}
        \item \label{PropCharacterizationqOrbClosedItem1} $\qOrb(\xi)$ is closed.
                \item\label{PropCharacterizationqOrbClosedItem2} $\qOrb(\xi)=\COrb(\xi)$.
        \item\label{PropCharacterizationqOrbClosedItem3}    $\overline{\Orb}(\xi)$ is minimal among closed invariant subsets.
        \item \label{PropCharacterizationqOrbClosedItem4} For all $A\in \xi$ there is $r>0$ such that $N_s(X\setminus N_r(A))\notin \xi$ for all $s>0$.
    \end{enumerate}
\end{theorem}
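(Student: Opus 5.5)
Each condition will be turned into a statement about orbit closures, using two earlier results. Proposition~\ref{PropOrbitsAreCountable}\eqref{PropOrbitsAreCountable.Item1.5} says that $\eta\in\COrb(\xi)$ if and only if every $A\in\eta$ has some $r$ with $N_r(A)\in\xi$. Proposition~\ref{PropClosureOrbDisjointOrComparable} says that orbit closures are either disjoint or nested. Recall also that $\qOrb(\xi)=\{\eta\colon \COrb(\eta)=\COrb(\xi)\}$ is always contained in $\COrb(\xi)$, and that $\COrb(\xi)$ is invariant. The last fact holds because the orbit saturation of an open set is open by Proposition~\ref{PropOrbUOpenAndEqualqOrbU}, so the complement of an invariant closed set's saturation behaves well.

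\textbf{(2)$\Leftrightarrow$(3).} The set $\COrb(\xi)$ is minimal among closed invariant sets if and only if $\COrb(\eta)=\COrb(\xi)$ for every $\eta\in\COrb(\xi)$. The only point to check is that any closed invariant $F\subseteq\COrb(\xi)$ contains $\COrb(\eta)$ for each $\eta\in F$. With this, (3) is exactly the statement $\COrb(\xi)\subseteq\qOrb(\xi)$, which is (2).

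\textbf{(2)$\Rightarrow$(1)} is trivial.

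\textbf{(1)$\Rightarrow$(2).} Since $\Orb(\xi)\subseteq\qOrb(\xi)$, the set $\qOrb(\xi)$ is dense in $\COrb(\xi)$. Closedness therefore gives $\qOrb(\xi)=\COrb(\xi)$.

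\textbf{(3)$\Leftrightarrow$(4).} This is the real content, and I would prove it by contraposition in both directions.

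First suppose (4) fails. Then there is $A\in\xi$ such that for every $r$ some $s_r$ satisfies $N_{s_r}(X\setminus N_r(A))\in\xi$. For each $r$, Proposition~\ref{PropOrbitsAreCountable}\eqref{PropOrbitsAreCountable.Item1.5} applied to the closed set $\overline{X\setminus N_r(A)}$ gives a point $\eta_r\in\COrb(\xi)$ with $X\setminus N_r(A)\in\eta_r$. I would take a cluster point $\eta$ of the $\eta_r$. Because orbit closures are closed, $\eta\in\COrb(\xi)$. Moreover $X\setminus N_r(A)\in\eta$ for every $r$, since these sets decrease in $r$. Now if $\xi\in\COrb(\eta)$, then $A\in\xi$ would force $N_r(A)\in\eta$ for some $r$, which is a contradiction. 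Hence $\COrb(\eta)\subsetneq\COrb(\xi)$ and (3) fails.

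Conversely, suppose (3) fails. Then there is $\eta\in\COrb(\xi)$ with $\xi\notin\COrb(\eta)$. So some $A\in\xi$ satisfies $N_r(A)\notin\eta$, that is $X\setminus N_r(A)\in\eta$, for every $r$. Fix $r$. Since $\eta\in\COrb(\xi)$, there is $s$ with $N_s(X\setminus N_r(A))\in\xi$. So this $A$ witnesses the failure of (4).

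The step I expect to be the main obstacle is the cluster-point argument in the first direction of (3)$\Leftrightarrow$(4). It looks straightforward, but one must check carefully that the $\eta_r$ can be chosen inside $\COrb(\xi)$ and that the nested sets $X\setminus N_r(A)$ pass to the limit. The invariance of $\COrb(\xi)$, needed for (2)$\Leftrightarrow$(3), also deserves a short justification.
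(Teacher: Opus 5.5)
Your proposal is correct and is essentially the paper's proof. When (4) fails you use the same construction: points $\eta_r$ of $\COrb(\xi)$ with $X\setminus N_r(A)\in\eta_r$, followed by a limit point $\eta$, which the paper realizes as a $\zeta$-limit for a nonprincipal ultrafilter $\zeta$ on $\N$. Your converse is exactly the paper's (4)$\Rightarrow$(3) argument.

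The differences are cosmetic. You check $\xi\notin\COrb(\eta)$ via Proposition~\ref{PropOrbitsAreCountable}\eqref{PropOrbitsAreCountable.Item1.5}, where the paper uses a direct partial-translation argument. The existence of $\eta_r$ really uses the set form of that proposition, which is established in its proof: if $N_s(C)\in\xi$, then $\overline{C}\cap\Orb(\xi)\neq\emptyset$. Your justification of invariance of $\COrb(\xi)$ is garbled; it is simply that $\Orb(\beta X\setminus\COrb(\xi))$ is open and misses $\Orb(\xi)$, hence misses $\COrb(\xi)$.
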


The equivalence between \eqref{PropCharacterizationqOrbClosedItem2} and \eqref{PropCharacterizationqOrbClosedItem3} above was known for groups (see \cite[Theorem 5.1(i)]{Protasov2008TopAndAppli}).

\begin{proof}[Proof of Theorem \ref{ThmCharacterizationqOrbClosed}]
Suppose  $\qOrb(\xi)$ is closed. As $\Orb(\xi)\subseteq \qOrb(\xi)\subseteq \COrb(\xi)$, we have $\overline{\Orb}(\xi)=\qOrb(\xi)$. So, \eqref{PropCharacterizationqOrbClosedItem1}$\Rightarrow$\eqref{PropCharacterizationqOrbClosedItem2}.

\eqref{PropCharacterizationqOrbClosedItem2}$\Rightarrow$\eqref{PropCharacterizationqOrbClosedItem4}:
Let $A\in \xi$ and suppose towards a contradiction that  for all $r>0$ there is $s>0$ such that $N_s(X\setminus N_r(A))\in \xi$. Hence, for all $r>0$, there is a partial translation $f_r$ of $X$ with $\dom(f_r)\in \xi$ and $\mathrm{im}(f_r)\subseteq X\setminus N_r(A)$. Let $\eta_r=f_r(\xi)$. Let $\zeta$ be a nonprincipal ultrafilter on $\N$ and define an ultrafilter
$\eta$   on $X$ such that, for all $B\subseteq X$,
\[B\in \eta\ \Leftrightarrow \ \{r\in \N\colon B\in\eta_r \}\in \zeta.\]

Let us show that $\eta\in \COrb(\xi)\setminus \qOrb(\xi)$. We start with $\eta\in \COrb(\xi)$. For that, fix   $B\in \eta$ and notice that $\overline B\cap \Orb(\xi)$ is nonempty. Indeed, by the definition of $\eta$,  there must be $r\in \N$ such that $B\in \eta_r$. So, $f_r$ is a partial translation which takes $\xi$ into $\overline B$. We are left to show that $\eta\not\in   \qOrb(\xi)$. As   $\eta\in \COrb(\xi)$, this is equivalent to showing that $\xi\not\in \COrb(\eta)$. For that,  as $\xi\in \overline A$, it is enough to observe that $\overline A\cap \Orb(\eta)=\emptyset$.  In other  words, we shall prove  that $f(B)\not\subseteq A$ for all partial translations $f$ of $X$ and all $B\in \eta$. Fix $B\in \eta$ and a partial translation $f$ of $X$. Let \[t=\sup_{x\in \dom(f)}d(x,f(x)).\] As $B\in \eta$ and $\eta $ is nonprincipal, there is $r>t$ such that $B\in \eta_r$. By our choice of $f_r$, we have that $\mathrm{im}(f_r)\subseteq X\setminus N_r(A)$. So, $B\cap  (X\setminus N_r(A))\in \eta_r$. Since $r>t$, \[f\big(B\cap ( X\setminus N_r(A))\big)\not\subseteq A.\] So, $f(B)\not\subseteq A$ and we are done.

\eqref{PropCharacterizationqOrbClosedItem4}$\Rightarrow$\eqref{PropCharacterizationqOrbClosedItem3}: As the closure of orbits are invariant, it is sufficient to show that given an   $\eta\in \COrb(\xi)$, we have $\xi\in \COrb(\eta)$. Fix such $\eta$. Suppose towards a contradiction that there is $A\in \xi$ such that $\overline A\cap \Orb(\eta)=\emptyset$. So, $ N_r(A)\notin\eta$ for all $r>0$. As $\eta$ is an ultrafilter, we have that   $ X\setminus N_r(A)\in\eta$ for all $r>0$. As $\eta\in \COrb(\xi)$, this implies that for all  $r>0$, there is $s>0$ such that $ N_s(X\setminus N_r(A))\in\xi$ (Proposition \ref{PropOrbitsAreCountable}\eqref{PropOrbitsAreCountable.Item1.5}). This $A$  contradicts \eqref{PropCharacterizationqOrbClosedItem4}.

\eqref{PropCharacterizationqOrbClosedItem3}$\Rightarrow$\eqref{PropCharacterizationqOrbClosedItem1}: Suppose  $\overline{\Orb}(\xi)$ is minimal among closed invariant subsets. As $\overline{\Orb}(\eta)$ is always closed and invariant,   $\overline{\Orb}(\xi)=\overline{\Orb}(\eta)$ for all $\eta\in \overline{\Orb}(\xi)$. In other words, every element in $\overline{\Orb}(\xi)$ is quasi-orbit equivalent to $\xi$. So, $\overline{\Orb}(\xi)=\qOrb(\xi)$.
\end{proof}

 \iffalse
\begin{corollary}\label{CorqOrbWeakerThanorb}
   Let $X$ be a u.l.f.\  metric space.  If $\xi\in \partial X$ is such that $\Orb(\xi)$ is not closed and  $\overline{\Orb}(\xi)$ is minimal among closed invariant subsets, then $\qOrb(\xi)$ is strictly larger than $\Orb(\xi)$.  In particular, if $\Orb(\xi)$ is not closed for all $\xi\in \partial X$, then the quasi-orbit equivalence relation is strictly weaker than the orbit equivalence relation.  This is the case, for instance, if $X=\N$.
\end{corollary}

\begin{proof}
    The first statement is an immediate consequence of Theorem \ref{ThmCharacterizationqOrbClosed}. The second statement follows since, by Zorn's lemma, there are always nonempty $F\subseteq \partial X$ which are minimal under inclusion with respect to being closed and invariant. Therefore, if $\xi\in F$ for such $F$, then $F=\COrb(\xi)$ and, by Theorem \ref{ThmCharacterizationqOrbClosed}, $F=\qOrb(\xi)$. As $\Orb(\xi)$ is not closed, $\qOrb(\xi)$ is larger than $\Orb(\xi)$. The very last statement follows since, by Theorem \ref{ThmCharacterizationOrbClosed.IntheText}, none of the orbits of elements in $\partial \N$ can be closed.
\end{proof}

In view of Corollary \ref{CorqOrbWeakerThanorb}, we point out the following immediate characterization of when the equivalence relation of quasi-orbit is strictly weaker than the of of orbits.

 \begin{proposition}
     Let $X$ be a u.l.f.\ metric space. Then $\calOrb(\partial X)$ is $T_0$ if and only if $\Orb=\qOrb$.\qed
 \end{proposition}
\fi

\subsection{Separation axioms for $\calOrb(\partial X)$}
We now study when $\calOrb(\partial X)$ is Hausdorff, $T_1$, and $T_0$. We start proving our characterization of $T_1$-ness presented in the introduction. This is a good moment to recall the reader of the concepts of coarse embeddability and equivalence.

\begin{definition}
    Let $(X,d_X)$ and $(Y,d_Y)$ be metric spaces and $f\colon X\to Y$ be a map.
    \begin{enumerate}
        \item The map $f$ is a \emph{coarse embedding} if given sequences $(x_n)_n$ and $(z_n)_n$ in $X$ we have that
        \[d_X(x_n,z_n)\to \infty \Leftrightarrow d_Y(f(x_n),f(z_n))\to \infty.\]
        \item A coarse embedding $f$ is a \emph{coarse equivalence} if $f$ is \emph{cobounded}, i.e., if
        \[\sup_{y\in Y}d(y,f(X))<\infty.\]
    \end{enumerate}
\end{definition}

Equivalently, a map $f$ is a coarse equivalence if and only if for all $r>0$ there is $s>0$ such that
\[d_X(x,z)\leq r\Rightarrow d_Y(f(x),f(z))\leq s\]
 and for all $s>0$ there is $r>0$ such that
\[d_X(x,z)\geq r\Rightarrow d_Y(f(x),f(z))\geq s.\]

\begin{theorem}
   The following are equivalent for a u.l.f.\ metric space $X$:
   \begin{enumerate}
       \item\label{Item1CorClosedQuasiOrbitsCoarseDisjointUnionSing} $X$ does not contain a coarse copy of $\cM_2$.
       \item\label{Item2CorClosedQuasiOrbitsCoarseDisjointUnionSing}  $\mathrm{Orb}(\xi)$ is closed for all $\xi \in \partial X$.
       \item\label{Item3CorClosedQuasiOrbitsCoarseDisjointUnionSing} $\qOrb(\xi)$  is closed for all $\xi \in \partial X$.
       \item\label{Item4CorClosedQuasiOrbitsCoarseDisjointUnionSing} The space $\calOrb(\partial X)$ is $T_1$.
   \end{enumerate}\label{CorClosedQuasiOrbitsCoarseDisjointUnionSing.Inthetext}
\end{theorem}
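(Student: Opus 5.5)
Before the main chain, I record the elementary fact that in the quotient $\calOrb(\partial X)$ a point $\Orb(\xi)$ is closed if and only if its preimage $\Orb(\xi)\subseteq\partial X$ is closed. The preimage of a point under the quotient map is exactly the orbit, and $\partial X$ is closed in $\beta X$, so closedness in $\partial X$ and in $\beta X$ agree. Hence \eqref{Item2CorClosedQuasiOrbitsCoarseDisjointUnionSing}$\Leftrightarrow$\eqref{Item4CorClosedQuasiOrbitsCoarseDisjointUnionSing} is immediate.

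Next, \eqref{Item2CorClosedQuasiOrbitsCoarseDisjointUnionSing}$\Rightarrow$\eqref{Item3CorClosedQuasiOrbitsCoarseDisjointUnionSing}. Suppose every orbit in $\partial X$ is closed, and let $\eta\in\COrb(\xi)=\Orb(\xi)$. Then $\eta$ is orbit equivalent to $\xi$, so $\COrb(\eta)=\COrb(\xi)$. It follows that $\qOrb(\xi)=\COrb(\xi)$, which is closed.

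For \eqref{Item3CorClosedQuasiOrbitsCoarseDisjointUnionSing}$\Rightarrow$\eqref{Item1CorClosedQuasiOrbitsCoarseDisjointUnionSing} I argue by contraposition. If $X$ contains a coarse copy of $\cM_2$, Proposition \ref{prop:uncountably_many_disjoint_orbits} gives $\xi\in\partial X$ and many $\xi_i\in\COrb(\xi)$ with pairwise disjoint orbit closures. At most one $\COrb(\xi_i)$ can equal $\COrb(\xi)$, since they are pairwise disjoint and each is contained in $\COrb(\xi)$. So some $\xi_i\in\COrb(\xi)\setminus\qOrb(\xi)$, and by Theorem \ref{ThmCharacterizationqOrbClosed} the set $\qOrb(\xi)$ is not closed.

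The main obstacle is \eqref{Item1CorClosedQuasiOrbitsCoarseDisjointUnionSing}$\Rightarrow$\eqref{Item2CorClosedQuasiOrbitsCoarseDisjointUnionSing}, again by contraposition. Assume $\Orb(\xi)$ is not closed for some $\xi\in\partial X$. The proof of Theorem \ref{ThmCharacterizationOrbClosed.IntheText} (\eqref{Item1ThmCharacterizationOrbClosed}$\Rightarrow$\eqref{Item3ThmCharacterizationOrbClosed}) then supplies the triangular array $x_n(j)$, $j\in F_n$, with the following properties:
\begin{itemize}
\item distinct points within each row;
\item rows $R_n=\{x_n(j)\colon j\in F_n\}$ pairwise disjoint, with $d(R_n,R_m)\to\infty$;
\item $\sup_{n\in\N_j}d(x_n(1),x_n(j))<\infty$ for each fixed $j$.
\end{itemize}

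From this I extract a copy of $\{(m^2,n^2)\colon n\ge m\}$, which is coarsely equivalent to $\cM_2$. Write $c_j=\sup_{n\in\N_j}d(x_n(1),x_n(j))$. By u.l.f.-ness, $d(x_n(1),x_n(j))\to\infty$ as $j\to\infty$ within $F_n$, since only boundedly many points lie near $x_n(1)$. I then pass to subsequences in two steps:
\begin{enumerate}
\item Choose indices $j_1<j_2<\cdots$ so that the points $x_n(j_k)$ in a single row are mutually far apart. Concretely, for $n$ in a suitable infinite set, I want $d(x_n(j_k),x_n(j_l))$ to be comparable to $|k^2-l^2|$ up to a fixed function. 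Since distances only grow, I expect to pick $j_{k+1}$ with $d(x_n(1),x_n(j_{k+1}))\ge 2c_{j_k}+k$ for all relevant $n$; the uniform lower bound follows from u.l.f. plus the bounds $c_j$.
\item Choose rows $n_1<n_2<\cdots$ with $n_k\in\N_{j_k}$, spreading the rows apart so that distances between different rows dominate everything.
\end{enumerate}
The map $(m^2,n^2)\mapsto x_{n_n}(j_m)$ should then be a coarse embedding. Within a row it is controlled below by step 1 and above by $c_{j_m}$. Across rows it tends to infinity, because $d(R_n,R_m)\to\infty$.

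I expect the delicate point to be the upper bound within a row. The distance $d(x_n(j_k),x_n(j_l))$ is bounded by $c_{j_k}+c_{j_l}$, which depends only on $k,l$, while in $\cM_2$ the corresponding distance is $|k^2-l^2|$. Coarse embedding requires the two-sided control to be uniform. I would handle this by reparametrizing, replacing squares with a faster-growing sequence adapted to $(c_{j_k})$. The resulting subset $\{(a_m,a_n)\colon n\ge m\}$ for any such rapidly increasing $(a_n)$ is coarsely equivalent to $\cM_2$, because the coarse type of $\cM_2$ depends only on the "sparse" pattern of points.
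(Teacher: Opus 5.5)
Your overall plan follows the paper. For (1)$\Rightarrow$(2) you take the array from the proof of Theorem \ref{ThmCharacterizationOrbClosed.IntheText} and run the argument of Lemma \ref{Lemma.Coarse.Embedding.of.M2}, and your (2)$\Leftrightarrow$(4) and (2)$\Rightarrow$(3) match the paper. The gap is in how you obtain lower bounds.

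Your claim that u.l.f.-ness gives $d(x_n(1),x_n(j))\to\infty$ as $j\to\infty$ within $F_n$ is empty as stated, since each $F_n$ is finite. Its useful reading, $\inf_{n\in\N_j}d(x_n(1),x_n(j))\to\infty$, does not follow from the properties you list. Nothing there prevents $x_n(\max F_n)$ from lying at uniformly bounded distance from $x_n(1)$ for every $n$. In that case the infimum stays bounded along the infinitely many $j$ with $\N_j\neq\N_{j+1}$. Distances do not ``only grow'', and the upper bounds $c_j$ give no lower bounds.

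What u.l.f.-ness does give is a pigeonhole statement. If closed $M$-balls have at most $N(M)$ points, then among any $N(M)+1$ indices $j>j_k$, every row $n$ lying in all the corresponding $\N_j$ has at least one index with $d(x_n(1),x_n(j))>M$. Getting a single common index $j_{k+1}$ forces you to discard rows. You must do this so that infinitely many rows survive in every later $\N_j$, or the iteration stalls.

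Property (1) of the array handles this. Since $\{x_n(1)\colon n\in\N_j\}\in\xi$ for all $j$, the row indices carry an ultrafilter $\tilde\xi$ with $\N_j\in\tilde\xi$ for all $j$. So one of the finitely many pigeonhole sets lies in $\tilde\xi$. Iterating gives nested $S_1\supseteq S_2\supseteq\cdots$ in $\tilde\xi$ with $S_k\subseteq\N_{j_k}$. You then pick rows $n_1<n_2<\cdots$ with $n_p\in S_p$, so row $n_p$ carries the lower bounds at $j_1,\ldots,j_p$. This selection is the real content of your step 1 and has to be supplied. The paper sidesteps it by building $\inf_{n\in\N_j}d(x_n(1),x_n(j))\to\infty$ into the array as an extra property, which is exactly hypothesis (2) of Lemma \ref{Lemma.Coarse.Embedding.of.M2}.

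The point you flag as delicate is not an issue, because coarse embeddings only need qualitative control.
\begin{itemize}
\item Within a row, $|m^2-m'^2|\le B$ forces $m=m'$ or $m,m'\le B$, so the image distance is at most $2\max_{l\le B}c_{j_l}$.
\item Conversely, a large image distance forces $m\neq m'$ with $\max(m,m')$ large, hence $|m^2-m'^2|\ge m+m'$ is large.
\item Across rows, $|n^2-n'^2|\ge n+n'$ together with $d(R_{n_p},R_{n_q})\to\infty$ as $p+q\to\infty$ (arranged via u.l.f.) settles both directions.
\end{itemize}
So $(m^2,n^2)\mapsto x_{n_n}(j_m)$ already works with plain squares, and the reparametrization is unnecessary, though your claim about it is correct.

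Your (3)$\Rightarrow$(1) genuinely differs from the paper's. The paper checks directly that condition (4) of Theorem \ref{ThmCharacterizationqOrbClosed} fails for an ultrafilter containing the first column of $\{(m^2,n^2)\colon n\ge m\}$. You instead combine Proposition \ref{prop:uncountably_many_disjoint_orbits} with the equivalence (1)$\Leftrightarrow$(2) of that theorem. Your route is correct and, given the proposition, slightly shorter; two disjoint orbit closures inside $\COrb(\xi)$ already suffice.
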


Before proving Theorem \ref{CorClosedQuasiOrbitsCoarseDisjointUnionSing.Inthetext}, let us isolate a lemma which guarantees the coarse embeddability of $\cM_2$ in a u.l.f.\ metric space $X$.

\begin{lemma}\label{Lemma.Coarse.Embedding.of.M2}
    Let $X$ be a u.l.f.\ metric space. Let $(\N_j)_{j=1}^\infty$ be a decreasing sequence of infinite subsets of $\N$ and for each $n\in\N_1$ let 
    \[F_n=\{j\in\N\colon n\in \N_j\}.\] Let $((x_n(j))_{n\in\N_j})_{j\in\N}$ be a family of distinct elements of  $X$ such that
    
\begin{enumerate}
    \item\label{Eq.Coarse.M2.130426.1} $\sup_{n\in \N_j}d(x_n(1),x_n(j))<\infty$
for all $j\in\N$, and
\item\label{Eq.Coarse.M2.130426.3} $\inf_{n\in\N_j}d(x_n(1), x_n(j))\to \infty$ as $j\to \infty$.
\end{enumerate}
Then $\cM_2$ coarsely embeds into $X$.
\end{lemma}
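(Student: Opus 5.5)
The plan is to build an explicit copy of $\cM_2$ by thinning out the given family and placing the copies of the ``rows'' far apart. First I fix the geometry of $\cM_2$ that I will use. Its elements are the numbers $3^a+3^b$ with $a<b$. Two elements with different top digits $b\neq b'$ are at distance at least roughly $3^{\max(b,b')}$. Two elements with the same top digit $b$ and lower digits $a<a'$ are at distance $3^{a'}-3^a\in[\tfrac23 3^{a'},3^{a'}]$. So $\cM_2$ is a coarse disjoint union over $b$ of the finite ``clusters'' $\{3^a\colon a<b\}$. Inside a cluster, the distance between two points is governed, up to a factor $3/2$, by the larger of their two lower indices. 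It therefore suffices to find, in $X$, clusters $C_k=\{y_k(a)\colon a<k\}$ with the following two properties:
\begin{enumerate}
\item for $a<a'<k$, the distance $d(y_k(a),y_k(a'))$ lies in an interval $[\alpha_{a'},\beta_{a'}]$ depending only on $a'$, where $\alpha_{a'}\to\infty$ and $\beta_{a'}<\infty$;
\item $d(C_k,\bigcup_{l<k}C_l)\geq k$ for every $k$.
\end{enumerate}

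To obtain the internal estimate (1), let $D_j=\inf_{n\in\N_j}d(x_n(1),x_n(j))$ and $R_j=\sup_{n\in\N_j}d(x_n(1),x_n(j))$. These satisfy $D_j\to\infty$ and $R_j<\infty$ by hypotheses (2) and (1) of the lemma. I choose inductively indices $1=j_0<j_1<j_2<\cdots$ with $D_{j_{a+1}}>4R_{j_a}+a+1$. For $n\in\N_{j_{a'}}$ and $a<a'$, the triangle inequality through $x_n(1)$ gives
\[D_{j_{a'}}-R_{j_a}\leq d(x_n(j_a),x_n(j_{a'}))\leq R_{j_{a'}}+R_{j_a}.\]
Hence the distance lies in $[\tfrac12 D_{j_{a'}},2R_{j_{a'}}]$. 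This interval depends only on the larger index $a'$, and its left endpoint tends to infinity. Since $\N_j$ is decreasing, every $n\in\N_{j_{k-1}}$ has all the points $x_n(j_0),\dots,x_n(j_{k-1})$ defined.

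To obtain the separation (2), I pick $n_k\in\N_{j_{k-1}}$ inductively and set $y_k(a)=x_{n_k}(j_a)$. The cluster $C_k$ lies in the ball of radius $R_{j_{k-1}}$ around $x_{n_k}(1)$. The points $x_n(1)$, for $n\in\N_{j_{k-1}}$, are distinct and infinitely many. By u.l.f.\ every ball is finite, so these points eventually leave every bounded set. I can therefore choose $n_k$, different from the earlier choices, with
\[d\Bigl(x_{n_k}(1),\ \bigcup_{l<k}C_l\Bigr)>R_{j_{k-1}}+k.\]
This gives (2). Finally I define $\Phi\colon\cM_2\to X$ by $\Phi(3^a+3^k)=y_k(a)$, so that the cluster with top digit $k$ is sent to $C_k$.

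The remaining, and main, step is checking that $\Phi$ is a coarse embedding. The bookkeeping is what needs care, since the control functions must be uniform. Suppose $(p_m)$ and $(q_m)$ are sequences in $\cM_2$ with bounded distances. Then either the pair lies in the same cluster with bounded larger lower index $a'$, and the image distance is at most $2R_{j_{a'}}$, which is bounded. Or the pair lies in two different clusters. Bounded distance between different clusters forces both top digits to be bounded, so only finitely many points occur and the image distances are again bounded. Conversely, suppose the distances $d(p_m,q_m)$ tend to infinity. In the same-cluster case, the larger lower index $a'$ tends to infinity, so the image distance is at least $\tfrac12 D_{j_{a'}}\to\infty$. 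In the different-cluster case, the larger top digit $k$ tends to infinity, and by (2) the image distance is at least $k$. Passing to subsequences handles mixtures of the two cases. This yields both implications in the definition of a coarse embedding, and shows that $\cM_2$ coarsely embeds into $X$.
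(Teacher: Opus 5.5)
Your proof is correct and takes essentially the paper's route. You thin out the indices $j$ so that distances within a row are bounded above and below in terms of the larger column index, you diagonalize by choosing one row per cluster far from the earlier ones, and you send the clusters of $\cM_2$ to these rows; the paper uses the model $\{(j^2,n^2)\colon n\geq j\}$ and calls the final check straightforward, whereas you make all the bounds explicit. The one step you leave implicit is that $R_{j_a}<\tfrac14 D_{j_{a+1}}\leq\tfrac14 D_{j_{a'}}\leq\tfrac14 R_{j_{a'}}$ for all $a<a'$, not only for $a'=a+1$; this follows from your growth condition, because $D_{j_{b+1}}>4R_{j_b}\geq 4D_{j_b}$ forces $D_{j_b}$ to increase with $b$.
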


\begin{proof}
As the family $((x_n(j))_{n\in\N_j})_{j \in\N}$ consists of distinct elements,    the subsets
$\{x_n(j)\colon j\in F_n\}$, $n\in\N_1$, are all disjoint. Therefore, since $X$ is u.l.f., going to a subsequence if necessary, we can assume they are getting further away from each other. More precisely, there is an infinite $M\subseteq \N$ such that,  replacing each $\N_j$ by $\N_j\cap M$ if necessary,  we can assume furthermore that 
\[d(\{x_n(j)\colon j\in F_n\},\{x_m(j)\colon j\in F_m\})\to \infty\] as $n+m\to \infty$ with $n\neq m$ in $\N_1$. Also, restricting the $j$'s to an appropriate infinite subset of $\N$ if necessary, we can also assume that
\[\inf_{n\in\N_j}d(\{x_n(i)\colon i\in F_n, i<j\}, x_n(j))\to \infty\ \text{ as }\   j\to \infty.\]

    By diagonalizing $(\N_j)_j$ and relabeling elements, we can assume without loss of generality that $\N_j=\{n\in \N\colon n\geq j\}$ for all $j\in\N$. In particular, $F_n=\{1,\ldots,n\}$ for all $n\in\N$. It is now straightforward to check that the map 
    \[(j^2,n^2)\in \left\{(j^2,n^2)\in \N^2\colon n\geq j\right\}\mapsto x_n(j)\in X\]
    is a coarse embedding. Since the space in the left-hand side of the displayed equation above is coarsely equivalent to $\cM_2$, the result follows.
\end{proof}

 \begin{proof}[Proof of Theorem \ref{CorClosedQuasiOrbitsCoarseDisjointUnionSing.Inthetext}]
\eqref{Item1CorClosedQuasiOrbitsCoarseDisjointUnionSing}$\Rightarrow$\eqref{Item2CorClosedQuasiOrbitsCoarseDisjointUnionSing}: Suppose towards a contradiction that there is $\xi\in \partial X$ such that $\Orb(\xi)$ is not closed and let us show that $\cM_2$  coarsely embeds into $X$.  

As $\Orb(\xi)$ is not closed,   there is no $r>0$ as in  Theorem \ref{ThmCharacterizationOrbClosed.IntheText}\eqref{Item3ThmCharacterizationOrbClosed}.
We can then let      $(\N_j)_{j\in\N}$, $((x_n(j))_{n\in \N_j})_{j\in\N}$ and $(F_n)_n$ be as in the proof of the implication  \eqref{Item1ThmCharacterizationOrbClosed}$\Rightarrow$\eqref{Item3ThmCharacterizationOrbClosed} of Theorem \ref{ThmCharacterizationOrbClosed.IntheText}. Moreover, without loss of generality, we can also assume that, besides items \eqref{Item1.orbitnotclosed}--\eqref{Item3.orbitnotclosed} in the definition of these objects, they also satisfy
\begin{enumerate}\setcounter{enumi}{5}
    \item  $\inf_{n\in\N_j}d(x_n(1), x_n(j))\to \infty$ as $j\to \infty$.
\end{enumerate}
Indeed, this is simply a variation of property \eqref{Item2.orbitnotclosed} satisfied by these objects and an immediate consequence of the nonexistence of $r>0$ as in  Theorem \ref{ThmCharacterizationOrbClosed.IntheText}\eqref{Item3ThmCharacterizationOrbClosed}. By Lemma \ref{Lemma.Coarse.Embedding.of.M2}, $\cM_2$ coarsely embeds into $X$.

The implication \eqref{Item2CorClosedQuasiOrbitsCoarseDisjointUnionSing}$\Rightarrow$\eqref{Item3CorClosedQuasiOrbitsCoarseDisjointUnionSing}
    follows since $\Orb(\xi)\subseteq \qOrb(\xi)\subseteq \COrb(\xi)$ for all $\xi\in \beta X$.

For the implication \eqref{Item3CorClosedQuasiOrbitsCoarseDisjointUnionSing}$\Rightarrow$\eqref{Item1CorClosedQuasiOrbitsCoarseDisjointUnionSing}, suppose
  the space $X$ contains a coarse copy of $\{(m^2,n^2)\colon  n\geq m\}$.   For simplicity, assume $X$ actually equals  $ \{(m^2,n^2)\colon  n\geq m\}$  endowed with its canonical metric  and let $\xi$ be a nonprincipal ultrafilter on $X$ containing $A= \{(1,n^2)\colon n\in \N\}$. It is evident that
    \[N_{k^2}(X\setminus N_{k^2-1}(A))\in \xi \ \text{for all}\ k\in\N.\]
 So, Theorem \ref{ThmCharacterizationqOrbClosed} implies that $\qOrb(\xi)$ is not closed.

 The equivalence \eqref{Item2CorClosedQuasiOrbitsCoarseDisjointUnionSing}$\Leftrightarrow$\eqref{Item4CorClosedQuasiOrbitsCoarseDisjointUnionSing} is immediate.
\end{proof}

 We now characterize Hausdorfness of the orbit space of $\partial X$ in terms of the coarse embeddability of $\cM_{3/2}$. For this, the fact that a space which does not contain $\cM_{3/2}$ coarse must have asymptotic dimension zero will be needed. Now is a good moment to recall the reader of the definition of asymptotic dimension.

 \begin{definition}
\label{Definition.Asym.Dim.d}
 Let $X$ be a metric space and $n\in\{0\}\cup \N$. We say that $X$ has \emph{asymptotic dimension at most $n$} if for all $r>0$ there are families $\cU_0,\ldots,\cU_n$ of subsets of $X$ such that
 \begin{enumerate}
     \item $X=\bigcup_{i=0}^n\bigcup_{U\in \cU_i}U$,
     \item $\sup_{U\in \cU_i} \mathrm{diam}(U)<\infty$ for all $i\in\N$, and
     \item for all $i\in \{0,\ldots, n\}$ and all distinct $U,V\in \cU_i$, we have $d(U,V)>r$.
 \end{enumerate}
 If $X$ has asymptotic dimension at most $n$ but it does not have asymptotic dimension at most $n-1$, then $X$ has \emph{asymptotic dimension $n$}
 \end{definition}

\iffalse
Here is an extremely useful characterization of when a u.l.f.\ metric space $X$ has
asymptotic dimension $0$: $X$ has asymptotic dimension $0$ if and only if for all $r>0$ there is $n_r\in\N$ such that any $n_r$-path in $X$ has length at most $n_r$ (see \cite[Lemma 2.4]{LiWillett2018JLMS} for a proof of this).\footnote{Recall, given $r>0$, an \emph{$r$-path} in $X$ is a finite sequence $x_1,\ldots,x_n\in X$ such that $d(x_i,x_{i+1})<r$ for all $i\in \{1,\ldots, n-1\}$. The \emph{length} of this path is $n$.}
\fi

\begin{definition}\label{Defi.Coarse.Disj.Union}
    Let $(X_n,d_n)_n$ be a sequence of u.l.f.\ metric spaces. The \emph{coarse disjoint union of $(X_n,d_n)_n$} is a metric space $X=\bigsqcup_nX_n$ endowed with a metric $d$ such that
    \begin{enumerate}
        \item $d_n\restriction X_n\times X_n=d_n$ for all $n\in\N$ and
        \item $d(X_n,X_m)\to \infty$ as $n+m\to \infty $ with $n\neq m$.
    \end{enumerate}
\end{definition}

Technically speaking, the article ``the'' in the definition of coarse disjoint unions is not correct since there are many metrics satisfying the above. However, any such metric gives rise to bijectively coarsely equivalent spaces. As we only care about the coarse type of our spaces, this justify our choice of article.

\begin{theorem} 
 The following are equivalent for a u.l.f.\ metric space $X$:
     \begin{enumerate}
         \item\label{Prop.HausdorffSSS.Item1} $X$ does not contain a coarse copy  of $\cM_{3/2}$.
         \item\label{Prop.HausdorffSSS.Item2} $\calOrb(\partial X)$ is Hausdorff.
     \end{enumerate} \label{Thm.Hausdorfness.Charac.InTheText}
\end{theorem}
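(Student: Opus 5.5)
The plan is to prove both directions by working with the open quotient map $q\colon \partial X\to \calOrb(\partial X)$ (Corollary \ref{CorQuotientOpen}). Since $q$ is open, $\calOrb(\partial X)$ is Hausdorff if and only if the orbit relation $\Orb\subseteq \partial X\times\partial X$ is closed. So the statement reduces to: $\Orb$ is closed in $\partial X\times \partial X$ if and only if $X$ contains no coarse copy of $\cM_{3/2}$.

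\emph{Step 1: (2)$\Rightarrow$(1).} Assume $\cM_{3/2}$ embeds, and for simplicity take $X=\cM_{3/2}$ in the concrete form $\{(1,k^2),(n,k^2)\colon k\in\N_n\}$. For each $n$, let $\xi_n$ be a nonprincipal ultrafilter containing $\{(1,k^2)\colon k\in\N_n\}$, and let $\eta_n=\overline{f_n}(\xi_n)$, where $f_n(1,k^2)=(n,k^2)$ is a partial translation. Then $(\xi_n,\eta_n)\in\Orb$ for every $n$. Now take a nonprincipal limit $\xi$ of the $\xi_n$ and $\eta$ of the $\eta_n$ along a common ultrafilter $\zeta$ on $\N$; then $(\xi,\eta)$ lies in the closure of $\Orb$. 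I claim $\xi\notin\Orb(\eta)$. Indeed, $\xi$ contains $\{(1,k^2)\}$, while $\eta$ contains $\{(n,k^2)\colon n\geq m\}$ for every $m$. A partial translation of displacement at most $r$ can map points $(n,k^2)$ with $n^2$ large only to points at distance more than $r$ from the column $\{1\}\times\N$, so Proposition \ref{PropOrbitsAreCountable}\eqref{PropOrbitsAreCountable.Item1} fails. Hence $\Orb$ is not closed.

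\emph{Step 2: (1)$\Rightarrow$(2).} Suppose $\Orb$ is not closed, witnessed by a net $(\xi_\lambda,\eta_\lambda)\to(\xi,\eta)$ with $\eta_\lambda\in\Orb(\xi_\lambda)$ but $\eta\notin\Orb(\xi)$. I would first reduce to the case where $X$ has no coarse copy of $\cM_2$; otherwise $\cM_{3/2}$, being a subspace of $\cM_2$ up to coarse equivalence, already embeds. Then Theorems \ref{CorClosedQuasiOrbitsCoarseDisjointUnionSing.Inthetext} and \ref{ThmCharacterizationOrbClosed.IntheText} make all orbits finite, and each $\xi\in\partial X$ has an $r$ with $\lim_{x,\xi}d(N_r(x),X\setminus N_r(x))=\infty$.

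Next I translate non-closedness into geometry. For each $r$, consider $E_r=\{(x,y)\colon d(x,y)\le r\}$, decomposed into finitely many partial translation graphs. For every $A\in\xi$, $B\in\eta$ and every $r$, some pair $(\xi_\lambda,\eta_\lambda)$ lies in $\overline A\times\overline B$ and is related by a partial translation $g_\lambda$. By Proposition \ref{PropOrbitsAreCountable}\eqref{PropOrbitsAreCountable.Item1}, the displacements $s_\lambda$ of these translations must be unbounded, since bounded displacement would pass to the limit and put $\eta$ in $\Orb(\xi)$. This yields, for each $n$, sets $A_n\subseteq X$ and partial translations $h_n$ on $A_n$ with displacement roughly $s_n\to\infty$. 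These have bounded displacement for fixed $n$, but displacement larger than any fixed bound as $n$ grows, and the $A_n$ and $h_n(A_n)$ accumulate at $\xi$ and $\eta$ respectively.

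Using u.l.f.-ness and a diagonal argument, as in Lemma \ref{Lemma.Coarse.Embedding.of.M2}, I would choose infinite $\N_n$ and points $x_k\in A_n$, $k\in\N_n$, that are mutually far apart and far across different $n$. The map $(1,k^2)\mapsto x_k$, $(n,k^2)\mapsto h_n(x_k)$ should then be a coarse embedding of $\cM_{3/2}$, provided the pairs are chosen far apart from each other.

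\emph{Main obstacle.} The hardest part will be guaranteeing that the copies of the first column $\{x_k\}$ for different $n$ fit together coherently. I expect to handle this by choosing all $A_n$ inside a single set in $\xi$ on which the finite-orbit structure of $\xi$ is uniform; this requires both that $d(N_r(x),X\setminus N_r(x))$ is large there and that the displacement $d(x_k,h_n(x_k))$ tends to infinity in $n$.
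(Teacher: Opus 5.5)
Your proof is correct. For (2)$\Rightarrow$(1) it is the paper's argument. Your $\xi=\lim_\zeta\xi_n$ and $\eta=\lim_\zeta\eta_n$ are exactly the paper's $\eta_1,\eta_2$. Showing that $(\xi,\eta)$ lies in the closure of the orbit relation but not in $\Orb$ is the same as the paper's observation that no saturated neighbourhoods of the two orbits are disjoint, because the quotient map is open (Corollary \ref{CorQuotientOpen}). One small slip: the distance from $(n,k^2)$ to the first column is $n-1$, so you want $n$ large, not $n^2$.

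For (1)$\Rightarrow$(2) the routes genuinely differ. The paper argues directly. If $\cM_{3/2}$ does not embed, there is an $r$ such that, for each $C$, only finitely many pairs satisfy $r<d(x,y)\le C$. This forces asymptotic dimension zero and a bijective coarse equivalence with a coarse disjoint union $\bigsqcup_n\{1,\ldots,i(n)\}$ of uniformly bounded finite sets. In that model, distinct orbits are separated by the saturations of $\overline{p^{-1}(A)}$ and $\overline{p^{-1}(B)}$ for disjoint $A,B\subseteq\N$. You instead argue the contrapositive through closedness of the orbit relation. A pair $(\xi,\eta)$ in its closure with $\eta\notin\Orb(\xi)$ yields, for every $n$, infinitely many pairs with $n<d(x,y)\le s_n$. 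That is precisely the pair criterion for containing $\cM_{3/2}$ which the paper also invokes. Your route avoids asymptotic dimension and the structural decomposition altogether. The paper's route gives an explicit description of such spaces, and hence of their orbit space.

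Two points in Step 2 should be tightened.

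First, you need a \emph{pointwise} lower bound on displacement, not just that the suprema $s_\lambda$ are unbounded. This comes straight from Proposition \ref{PropOrbitsAreCountable}\eqref{PropOrbitsAreCountable.Item1}. Since $\eta\notin\Orb(\xi)$, for each $n$ there is $B_n\in\eta$ with $A_n=X\setminus N_n(B_n)\in\xi$, so $d(A_n,B_n)>n$. Eventually $(\xi_\lambda,\eta_\lambda)\in\overline{A_n}\times\overline{B_n}$. If $\eta_\lambda=\overline g(\xi_\lambda)$, then $\{x\in A_n\cap\dom(g)\colon g(x)\in B_n\}$ lies in $\xi_\lambda$, so it is infinite. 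On this set, $n<d(x,g(x))\le\sup_{y\in\dom(g)}d(y,g(y))$.

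Second, the obstacle you single out as the hardest part is not real. In $\{(1,k^2),(n,k^2)\}$, two points in different rows are at distance at least $|k^2-k'^2|\ge k+k'$. So $\cM_{3/2}$ is coarsely just a coarse disjoint union of two-point sets $\{(1,k^2),(n_k,k^2)\}$ of diameter $n_k-1$, with each value occurring infinitely often. The first column carries no coarse structure beyond being sparse, so there is nothing to make coherent. Choose the pairs $\{x,g(x)\}$ inductively, each far from all previously chosen points; this is possible because each domain is infinite and $X$ is u.l.f. That already gives a coarse embedding, exactly as you say in the sentence before the obstacle. The reduction to spaces without $\cM_2$, the finiteness of orbits, and the uniform set in $\xi$ can all be dropped.
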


\begin{proof}
\eqref{Prop.HausdorffSSS.Item1}$\Rightarrow$\eqref{Prop.HausdorffSSS.Item2}
As mentioned in the introduction, $\cM_{3/2}$ is coarsely equivalent to
\begin{equation}\label{Eq.M32.prova}
\{(1,k^2),(n,k^2)\in \N^2\colon n\in\N,\ k\in \N_n\},\end{equation}
where $\N=\bigsqcup_n\N_n$ is a partition of $\N$ into infinite subsets and $\N^2$ is considered with its standard metric. It is therefore straightforward that $X$ contains a coarse copy of $\cM_{3/2}$ if and only if  for all $r>0$ there are sequences $(x_n)_n$ and $(y_n)_n$ of distinct elements of $X$ such that $\sup_{n}d(x_n,y_n)<\infty$ and $d(x_n,y_n)>r$ for all $n\in\N$. This characterization may be even more clear if one consideres the other description of $\cM_{3/2}$ given in the introduction: $\cM_{3/2}$ is the coarse disjoint union  over $n$ of $X_n=\cM_1\sqcup\cM_1$ where the Hausdorff distance in each $X_n$ between the disjoint copies of $\cM_1$ is finite but goes to infinite as $n\to \infty$.

If $X$ does not contain a coarse copy of $\cM_{3/2}$, fix then $r>0$ for which there are no sequences as above. Moreover, the lack of coarse embeddability of  $\cM_{3/2}$ into  $X$ implies that $X$ must have asymptotic dimension zero. So, there are subsets $(U_n)_n$ of $X$ such that $d(U_n,U_m)>r$ for all distinct $n,m\in\N$ and $\sup_n\mathrm{diam}(U_n)<\infty$. By our choice of $r$, we must have that $d(U_n,U_m)\to \infty$ as $n+m\to \infty$ with $n\neq m$. As $X$ is u.l.f., there is $k\in\N$ such that $|U_n|\leq k$ for all $n\in\N$. Then, letting $i(n)=|U_n|$, we have that $X$ is bijectively coarsely equivalent to the   coarse disjoint union
\[Y=\bigsqcup_n\{1,\ldots, i(n)\}.\]

Let  $\xi,\zeta\in \partial Y$ be in different orbits. It is straightforward that there are  disjoint $A,B\subseteq \N$ such that $\bigsqcup_{n\in A}\{1,\ldots, i(n)\}\in\xi$ and $\bigsqcup_{n\in B}\{1,\ldots, i(n)\}\in \zeta$.  The orbits of $\overline A$ and $\overline B$ are clearly disjoint.

\eqref{Prop.HausdorffSSS.Item2}$\Rightarrow$\eqref{Prop.HausdorffSSS.Item1} Suppose $X$ contains a coarse copy of $\cM_{3/2}$. For simplicity of notation, assume $\cM_{3/2}$ is the set in \eqref{Eq.M32.prova} and it is contains in $X$. For each $n\in\N$, let
\begin{equation}
X_n=\{(1,k^2),(n,k^2)\in \N^2\colon \ k\in \N_n\},\end{equation}
Let $\xi$ be a nonprincipal ultrafilter on $\N$ and, for each  $p\in\N$, let $\zeta_n$ be a nonprincipal ultrafilter on $\N_n$. Define nonprincipal ultrafilters $\eta_1$ and $\eta_2$ on $X$ by letting
\begin{align*}
A\in \eta_1\ \Leftrightarrow\ \{n\in\N\colon \{k\in\N\colon (1,k^2)\in A\}\in \zeta_n\}\in\xi
\end{align*}
and
\begin{align*}
A\in \eta_2\ \Leftrightarrow\ \{n\in\N\colon \{k\in\N\colon (n,k^2)\in A\}\in \zeta_n\}\in\xi.
\end{align*}
It is immediate that $\eta_1$ and $\eta_2$ are not in the same orbit and that for all $A_1\in \eta_1$ and $A_2\in \eta_2$, the orbits of the subsets $\overline{A_1}$ and $\overline{A_2}$ intersect. In other words, $\Orb(\eta_1)$ and $\Orb(\eta_2)$ are distinct elements of $\calOrb(\partial X)$ which cannot be separated by open subsets. So, $\calOrb(\partial X)$ is not Hausdorff.
\end{proof}

\subsubsection{When is $\calOrb(\partial X)$ a $T_0$ space?}
While we have characterizations of Hausdorffness and $T_1$-ness, this is not the case for $T_0$-ness. We now present our findings about the $T_0$ axiom for $\calOrb(\partial X)$. The following is completely immediate from the definitions.

\begin{proposition}\label{CorT0IFFOrb=qOrb}
Let $X$ be a u.l.f.\ metric space. Then $\calOrb(\partial X)$ is $T_0$ if and
only if $\Orb=\qOrb$ on $\partial X$.\qed
\end{proposition}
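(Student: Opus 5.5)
The plan is to unwind the definitions of the quotient topology on $\calOrb(\partial X)$ and use that the quotient map $q\colon \partial X\to\calOrb(\partial X)$ is open, which is the restriction to $\partial X$ of Corollary \ref{CorQuotientOpen}. Since $\partial X$ is invariant and Proposition \ref{PropOrbUOpenAndEqualqOrbU} applies to open subsets of $\partial X$, saturations of open sets are open. The first step is to identify the closure of a point in $\calOrb(\partial X)$. Because $q$ is open and surjective, the preimage under $q$ of the closure of a set equals the closure of its preimage. Hence the closure of the point $\Orb(\xi)$ is $q(\COrb(\xi))$, and its preimage $\COrb(\xi)\cap\partial X$ is automatically saturated, since orbit closures are invariant. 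Here $\COrb(\xi)\subseteq \partial X$ because $\partial X$ is closed and invariant.

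Next, recall that a space is $T_0$ if and only if distinct points have distinct closures. For $\xi,\eta\in\partial X$, the points $\Orb(\xi)$ and $\Orb(\eta)$ have the same closure in $\calOrb(\partial X)$ exactly when $q(\COrb(\xi))=q(\COrb(\eta))$. Since both sets are saturated, this holds exactly when $\COrb(\xi)=\COrb(\eta)$, which by definition means $\eta\in\qOrb(\xi)$.

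The conclusion then follows directly. $T_0$ holds if and only if, for all $\xi,\eta\in\partial X$, $\qOrb(\xi)=\qOrb(\eta)$ implies $\Orb(\xi)=\Orb(\eta)$. Since $\Orb\subseteq\qOrb$ always, this is exactly the statement that $\Orb=\qOrb$ on $\partial X$.

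The only point needing care is the closure computation in the first step. An alternative route avoids openness of $q$: a subset of $\calOrb(\partial X)$ is closed precisely when its preimage is a closed invariant subset of $\partial X$, and the smallest closed invariant set containing $\xi$ is $\COrb(\xi)$. That argument needs no real obstacle.
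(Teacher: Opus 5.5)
Your proof is correct and is the definitional unwinding the paper means when it calls the statement immediate. Closures of points in $\calOrb(\partial X)$ are the images of the invariant sets $\COrb(\xi)$, so two orbits are topologically indistinguishable exactly when they lie in the same quasi-orbit; this is also what the paper's subsequent remark says in identifying $\partial X/\qOrb$ as the Kolmogorov quotient of $\calOrb(\partial X)$. Either of your routes to the closure computation (openness of $q$, or taking the smallest closed saturated set containing $\xi$) works.
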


\begin{remark}
    Proposition \ref{CorT0IFFOrb=qOrb} is actually valid in a much broader sense. Precisely, consider  a dynamical system $(M,\mathcal S)$ where $M$ is a  locally compact Hausdorff space and $\mathcal S$ is   an inverse
semigroup $\mathcal S$ acting on $M$ by partial homeomorphisms
$s\colon \dom(s)\to \ran(s)$ between open subsets of $M$. Then the quotient space $M/\qOrb$ is always $T_0$ --- in fact, it is the $T_0$-ization (Kolmogorov quotient) of the quotient space $M/\Orb$. In particular, $M/\Orb$ is $T_0$ if and only if $\Orb=\qOrb$.
\end{remark}

 While Theorem \ref{CorClosedQuasiOrbitsCoarseDisjointUnionSing.Inthetext} gives that $\calOrb(\partial\cM_2)$ is not $T_1$, the next proposition shows that this space is at least $T_0$.

\begin{proposition}\label{PropSimpleSpaceT_0}
The orbit space $\calOrb(\partial \cM_2)$ is $T_0$. \end{proposition}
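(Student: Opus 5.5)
By Proposition \ref{CorT0IFFOrb=qOrb}, it suffices to show that $\Orb=\qOrb$ on $\partial\cM_2$. That is, whenever $\xi,\eta\in\partial\cM_2$ satisfy $\xi\in\COrb(\eta)$ and $\eta\in\COrb(\xi)$, we must show $\Orb(\xi)=\Orb(\eta)$.

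We work in the model $X=\{(m^2,n^2)\colon n\geq m\}$, which is bijectively coarsely equivalent to $\cM_2$; orbit notions are invariant under such equivalences. Write $X$ as the disjoint union of the columns $C_m=\{(m^2,n^2)\colon n\geq m\}$. The key geometric fact, already used in the proof of Proposition \ref{prop:uncountably_many_disjoint_orbits}, is this: for every $s>0$ there is $k$ such that two distinct points of $X$ at distance $\leq s$ both lie in $\bigcup_{m\leq k}C_m$. In particular, a partial translation of displacement $\leq s$ is the identity outside $\bigcup_{m\leq k}C_m$.

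Split into two cases for $\xi\in\partial X$.

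\emph{Case (a): some $\bigcup_{m\leq k}C_m\in\xi$.} Then some single column $C_m\in\xi$, since $\xi$ is an ultrafilter. All columns are at finite Hausdorff distance from one another: $C_m$ and $C_1$ differ by the horizontal shift $(1,n^2)\mapsto(m^2,n^2)$ on $n\geq m$. Hence $\xi$ is orbit equivalent to an ultrafilter $\xi'$ on $C_1\cong\{n^2\}$, a copy of $\cM_1$.

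\emph{Case (b): $\bigcup_{m\leq k}C_m\notin\xi$ for all $k$.} Then $\xi$ lives in the ``tail'' $T_k=\bigcup_{m>k}C_m$ for every $k$. By the key fact and Proposition \ref{PropOrbitsAreCountable}\eqref{PropOrbitsAreCountable.Item1.5}, if $\eta\in\COrb(\xi)$ and $A\in\eta$, then $N_r(A)\in\xi$ for some $r$. I expect to deduce from this that $\COrb(\xi)=\{\xi\}$, i.e., that $\xi$ is a fixed point with trivial orbit. Indeed, fix $r$; taking $k$ large for $s=r$, we get $N_r(A)\cap T_k=A\cap T_k$, so $A\cap T_k\in\xi$ and hence $A\in\xi$. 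This gives $\eta\subseteq\xi$, so $\eta=\xi$.

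Now let $\xi,\eta$ be quasi-orbit equivalent. If either falls in case (b), its orbit closure is a singleton, so $\xi=\eta$. Note that case (b) is invariant under orbits, since $\Orb(\xi)=\{\xi\}$. If both fall in case (a), replace them by the orbit-equivalent $\xi',\eta'\in\partial C_1$. Since $\COrb$ and $\qOrb$ are orbit invariant, $\xi',\eta'$ are still quasi-orbit equivalent in $X$. Any $\zeta\in\COrb(\xi')$ is either in case (a) or in case (b). Case (b) elements have singleton closures, so they cannot be quasi-orbit equivalent to $\xi'$; the relevant relation therefore takes place within case (a), essentially inside the column $C_1$.

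The main obstacle is the final step: showing that $\xi'$ and $\eta'$ are orbit equivalent inside $X$. On $C_1\cong\{n^2\}$ alone every orbit is a singleton, since points are arbitrarily far apart. Within $X$, however, $\eta'\in\COrb(\xi')$ means that for each $A\in\eta'$ some $N_r(A)\in\xi'$. Here $N_r(A)\cap C_1$ is contained in $A$ together with the points of $C_1$ at height $n^2\leq r$ (through horizontal moves), and vertical neighbours are far apart. I will check this carefully: a point $(1,n^2)$ is within $r$ of $(m^2,n^2)$ for $m^2\leq r+1$, but within column $1$ distinct points at heights $n^2\neq n'^2$ with $n$ large are far apart. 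Hence $N_r(A)\cap C_1\subseteq (A\cap C_1)\cup F$ for a finite set $F$, and since $\xi'$ is nonprincipal, $A\cap C_1\in\xi'$, giving $A\in\xi'$. Thus $\eta'\subseteq\xi'$, so $\eta'=\xi'$ and $\Orb(\xi)=\Orb(\eta)$. Verifying this neighbourhood computation is the crux; once it holds, $\calOrb(\partial\cM_2)$ is $T_0$.
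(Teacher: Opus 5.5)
Your overall strategy works, but it differs from the paper's. The paper separates points directly. If one of the two orbits is closed, the complement of that point of $\calOrb(\partial\cM_2)$ is an open set containing the other. Otherwise Theorem \ref{ThmCharacterizationOrbClosed.IntheText} forces both ultrafilters onto finitely many columns. There, distinct orbits must have distinct ``row'' ultrafilters, so a set of rows $A\subseteq\N$ splits them and gives separating open sets.

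You instead verify the criterion $\Orb=\qOrb$ of Proposition \ref{CorT0IFFOrb=qOrb} by computing orbit closures by hand. Tail ultrafilters are fixed points with $\COrb(\xi)=\{\xi\}$, and the orbit closure of an ultrafilter on $C_1$ meets $\overline{C_1}$ only in that ultrafilter. Your route avoids the closed-orbit characterization and describes $\COrb(\xi)$ explicitly. The paper's route gives the slightly stronger fact that distinct non-closed orbits can be separated by disjoint open sets.

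The step you call the crux is, however, misstated. The inclusion $N_r(A)\cap C_1\subseteq (A\cap C_1)\cup F$ with $F$ finite is false for a general $A\in\eta'$. As you note yourself, $(1,n^2)$ is within $r$ of $(m^2,n^2)$ whenever $m^2\leq r+1$. So horizontal moves put infinitely many points of $C_1\setminus A$ into $N_r(A)$ as soon as $A$ meets columns $2,3,\dots$. For instance, take $A=E\cup C_2$ with $E\subseteq C_1$ and $E\in\eta'$. Then $N_3(A)\supseteq C_1\setminus\{(1,1)\}$, which belongs to every nonprincipal ultrafilter containing $C_1$, so nothing about $E$ follows.

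The repair is to apply Proposition \ref{PropOrbitsAreCountable}\eqref{PropOrbitsAreCountable.Item1.5} to $A\cap C_1$, which lies in $\eta'$ because $C_1\in\eta'$. For $A\subseteq C_1$, take a point $(1,n^2)\notin A$ within $r$ of some $(1,n'^2)\in A$. Then $n\neq n'$, hence $n+n'\leq |n^2-n'^2|\leq r$. So $F=\{(1,n^2)\colon n\leq r\}$ works. Nonprincipality of $\xi'$ then gives $A\cap C_1\in\xi'$, and $\eta'\subseteq\xi'$ follows. With this correction your proof is complete.
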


\begin{proof}
    Let $\xi,\eta\in \partial X$ and suppose $\Orb(\xi)\neq \Orb(\eta)$. If either $\Orb(\xi)$ or $\Orb(\eta)$ are closed, then its complement is an open neighborhood of one of this elements which does not contain the other, as desired. Suppose then that neither  $\Orb(\xi)$ nor $\Orb(\eta)$ are closed. For simplicity, let us consider the representation of $\cM_2$ given by \eqref{Rq.Definition.Mk.concreta}, so,
    \[\cM_2=\{(m^2,n^2)\in \N^2\colon m<n\}.\]
    By Theorem \ref{ThmCharacterizationOrbClosed.IntheText}, it is immediate that there is $k\in\N$ such that both $\xi $ and $\eta$ contain
    \[\{(m^2,n^2)\in \cM_2\colon  m\leq k\}.\]
    Moreover, as $\xi$ and $\eta$ are not in the same orbit, there must be $A\subseteq \N$ such that
\[ B=   \{(m^2,n^2)\in \cM_2\colon \ m\leq k,\ n\in A\}\in \xi\] and
\[   C=  \{(m^2,n^2)\in \cM_2\colon   m\leq k,\ n\notin A\}\in \eta.\]
Then $\overline B$ and $\overline C$ are disjoint open subsets containing $\xi $ and $\eta$, respectively.
\end{proof}

In the opposite direction, we now proceed to obtain  Theorem~\ref{CorAsymDomQuaseLargerOrb.Inthetext}, which provides a fairly weak condition which ensures  that $\calOrb(\partial X)$ is not   $T_0$. We start with a couple preparatory propositions.

\begin{proposition}\label{PropCoarseEmbOrbSpaceHomeo}
    Let $X$ and $Y$  be  u.l.f.\ metric spaces.
    \begin{enumerate}
        \item\label{PropCoarseEmbOrbSpaceHomeo.1} If $X\subseteq Y$, then this inclusion  induces a homeomorphic embedding of $\calOrb(\partial X) $ onto an open subset of  $\calOrb(\partial Y)$.
        \item\label{PropCoarseEmbOrbSpaceHomeo.2} If $X\subseteq Y$ and $X$ is cobounded in $Y$, the induced homeomorphism of the previous item is surjective.
        \item \label{PropCoarseEmbOrbSpaceHomeo.3} Any coarse embedding   $f\colon X\to Y$    induces a homeomorphic embedding of $\calOrb(\partial X) $ onto an open subset of  $\calOrb(\partial Y)$.
    \end{enumerate}

\end{proposition}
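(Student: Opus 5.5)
The plan is to prove items \eqref{PropCoarseEmbOrbSpaceHomeo.1} and \eqref{PropCoarseEmbOrbSpaceHomeo.2} directly from the ultrafilter description of orbits, and then reduce item \eqref{PropCoarseEmbOrbSpaceHomeo.3} to them.

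\textbf{Item \eqref{PropCoarseEmbOrbSpaceHomeo.1}.} For $X\subseteq Y$, the space $\beta X$ is identified with the clopen set $\overline X\subseteq\beta Y$, and $\partial X$ with $\overline X\cap\partial Y$.
\begin{enumerate}
\item The map $\Orb_X(\xi)\mapsto \Orb_Y(\xi)$ is well defined and injective. Both directions follow from Proposition \ref{PropOrbitsAreCountable}\eqref{PropOrbitsAreCountable.Item1}, with one subtlety: the $N_r$ in $X$ is the trace on $X$ of the $N_r$ in $Y$, and both ultrafilters contain $X$.
\item Continuity comes from the universal property of quotients, because the inclusion $\partial X\hookrightarrow\partial Y$ is continuous.
\item The image is $\Orb_Y(\overline X\cap\partial Y)$, which is open by Proposition \ref{PropOrbUOpenAndEqualqOrbU} (restricted to $\partial Y$). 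Its elements are exactly the orbits meeting $\partial X$.
\item The map is open. An open set of $\calOrb(\partial X)$ pulls back to an $\Orb_X$-saturated open $U\subseteq\partial X$. This $U$ is open in $\partial Y$, since $\overline X$ is clopen. Then $\Orb_Y(U)$ is open, and $\Orb_Y(U)\cap\partial X=U$ by injectivity. So the image of $U$ is open, and we get a homeomorphism onto an open subset.
\end{enumerate}

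\textbf{Item \eqref{PropCoarseEmbOrbSpaceHomeo.2}.} If $X$ is $s$-dense in $Y$, then $N_s(X)=Y$. For any $\eta\in\partial Y$, Proposition \ref{PropOrbitsAreCountable}\eqref{PropOrbitsAreCountable.Item1.5} gives $\eta\in\COrb(\xi)$-type information. More directly, we cover $Y$ by finitely many partial translations into $X$ using u.l.f.-ness: choose $f\colon Y\to X$ with $d(y,f(y))\le s$ and split $Y$ into finitely many pieces on which $f$ is injective. This puts every orbit through $\partial X$, so the map is surjective.

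\textbf{Item \eqref{PropCoarseEmbOrbSpaceHomeo.3}.} We reduce to the first two items.
\begin{enumerate}
\item Since $X$ is u.l.f., the fibers of $f$ have uniformly bounded diameter, hence uniformly bounded cardinality. Partition $X$ into finitely many pieces $X_1,\ldots,X_k$ on which $f$ is injective.
\item Each $f\restriction X_i$ is a bijective coarse equivalence onto $f(X_i)$. Such a map conjugates partial translations: $g\mapsto fgf^{-1}$ is a bijection $\PT(X_i)\to\PT(f(X_i))$, by the coarse embedding inequalities. Hence $\beta(f\restriction X_i)$ is a homeomorphism $\overline{X_i}\to\overline{f(X_i)}$ preserving orbits, by Proposition \ref{PropOrbitsAreCountable}\eqref{PropOrbitsAreCountable.Item1}.
\item Patch these together using $\beta f\colon\partial X\to\partial Y$. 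Show that $\xi\sim\xi'$ iff $\beta f(\xi)\sim\beta f(\xi')$. The key input is that $f$ coarsely embeds, so $f(N_r(A))\subseteq N_{s}(f(A))$ and conversely $f^{-1}(N_s(f(A)))\subseteq N_{r'}(A)$.
\item The image $f(X)$ is a subspace of $Y$, and $X$ is coarsely equivalent to it. So the induced map factors as $\calOrb(\partial X)\to\calOrb(\partial f(X))\to\calOrb(\partial Y)$. The second arrow is handled by item \eqref{PropCoarseEmbOrbSpaceHomeo.1}.
\item The first arrow is a homeomorphism. It is continuous, bijective by the neighborhood criterion, and open because $\beta f$ maps clopen sets $\overline A$ to clopen sets $\overline{f(A)}$ and saturations of open sets are open.
\end{enumerate}

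The main obstacle is step 5 of item \eqref{PropCoarseEmbOrbSpaceHomeo.3}: proving that the first arrow is open when $f$ is not injective. There I would use the finite partition $X_1,\ldots,X_k$ and treat each $\overline{X_i}$ separately.
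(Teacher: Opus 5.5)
Your proposal is correct, and for item (3) it takes a different route from the paper's proof.

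\textbf{Items (1) and (2).} Here you follow the paper. The paper gets (1) from the commuting square given by the Stone--\v{C}ech extension of the inclusion, together with the openness of the quotient maps (Corollary~\ref{CorQuotientOpen}); you spell this out. The paper gets (2) by moving every point of $Y$ a bounded distance into $X$ using finitely many partial translations, which is what you do.

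\textbf{Item (3), the paper's route.} The paper picks a cobounded $Z\subseteq X$ on which $f$ is injective. It identifies $\calOrb(\partial X)$ with $\calOrb(\partial Z)$ using (1)--(2). It then transports along the bijective coarse equivalence $Z\to f(Z)$, a step it leaves implicit, and finally applies (1) to $f(Z)\subseteq Y$.

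\textbf{Item (3), your route.} You factor through $f(X)$ and treat $\beta f$ on all of $\partial X$ at once.
\begin{itemize}
\item Your step 3 uses Proposition~\ref{PropOrbitsAreCountable}\eqref{PropOrbitsAreCountable.Item1} and the two coarse-embedding inclusions. It shows that $\beta f$ both preserves and reflects orbit equivalence.
\item The identity $\beta f(\overline A\cap\partial X)=\overline{f(A)}\cap\partial Y$ makes $\beta f$ an open surjection $\partial X\to\partial f(X)$.
\item Hence the induced map $\calOrb(\partial X)\to\calOrb(\partial f(X))$ is a homeomorphism, and composing with item (1) finishes.
\end{itemize}
This argument is self-contained, makes explicit the step the paper leaves implicit, and does not need item (2).

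\textbf{The step 5 obstacle is not real.} The identity $\beta f(\overline A\cap\partial X)=\overline{f(A)}\cap\partial Y$ holds whether or not $f$ is injective: extend $\{A\cap f^{-1}(B)\colon B\in\eta\}$ to an ultrafilter. So your steps 1--2 can be dropped, namely the partition $X_1,\ldots,X_k$ and the conjugation $g\mapsto fgf^{-1}$.

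\textbf{Minor points.} The uniform bound on the diameters of the fibers of $f$ comes from $f$ being a coarse embedding, not from u.l.f.-ness; u.l.f.-ness then bounds their cardinality. Finiteness of the fibers is what guarantees $\beta f(\partial X)\subseteq\partial Y$, which you use implicitly.
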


\begin{proof}
\eqref{PropCoarseEmbOrbSpaceHomeo.1} Suppose $X\subseteq Y$ and let $i\colon X\to Y$ be the inclusion map. Let then $\overline i\colon \partial X\to \partial Y$ be the restriction to $\partial X$ of the Stone--\v{C}ech extension $\beta X\to \beta Y$ of $i$. So, $\overline i$ is a homeomorphism between $\partial X$ and the open subset  $\overline X\subseteq \partial Y$.
 This map induces a map $j\colon \calOrb(\partial X)\to \calOrb(\partial Y)$ which makes the diagram in Figure \ref{Diagram.1} commute, where the vertical arrows in this diagram as the canonical quotient maps. As this quotient maps are open maps (Corollary \ref{CorQuotientOpen}), the result follows.
 \begin{figure}[h]
 \[ \xymatrix{
  \partial X\ar[r]^{\overline i}\ar[d] & \partial Y\ar[d]\\
  \calOrb(\partial X) \ar[r]_j       & \calOrb(\partial Y)
 } \]
\caption{}\label{Diagram.1}
\end{figure}

\eqref{PropCoarseEmbOrbSpaceHomeo.2} Suppose further more that $X$ is a cobounded subset of $Y$. Fix $r>0$ such that $Y=N_r(Y)$. As $Y$ is u.l.f., there is $N\in\N$ and a partition $Y=\bigsqcup_nY_n$ such that each $Y_n$ has diameter at most $r$ and cardinality at most $N$. Therefore, every $\xi\in \partial Y$ is in the orbit of some $\zeta\in \partial X$. This shows that the map defined in the previous time is surjective.

\eqref{PropCoarseEmbOrbSpaceHomeo.3} If $f\colon X\to Y$ is a coarse embedding, then there is $r>0$ such that $d_X(x,x')>r$ implies $d_Y(f(x),f(x'))>0$ for all $x,x'\in X$, where $d_X$ and $d_Y$ are the distances in $X$ and $Y$ respectively. So, there is $Z\subseteq X$ cobounded in $X$ such that $f\colon Z\to Y$ is injective.  The result then follows from the previous two items.
\end{proof}

\begin{proposition}\label{PropIfContainsSubspaceWithNoClosedOrbitsQuasiOrbWeakerOrb}
    Let $Y$ and $X$ be  u.l.f.\ metric space and suppose that, for all  $\xi\in \partial Y$,  $\Orb(\xi)$ is not  closed in $\calOrb(\partial Y)$. If $Y$ coarsely embeds into $X$, then $\calOrb(\partial X)$ is not $T_0$.
    \end{proposition}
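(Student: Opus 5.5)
The plan is to reduce to $Y$ itself and then find, inside $\partial Y$, two distinct orbits that are quasi-orbit equivalent. By Proposition \ref{PropCoarseEmbOrbSpaceHomeo}\eqref{PropCoarseEmbOrbSpaceHomeo.3}, a coarse embedding $Y\to X$ induces a homeomorphic embedding of $\calOrb(\partial Y)$ onto an open subset of $\calOrb(\partial X)$. Since $T_0$ passes to subspaces, it suffices to show that $\calOrb(\partial Y)$ is not $T_0$. By Proposition \ref{CorT0IFFOrb=qOrb}, this amounts to producing $\xi\in\partial Y$ with $\qOrb(\xi)\neq\Orb(\xi)$.

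To find such a $\xi$, I use a minimal closed invariant set. The set $\partial Y$ is compact, nonempty (we may assume $Y$ is infinite, otherwise the hypothesis is vacuous and the claim is meaningless), and invariant under $\PT(Y)$. By Zorn's lemma, using compactness to see that a decreasing chain of nonempty closed invariant subsets of $\partial Y$ has nonempty closed invariant intersection, there is a nonempty closed invariant $F\subseteq\partial Y$ that is minimal among such sets. Pick $\xi\in F$. Then $\COrb(\xi)\subseteq F$ is closed, invariant and nonempty, so $\COrb(\xi)=F$ by minimality.

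One subtlety needs checking here. Theorem \ref{ThmCharacterizationqOrbClosed}\eqref{PropCharacterizationqOrbClosedItem3} speaks of minimality among closed invariant subsets of $\beta Y$, whereas $F$ is minimal only among those contained in $\partial Y$. These agree because any nonempty closed invariant $G\subseteq\COrb(\xi)\subseteq\partial Y$ lies in $\partial Y$. Hence Theorem \ref{ThmCharacterizationqOrbClosed} gives $\qOrb(\xi)=\COrb(\xi)$.

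It remains to conclude. By hypothesis $\Orb(\xi)$ is not closed in $\calOrb(\partial Y)$, that is, not closed in $\partial Y$; since $\partial Y$ is closed in $\beta Y$, it is not closed in $\beta Y$ either. Therefore $\Orb(\xi)\subsetneq\COrb(\xi)=\qOrb(\xi)$, so some $\eta\in\partial Y$ is quasi-orbit equivalent but not orbit equivalent to $\xi$. Proposition \ref{CorT0IFFOrb=qOrb} then shows that $\calOrb(\partial Y)$ is not $T_0$, and hence neither is $\calOrb(\partial X)$.

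I expect the main obstacle to be the bookkeeping in the embedding step. One must make sure the induced map from Proposition \ref{PropCoarseEmbOrbSpaceHomeo} really sends quasi-orbit equivalent points of $\partial Y$ to points in $\calOrb(\partial X)$ that remain topologically indistinguishable but distinct. This follows because a homeomorphic embedding preserves both distinctness and indistinguishability of points, so failure of $T_0$ transfers.
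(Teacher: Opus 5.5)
Your proof is correct and is essentially the paper's argument: Zorn's lemma gives a minimal nonempty closed invariant $F=\COrb(\xi)\subseteq\partial Y$, Theorem~\ref{ThmCharacterizationqOrbClosed} yields $\qOrb(\xi)=\COrb(\xi)\supsetneq\Orb(\xi)$, Proposition~\ref{CorT0IFFOrb=qOrb} shows $\calOrb(\partial Y)$ is not $T_0$, and Proposition~\ref{PropCoarseEmbOrbSpaceHomeo} transfers this to $\calOrb(\partial X)$; you state the direction of that embedding correctly, whereas the paper's text has it reversed. One small correction: for finite $Y$ the statement is not meaningless but false, since the hypothesis holds vacuously while the conclusion can fail (take $X$ finite as well), so infiniteness of $Y$ is a genuine implicit hypothesis, and both your argument and the paper's need it to get $\partial Y\neq\emptyset$.
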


 \iffalse
\begin{lemma}
    Let $X$ be a u.l.f.\ metric space and $Y\subseteq X$. Then, for all $\xi\in \overline Y$, we have
    \[\overline{\Orb(\xi)\cap \overline Y}=\COrb(\xi)\cap\overline Y.\]
\end{lemma}

\begin{proof}
    The inclusion ``$\subseteq$'' is immediate.  On the other hand, suppose $\eta$ is in the right-hand side of the equality above and let $A\in \eta$. We should notice that $\Orb(\xi)\cap \overline Y\cap\overline A\neq \emptyset$. Since $\eta\in \overline Y$, we can assume $A\subseteq Y$ and, as $\eta\in \Orb(\xi)$, there must be a $f\in \PT(X)_\xi$  such that $\mathrm{im}(f)\subseteq A$. As $A\subseteq Y$,   \[\overline f(\xi)\in \Orb(\xi)\cap \overline A= \Orb(\xi)\cap \overline Y\cap \overline A\] as desired.
\end{proof}
\fi

\begin{proof}
Let us first notice the result holds if $X=Y$. Using  Zorn's lemma, we can find a nonempty $F\subseteq \partial X$ which is minimal under inclusion with respect to being closed and invariant. By the minimality of $F$,   $F=\COrb(\xi)$ for all $\xi\in F$. Hence, by Theorem \ref{ThmCharacterizationqOrbClosed}, $F=\qOrb(\xi)$. As $\Orb(\xi)$ is not closed, $\qOrb(\xi)$ is larger than $\Orb(\xi)$ and the result follows from Proposition \ref{CorT0IFFOrb=qOrb}.

The general case  follows from  Proposition \ref{PropCoarseEmbOrbSpaceHomeo} since $\calOrb(\partial X)$ is homeomorphic to an open subset of $\calOrb(\partial Y)$.
\end{proof}

The following proposition deals with the space $\cM$ defined in \eqref{Rq.Definition.M}.

\begin{proposition}\label{PropMnotT0}
Let $\cM$ be the metric space defined above. Then none of the orbits $\Orb(\xi)$, for $\xi\in \partial \cM$, is closed.
\end{proposition}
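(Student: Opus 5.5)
By Theorem \ref{ThmCharacterizationOrbClosed.IntheText}, it suffices to show that for every $\xi\in\partial\cM$ and every $r>0$ the limit $\lim_{x,\xi}d(N_r(x),\cM\setminus N_r(x))=\infty$ fails. We will exhibit, for each $r$, a uniform bound $d(N_r(x),\cM\setminus N_r(x))\le C(r)$ valid for all but finitely many $x\in\cM$. Since $\xi$ is nonprincipal, cofinite sets belong to $\xi$, so the limit along $\xi$ is at most $C(r)$. Hence no $r$ works, and $\Orb(\xi)$ is not closed.

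We use the picture of $\cM$ as the set of integers $\sum_i a_i3^i$ with $a_i\in\{0,1\}$ and finitely many $a_i=1$. Fix $r>0$ and choose $p$ with $3^{p}>2r$. Call $B(x)=\{y\in\cM\colon y_i=x_i \text{ for all } i>p\}$ the \emph{$p$-block} of $x$; it has $2^p$ elements and diameter less than $3^{p+1}/2$.

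The key estimate is that points of $\cM$ in different $p$-blocks are far apart. Suppose $y,z$ differ in some digit of index $>p$, and let $q$ be the largest such index. Then
\[
|y-z|\ \ge\ 3^q-\sum_{i<q}3^i\ >\ 3^q/2\ \ge\ 3^{p+1}/2.
\]
Consequently, $N_r(x)$ is contained in $B(x)$ as soon as $r<3^{p+1}/2$, which holds by our choice of $p$.

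Next we produce points near $N_r(x)$ but outside it. Fix $x$ with $x\neq 0$ and a digit of index $\ge p+1$ equal to $1$; this excludes only finitely many $x$. Let $m$ be the smallest index $\ge p+1$ with $x_m=1$. Modify $x$ by setting digit $m$ to $0$ and digit $p+1$ to $1$ when $m>p+1$. If $m=p+1$, instead toggle digit $p+2$: this may create a far point, so we handle it differently.

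A cleaner route is to show directly that $N_r(x)\neq\cM$ and that some point of $\cM\setminus N_r(x)$ lies within a bounded distance $C(r)$ of $x$. Consider $x'=x\pm 3^{p+1}$, the point obtained by toggling digit $p+1$. Toggling is legitimate because digits are only $0$ or $1$, so $x'\in\cM$. We have $d(x,x')=3^{p+1}$, while $x'\notin N_r(x)$ because $3^{p+1}>r$. Then
\[
d\bigl(N_r(x),\cM\setminus N_r(x)\bigr)\ \le\ d(x,x')\ =\ 3^{p+1}\ =:\ C(r).
\]
This bound holds for every $x\in\cM$, so the limit along $\xi$ is at most $C(r)<\infty$. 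This is simpler than the block analysis above; the block estimate is not even needed.

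The only thing to check is that toggling digit $p+1$ always stays in $\cM$, and this holds since $\cM$ imposes no constraint beyond finiteness of the support. The main point to double-check is that the theorem's condition really concerns $d(N_r(x),X\setminus N_r(x))$ for the fixed $r$. For this, note that $\cM\setminus N_r(x)$ is nonempty and contains $x'$. With that confirmed, Theorem \ref{ThmCharacterizationOrbClosed.IntheText}\eqref{Item1ThmCharacterizationOrbClosed}$\Leftrightarrow$\eqref{Item3ThmCharacterizationOrbClosed} gives that $\Orb(\xi)$ is not closed for every $\xi\in\partial\cM$.
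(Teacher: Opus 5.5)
Your proof is correct and is essentially the paper's argument: toggle a single ternary digit whose weight $3^{q}$ exceeds $r$. This gives a point of $\cM\setminus N_r(x)$ at distance exactly $3^{q}$ from $x$, for every $x\in\cM$, so condition (3) of Theorem~\ref{ThmCharacterizationOrbClosed.IntheText} fails for every $r$. In a final write-up, delete the abandoned $p$-block analysis and the unfinished case split on $m$, since, as you note, neither is needed.
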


\begin{proof}
    The proof is  an application of Theorem \ref{ThmCharacterizationOrbClosed.IntheText}. For that, fix $r\in \N$. Then, if $\bar a=(a_i)_i,\bar b=(b_i)_i\in \cM$ are such that $a_i=b_i$ for all $i\in \N\setminus \{r\}$ and $a_r\neq b_r$, we have that
    $d(\bar a,\bar b)=3^r$, where $d$ is the metric of $\cM$ defined in \eqref{Eq.Definiion. Metric.M}. This shows that
    \[\bar a\in \{x\in \cM\colon d(N_r(x),X\setminus  N_r(x))\leq 3^r\}\]
    for any such $\bar a\in \cM$. Therefore, the set on the right-hand side above contains all tuples in $\cM$  with length larger than $r$  and, in particular, it is cofinite. Hence, if $\xi\in \partial \cM$, $\xi$ contains this set for all $r\in\N$. By  Theorem \ref{ThmCharacterizationOrbClosed.IntheText}, $\Orb(\xi)$ is not closed.
\end{proof}

 \begin{theorem}[Theorem \ref{CorAsymDomQuaseLargerOrb.Inthetext}]
Let $X$ be a u.l.f.\ metric space.  If $\cM$ coarsely embeds into $X$,
then $\calOrb(\partial X)$ is not $T_0$. In particular, if the
asymptotic dimension of $X$ is at least $1$, then $\calOrb(\partial X)$
is not $T_0$.\label{CorAsymDomQuaseLargerOrb.Inthetext}
\end{theorem}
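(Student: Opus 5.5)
The first assertion follows by combining Proposition \ref{PropMnotT0} with Proposition \ref{PropIfContainsSubspaceWithNoClosedOrbitsQuasiOrbWeakerOrb}. Take $Y=\cM$. By Proposition \ref{PropMnotT0}, no orbit $\Orb(\xi)$ with $\xi\in\partial\cM$ is closed; note that $\cM$ is u.l.f. By hypothesis $\cM$ coarsely embeds into $X$, so Proposition \ref{PropIfContainsSubspaceWithNoClosedOrbitsQuasiOrbWeakerOrb} gives directly that $\calOrb(\partial X)$ is not $T_0$. I would add one remark here: in the proof of that proposition the embedding direction should be read as ``$\calOrb(\partial Y)$ is homeomorphic to an open subset of $\calOrb(\partial X)$'' (Proposition \ref{PropCoarseEmbOrbSpaceHomeo}\eqref{PropCoarseEmbOrbSpaceHomeo.3}). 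Failure of $T_0$ passes from an open subspace to the whole space, since two points of the subspace that are topologically indistinguishable there remain indistinguishable in the ambient space. I would spell this out briefly.

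For the ``in particular'' clause, it suffices to show that if the asymptotic dimension of $X$ is at least $1$, then $\cM$ coarsely embeds into $X$. I would argue contrapositively and use the universality result quoted in the introduction. The cleanest route, as I see it, goes through the characterization already used in the proof of Theorem \ref{Thm.Hausdorfness.Charac.InTheText}. A u.l.f.\ space fails to have asymptotic dimension zero exactly when there is $r>0$ such that $r$-chains can be arbitrarily long while their endpoints drift arbitrarily far apart. In other words, for this $r$ there are finite $r$-paths $x^n_0,\ldots,x^n_{k_n}$ with $d(x^n_0,x^n_{k_n})\to\infty$.

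From such paths I would build a coarse copy of $\cM$ by a tree-like inductive construction. The model is $\cM$ itself, seen as natural numbers with ternary digits in $\{0,1\}$: the points with digits only below position $j$ form blocks of diameter about $3^j$, and distinct blocks at level $j$ are at distance about $3^{j}$ or more. I would realize level $j$ by choosing, inside $X$, two already-constructed copies of the level-$(j-1)$ configuration, placed at mutual distance comparable to a prescribed scale $R_j$ and far from everything else. Long $r$-paths with endpoints far apart supply points at every intermediate distance, up to an error of $r$, from a given point. This is what lets me place the second copy at a controlled distance from the first.

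I expect this step to be the main obstacle. The embedding of $\cM$ needs both upper and lower control at all scales simultaneously, and u.l.f.\ alone does not immediately guarantee room to place new copies far from the previous ones. If the direct construction becomes messy, I would instead cite the standard fact that a metric space of positive asymptotic dimension contains coarse copies of arbitrarily large finite pieces of $\cM$ uniformly, and glue them into $\cM$. Alternatively, I would bypass $\cM$ and apply Proposition \ref{PropIfContainsSubspaceWithNoClosedOrbitsQuasiOrbWeakerOrb} with $Y$ a coarse disjoint union of the long paths, a space that visibly satisfies the negation of Theorem \ref{ThmCharacterizationOrbClosed.IntheText}\eqref{Item3ThmCharacterizationOrbClosed} at every boundary point.
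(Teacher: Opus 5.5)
Your first paragraph is the paper's proof: Proposition \ref{PropMnotT0} combined with Proposition \ref{PropIfContainsSubspaceWithNoClosedOrbitsQuasiOrbWeakerOrb} for $Y=\cM$. Both of your side remarks are correct. In the proof of that proposition the direction is indeed that $\calOrb(\partial Y)$ sits as an open subspace of $\calOrb(\partial X)$, not the reverse as written there. And points that are indistinguishable in a subspace remain indistinguishable in the ambient space.

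The paper gives no separate argument for the ``in particular'' clause. The implicit route is the fact quoted in Lemma \ref{lemeta.tarde.cansado} from Li--Willett: asymptotic dimension at least $1$ yields a coarse copy of $\bigsqcup_n\{1,\ldots,n\}$. To this one adds that $\cM$ embeds there. Indeed, $\cM$ is itself the coarse disjoint union of the finite blocks $C_j$ of sequences whose last $1$ sits in position $j$. Each $C_j$ is isometric to a subset of an interval of length at most $3^j$, and $d(C_j,C_{j+1})\geq 3^j$. Your ``cite and glue'' fallback is exactly this route.

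Your main, tree-like route has a genuine gap as sketched. Long $r$-paths give you, for a base point $o$, points at every distance from $o$. That only controls pairwise distances from below, via $d(p,p')\geq |d(o,p)-d(o,p')|$. A coarse copy of $\cM$ also needs the upper bounds. It also needs an actual second copy of the level-$(j-1)$ configuration near the chosen point, and nothing in the sketch produces either. Obtaining them is essentially the problem of upgrading $r$-paths to uniformly coarsely embedded intervals, which is the nontrivial content hidden in the cited fact.

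Your last alternative, by contrast, is complete and is the cleanest argument; I would make it the main one.
\begin{itemize}
\item Choose finite $r$-connected sets $K_n\subseteq X$ with $\mathrm{diam}(K_n)\to\infty$ (or a single unbounded $r$-connected set). Let $Y=\bigcup_n K_n$ with the metric of $X$; it need not even be a coarse disjoint union.
\item Given $s>0$, all but finitely many $x\in Y$ lie in some $K_n$ with $\mathrm{diam}(K_n)>2s$. Then $K_n\not\subseteq N_s(x)$, and $r$-connectedness of $K_n$ gives $d(N_s(x),Y\setminus N_s(x))\leq r$.
\item Hence condition (3) of Theorem \ref{ThmCharacterizationOrbClosed.IntheText} fails at every $\xi\in\partial Y$, so no orbit in $\partial Y$ is closed.
\item Proposition \ref{PropIfContainsSubspaceWithNoClosedOrbitsQuasiOrbWeakerOrb} applied to the inclusion $Y\subseteq X$ then finishes the proof.
\end{itemize}
This uses only the elementary characterization of asymptotic dimension zero via uniformly bounded $r$-components. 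It bypasses $\cM$ and intervals entirely and does not depend on the Li--Willett input.
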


\begin{proof}
Let $X$ be a u.l.f.\ metric space containing a coarse copy of $\cM$ as in Proposition \ref{PropMnotT0}. It then follows from this proposition and Proposition \ref{PropIfContainsSubspaceWithNoClosedOrbitsQuasiOrbWeakerOrb} that $\calOrb(\partial X)$ is not $T_0$.
\end{proof}

Unfortunately, we could not solve   the problem of completely characterizing  $T_0$-ness of $\calOrb(\partial X)$. We leave this task for a smarter reader:

\begin{problem}\label{ProbCharacT0}
Is there a u.l.f.\ metric space $Y$ such that, given a u.l.f.\ metric space $X$, $\calOrb(\partial X)$ is $T_0$ if and only if $Y$ does not coarsely embeds into $X$? Notice that, by Theorem~\ref{CorAsymDomQuaseLargerOrb.Inthetext}, any such $Y$ necessarily has asymptotic dimension zero. Can it be the case that $Y=\cM$ works?
\end{problem}

\subsection{Zelenyuk's result on maximal orbit closures}
 We shall make use of  a breakthrough of
 Zelenyuk in  the following sections. As the language used in his work is slightly different than ours, let us state his result here for later reference. In order to state Zelenyuk's result, we start recalling the coarse structure of a finitely generated group.

Let $G$ be a finitely generated group and $F\subseteq G$ be a finite subset which generates $G$ and is \emph{symmetric}, i.e., if $g\in F$, then $g^{-1}\in F$. The \emph{Cayley graph of $G$ with respect to $F$}, denoted by $\mathrm{Cayley}(G,F)$, is the graph given by
\[\mathrm{Vertex}(\mathrm{Cayley}(G,F))=G\]
and \[ \mathrm{Edge}(\mathrm{Cayley}(G,F))=\{(g, h)\in G^2\colon g^{-1} h \in F\}.\]
Then $\mathrm{Cayley}(G,F)$ is a connected graph and, letting $d_{F}$ be the shortest path metric on  $\mathrm{Cayley}(G,F)$,  $(\mathrm{Cayley}(G,F),d_{F})$ becomes a  uniformly locally finite metric space. While the metric $d_F$ depends on $F$, the coarse type of this space does not: any other finite symmetric generating subset gives rise to a coarsely equivalent metric. Since for our large scale purposes this is all we need, we make an abuse of notation and simply denote $(\mathrm{Cayley}(G,F),d_{F})$ by $G$. This allows us to see any  finitely generated group  as a uniformly locally finite metric space.

 The following is \cite[Corollary 2 and Remark 7]{Zelenyuk2022FundMath}.

 \begin{theorem}[Zelenyuk]
 Let $G$ be a finitely generated group. Then every orbit closure in $\partial G$ is contained in a maximal orbit closure. In other words,  for all $\xi\in \partial G$ there is $\zeta \in \partial G$ with $\xi\in \COrb(\zeta)$ and such that if $\eta\in \partial G$ and $\zeta\in \COrb(\eta)$, then $\eta\in \COrb(\zeta)$.
     \label{Thm.Zelenyuk}
 \end{theorem}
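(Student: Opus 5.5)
Since this statement is quoted from \cite[Corollary~2 and Remark~7]{Zelenyuk2022FundMath}, the paper simply invokes it. If I had to prove it, I would first translate it into the language of the Stone--\v{C}ech semigroup $\beta G$ and then run a Zorn's lemma argument whose key step is building upper bounds for chains.

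\textbf{Translation.} Work with the left-invariant word metric $d_F$. Every partial translation $f$ has the form $f(x)=xg_x$ with $g_x$ ranging over a finite set, and so does its inverse. Using the u.l.f.\ partition argument from the proof of Proposition~\ref{PropOrbitsAreCountable}, I would show that $\Orb(\xi)=\{\xi g\colon g\in G\}$ for every $\xi\in\beta G$. Here $\beta G$ carries its usual right-topological semigroup structure, for which right multiplication $\rho_q\colon p\mapsto pq$ is continuous. Taking closures gives
\[\COrb(\xi)=\overline{\xi G}.\]
I then expect, and would have to check against the paper's convention, that $\overline{\xi G}$ is the principal right ideal $\xi\,\beta G$. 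Under this translation, the theorem says: every principal right ideal of $\beta G$ generated by an element of $G^*=\partial G$ is contained in a maximal one.

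\textbf{Zorn's lemma and countable chains.} Consider the preorder $\xi\preceq\zeta \iff \xi\in\COrb(\zeta)$ on $\partial G$, and pass to the induced partial order on orbit closures. By Zorn's lemma it suffices to show that every chain $(\COrb(\xi_\alpha))_{\alpha<\kappa}$ has an upper bound of the form $\COrb(\zeta)$. Proposition~\ref{PropClosureOrbDisjointOrComparable} guarantees that the relevant orbit closures are either nested or disjoint. For a chain of countable cofinality I would imitate the ultrafilter-limit constructions in the proofs of Theorems~\ref{ThmCharacterizationOrbClosed.IntheText} and~\ref{ThmCharacterizationqOrbClosed}. The naive choice $\zeta=\lim_{n\to\mathcal U}\xi_n$ does not obviously work, since each $\xi_n$ need not lie in $\COrb(\zeta)$. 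The fix is to choose, inductively, elements $\xi_n'$ together with witnessing sets $A_n$ and translates, so that the sets witnessing $\xi_n\in\COrb(\xi_{n+1}')$ are realized uniformly inside one set that is spread out in $G$. A product-ultrafilter $\zeta$ built over these sets then satisfies the criterion of Proposition~\ref{PropOrbitsAreCountable}\eqref{PropOrbitsAreCountable.Item1.5} for every $\xi_n$ simultaneously. Proposition~\ref{PropOrbitsAreCountable}\eqref{PropOrbitsAreCountable.Item2} says orbits are countable, which keeps this bookkeeping countable.

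\textbf{Main obstacle: uncountable cofinality.} Chains of uncountable cofinality are the heart of Zelenyuk's contribution. For these, the diagonal construction above yields no single generator: the union of the chain is invariant but need not be contained in one orbit closure. My first attempt would be a compactness argument. For each $\alpha$, let $K_\alpha=\{\zeta\in\partial G\colon \xi_\alpha\in\COrb(\zeta)\}$; the goal is to show these sets are nonempty and closed, and then conclude $\bigcap_\alpha K_\alpha\neq\emptyset$ by the finite intersection property.

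The difficulty is that $K_\alpha$ need not be closed. I would therefore replace $K_\alpha$ by a closed subset defined by a fixed ``witness pattern''. This is a family of sets $A\in\xi_\alpha$, each paired with a uniform bound $r$ so that $N_r(A)\in\zeta$; I would aim to choose these patterns coherently along the chain using the group structure. Right multiplication by $g$ is a global translation, so patterns can be transported along $G$, which is what should allow the patterns for different $\alpha$ to be made compatible. This is exactly the step where homogeneity of $G$ must be used, and I expect it to be the hardest part; in the paper itself we simply rely on \cite{Zelenyuk2022FundMath} for it.
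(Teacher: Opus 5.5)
\paragraph{Dictionary.} The paper does not reprove this result. It cites \cite[Corollary~2 and Remark~7]{Zelenyuk2022FundMath} and supplies only the dictionary: orbit closures in $\partial G$ are the principal left ideals $\beta G\cdot\xi=\overline{G\cdot\xi}$ with $\xi\in\partial G$. This uses that $\rho_\xi\colon p\mapsto p\cdot\xi$ is continuous and that $G$ is dense.

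Your dictionary is wrong as stated. With the left-invariant metric you correctly get $\Orb(\xi)=\xi G$. However, in the right topological semigroup $\beta G$ the map $\lambda_\xi\colon q\mapsto\xi\cdot q$ is not continuous for $\xi\in\partial G$. So $\overline{\xi G}$ has no reason to equal the principal right ideal $\xi\,\beta G$, and in general it does not. For $G=\mathbb Z$ one has $\overline{\xi+\mathbb Z}=\beta\mathbb Z+\xi$, a left ideal. When $\xi$ lies in a minimal left ideal, this is not a right ideal: otherwise it would be two-sided, hence contain the smallest ideal, forcing $\beta\mathbb Z$ to have only one minimal left ideal. Consequently the ``maximal principal right ideal'' statement you arrive at is not what Zelenyuk proves.

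The fix is to use the right-invariant word metric, which is isometric to $d_F$ via $g\mapsto g^{-1}$. Then every partial translation agrees on a member of $\xi$ with some $x\mapsto gx$, so $\Orb(\xi)=G\xi$ and $\COrb(\xi)=\rho_\xi(\beta G)=\beta G\,\xi$.

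\paragraph{The missing step.} The more serious problem is that the Zorn's lemma scheme carries no content by itself. By Proposition~\ref{PropClosureOrbDisjointOrComparable}, if $\COrb(\zeta)$ is maximal and contains one member of a chain, then every member of the chain meets $\COrb(\zeta)$. Hence each member is comparable with $\COrb(\zeta)$, and by maximality is contained in it. So the theorem is \emph{equivalent} to the assertion that every chain of orbit closures in $\partial G$ has an upper bound of the form $\COrb(\zeta)$. All the substance lies in that upper-bound step, and you do not supply it.

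In the countable case you rightly discard $\lim_{n\to\mathcal U}\xi_n$. But the ``uniformly realized witnesses'' are never constructed. The one thing that must be checked is also never verified: that for every $n$ and every $A\in\xi_n$ some $N_r(A)$ belongs to the new ultrafilter $\zeta$.

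In the uncountable case you concede that $K_\alpha$ is not closed. Your proposed closed replacement needs radii chosen coherently along the whole chain so that the resulting closed sets have the finite intersection property. Nothing in the proposal shows this can be done, and this is exactly where Zelenyuk's argument is needed.

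As it stands, the proposal amounts to citing \cite{Zelenyuk2022FundMath}, as the paper does, but with the translation into semigroup language done incorrectly.
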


 We should mention that in Zelenyuk's work, there is no direct usage of partial translations per se. Instead, the author considers the structure of principal left ideals in the semigroup $(\beta G, \cdot)$. Recall that the group operation of $G$ extends to $\beta G$ by defining $p \cdot q$, for $p, q \in \beta G$, as the ultrafilter given by $A \in p \cdot q$ if and only if $\{g \in G \colon g^{-1}A \in q\} \in p$ for all $A \subseteq G$. Endowed with this operation, $\beta G$ becomes a right topological semigroup, meaning that for any fixed $\xi \in \beta G$, the map $p \mapsto p \cdot \xi$ is continuous. 

The principal left ideal generated by $\xi \in \beta G$ is the set $\beta G \cdot \xi = \{p \cdot \xi \colon p \in \beta G\}$. By the aforementioned continuity and the fact that $G$ is dense in $\beta G$, this ideal is precisely the closure of the set
\begin{equation}\label{Eq.Orb.Zelenyuk}
  \{ g \cdot \xi \colon g \in G \} = \{ \bar{f}_g(\xi) \colon g \in G \},
\end{equation}
where $f_g(h) = hg$ for all $g,h \in G$. Since each $f_g$ is a partial translation (which happens to be everywhere defined) and, for any $f \in \mathrm{PT}(G)$, there exists $g \in G$ such that $\bar{f}(\xi) = \bar{f}_g(\xi)$ for ultrafilters in the boundary, the set $\Orb(\xi)$ coincides with the one in \eqref{Eq.Orb.Zelenyuk}. This identity between principal left ideals and orbit closures justifies why Zelenyuk's result on the finiteness of chains of ideals applies directly to our setting.

It is easy to see that Zelenyuk's result passes to subspaces. This will be the version of his result we will use below.

\begin{corollary} Suppose a u.l.f.\ metric space $X$ coarsely embeds into a finitely generated group $G$.  Then every orbit closure in $\partial X$ is contained in a maximal orbit closure.
     \label{Cor..Zelenyuk}
 \end{corollary}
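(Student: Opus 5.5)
The plan is to transport Theorem \ref{Thm.Zelenyuk} along the coarse embedding, using that it induces a homeomorphic embedding of orbit data onto an \emph{open, invariant} piece of $\partial G$ (Proposition \ref{PropCoarseEmbOrbSpaceHomeo}). First I reduce to the case $X\subseteq G$ is a subset with the restricted metric. By the proof of Proposition \ref{PropCoarseEmbOrbSpaceHomeo}\eqref{PropCoarseEmbOrbSpaceHomeo.3}, there is a cobounded $Z\subseteq X$ on which the embedding $f$ is injective. The map $f|_Z$ is then a coarse equivalence onto $f(Z)\subseteq G$. Partial translations of $Z$ and of $f(Z)$ correspond up to finite-to-one splittings, using the u.l.f.\ partition trick from Proposition \ref{PropOrbitsAreCountable}. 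Hence the relation $\eta\in\COrb(\xi)$, characterized by Proposition \ref{PropOrbitsAreCountable}\eqref{PropOrbitsAreCountable.Item1.5} through the condition that $N_r(A)\in\xi$ for some $r$, is preserved and reflected by $\overline{f}$. Similarly, every $\xi\in\partial X$ lies in the orbit of some element of $\partial Z$, by item \eqref{PropCoarseEmbOrbSpaceHomeo.2}. So it suffices to treat $X\subseteq G$.

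Next I compare the orbit closure of $\xi\in\partial X$ computed in $X$ with the one computed in $G$. The key observation is that for $\xi,\eta\in\overline X\cap\partial G$, the criterion in Proposition \ref{PropOrbitsAreCountable}\eqref{PropOrbitsAreCountable.Item1.5} is intrinsic to $X$. Indeed, for $A\subseteq X$, the neighbourhood $N_r^G(A)$ belongs to $\xi$ if and only if $N_r^G(A)\cap X=N_r^X(A)$ belongs to $\xi$. Hence
\[
\COrb_X(\xi)=\COrb_G(\xi)\cap\overline X,
\]
and likewise the relation ``$\zeta\in\COrb(\eta)$'' between points of $\overline X$ is the same whether computed in $X$ or in $G$.

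Now fix $\xi\in\partial X$ and apply Theorem \ref{Thm.Zelenyuk} in $G$. This gives $\zeta\in\partial G$ with $\xi\in\COrb_G(\zeta)$ and $\COrb_G(\zeta)$ maximal. The main obstacle is that $\zeta$ need not lie in $\overline X$. To fix this, I use Proposition \ref{PropOrbUOpenAndEqualqOrbU} with $U=\overline X$, which is open. Since $\xi\in\overline X$ and $\xi\in\COrb_G(\zeta)$, that proposition gives $\zeta\in\Orb_G(\overline X)$. So $\zeta'=\overline g(\zeta)\in\overline X$ for some partial translation $g$. Orbit-equivalent points have the same orbit closure, so $\COrb_G(\zeta')=\COrb_G(\zeta)$ is still maximal and contains $\xi$.

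Finally I check maximality in $X$. Suppose $\eta\in\partial X$ and $\zeta'\in\COrb_X(\eta)\subseteq\COrb_G(\eta)$. Maximality in $G$ gives $\eta\in\COrb_G(\zeta')$, and both points lie in $\overline X$, so $\eta\in\COrb_X(\zeta')$. Also $\xi\in\COrb_G(\zeta')\cap\overline X=\COrb_X(\zeta')$. Thus $\COrb_X(\zeta')$ is a maximal orbit closure in $\partial X$ containing $\xi$.
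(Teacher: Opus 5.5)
Your proof is correct. Its skeleton matches the paper's: reduce to $X\subseteq G$, take a maximal orbit closure $\COrb_G(\zeta)$ from Theorem~\ref{Thm.Zelenyuk}, and restrict it to $\partial X$. The restriction step, however, rests on different ingredients.

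The paper appeals to Proposition~\ref{PropClosureOrbDisjointOrComparable} (orbit closures are comparable or disjoint) and to the identity $\Orb_X(\eta)=\Orb_G(\eta)\cap\partial X$. It leaves implicit why some point of $\partial X$ generates the restricted closure.

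You instead prove the closure-level identity $\COrb_X(\xi)=\COrb_G(\xi)\cap\overline X$ directly from Proposition~\ref{PropOrbitsAreCountable}\eqref{PropOrbitsAreCountable.Item1.5}. You then explicitly move the generator $\zeta$ into $\overline X$ along its $G$-orbit. After that, maximality in $\partial X$ is a two-line check, and comparability of closures is never used. The relocation does not even need Proposition~\ref{PropOrbUOpenAndEqualqOrbU}: $\overline X$ is an open neighbourhood of $\xi\in\COrb_G(\zeta)$, so it meets $\Orb_G(\zeta)$ by the definition of closure.

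Your reduction to a subset is also more careful than the paper's appeal to an identification of $\partial X$ with $\partial Y$. Two small remarks there. First, since $h=f|_Z$ is a bijection onto $h(Z)$, partial translations correspond exactly via $g\mapsto hgh^{-1}$, so no finite splitting is needed. Second, the passage from the cobounded $Z$ back to $X$ uses the same two ingredients as your main step. These are the identity $\COrb_Z(\cdot)=\COrb_X(\cdot)\cap\overline Z$, and moving points of $\partial X$ into $\overline Z$ along orbits, applied both to $\xi$ and to any competitor $\eta$. That step deserves an explicit sentence.
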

\begin{proof}
In this proof, we use a lower index in $\Orb$ to denote with respect to which space this orbit is being taken; for instance, $\Orb_X$ denotes the orbits given by the semigroup $\PT(X)$ acting on $\partial X$. 

Assume first that $X \subseteq G$. Notice that if $\xi \in \partial X$ and $\zeta \in \partial G$ is such that $\xi \in \COrb_G(\zeta)$, and if $\COrb_G(\zeta) \cap \partial X \neq \emptyset$, then there is $\eta \in \partial X \cap \COrb_G(\zeta)$. By Proposition \ref{PropClosureOrbDisjointOrComparable} and the fact that $\Orb_X(\eta) = \Orb_G(\eta) \cap \partial X$, it follows that any chain of orbit closures in $\partial X$ is a restriction of a chain in $\partial G$. The result then follows from Theorem \ref{Thm.Zelenyuk} since, for any $\xi \in \partial X$, we have
\[ \Orb_X(\xi) = \Orb_G(\xi) \cap \partial X. \]
Indeed, if $\xi \in \partial X$, Theorem \ref{Thm.Zelenyuk} provides $\eta \in \partial G$ such that $\COrb_G(\eta)$ is a maximal orbit closure in $\partial G$ containing $\xi$. Since $\xi \in \COrb_G(\eta) \cap \partial X$, we can take a maximal element in $\partial X$ within this global orbit. By Proposition \ref{PropClosureOrbDisjointOrComparable}, orbit closures are either comparable or disjoint; hence, the restriction of a maximal closure in $\partial G$ to $\partial X$ (when non-empty) yields a maximal orbit closure in $\partial X$.

In general, if $X$ coarsely embeds into $G$, $X$ is coarsely equivalent to a subset $Y \subseteq G$. Since $\partial X$ and $\partial Y$ are homeomorphically identified in a way that preserves the dynamical structure of partial translations, the result for subjets of $G$ extends to $X$.
\end{proof}

\section{The Higson corona   and applications}\label{SectionHigson}

In this section, we study the Higson equivalence relation on $\beta X$. This will be crucial for the proof of Theorems \ref{ThmLocalUrysohnForOrb.InTheText}  and \ref{ThmPrimoContainsPrimitive.InTheText}. We start recalling the definition of Higson functions.

\begin{definition}\label{DefiHigson}
    Let $X$ be a u.l.f.\ metric space and $h\colon X\to \C$ be  bounded. We call $h$ a \emph{Higson function} if for all $\eps,r>0$ there is a finite $A\subseteq X$ such that
    \[\forall x,y\in X\setminus A,\ d(x,y)\leq r\ \text{ implies }\ |h(x)-h(y)|\leq \eps.\]
    The \cstar-algebra of all Higson functions is denoted by $C_h(X)$.
\end{definition}

 The next proposition gives a characterization of when a bounded function $X\to \C$ is Higson in terms of dynamics.

  \begin{proposition}\label{PropHigsonFunctionsAreConstantOnOrbits}
      Let $X$ be a u.l.f.\ metric space and $h\colon X\to \C$ be a bounded function. Then, under the canonical identification $\ell_\infty(X)\cong C(\beta X)$,   $h$ is a Higson function if and only if it is constant on the orbits of $\partial X$.
  \end{proposition}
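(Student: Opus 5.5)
The plan is to prove both implications by working directly with the Stone--\v{C}ech extension $\overline h\in C(\beta X)$ of $h$. For $\xi\in\beta X$ and $A\in\xi$, the value $\overline h(\xi)$ is the limit of $h$ along $\xi$, so $\overline h(\xi)$ lies in the closure of $h(A)$.

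\emph{Forward direction.} Suppose $h$ is Higson, and let $\xi\in\partial X$ and $f\in\PT(X)_\xi$. Put $r=\sup_{x\in\dom(f)}d(x,f(x))$ and fix $\eps>0$. The Higson condition gives a finite $F\subseteq X$ with $|h(x)-h(y)|\leq\eps$ whenever $x,y\notin F$ and $d(x,y)\leq r$. Since $\xi$ is nonprincipal, the set $A=\dom(f)\setminus (F\cup f^{-1}(F))$ lies in $\xi$. On $A$ we have $|h(x)-h(f(x))|\leq\eps$. Taking limits along $\xi$ and using $\overline f(\xi)=\lim_{x,\xi}f(x)$, i.e.\ $\overline h(\overline f(\xi))=\lim_{x,\xi}h(f(x))$, we get $|\overline h(\xi)-\overline h(\overline f(\xi))|\leq\eps$. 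As $\eps$ was arbitrary, $\overline h$ is constant on $\Orb(\xi)$.

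\emph{Backward direction.} Argue by contrapositive. Suppose $h$ is not Higson. Then there are $\eps,r>0$ such that for every finite $F$ some pair $x,y\notin F$ has $d(x,y)\leq r$ and $|h(x)-h(y)|>\eps$. Inductively choose pairs $(x_n,y_n)$ of this kind avoiding all points chosen before; the sets $\{x_n,y_n\}$ are then pairwise disjoint. By u.l.f.-ness the ball $N_r(x)$ has bounded size, so the map $x_n\mapsto y_n$ on $D=\{x_n\}$ is a bounded-distance map that is finite-to-one with uniformly bounded fibers. In fact it is injective here, because the $y_n$ are distinct and $y_n\neq x_m$. Hence $f\colon D\to\{y_n\}$, $f(x_n)=y_n$, is a partial translation; if needed we can also shrink $D$ to a subsequence. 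Take any nonprincipal ultrafilter $\xi$ containing $D$. Then $\overline f(\xi)\in\Orb(\xi)$, and $|\overline h(\xi)-\overline h(\overline f(\xi))|=\lim_{n,\xi}|h(x_n)-h(y_n)|\geq\eps$. So $\overline h$ is not constant on this orbit in $\partial X$.

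The main obstacle is making sure the chosen pairs really give a partial translation, in particular that the points are distinct so that $f$ is a bijection onto its image. This is handled by the inductive avoidance of earlier points, so the whole argument should be routine.
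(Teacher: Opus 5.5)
Your proposal is correct and follows essentially the same argument as the paper. In the forward direction you discard a finite set from $\dom(f)$ (usefully removing $f^{-1}(F)$ as well) and pass to limits along $\xi$. In the backward direction you build pairwise disjoint bad pairs $(x_n,y_n)$ with $d(x_n,y_n)\le r$, turn $x_n\mapsto y_n$ into a partial translation, and evaluate at a nonprincipal ultrafilter containing $\{x_n\}$. Your inductive construction only makes explicit the choice of sequences that the paper asserts directly. The u.l.f.\ remark in that direction is unnecessary, since injectivity already follows from the distinctness of the chosen points.
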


\begin{proof}
Suppose first that $h$ is a Higson function and let  $\xi\in \partial X$ and $f\in \PT(X)_\xi$. We shall notice that $h(\xi)= h(\overline f(\xi))$. Let $\eps>0$ and
    \[r=\sup_{x\in \dom(f)}d(x,f(x)).\] Then, as $h$ is Higson, there is a finite $F\subseteq X$ such that
    \[\forall x,y\in X\setminus A,\ d(x,y)\leq r\ \text{ implies }\ |h(x)-h(y)|\leq \eps.\]
Therefore, as $\xi$ is nonprincipal,
    \[\{x\in \dom(f)\colon |h(x)-h(f(x))|\leq \eps\}\in \xi.\]
Since
    \[ h(\xi)=\lim_{x,\xi}h(x)\ \text{ and }\  h(\overline f(\xi))=\lim_{x,\overline f(\xi)}h(x)=\lim_{x\in \dom(f),\xi}h(f(x)),\]
 this implies that
    \[\left|   h(\xi)-  h(\overline f(\xi))\right|\leq \eps.\]
As $\eps$ was arbitrary, $ h(\xi)= h(\overline f(\xi))$.

Assume now that $h$ is constant on the orbits of $\partial X$ and suppose towards a contradiction that $h$ is not a Higson function.
 Then there are $\eps>0$ and sequences $(x_n)_n,(y_n)_n\subseteq X$ of distinct elements with   $x_n\neq y_m$ for all   $n,m\in\N$ such that
\[\sup_{n\in\N}d(x_n,y_n)<\infty \ \text{ and } \ |h(x_n)-h(y_n)|\geq \eps\]
for all $n\in\N$. Let $f\colon \{x_n\colon n\in\N\}\to\{y_n\colon n\in\N\}$ be the partial translation such that $f(x_n)=y_n$ for all $n\in\N$. Then, if $\xi\in \partial X$ is a nonprincipal ultrafilter containing $\{x_n\colon n\in\N\}$, we must have $| h(\xi)- h(\overline f(\xi))|\geq \eps$. In particular, $ h(\xi)\neq  h(\overline f(\xi))$, so,  $h$ is not constant on the orbits of $\partial X$.
\end{proof}

  \subsection{Characterizations of the Higson equivalence}

  We define the Higson-orbit of an element  $\xi\in \partial  X$ as
 \[\HOrb(\xi)=\{\eta\in \partial X\colon h(\xi)=h(\eta)\ \text{ for all }
 h\in C_h(X)\}.\]
 This  defines the \emph{Higson  equivalence relation} on $\partial X$, denoted by $\HOrb$. By Proposition \ref{PropHigsonFunctionsAreConstantOnOrbits},
 \[\Orb_{|\partial X}\subseteq \qOrb_{|\partial X}\subseteq \HOrb,\]
where $\Orb_{|\partial X}$ and $\Orb_{|\partial X}$ denote the restrictions of $\Orb$ and $\qOrb$ to $\partial X$, respectively.

The following is the main result of this subsection.

\begin{theorem}\label{ThmHigsonRel}
    Let $X$ be a u.l.f.\ metric space and $\xi,\eta\in \partial X$. The following are equivalent:
    \begin{enumerate}
        \item \label{ItemThmHigsonRel1}$\HOrb(\xi)=\HOrb(\eta)$.
\item\label{ItemThmHigsonRel2}
There are no disjoint  open subsets of $\calOrb(\partial X)$ which separate $\Orb(\xi)$ and $\Orb(\eta)$.
\item\label{ItemThmHigsonRel2.5} $\xi \sim_{R}\eta$, where $R\subseteq \partial X\times \partial X$ is the topological closure of the orbit equivalence relation.
\item\label{ItemThmHigsonRel3} $\xi \sim_{R'}\eta$, where $R'\subseteq \partial X\times \partial X$ is the smallest closed equivalence relation which is orbit-invariant.
    \end{enumerate}
If furthermore $X$ coarsely embeds into a  finitely generated group, we can add the following to our list of equivalences.
\begin{enumerate}\setcounter{enumi}{4}
    \item\label{ItemThmHigsonRel6} There is $\zeta\in \partial X$ such that $\xi$ and $\eta$ are contained in  $\COrb(\zeta)$
\end{enumerate}
\end{theorem}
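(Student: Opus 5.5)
The plan is to prove the cycle \eqref{ItemThmHigsonRel2.5}$\Rightarrow$\eqref{ItemThmHigsonRel2}$\Rightarrow$\eqref{ItemThmHigsonRel1}$\Rightarrow$\eqref{ItemThmHigsonRel2.5}, and then to get \eqref{ItemThmHigsonRel3} from the chain of inclusions $R\subseteq R'\subseteq \HOrb$.

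For that chain, note first that $\HOrb$ is the kernel of the continuous map from $\partial X$ to the spectrum of $C_h(X)/c_0(X)$, which is a Hausdorff space. Hence $\HOrb$ is a closed equivalence relation. It is orbit-invariant by Proposition \ref{PropHigsonFunctionsAreConstantOnOrbits}. Therefore $R'\subseteq\HOrb$, and $R\subseteq R'$ holds trivially because $R'$ is closed and contains $\Orb$. Once \eqref{ItemThmHigsonRel1}$\Rightarrow$\eqref{ItemThmHigsonRel2.5} is proved, i.e. $\HOrb\subseteq R$, all three relations coincide and \eqref{ItemThmHigsonRel3} joins the list.

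\eqref{ItemThmHigsonRel2.5}$\Rightarrow$\eqref{ItemThmHigsonRel2}: argue by contraposition. Suppose disjoint open sets of $\calOrb(\partial X)$ separate the two orbits. Their preimages $U\ni\xi$ and $V\ni\eta$ are disjoint, open and invariant in $\partial X$. Then $U\times V$ is an open neighbourhood of $(\xi,\eta)$ that misses $\Orb$, so $(\xi,\eta)\notin R$.

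\eqref{ItemThmHigsonRel2}$\Rightarrow$\eqref{ItemThmHigsonRel1}: again by contraposition. If some $h\in C_h(X)$ has $h(\xi)<c<h(\eta)$, then $\{h<c\}$ and $\{h>c\}$ are open subsets of $\partial X$. They are orbit-invariant by Proposition \ref{PropHigsonFunctionsAreConstantOnOrbits}, so they descend to disjoint open subsets of $\calOrb(\partial X)$ separating the two orbits.

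\eqref{ItemThmHigsonRel1}$\Rightarrow$\eqref{ItemThmHigsonRel2.5}: this is the analytic core. If $(\xi,\eta)\notin R$, there are $A\in\xi$ and $B\in\eta$ with $(\overline A\times\overline B)\cap\Orb=\emptyset$. Using the partial-translation decomposition of bounded-distance sets of pairs (the argument of Proposition \ref{PropOrbitsAreCountable}), this says that for every $r$ the set $\{(a,b)\in A\times B\colon d(a,b)\le r\}$ is finite. In other words, $A$ and $B$ diverge: $d(A\setminus K,B\setminus K)\to\infty$ as the finite set $K$ grows. I would then take
\[h(x)=\frac{d(x,A)}{d(x,A)+d(x,B)}.\]
The elementary estimate $|h(x)-h(y)|\le 2d(x,y)/(d(x,A)+d(x,B))$, combined with the divergence, shows that $h$ is Higson; at worst one first modifies $A$ and $B$ on finite sets. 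Since $h=0$ on $A$ and $h=1$ on $B$, we get $h(\xi)\ne h(\eta)$.

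For \eqref{ItemThmHigsonRel6} under the group hypothesis, the direction \eqref{ItemThmHigsonRel6}$\Rightarrow$\eqref{ItemThmHigsonRel3} is easy. Since $R'\supseteq\Orb$ is closed, we have $\{\zeta\}\times\COrb(\zeta)\subseteq R'$. Hence $(\zeta,\xi)$ and $(\zeta,\eta)$ lie in $R'$, and transitivity of $R'$ gives $(\xi,\eta)\in R'$.

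For \eqref{ItemThmHigsonRel1}$\Rightarrow$\eqref{ItemThmHigsonRel6}, I would use Corollary \ref{Cor..Zelenyuk} together with Proposition \ref{PropClosureOrbDisjointOrComparable}. These imply that the maximal orbit closures are pairwise disjoint and cover $\partial X$. So it suffices to show: if $\xi\in\COrb(\zeta_1)$ and $\eta\in\COrb(\zeta_2)$ with distinct maximal closures, then some Higson function separates $\zeta_1$ from $\zeta_2$; it then also separates $\xi$ from $\eta$, because Higson functions are constant on orbit closures by continuity. Distinct maximal closures force $\zeta_1\notin\COrb(\zeta_2)$ and $\zeta_2\notin\COrb(\zeta_1)$. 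The proof of Proposition \ref{PropClosureOrbDisjointOrComparable} then produces decreasing sequences $A_n\in\zeta_1$ and $B_n\in\zeta_2$ with $N_n(A_n)\cap N_m(B_m)=\emptyset$ for all $n,m$. The main obstacle is turning these ``layered'' sets into a genuinely Higson function taking different values at $\zeta_1$ and $\zeta_2$. I would first try a diagonal choice of divergent sets $A'\in\zeta_1$ and $B'\in\zeta_2$, for instance $A'=\bigcup_n (A_n\cap S_n)$ for an exhausting sequence of annuli $S_n$ with $n\to\infty$, chosen so that $d(A'\setminus K,B'\setminus K)\to\infty$. Then the function from the previous paragraph applies. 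Checking that such diagonal sets still belong to the ultrafilters $\zeta_1$ and $\zeta_2$ is where I expect the real difficulty, and possibly where the Zelenyuk structure must be used more substantially.
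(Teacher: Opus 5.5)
Your proof of the equivalence of (1)--(4), and of (5)$\Rightarrow$(4), is correct, and it is organised differently from the paper's. The paper gets (1)$\Rightarrow$(2) from Theorem~\ref{ThmHigsonRel.Closed.Sets}, then (2)$\Rightarrow$(3) by nets and (4)$\Rightarrow$(1) by minimality. You prove (1)$\Rightarrow$(3) in one stroke: a basic neighbourhood $\overline A\times\overline B$ missing $\Orb$ forces $A\in\xi$ and $B\in\eta$ to diverge, and $d(x,A)/(d(x,A)+d(x,B))$ then separates. This is shorter for points. The paper's route has the advantage of separating closed sets, which it reuses for Theorem~\ref{ThmLocalUrysohnForOrb.InTheText}.

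The genuine gap is the step you flagged, (1)$\Rightarrow$(5). It cannot be filled, because the implication is false already for subsets of $\Z^2$.

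Take the model of $\cM_{3/2}$ from the proof of Theorem~\ref{Thm.Hausdorfness.Charac.InTheText}. Let $X=\bigsqcup_{n\geq 2}X_n\subseteq\Z^2$ with the $\ell_1$ metric, where $X_n=\{(1,k^2),(n,k^2)\colon k\in\N_n\}$ and $\N=\bigsqcup_{n\geq 2}\N_n$ is a partition into infinite sets. Let $p$ be a nonprincipal ultrafilter on $\{2,3,\ldots\}$ and $\zeta_n$ one on $\N_n$. Let $\eta_1$ (resp.\ $\eta_2$) be the $p$-limit over $n$ of the $\zeta_n$-limit over $k$ of $(1,k^2)$ (resp.\ $(n,k^2)$); these are the ultrafilters built in that proof.

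First, (1)--(4) hold. For each fixed $n$, the partial translation $(1,k^2)\mapsto(n,k^2)$, $k\in\N_n$, makes the $n$-th inner limits orbit equivalent. Hence $(\eta_1,\eta_2)\in R$.

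Second, (5) fails.
\begin{enumerate}
\item Fix $r$. All but finitely many points of $\bigcup_{n>r+1}X_n$ are $r$-isolated in $X$, and this set lies in both $\eta_1$ and $\eta_2$. So every partial translation with domain in $\eta_1$ is the identity on a member of $\eta_1$, which gives $\COrb(\eta_1)=\{\eta_1\}$.
\item Suppose $\eta_1\in\COrb(\zeta)$. Apply Proposition~\ref{PropOrbitsAreCountable}\eqref{PropOrbitsAreCountable.Item1.5} to the part of the first column inside $\bigcup_{n>N}X_n$; its neighbourhoods stay in that set up to finitely many points, so $\bigcup_{n>N}X_n\in\zeta$ for all $N$.
\item Now let $A\in\eta_1$ lie in the first column. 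Then $N_r(A)\cap\bigcup_{n>r+1}X_n\subseteq A$ up to a finite set, so $A\in\zeta$. Hence $\zeta=\eta_1$.
\end{enumerate}
Consequently no $\zeta$ has $\eta_1,\eta_2\in\COrb(\zeta)$.

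In the terms of your reduction, $\{\eta_1\}$ and $\{\eta_2\}$ are distinct maximal orbit closures. Yet for all $A\in\eta_1$ and $B\in\eta_2$, some $N_{n-1}(A)\cap B$ is infinite. So divergent sets in the two ultrafilters do not exist, and neither a diagonal choice nor any further use of Zelenyuk's theorem can produce them.

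The paper's own argument for (3)$\Rightarrow$(5) breaks at the same point. It infers $R\subseteq E$ from the facts that $E$ contains $\Orb$ and that its classes (the maximal orbit closures) are closed. That inference needs $E$ to be closed in $\partial X\times\partial X$, and the example has $(\eta_1,\eta_2)\in R\setminus E$. What is actually provable, and what your proposal proves, is (1)$\Leftrightarrow$(2)$\Leftrightarrow$(3)$\Leftrightarrow$(4) together with (5)$\Rightarrow$(1). Corollary~\ref{Corollary.ThmHigRel.Ze} also fails for this $X$.
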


The   equivalence between \eqref{ItemThmHigsonRel1} and \eqref{ItemThmHigsonRel3} of Theorem \ref{ThmHigsonRel} was already obtained in
\cite[Proposition 1]{Protasov2005TopAndAppl} in the language for balleans.  In Section \ref{SubsectionRestrictionGNSHigFunc}, we add yet another characterization to the list above in terms of certain GNS representations of uniform Roe algebras.

  We need some preparatory results before proving Theorem \ref{ThmHigsonRel}.

  \begin{lemma}\label{LemmaExistenceOfHigsonFunctionSeparationSubsetsFromFarAwaySequences}
    Let $X$ be a u.l.f.\ metric space, $\xi\in \partial X$, and $B\subseteq X$. Suppose there is $A\in \xi$ such that
    \[d(x,B)\to \infty\ \text{ as }\ x\underset{x\in A}{\to} \infty .\]
    Then, there are an open subset $U\subseteq \beta X$ with $\xi \in U$ and a Higson function $ h\colon X\to [0,1]$ such that $  h\restriction U=1$ and $ h\restriction \overline B=0$.
\end{lemma}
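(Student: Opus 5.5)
The plan is to build $h$ directly from the distance function to $B$, slowed down so that it becomes Higson. Note first that if $B$ is empty the constant function $1$ works, so we may assume $B\neq\emptyset$. The hypothesis says that for every $R>0$ the set $\{x\in A\colon d(x,B)\le R\}$ is finite. The natural candidate is
\[
h(x)=\min\Big\{1,\ \frac{d(x,B)}{\rho(x)}\Big\}
\]
for a slowly growing scale $\rho$. A simpler and cleaner choice is to use a slowly varying modulus of the distance itself: $h(x)=\phi(d(x,B))$, where $\phi\colon[0,\infty)\to[0,1]$ is nondecreasing, $\phi(0)=0$, $\phi(t)\to1$ as $t\to\infty$, and $\phi$ is uniformly continuous with $\sup_{|s-t|\le r}|\phi(s)-\phi(t)|\to0$ along large arguments. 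For instance, take $\phi(t)=\min\{1,\log(1+t)/T\}$ truncated. However, $\phi$ must reach $1$ on a set in $\xi$, which a fixed $T$ does not give. So instead take $\phi(t)=1-\frac{1}{\log(e+t)}$, which satisfies $\phi(0)<1$ but need not vanish on $B$.

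To fix this, I would combine two pieces. Let $g(x)=\phi(d(x,B))$ with $\phi(t)=\min\{1,\sqrt{t}/\sqrt{t_0}\}$-type behaviour replaced by the genuinely useful form
\[
g(x)=\frac{\log(1+d(x,B))}{1+\log(1+d(x,B))},
\]
so $g=0$ on $B$, $g\to 1$ as $d(x,B)\to\infty$, and $g$ is Higson. The Higson property follows because $d(\cdot,B)$ is $1$-Lipschitz and $t\mapsto \log(1+t)/(1+\log(1+t))$ has derivative tending to $0$. Hence for $d(x,y)\le r$, the difference $|g(x)-g(y)|$ is small once $\min\{d(x,B),d(y,B)\}$ is large. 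The points where $d(x,B)$ is bounded but $x$ is far out form a region where $g$ can still vary; but there $t$ ranges in a bounded interval where $|\phi(t)-\phi(s)|$ is not small for $|t-s|\le r$. This is the main obstacle: $g$ need not be Higson near $B$ at infinity.

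To resolve it, I would replace the logarithm by a function whose oscillation on $r$-steps is controlled globally and not just at large $t$. Concretely, I would let $h(x)=\psi(x)\,\min\{1,d(x,B)/\rho(x)\}$ with $\rho(x)=\log(2+d(x,x_0))$ for a base point $x_0$. Since $\rho\to\infty$ and $\rho$ is slowly varying, $|d(x,B)/\rho(x)-d(y,B)/\rho(y)|\le r/\rho(x)+o(1)$ when $d(x,y)\le r$, and both terms go to $0$ outside finite sets. This requires checking that $\min\{1,\cdot\}$ of such a ratio is Higson, which follows from the $1$-Lipschitz property of $d(\cdot,B)$ and the estimate $|1/\rho(x)-1/\rho(y)|\,d(x,B)\le$ small whenever the ratio is $\le 1$, which itself uses the $u.l.f.$-independent estimate $\log(2+s+r)-\log(2+s)\to0$. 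Then $h=0$ on $B$, so $h\restriction\overline B=0$ by continuity.

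For the open set $U$, it remains to find $A'\in\xi$ on which $h=1$, i.e.\ $d(x,B)\ge\rho(x)$. This is where the hypothesis has to be used, but as stated it says only that $d(x,B)\to\infty$ along $A$, not that it dominates $\log d(x,x_0)$. So I would choose $\rho$ adapted to $A$: set $\rho(x)=\min\{\log(2+d(x,x_0)),\ \theta(d(x,x_0))\}$, where $\theta$ is a nondecreasing, slowly varying function tending to infinity. Such a $\theta$ can be chosen so that $\theta(d(x,x_0))\le d(x,B)$ for all but finitely many $x\in A$; this is possible since $m(R)=\inf\{d(x,B)\colon x\in A,\ d(x,x_0)\ge R\}\to\infty$, and one can take $\theta$ to be a slowly growing lower envelope of $m$ (for example, piecewise linear with slope at most $1$, hence still slowly varying). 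Then $A'=\{x\in A\colon h(x)=1\}$ is cofinite in $A$, so $A'\in\xi$ because $\xi$ is nonprincipal. Finally, $U=\overline{A'}$ is open, contains $\xi$, and $h\restriction U=1$ by continuity of the extension. Constructing $\theta$ so that it is simultaneously slowly varying (for the Higson property) and below $m$ is the key technical step.
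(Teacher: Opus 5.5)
Your final construction is correct, but it takes a different route from the paper.

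\textbf{The paper's route.} It works combinatorially. Using u.l.f.-ness and the hypothesis, it partitions $A$ into finite blocks $A_n$ with $d(A_n,B)\geq n$ and with non-consecutive blocks far apart. It then uses that $\xi$ is an ultrafilter to keep only the even (or only the odd) blocks $A'$. Finally it sums the disjointly supported bumps $\max\{1-d(x,A_{2n})/n,0\}$; the Higson property comes from the $n$th bump being $\frac1n$-Lipschitz.

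\textbf{Your route.} You use the single global formula $h(x)=\min\{1,d(x,B)/\rho(x)\}$. Here $\rho$ is a scale that tends to infinity and is dominated by the growth modulus $m$ of $d(\cdot,B)$ along $A$. This uses only that $\xi$ is nonprincipal. It also gives $h=1$ on a cofinite subset of $A$, hence $h\equiv 1$ on $\overline{A}\cap\partial X$ for every nonprincipal ultrafilter containing $A$ at once. The paper's $h$ is only guaranteed to equal $1$ on the chosen half $\overline{A'}$. The price you pay is constructing a minorant of $m$, which the paper's block decomposition avoids.

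\textbf{Points to tidy up.}
\begin{itemize}
\item The factor $\psi$ is never defined. Drop it (take $\psi\equiv 1$), together with the discarded earlier attempts.
\item You need $\rho>0$.
\item The $\log$ term in $\rho$ is superfluous. Take
\[\theta(R)=\max\Big\{1,\inf_{s\geq 0}\big(m(s)+|R-s|\big)\Big\}\]
and $\rho(x)=\theta(d(x,x_0))$. This $\theta$ is nondecreasing, $1$-Lipschitz, tends to infinity, and satisfies $\theta\leq m$ for all large $R$.
\item Slope at most $1$ does not make $\theta$ ``slowly varying'' in the sense you invoke (small oscillation on $r$-steps), but you do not need that. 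If $d(x,y)\leq r$ and $d(x,B)\leq \rho(x)$, then
\[|h(x)-h(y)|\leq \frac{|d(x,B)-d(y,B)|}{\rho(y)}+d(x,B)\,\frac{|\rho(x)-\rho(y)|}{\rho(x)\rho(y)}\leq \frac{2r}{\rho(y)}.\]
The case $d(y,B)\leq\rho(y)$ is symmetric, and $h(x)=h(y)=1$ when both ratios are at least $1$. Since balls in $X$ are finite, $\rho\to\infty$ off finite sets, so $h$ is Higson.
\end{itemize}
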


\begin{proof}
    As $X$ is u.l.f., the  hypothesis allow us to  find a partition for $A$ into finite subsets, say $A=\bigsqcup_nA_n$, such that
\begin{enumerate}
\item $d(A_n,B)\geq n$ for all $n>1$, and
    \item $d(A_n,A_m)> 2n$ for all $n\geq 3$ and all $m\leq n-2$.
\end{enumerate}
As $\xi$ is an ultrafilter, either $\bigsqcup_nA_{2n}\in \xi$ or $\bigsqcup_nA_{2n-1}\in \xi$.  Without loss of generality, assume $A'=\bigsqcup_nA_{2n}\in \xi$.  For each $n\in\N$, let $h_n\colon X\to [0,1]$ be given by
\[h_n(x)=\max\left\{1-\frac{d(x,A_{2n})}{n},0\right\}\]
for all $x\in X$.
Define then $h\colon X\to [0,1]$ by letting
\[h(x)=\sum_{n\in\N}h_n(x)\]
for all $x\in X$. Since for each $x\in X$, the set $\{n\in\N\colon h_n(x)\neq 0\}$ has at most 1 element, $h$ is well defined. Moreover, it is straightforward   to check that $h$ is a Higson function. Clearly, $h\restriction B=0$ and   $h\restriction A'=1$. So, the extension of $h$ to $\beta X$ satisfies  $ h\restriction \overline B=0$ and $ h \restriction \overline A=1$.
\end{proof}

\begin{lemma}
\label{LemmaHigsonLattice}
 Let $X$ be a u.l.f.\ metric space and $f,g\colon X\to \R$ be Higson functions. Then $\max\{f,g\}$ is also a Higson function.
\end{lemma}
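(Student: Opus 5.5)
The plan is to verify Definition \ref{DefiHigson} for $\max\{f,g\}$ directly, using an elementary pointwise inequality. First, $\max\{f,g\}$ is bounded, since both $f$ and $g$ are bounded real-valued functions. So the only thing to check is the oscillation condition.

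The key observation is that for real numbers $a,b,c,d$,
\[
\bigl|\max\{a,b\}-\max\{c,d\}\bigr|\leq \max\{|a-c|,|b-d|\}.
\]
To check this, assume without loss of generality that $\max\{a,b\}\geq \max\{c,d\}$, and suppose the maximum on the left is attained at $a$. Then
\[
0\leq a-\max\{c,d\}\leq a-c\leq |a-c|.
\]
The case where the maximum is attained at $b$ is symmetric, using $b-d\leq |b-d|$.

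Now fix $\eps,r>0$. Since $f$ and $g$ are Higson functions, there are finite sets $A_f,A_g\subseteq X$ with the following properties.
\begin{itemize}
\item For all $x,y\in X\setminus A_f$ with $d(x,y)\leq r$, we have $|f(x)-f(y)|\leq\eps$.
\item For all $x,y\in X\setminus A_g$ with $d(x,y)\leq r$, we have $|g(x)-g(y)|\leq\eps$.
\end{itemize}
Let $A=A_f\cup A_g$, which is finite. For $x,y\in X\setminus A$ with $d(x,y)\leq r$, the inequality above gives
\[
\bigl|\max\{f,g\}(x)-\max\{f,g\}(y)\bigr|\leq\eps.
\]
Hence $\max\{f,g\}$ is a Higson function.

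An alternative route would use Proposition \ref{PropHigsonFunctionsAreConstantOnOrbits}: since $\max\{f,g\}$ extends continuously to $\beta X$ as the pointwise maximum of the extensions, and both extensions are constant on orbits of $\partial X$, so is their maximum. The direct argument above is more self-contained, so that is the one to follow. There is no real obstacle here; the only step needing care is the elementary max inequality.
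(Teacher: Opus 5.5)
Your proof is correct and follows the same route as the paper: take the union of the two finite exceptional sets and bound the oscillation of $\max\{f,g\}$ by those of $f$ and $g$. Your inequality $\bigl|\max\{a,b\}-\max\{c,d\}\bigr|\leq\max\{|a-c|,|b-d|\}$ gives the bound $\eps$ rather than the paper's $2\eps$. You also fix $r$ before choosing the finite set, which is the right order of quantifiers; the paper's write-up is looser on that point.
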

\begin{proof}
 Fix $\eps>0$. As both $f$ and $g$ are Higson, there are $R>0$ and $A\subseteq X$ finite such that for all $x,y\in X\setminus A$ with $d(x,y)<R$, we have that $|f(x)-f(y)|\leq \eps$ and $|g(x)-g(y)|\leq \eps$. But then, for all such $x$ and $y$, we have $|h(x)-h(y)|\leq 2\eps$, where $h=\max\{f,g\}$.
\end{proof}

\begin{theorem}\label{ThmHigsonRel.Closed.Sets}
    Let $X$ be a u.l.f.\ metric space and $ E, F\subseteq \partial X$ be closed. Suppose there are invariant open disjoint subsets $U,V\subseteq \partial X$ with $E\subseteq U$ and $F\subseteq V$. Then, there is  a Higson function $h\colon X\to [0,1]$ such that  $  h \restriction E=0$, and $ h\restriction F=1$.
\end{theorem}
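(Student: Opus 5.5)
The plan is to build the Higson function locally around each point of the compact set $E$ and then patch finitely many local pieces together using Lemma \ref{LemmaHigsonLattice}. Everything is reduced to Lemma \ref{LemmaExistenceOfHigsonFunctionSeparationSubsetsFromFarAwaySequences}.

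\textbf{Step 1: a set $A_\xi$ far from a set $B$ with $F\subseteq\overline B$.} Since $V$ is open in $\partial X$ and $F$ is compact, we may pick $B\subseteq X$ with $F\subseteq \overline B\cap\partial X\subseteq V$, using finitely many basic clopen sets. Fix $\xi\in E$. Then $\xi\in U$, and $U$ is invariant and disjoint from $V$, so $\Orb(\xi)\cap\overline B=\emptyset$. By Proposition \ref{PropOrbUOpenAndEqualqOrbU}, $\Orb(\overline B\cap \partial X)$ is an open invariant set contained in $V$. I will show that $\xi\notin\COrb$ of anything in $\overline B$, and more strongly that there is $A_\xi\in\xi$ with $d(x,B)\to\infty$ as $x\to\infty$ in $A_\xi$.

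Suppose no such $A_\xi$ exists. Then for every $A\in\xi$ there is $r$ with $N_r(B)\cap A$ infinite. The aim is to derive a contradiction with the existence of the disjoint invariant open sets. The subtle point is that $\xi\in\COrb(\eta)$ for some $\eta\in\overline B$ is not excluded by invariance of open sets alone.

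The refined plan is therefore to use compactness differently. Shrink $U$ to an open set $U'$ with $E\subseteq U'\subseteq\overline{U'}\subseteq U$, which is possible by normality of $\partial X$. Consider the closed set $C=\partial X\setminus \Orb(U')$. It contains $V$'s complement-related points, and in particular $\overline B\cap\partial X\subseteq V\subseteq C$ after also shrinking $V$ similarly. Taking $B$ with $\overline B\cap\partial X\subseteq V'$, where $V'$ is an open set with $F\subseteq V'$ and $\overline{V'}\subseteq V$, the hypothesis of Lemma \ref{LemmaExistenceOfHigsonFunctionSeparationSubsetsFromFarAwaySequences} would fail only if for each $r$ the sets $N_r(B)$ meet every $A\in\xi$ infinitely. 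An ultrafilter argument then produces $\zeta_r\in\overline{N_r(B)}\cap\overline{A}$. Diagonalizing over $r$ and over $A\in\xi$, as in the proof of Theorem \ref{ThmCharacterizationqOrbClosed}, gives $\zeta_r\in \Orb(\overline B)\subseteq V$ converging to $\xi$. This is where I need the sets to separate: the $\zeta_r$ lie in the closed invariant set $\overline{\Orb(\overline{V'})}$, and I must show this set misses $E$. I expect this to be the \textbf{main obstacle}. I would handle it by taking $\xi$-large sets $A\subseteq$ a clopen neighbourhood of $\xi$ inside $U$, and noting that points of $\overline{N_r(B)}\cap\overline A$ are in $\Orb(\overline B)$ (by Proposition \ref{PropOrbitsAreCountable}) and in $\overline A\subseteq U$. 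That contradicts $U\cap V=\emptyset$ together with the invariance of $V$. So for every clopen $\overline{A}\ni\xi$ with $\overline A\cap\partial X\subseteq U$ and every $r$, the set $A\cap N_r(B)$ is finite, which is exactly the hypothesis of Lemma \ref{LemmaExistenceOfHigsonFunctionSeparationSubsetsFromFarAwaySequences}.

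\textbf{Step 2: local functions and patching.} For each $\xi\in E$, Lemma \ref{LemmaExistenceOfHigsonFunctionSeparationSubsetsFromFarAwaySequences} gives an open $U_\xi\ni\xi$ and a Higson function $h_\xi\colon X\to[0,1]$ with $h_\xi\restriction U_\xi=1$ and $h_\xi\restriction\overline B=0$. By compactness of $E$, finitely many $U_{\xi_1},\dots,U_{\xi_n}$ cover $E$. Set $g=\max_i h_{\xi_i}$, which is Higson by Lemma \ref{LemmaHigsonLattice}. Then $g=1$ on $E$ and $g=0$ on $\overline B\supseteq F$. Finally take $h=1-g$, which is Higson with $h\restriction E=0$ and $h\restriction F=1$.
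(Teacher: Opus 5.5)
Your proof is correct and is essentially the paper's argument with the roles of $E$ and $F$ interchanged. The paper covers $E$ by a single $\overline B\subseteq U$ and applies Lemma~\ref{LemmaExistenceOfHigsonFunctionSeparationSubsetsFromFarAwaySequences} at the points of $F$, so it needs no final $1-g$; its key contradiction (an infinite $A\cap N_r(B)$ gives a nonprincipal ultrafilter in $\overline A\subseteq V$ that a partial translation moves into $\overline B\subseteq U$) is exactly your clopen-neighbourhood argument. The normality/shrinking and diagonalization detour in your Step~1 can simply be deleted, and the worry stated there is unfounded: $\xi\in\COrb(\eta)$ with $\eta\in\overline B\cap\partial X$ is already excluded because $U$ is open and $V$ is invariant.
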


\begin{proof}Throughout the proof, we make the following abuse of notation: if $A\subseteq X$, we will denote by $\overline A$ the intersection $\overline A \cap \partial X$. I.e., $\overline A$ denotes the nonprincipal ultrafilters which are in the closure of $A\subseteq \beta X$.

 By the compactness of $E$, we can choose   subsets $ B_1,\ldots, B_k\subseteq X$ such that
 \[E\subseteq \bigcup_{i=1}^k\overline{B_i}\subseteq U.\]
Let $B=\bigcup_{i=1}^kB_i$. By the properties of ultrafilters, it is immediate that $\overline B=\bigcup_{i=1}^n\overline{B_i}$.  So, \[E\subseteq \overline B\subseteq U.\]

\begin{claim}
    For each $\xi\in F$, there is $A\in \xi$ and a Higson function $h\colon X\to [0,1]$ such that $ h\restriction \overline A=1$ and $ h\restriction \overline B=0$.
\end{claim}

\begin{proof}
As $\xi\in F\subseteq V$, we can pick $A_0\subseteq X$ such that $\xi\in \overline{A_0}\subseteq V$.   By Lemma \ref{LemmaExistenceOfHigsonFunctionSeparationSubsetsFromFarAwaySequences}, it is enough to notice that
\begin{equation}\label{EqPerguntaRuy25Sep24} d(x,B)\to \infty\ \text{ as }\ x\underset{x\in A_0}{\to} \infty .
\end{equation}
In other words, let us show that     for all $x_0\in X$ and  $n>0$ there is $r>0$ such that for all  $x\in A_0$ with $x\not\in N_r(x_0)$,  we have that  $d(x,B)>n$. Suppose this is not the case. Then, there is $n\in\N$ and an infinite $A\subseteq A_0$ such that $d(x,B)\leq n$ for all $x\in A$. Replacing $A$ by a smaller infinite subset if necessary, we can assume there is a partial translation $f$ of $X$ with $\dom(f)=A$ and $\mathrm{im}(f)\subseteq B$. Then, if $\eta$ is a nonprincipal ultrafilter with $A\in \eta$, we have that $\eta\in \overline{A}\subseteq \overline{A_0}$ and $f(\eta)\in \overline{B}$. On the other hand, since   $\overline{A_0}\subseteq V$ and $V$ is invariant, $\overline f(\eta)\in V$. As $\overline{B}\subseteq U$, this contradicts the assumption on $U$ and $V$ of them being   disjoint.
\end{proof}

 By the compactness of $F$ together with the previous claim, we can choose   subsets $ A_1,\ldots, A_k\subseteq X$ and Higson functions $(h_i\colon  X\to [0,1])_{i=1}^n$ such that
 \[F\subseteq \bigcup_{i=1}^n\overline{A_i}\subseteq U,\  h_i\restriction \overline{A_i}=1,\ \text{ and } \ {h_i}\restriction \overline{B}=0\]
for all $i\in \{1,\ldots, n\}$. Let $A=\bigcup_{i=1}^nA_i$. Then $\overline A=\bigcup_{i=1}^n\overline{A_i}$. Hence, letting
\[h=\max\{h_1,\ldots, h_n\},\]
we have that $\overline  h\restriction \overline A=1$ and $ h\restriction \overline B=0$.    By Lemma \ref{LemmaHigsonLattice}, $h$ is a Higson function and we are done.
\end{proof}

\begin{proof}    [Proof of Theorem \ref{ThmHigsonRel}]
\eqref{ItemThmHigsonRel1}$\Rightarrow$\eqref{ItemThmHigsonRel2}: This  is a particular case of Theorem \ref{ThmHigsonRel.Closed.Sets}.

\eqref{ItemThmHigsonRel2}$\Rightarrow$\eqref{ItemThmHigsonRel2.5}: If   \eqref{ItemThmHigsonRel2} holds, then, we can find nets $(\xi_i)_{i\in I}$ and $(\eta_i)_{i\in I}$ in $\partial X$ such that
\[\xi=\lim_{i,I}\xi_i, \ \eta=\lim_{i,I}\eta_i, \ \text{ and } \eta_j\in \Orb(\xi_j) \ \forall j\in I.\]
Since $R$ is a closed relation containing the  orbit equivalence relation,  this implies that $\xi\sim_R \eta$.

\eqref{ItemThmHigsonRel2.5}$\Rightarrow$\eqref{ItemThmHigsonRel3}:
Clearly, $\sim_R\subseteq_{R'}$.

\eqref{ItemThmHigsonRel3}$\Rightarrow$\eqref{ItemThmHigsonRel1}: Let $\sim_H\subseteq \partial X\times \partial X$ be the Higson relation given in \eqref{ItemThmHigsonRel1}, i.e., $\xi\sim_H\eta$ if and only if $h(\xi)=h(\eta)$ for all Higson functons $h\colon X\to \C$. Then, it is completely straightforward that $\sim_H$ is an orbit-invariant  closed relation. Therefore, the minimality of $\sim_R$ implies $\sim_R\subseteq \sim_H$.

The implication \eqref{ItemThmHigsonRel6}$\Rightarrow$\eqref{ItemThmHigsonRel1} is immediate from Proposition \ref{PropHigsonFunctionsAreConstantOnOrbits} and it does not depend on any further properties of $X$. Suppose now $X$ can be coarsely embeded in a finitely generated  group and let us prove the implication \eqref{ItemThmHigsonRel2.5}$\Rightarrow$\eqref{ItemThmHigsonRel6}. First notice that, by Proposition \ref{PropClosureOrbDisjointOrComparable},  the relation   defined on \eqref{ItemThmHigsonRel6} is indeed an equivalence relation, call it $E$. By Corollary \ref{Cor..Zelenyuk}, we can write \[\partial X=\bigsqcup_{\xi\in I}\COrb(\xi),\] where each  $\COrb(\xi)$ is maximal with respect to inclusion among all closures of orbits of elements in $\partial X$. In other words, $(\COrb(\xi))_{\xi\in I}$ are the equivalence classes of $E$. Therefore, $E$ has closed equivalence classes and it contains $\Orb$.  Hence, $R\subseteq E$ and the implication follows.
\end{proof}

 We isolate a corollary of the proof of the implication \eqref{ItemThmHigsonRel2.5}$\Rightarrow$\eqref{ItemThmHigsonRel6} of Theorem \ref{ThmHigsonRel} which is interesting on its own and which will be used below.

 \begin{corollary}\label{Corollary.ThmHigRel.Ze}
 Let $X$ be a u.l.f.\ metric space which coarsely embeds into a  finitely generated group and $\xi\in \partial X$. Then there is $\eta\in \partial X$ such that $\HOrb(\xi)=\COrb(\eta)$.\qed
 \end{corollary}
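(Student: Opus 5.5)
We must show: for $X$ coarsely embedding into a finitely generated group and $\xi\in\partial X$, there is $\eta\in\partial X$ with $\HOrb(\xi)=\COrb(\eta)$. The plan is to choose $\eta$ so that $\COrb(\eta)$ is the maximal orbit closure containing $\xi$, and then to check both inclusions.

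\emph{Choice of $\eta$.} By Corollary \ref{Cor..Zelenyuk}, there is $\eta\in\partial X$ such that $\xi\in\COrb(\eta)$ and $\COrb(\eta)$ is maximal among orbit closures in $\partial X$. As in the proof of Theorem \ref{ThmHigsonRel}, Proposition \ref{PropClosureOrbDisjointOrComparable} shows that maximal orbit closures are pairwise disjoint. Since every orbit closure lies in one of them, they partition $\partial X$. Moreover, $\COrb(\eta)$ is the unique maximal orbit closure containing $\xi$.

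\emph{Inclusion $\COrb(\eta)\subseteq\HOrb(\xi)$.} Let $\zeta\in\COrb(\eta)$. Then $\xi,\zeta\in\COrb(\eta)$, so item \eqref{ItemThmHigsonRel6} of Theorem \ref{ThmHigsonRel} holds with witness $\eta$. By the implication \eqref{ItemThmHigsonRel6}$\Rightarrow$\eqref{ItemThmHigsonRel1}, which rests on Proposition \ref{PropHigsonFunctionsAreConstantOnOrbits} and continuity, we get $\HOrb(\zeta)=\HOrb(\xi)$. Hence $\zeta\in\HOrb(\xi)$.

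\emph{Inclusion $\HOrb(\xi)\subseteq\COrb(\eta)$.} Let $\zeta\in\HOrb(\xi)$. By Theorem \ref{ThmHigsonRel}, applicable because $X$ coarsely embeds into a finitely generated group, there is $\theta\in\partial X$ with $\xi,\zeta\in\COrb(\theta)$. Again by Corollary \ref{Cor..Zelenyuk}, $\COrb(\theta)$ is contained in some maximal orbit closure $\COrb(\theta')$. This closure contains $\xi$, so by the uniqueness noted above $\COrb(\theta')=\COrb(\eta)$. Therefore $\zeta\in\COrb(\eta)$.

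\emph{Main obstacle.} The only delicate point is the uniqueness of the maximal orbit closure containing $\xi$. Suppose $\xi$ lies in two maximal closures. Their intersection is then nonempty, so by Proposition \ref{PropClosureOrbDisjointOrComparable} one contains the other. Maximality then forces them to be equal.
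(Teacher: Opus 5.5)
Your reduction is the paper's own. The corollary is read off from the proof of (3)$\Rightarrow$(5) in Theorem~\ref{ThmHigsonRel}, which identifies Higson classes with the maximal orbit closures given by Corollary~\ref{Cor..Zelenyuk}. Your uniqueness argument and the inclusion $\COrb(\eta)\subseteq\HOrb(\xi)$ are fine. The uniqueness of the maximal closure is not the real obstacle, though. The gap is in the reverse inclusion, where you invoke (1)$\Rightarrow$(5) of Theorem~\ref{ThmHigsonRel}: that implication is false, and so is the corollary. The paper justifies it by saying ``$E$ has closed equivalence classes and contains $\Orb$, hence $R\subseteq E$''. That inference is invalid: $R$ is the closure of the \emph{graph} of $\Orb$ in $\partial X\times\partial X$, and an equivalence relation with closed classes need not have closed graph.

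\textbf{Counterexample.} Partition $\N=\bigsqcup_n\N_n$ into infinite sets with $\min\N_n\ge n$. Write $n(k)$ for the index with $k\in\N_{n(k)}$, and let $X=\{4^k,\,4^k+n(k)\colon k\in\N\}\subseteq\Z$.
\begin{enumerate}
\item The pairs $P_k=\{4^k,4^k+n(k)\}$ satisfy $d(P_k,X\setminus P_k)\to\infty$. So a partial translation of displacement at most $r$ maps $P_k$ into $P_k$ for large $k$, and is the identity there once also $n(k)>r$. Hence every orbit in $\partial X$ has at most two points, and $\COrb(\theta)=\Orb(\theta)$ for all $\theta\in\partial X$.
\item Build $\eta_1,\eta_2$ as in the proof of Theorem~\ref{Thm.Hausdorfness.Charac.InTheText}. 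Fix nonprincipal ultrafilters $\xi$ on $\N$ and $\zeta_n$ on $\N_n$. Let $\eta_1$, $\eta_2$ be the $\xi$-limits of the images of $\zeta_n$ under $k\mapsto 4^k$ and $k\mapsto 4^k+n$, respectively.
\item For every $r$, the set $\bigcup\{P_k\colon n(k)>r\}$ lies in both $\eta_1$ and $\eta_2$. So $\Orb(\eta_1)=\{\eta_1\}$ and $\Orb(\eta_2)=\{\eta_2\}$, with $\eta_1\neq\eta_2$.
\item Let $h$ be a Higson function and fix $n$. Then $h(4^k)-h(4^k+n)\to 0$ as $k\to\infty$ in $\N_n$, so $h(\eta_1)=\lim_{n,\xi}\lim_{k,\zeta_n}h(4^k)=h(\eta_2)$.
\end{enumerate}
Thus $\HOrb(\eta_1)$ contains two distinct singleton orbits and equals no $\COrb(\eta)$, although $X\subseteq\Z$. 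In this example $E=\Orb$, while $(\eta_1,\eta_2)\in R$.

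The same phenomenon makes the corollary inconsistent with Theorems~\ref{CorClosedQuasiOrbitsCoarseDisjointUnionSing.Inthetext} and~\ref{Thm.Hausdorfness.Charac.InTheText}. The space $\cM_{3/2}$ coarsely embeds into $\Z^2$ and has a $T_1$ but non-Hausdorff orbit space. The corollary together with (1)$\Leftrightarrow$(2) of Theorem~\ref{ThmHigsonRel} would force that space to be Hausdorff. So the missing step cannot be repaired; only the inclusion $\COrb(\eta)\subseteq\HOrb(\xi)$, for the maximal orbit closure containing $\xi$, survives.
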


\subsection{Topologically transitive actions}

Higson functions are also the key to show that the dynamical system  $(\partial X, \PT(X))$ is never topologically transitive. Recall:

\begin{definition}
Consider a dynamical system  $(M,\mathcal{S})$, where $M$ is a topological space and $\mathcal{S}$ an inverse semigroup acting on $M$. Then $(M,\mathcal S)$ is called \emph{topologically transitive} if for all open $U,V\subseteq M$ there is $s\in \mathcal S$ such that $s(U)\cap V\neq \emptyset$.
\end{definition}

As mentioned in the introduction, while  $(\beta X,\PT(X))$ is always topologically transitive --- indeed, this is immediate since   $(X,\PT(X))$ is transitive ---, this is no longer the case when we restrict our action to the corona $\partial X$.
\begin{theorem}
Let $X$ be an infinite  u.l.f.\ metric space.  The dynamical system $(\partial X,\PT(X))$ is not topologically transitive.\label{ThmMixAction.InTheText}
\end{theorem}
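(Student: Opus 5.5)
The plan is to produce two nonempty open subsets $U,V\subseteq\partial X$ that are invariant under $\PT(X)$ and disjoint. Then for every partial translation $f$ we get $\overline f(U\cap\overline{\dom(f)})\subseteq U$, which is disjoint from $V$, so transitivity fails. The natural source of such sets is a nonconstant Higson function. By Proposition \ref{PropHigsonFunctionsAreConstantOnOrbits}, any Higson function $h$ is constant on orbits of $\partial X$. Hence the sets $U=\{\xi\in\partial X\colon h(\xi)<1/3\}$ and $V=\{\xi\in\partial X\colon h(\xi)>2/3\}$ are open, invariant and disjoint. The whole task therefore reduces to finding a Higson function $h\colon X\to[0,1]$ whose extension takes both values $0$ and $1$ on $\partial X$.

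To build $h$, fix a base point $x_0\in X$. Since $X$ is infinite and u.l.f., balls are finite, so $X$ is unbounded. Choose radii $r_1<r_2<\cdots$ growing fast and pick points $a_n$ with $d(a_n,x_0)\in[r_{2n},r_{2n}+C_n]$ so that the sequences go to infinity. A cleaner approach is to reuse Lemma \ref{LemmaExistenceOfHigsonFunctionSeparationSubsetsFromFarAwaySequences}. Pick an infinite set $A=\{a_n\}\subseteq X$ with $d(a_n,x_0)\to\infty$ so fast that $d(a_n,a_m)\geq \max(n,m)$ for $n\neq m$; this is possible since $X$ is unbounded and balls are finite. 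Split $A$ into $A_{\mathrm{even}}=\{a_{2n}\}$ and $B=A_{\mathrm{odd}}=\{a_{2n+1}\}$. Then $d(x,B)\to\infty$ as $x\to\infty$ within $A_{\mathrm{even}}$. Take any nonprincipal $\xi\ni A_{\mathrm{even}}$. The lemma gives a Higson $h\colon X\to[0,1]$ with $h=1$ on a neighbourhood of $\xi$ and $h\restriction\overline B=0$. Since $B$ is infinite, $\overline B\cap\partial X\neq\emptyset$, so $h$ attains $0$ and $1$ on $\partial X$.

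With this $h$, set $U,V$ as above. Both are nonempty, since $\xi\in V$ and any $\eta\in\overline B\cap\partial X$ lies in $U$. Let $f\in\PT(X)$ and $\zeta\in V\cap\overline{\dom(f)}$. Orbit invariance of $h$ gives $h(\overline f(\zeta))=h(\zeta)>2/3$, so $\overline f(\zeta)\notin U$. Applying the same argument to $f^{-1}$ covers the other order. Hence $\overline f(V)\cap U=\emptyset$ and $\overline f(U)\cap V=\emptyset$ for every $f$, which contradicts topological transitivity.

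The main obstacle is the choice of the well-separated sequence guaranteeing the hypothesis of Lemma \ref{LemmaExistenceOfHigsonFunctionSeparationSubsetsFromFarAwaySequences}. It is handled inductively. Having chosen $a_1,\dots,a_{n-1}$, the set $N_n(\{a_1,\dots,a_{n-1}\})$ is finite by uniform local finiteness, so we can pick $a_n$ outside it, using that $X$ is infinite.
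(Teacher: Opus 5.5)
Your proof is correct and follows the paper's argument. Since Higson functions are constant on orbits (Proposition \ref{PropHigsonFunctionsAreConstantOnOrbits}), separating two boundary points by a Higson function produces disjoint, nonempty, invariant open subsets of $\partial X$. The only difference is that the paper passes to the Higson compactification $hX$ and cites the fact that the Higson corona $\nu X$ has more than one point, whereas you build the separating Higson function directly from Lemma \ref{LemmaExistenceOfHigsonFunctionSeparationSubsetsFromFarAwaySequences}, applied to the even and odd terms of a well-separated sequence; this makes your version self-contained.
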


\begin{proof}
Let $hX$ be the \emph{Higson compactification of $X$}, i.e., $hX$ is the compact Hausdorff space given by Gelfand duality so that the unital abelian \cstar-algebra $C_h(X)$ is $^*$-isomorphic to $C(hX)$. Under these canonical identifications, we have
\begin{equation}\label{Inclusion}
  C(hX) = C_h(X)\subseteq \ell _\infty (X)=C(\beta X).
  \end{equation}
 Hence there is a continuous  surjective map
  $$
  \varphi \colon \beta X\to hX,
  $$
  so that the inclusion in \eqref{Inclusion} becomes
  $$
  b\in C(hX)\mapsto b\circ \varphi \in C(\beta X).
  $$

\begin{claim}
    The map $\varphi$ is constant on the orbits of $\beta X$.
\end{claim}
\begin{proof}
Arguing by contradiction,  suppose the contrary, and hence there exist $\xi\in \beta X$ and $f\in \PT(X)_\xi$ such that $\varphi(\xi)\neq \varphi(\overline f(\xi))$. As $hX$ is a compact Hausdorff space, there is  $b\in C(hX)$ such that
$b(\varphi (\overline f(\xi )))\neq b(\varphi (\xi ))$.  The function on $\beta X$ defined by $a=b\circ \varphi $ is then a Higson function and it satisfies
$a(\overline f(\xi ))\neq a(\xi )$. This contradicts the fact that Higson functions are constant on the orbits of $\beta X$ (Proposition \ref{PropHigsonFunctionsAreConstantOnOrbits}).
\end{proof}

As $X$ is infinite, the Higson corona $\nu X=hX\setminus X$ has more than one point (in fact, $\nu X$ has $2^{2^{\aleph_0}}$ elements, see~\cite[Theorem 3]{Keesling:1994cr} or \cite[Theorem 4.14]{BragaExel2023KMS}). Then, letting   $\alpha ,\beta \in \nu X$ be distinct and $U,V\subseteq hX$ be disjoint open subsets containing $\alpha$ and $\beta$, respectively,  it follows that
  $$
  U'=\varphi ^{-1}(U)\cap \partial X  \ \text{ and }\
  V'=\varphi ^{-1}(V)\cap \partial X
  $$
  are nonempty open invariant subsets of $\partial X$. This shows that the action of $\PT(X)$ on $\partial X$ is not topologically transitive.
\end{proof}

 \subsection{A localized version of Urysohn's lemma for $\calOrb(\partial X)$}
If $X$ is a u.l.f.\ metric space, its \emph{Higson corona} is  $\nu X=h X\setminus X$ --- the Higson compactification $h X$ was defined in the proof of Theorem \ref{ThmMixAction.InTheText}.  Let $\phi\colon \partial X\to  \nu X$ be the canonical quotient map so that the inclusion
\begin{equation}\label{EqInclusions11Oct}
    C(\nu X)\cong C_h(X)/c_0(X)\subseteq \ell_\infty(X)/c_0(X)\cong C(\partial X)
\end{equation}
is given by
\[b\in C(\nu X)\mapsto b\circ \phi \in C(\partial X).\]

\begin{proposition}\label{PropCnuXCOrbX}
    Let $X$ be a u.l.f.\ metric space,  $\phi\colon \partial X\to  \nu X$ be the canonical quotient map described above, and let $\theta\colon \calOrb(\partial X)\to \partial X$ be a section for the quotient map $\partial X\to \calOrb(\partial X)$. Then,
    \[b\in C(\nu X)\mapsto b\circ \phi\circ \theta\in C(\calOrb(\partial X))\]
    is an $^*$-isomorphism.
\end{proposition}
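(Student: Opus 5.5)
The plan is to identify $C(\calOrb(\partial X))$ with the orbit-invariant continuous functions on $\partial X$, and then to identify those with $C_h(X)/c_0(X)\cong C(\nu X)$ via Proposition \ref{PropHigsonFunctionsAreConstantOnOrbits}.

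\emph{Step 1: the map is well defined and independent of the section.} Let $q\colon \partial X\to \calOrb(\partial X)$ be the quotient map. For $b\in C(\nu X)$, the function $b\circ\phi\in C(\partial X)$ corresponds under \eqref{EqInclusions11Oct} to the class of a Higson function $h$. By Proposition \ref{PropHigsonFunctionsAreConstantOnOrbits}, $h$ is constant on orbits of $\partial X$, so $b\circ\phi$ is constant on each orbit; equivalently, $\phi$ itself is constant on orbits (as in the claim inside the proof of Theorem \ref{ThmMixAction.InTheText}). Hence $b\circ\phi=g\circ q$ for a unique function $g$ on $\calOrb(\partial X)$, and $g=b\circ\phi\circ\theta$ no matter which section $\theta$ is chosen. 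Since $g\circ q$ is continuous, $g$ is continuous by the definition of the quotient topology. The assignment $b\mapsto g$ is clearly a $^*$-homomorphism.

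\emph{Step 2: injectivity.} If $b\circ\phi\circ\theta=0$, then $b\circ\phi=0$ on all of $\partial X$. Since $\phi$ is surjective, $b=0$.

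\emph{Step 3: surjectivity.} Let $g\in C(\calOrb(\partial X))$. Then $k=g\circ q$ is a continuous function on $\partial X$ that is constant on orbits. Lift $k$ to a bounded function $h\colon X\to\C$ whose extension to $\beta X$ restricts to $k$ on $\partial X$; this is possible because $\ell_\infty(X)\to C(\partial X)$ is surjective. By the backward direction of Proposition \ref{PropHigsonFunctionsAreConstantOnOrbits}, $h$ is a Higson function, since that criterion only involves the values of the extension on $\partial X$. Therefore the class of $h$ lies in $C_h(X)/c_0(X)\cong C(\nu X)$, so $k=b\circ\phi$ for some $b\in C(\nu X)$. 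Then $g=b\circ\phi\circ\theta$.

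\emph{Main obstacle.} The delicate point is in Step 3: one must check that Proposition \ref{PropHigsonFunctionsAreConstantOnOrbits} really only depends on the boundary values. Rereading its proof, the contradiction is derived from the values $h(\xi)$ and $h(\overline f(\xi))$ at nonprincipal ultrafilters, so orbit-invariance on $\partial X$ alone suffices. The other facts used are routine: a $^*$-isomorphism onto its image that is surjective is an isomorphism, and continuity passes through the quotient map $q$.
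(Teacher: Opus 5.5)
Your proposal is correct and follows the paper's proof: injectivity from the surjectivity of $\phi$, and surjectivity by lifting $g\circ q$ to $\ell_\infty(X)$ and invoking Proposition \ref{PropHigsonFunctionsAreConstantOnOrbits}. The only difference is the continuity step. You use the universal property of the quotient topology ($g\circ q=b\circ\phi$ continuous implies $g$ continuous), whereas the paper uses a net argument that relies on the quotient map being open; your version is slightly cleaner and also makes independence from the choice of section explicit.
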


\begin{proof}
    Let $\alpha$ denote the map in the proposition. We start noticing $\alpha$ is well defined, i.e., that $\alpha(b)$ is indeed continuous on $\calOrb(\partial X)$ for all $b\in C(\nu X)$. Fix $b\in C(\nu X)\cong C_h(X)/c_0(X)$ and pick $h\in C_h(X)$ such that $b=h+c_0(X)$. Suppose $\xi\in \partial X$ and $(\xi_i)_{i\in I}$ is a net in $\partial X$ such that $\Orb(\xi)=\lim_{i,I}\Orb(\xi_i)$. Fix  $\eps>0$. As $h$ is continuous as a function on  $\partial  X$, there is an open $U\subseteq \partial X$ containing $\xi$ such that $|h(\xi)-h(\eta)|\leq \eps$ for all $\eta\in U$. As $(\Orb(\xi_i))_{i,I}$ converges to $\Orb(\xi)$ in $\calOrb(\partial X)$, there is $i_0\in I$ such that \[\Orb(U)\cap \Orb(\xi_i)\neq \emptyset\] for all $i>i_0$. Therefore, since Higson function are constant on orbits (Proposition \ref{PropHigsonFunctionsAreConstantOnOrbits}), this implies that \[|h(\xi)-h(\xi_i)|\leq \eps \ \text{  for all }\  i>i_0.\] Since $\alpha(b)(\xi)=h(\xi)$ and $\alpha(b)(\xi_i)=h(\xi_i)$, we conclude that  \[|\alpha(b)(\Orb(\xi))-\alpha(b)(\Orb(\xi_i))|\leq \eps
\]
for all $i>i_0$. By the arbitrariness of $\eps$ and $\xi$, $\alpha(b)$ is continuous as desired.

Injectivity of $\alpha$ is immediate.  Indeed, if $b,b'\in C(\nu X)$ are distinct, then there is $\xi\in \partial  X$ such that $b(\phi(\xi))\neq b'(\phi(\xi))$. So, \[\alpha(b)(\Orb(\xi))\neq \alpha(b')(\Orb(\xi)).\]

We are left to notice that $\alpha$ is surjective. This will again be a consequence of Proposition \ref{PropHigsonFunctionsAreConstantOnOrbits}. Indeed,   let $a\in C(\calOrb(\partial X))$ the define a map $b\in C(\partial X)$ as
\[b=a\circ q \text{ where }\ q\colon \partial X\to \calOrb(\partial X)\]
is the quotient map. Under the identifications in \eqref{EqInclusions11Oct}, there is $h\in \ell_\infty(X)$ such that $b=h+c_0(X)$ and we should show that $h\in C_h(X)$. By the definition of $b$, it is clear that $h$ must be constant on the orbits of $\partial X$. So it follows from Proposition \ref{PropHigsonFunctionsAreConstantOnOrbits} that $h$ is a Higson function and we are done.
\end{proof}

\begin{theorem}
Let $X$ be a u.l.f.\ metric space and suppose that
$E,F\subseteq \calOrb(\partial X)$ are closed subsets that can be
separated by open sets. Then there exists a continuous function
$f\colon \calOrb(\partial X)\to [0,1]$ such that $f|_E=0$ and $f|_F=1$.\label{ThmLocalUrysohnForOrb.InTheText}
\end{theorem}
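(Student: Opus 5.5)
The plan is to lift everything to $\partial X$, apply Theorem \ref{ThmHigsonRel.Closed.Sets} there, and push the resulting Higson function back down to $\calOrb(\partial X)$ via Proposition \ref{PropCnuXCOrbX}. Let $q\colon \partial X\to \calOrb(\partial X)$ be the quotient map, and let $W_E,W_F\subseteq \calOrb(\partial X)$ be disjoint open sets with $E\subseteq W_E$ and $F\subseteq W_F$. Put
\[\tilde E=q^{-1}(E),\quad \tilde F=q^{-1}(F),\quad U=q^{-1}(W_E),\quad V=q^{-1}(W_F).\]
By continuity of $q$, the sets $\tilde E,\tilde F$ are closed in $\partial X$ and $U,V$ are open. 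All four are orbit-invariant, since they are full preimages under the orbit quotient. Moreover, $U\cap V=\emptyset$, $\tilde E\subseteq U$ and $\tilde F\subseteq V$.

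Next we apply Theorem \ref{ThmHigsonRel.Closed.Sets} to the closed sets $\tilde E,\tilde F$ and the invariant disjoint open sets $U,V$. This produces a Higson function $h\colon X\to[0,1]$ whose extension to $\partial X$ satisfies $h\restriction\tilde E=0$ and $h\restriction \tilde F=1$. Since $h$ is real valued, we may replace it by $\min\{\max\{h,0\},1\}$ if needed to keep values in $[0,1]$. Lemma \ref{LemmaHigsonLattice}, together with the analogous statement for $\min$, keeps this modification Higson.

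By Proposition \ref{PropHigsonFunctionsAreConstantOnOrbits}, $h$ is constant on orbits of $\partial X$. Hence it descends to a well-defined function $f\colon \calOrb(\partial X)\to[0,1]$ with $f\circ q=h|_{\partial X}$. Equivalently, $f=\alpha(b)$, where $b=h+c_0(X)\in C(\nu X)$ and $\alpha$ is the isomorphism of Proposition \ref{PropCnuXCOrbX}. The value does not depend on the chosen section $\theta$, again because $h$ is constant on orbits. Continuity of $f$ is exactly what Proposition \ref{PropCnuXCOrbX} provides. One can also see it directly: $q$ is open by Corollary \ref{CorQuotientOpen}, so it is a quotient map, and $f\circ q$ is continuous. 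Finally $f|_E=0$ and $f|_F=1$, because every point of $E$ (resp. $F$) is $q(\xi)$ for some $\xi\in\tilde E$ (resp. $\tilde F$).

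The only step that needs real care is the application of Theorem \ref{ThmHigsonRel.Closed.Sets}, and in particular its hypotheses. There we must have $E$ and $F$ closed in $\partial X$ rather than merely orbit-closed, and $U,V$ invariant. Both properties come for free from taking full preimages under $q$. Beyond that, the argument reduces to the bookkeeping of passing between $\partial X$ and $\calOrb(\partial X)$ described above.
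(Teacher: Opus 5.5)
Your proposal is correct and follows the paper's proof essentially verbatim: pull back $E$, $F$ and the separating open sets along $q$, apply Theorem \ref{ThmHigsonRel.Closed.Sets}, and descend the resulting Higson function to $\calOrb(\partial X)$ via Proposition \ref{PropCnuXCOrbX}. Two small remarks: the clamping step is unnecessary, since that theorem already gives $h\colon X\to[0,1]$; and your direct continuity argument needs only that $\calOrb(\partial X)$ carries the quotient topology, not that $q$ is open.
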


\begin{proof}
     Let $E,F\subseteq \calOrb(\partial X)$ be closed invariant subsets. Let $q\colon \partial X\to \calOrb(\partial X)$ be the quotient map and define
     \[E'=q^{-1}(E)\ \text{ and }\ F'=q^{-1}(F).\]
     So, $E'$ and $F'$ are closed. By hypothesis, there are  disjoint  open subsets $U,V\subseteq \calOrb(\partial X)$ such that $E\subseteq U$ and $F\subseteq V$. Then, letting
          \[U'=q^{-1}(U)\ \text{ and }\ V'=q^{-1}(V),\]
          $U'$ and $V'$ are disjoint open invariant subsets containing $E'$ and $F'$, respectively. By Theorem \ref{ThmHigsonRel.Closed.Sets}, there is a Higson function $h\colon X\to[0,1]$ such that $h\restriction E'=0$ and $h\restriction F'=1$.
Therefore, if   $\phi\colon C(\nu X)\to C(\calOrb(\partial X))$ denotes the  $^*$-isomorphism in Proposition \ref{PropCnuXCOrbX}, then $\phi(h+c_0(X))$ is a function in $C(\calOrb(\partial X))$ with   the desired properties. \end{proof}

\section{Pseudo-orbits of $(\partial X,\PT(X))$}\label{SectionPseudo}

This section deals with yet another notion of equivalence relation on $\partial X$ determined by dynamics.
Results of this section were inspired by \cite[Chapter 4]{Akin1993GSM}  and   \cite[Theorem 5.3]{Brian2015FundM}.

\begin{definition}\label{DefinitionPseudoOrbit}
    Let $X$ be a u.l.f.\ metric space.
    \begin{enumerate}
        \item Let $\xi,\eta\in \partial X$, $\cU$ be an open cover of  $\partial X$,  $n\in\N$, and $(U_i)_{i=1}^n$ be subsets in $\cU$. We call $(U_i)_{i=1}^n$  an  \emph{$\cU$-chain from $\xi$ to $\eta$ of length $n$} if \[\xi\in U_1,\ \eta\in U_n,\ \text{ and }\ \Orb(U_i)\cap \Orb(U_{i+1})\neq \emptyset\] for all $i\in \{1,\ldots, n-1\}$.
        \item Let $\xi, \eta\in \partial X$. We say that \emph{there are chains (of length $n$) from $\xi$ to $\eta$} if  for all open covers $\cU$ of $\partial X$ there is a $\cU$-chain (of length $n$) from $\xi$ to $\eta$.
        \item Given  $\xi\in \partial X$, the \emph{pseudo-orbit of $\xi$} is
        \[\pOrb(\xi)=\{\eta\in \partial X\colon  \text{there are chains from }\xi\text{ to }\eta\}.\]
    \end{enumerate}
    This defines the \emph{pseudo-orbit equivalence relation} on $\partial X$, denoted by $\pOrb$.
\end{definition}

\begin{remark}
  While we have defined pseudo-orbits only for elements $\xi$ in the corona  $\partial X$, one could replicate  Definition \ref{DefinitionPseudoOrbit} for any element in $\beta X$. However, this would not be interesting since, in $\beta X$, all pseudo-orbits would be the whole $\beta X$. As it will be clear below, this is far from the case when considering only the corona $\partial X$.
\end{remark}

\begin{remark}\label{RemarkConnectedComponentsTop} Recall the following basic fact in general topology: Given a topological space $(M,\tau)$ and $x,y\in M$, the elements $x$ and $y$ are in the same connected component of $M$ if and only if for all open covers $\cU$ of $M$ there is a finite sequence $(U_i)_{i=1}^n$ in $\cU$ such that $x\in U_1$, $y\in U_n$, and $U_i\cap U_{i+1}\neq \emptyset$ for all $i\in \{1,\ldots, n-1\}$. Therefore, $\xi,\eta\in \partial X$ are in the same pseudo-orbit if and only if $\Orb(\xi)$ and $\Orb(\eta)$ are in the same connected component of $\calOrb(\partial X)$.
\end{remark}

\begin{example}
    Given  a u.l.f.\ metric space $X$ and $\xi, \eta\in \partial X$, the existence of chains of length $1$ from $\xi$ to $\eta$ is equivalent to $\xi=\eta$.
\end{example}

\begin{example}\label{ExampleChainLengthNhogson}
    Let $X$ be a u.l.f.\ metric space and $\xi, \eta\in \partial X$. As a consequence of Theorem \ref{ThmHigsonRel},  $\HOrb(\xi)=\HOrb(\eta)$ if and only if there are  chains of length $2$ from $\xi$ to $\eta$.
\end{example}

By the previous example, we have the following chain of inclusions
\[\Orb_{|\partial X}\subseteq \qOrb_{|\partial X}\subseteq \HOrb\subseteq \pOrb\]
Example \ref{ExampleChainLengthNhogson} can   be generalized:

\begin{proposition}
Given  a u.l.f.\ metric space $X$ and $\xi, \eta\in \partial X$. Then, $\HOrb(\xi)=\HOrb(\eta)$  if and only if there is $n\in\N$ so that there are  chains of length $n$ from $\xi$ to $\eta$.
\end{proposition}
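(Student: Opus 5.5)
The proof splits into the two implications. The forward one is essentially Example~\ref{ExampleChainLengthNhogson}. The backward one uses that Higson functions are continuous on $\partial X$ and constant on orbits (Proposition~\ref{PropHigsonFunctionsAreConstantOnOrbits}) to control how much a Higson function can change along a chain of bounded length.

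\emph{Forward direction.} Suppose $\HOrb(\xi)=\HOrb(\eta)$. By Example~\ref{ExampleChainLengthNhogson}, which rests on Theorem~\ref{ThmHigsonRel}, for every open cover $\cU$ of $\partial X$ there is a $\cU$-chain of length $2$ from $\xi$ to $\eta$, so $n=2$ works. If one prefers an arbitrary prescribed $n\geq 2$, a length-$2$ chain $(U_1,U_2)$ can be padded by repeating $U_1$. This is legitimate because $\Orb(U_1)\cap\Orb(U_1)=\Orb(U_1)\neq\emptyset$, as $\xi\in U_1$.

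\emph{Backward direction.} Fix $n$ such that for every open cover of $\partial X$ there is a chain of length $n$ from $\xi$ to $\eta$. Let $h\in C_h(X)$ and $\eps>0$; it suffices to show $|h(\xi)-h(\eta)|\leq 2n\eps$. The extension of $h$ to $\beta X$ is continuous, so for each $\zeta\in\partial X$ the set
\[U_\zeta=\{\zeta'\in\partial X\colon |h(\zeta')-h(\zeta)|<\eps\}\]
is open. Let $\cU=\{U_\zeta\colon \zeta\in\partial X\}$, and take a $\cU$-chain $(U_i)_{i=1}^n$ from $\xi$ to $\eta$. Two facts control $h$ along the chain:
\begin{enumerate}
\item Each $U_i$ is some $U_\zeta$, so $|h(\alpha)-h(\beta)|<2\eps$ for all $\alpha,\beta\in U_i$.
\item Since $\Orb(U_i)\cap\Orb(U_{i+1})\neq\emptyset$, there are $\alpha_i\in U_i$ and $\beta_{i+1}\in U_{i+1}$ with $\Orb(\alpha_i)=\Orb(\beta_{i+1})$. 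By Proposition~\ref{PropHigsonFunctionsAreConstantOnOrbits}, $h(\alpha_i)=h(\beta_{i+1})$.
\end{enumerate}
Set $\beta_1=\xi$ and $\alpha_n=\eta$. Then $|h(\beta_i)-h(\alpha_i)|<2\eps$ for each $i$ by (1), and the orbit links in (2) contribute no error. Telescoping along $\beta_1,\alpha_1=\beta_2,\dots,\alpha_n$ gives $|h(\xi)-h(\eta)|<2n\eps$. Since $n$ does not depend on $\eps$ and $\eps$ is arbitrary, $h(\xi)=h(\eta)$ for every Higson function $h$, that is, $\HOrb(\xi)=\HOrb(\eta)$.

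\emph{Where the difficulty lies.} The only real point is that the chain length $n$ must be uniform over all covers. Without that uniformity the bound $2n\eps$ could blow up as $\eps\to 0$, and the argument would only place $\xi$ and $\eta$ in the same pseudo-orbit (the same connected component of $\calOrb(\partial X)$, by Remark~\ref{RemarkConnectedComponentsTop}), not in the same Higson orbit. The rest is routine bookkeeping.
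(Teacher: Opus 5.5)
Your proof is correct and follows the paper's argument: the forward direction comes from Example~\ref{ExampleChainLengthNhogson}, and the backward direction picks a cover on whose members $h$ has small oscillation, then telescopes along the chain using that Higson functions are constant on orbits (you get $2n\eps$ where the paper gets $n\eps$, which makes no difference). One cosmetic slip: in the telescoping you write $\alpha_1=\beta_2$, but what you mean (and use) is only $h(\alpha_1)=h(\beta_2)$.
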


\begin{proof}
    The forward direction is immediate from Example \ref{ExampleChainLengthNhogson}. For the backwards direction, suppose there are chains of length $n$ from $\xi$ to $\eta$ and fix a Higson function $h\colon X\to \C$. Given $\eps>0$, let $\cU$ be an open cover of $X$ such that $|h(\zeta)-h(\zeta')|\leq \eps$ for all $U\in \cU$ and all $\zeta,\zeta'\in U$. Since Higson functions are constant on orbits, the existence of these chains from $\xi$ to $\eta$ implies that $|h(\xi)-h(\eta)|\leq n\eps$. As $\eps$ was arbitrary, we conclude $h(\xi)=h(\eta)$.
\end{proof}

The following proposition is immediate.

\begin{proposition}
    Let $X$ be a u.l.f.\ metric space. Then $\pOrb(\xi)$ is closed for all $\xi\in \partial X$. \qed
\end{proposition}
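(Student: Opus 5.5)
The plan is to show that the complement of $\pOrb(\xi)$ is open in $\partial X$, using Remark~\ref{RemarkConnectedComponentsTop} so that everything becomes a statement about the orbit space. By that remark, $\eta\in\pOrb(\xi)$ if and only if $\Orb(\eta)$ lies in the connected component $C$ of $\Orb(\xi)$ in $\calOrb(\partial X)$. In other words, $\pOrb(\xi)=q^{-1}(C)$, where $q\colon\partial X\to\calOrb(\partial X)$ is the quotient map. Connected components of any topological space are closed, and $q$ is continuous, so $q^{-1}(C)$ is closed.

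One point deserves care: the remark's equivalence has to hold exactly as stated. The chain condition in Definition~\ref{DefinitionPseudoOrbit} uses open covers $\cU$ of $\partial X$ together with the condition $\Orb(U_i)\cap\Orb(U_{i+1})\neq\emptyset$. This translates to covers of $\calOrb(\partial X)$ as follows.
\begin{enumerate}
\item For $\mathcal V$ an open cover of $\calOrb(\partial X)$, the family $q^{-1}(\mathcal V)$ is a cover of $\partial X$ by invariant open sets. For invariant sets, $\Orb(U)=U$, and $q^{-1}(V)\cap q^{-1}(V')\neq\emptyset$ if and only if $V\cap V'\neq\emptyset$.
\item Conversely, for $\cU$ an open cover of $\partial X$, the family $\{q(U)\colon U\in\cU\}$ is an open cover of $\calOrb(\partial X)$ by Corollary~\ref{CorQuotientOpen}, since $q$ is open. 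Moreover, $q(U)\cap q(U')\neq\emptyset$ if and only if $\Orb(U)\cap\Orb(U')\neq\emptyset$.
\end{enumerate}
So chains in one sense correspond exactly to chains in the other, and the cover-chain characterization of connected components holds in any space. Strictly speaking, this characterization describes quasi-components rather than components, so I would phrase the argument with quasi-components, which are also closed (they are intersections of clopen sets); this leaves the conclusion unchanged.

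Alternatively, closedness can be checked directly. Suppose $\eta\notin\pOrb(\xi)$. Then there is an open cover $\cU$ with no $\cU$-chain from $\xi$ to $\eta$. Choose $U_0\in\cU$ with $\eta\in U_0$. If some $\eta'\in U_0$ admitted a $\cU$-chain from $\xi$ ending in a set $U_n\ni\eta'$, then appending $U_0$ would give a chain to $\eta$, since $\eta'\in U_n\cap U_0\subseteq\Orb(U_n)\cap\Orb(U_0)$. This contradicts the choice of $\cU$, so $U_0$ is an open neighbourhood of $\eta$ disjoint from $\pOrb(\xi)$.

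I would present this second argument as the proof, because it is self-contained. The only potential subtlety, namely the component versus quasi-component distinction, does not arise in it.
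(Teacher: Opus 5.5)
Your proof is correct. The paper records this proposition as immediate and gives no proof, and your direct argument is the natural one: if $\eta\notin\pOrb(\xi)$, choose a cover $\cU$ with no chain from $\xi$ to $\eta$; then any member $U_0\in\cU$ containing $\eta$ misses $\pOrb(\xi)$, since a chain reaching a point of $U_0$ could be extended by $U_0$. This is the same chain-extension device the paper uses in the proof of Theorem~\ref{ThmChainTransitiveSpaces}.

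Your caveat is also right. In a space like $\calOrb(\partial X)$, which need not be compact Hausdorff, the cover-chain condition in Remark~\ref{RemarkConnectedComponentsTop} characterizes quasi-components rather than components, so basing the proof on the direct argument instead of that remark is the better choice.
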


\begin{theorem}\label{ThmChainTransitiveSpaces}
    Let $X$ be a u.l.f.\ metric space. Then $\partial X$ has only one pseudo-orbit  if and only if for every partition $X=A\sqcup B$ into infinite subsets,   there is $r>0$ such that $N_r(A)\cap B$ is infinite.
\end{theorem}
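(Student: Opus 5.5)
By Remark \ref{RemarkConnectedComponentsTop}, $\partial X$ has a single pseudo-orbit if and only if $\calOrb(\partial X)$ is connected. Since the quotient map $q\colon \partial X\to\calOrb(\partial X)$ is open and continuous (Corollary \ref{CorQuotientOpen}), this holds if and only if $\partial X$ has no partition into two nonempty, disjoint, open (hence clopen), orbit-invariant subsets. So the plan is to translate clopen invariant subsets of $\partial X$ into the geometric condition on partitions $X=A\sqcup B$. (If $X$ is finite both sides hold trivially or vacuously, so assume $X$ is infinite.)

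\emph{Step 1: clopen sets come from partitions.} Every clopen $W\subseteq\partial X$ is of the form $\overline A\cap\partial X$ for some $A\subseteq X$. Indeed, by compactness $W$ is a finite union of basic sets $\overline{A_i}\cap\partial X$, and $\bigcup_i\overline{A_i}=\overline{\bigcup_i A_i}$. Then the complement is $\overline{X\setminus A}\cap\partial X$. Hence a nontrivial clopen partition of $\partial X$ is the same as a partition $X=A\sqcup B$ into infinite sets, taken modulo finite sets.

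\emph{Step 2: invariance is the negation of the geometric condition.} I will show that $\overline A\cap\partial X$ is orbit-invariant if and only if $N_r(A)\cap B$ is finite for every $r>0$.

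For the forward direction, suppose $N_r(A)\cap B$ is infinite for some $r$. Using u.l.f.\ as in the proof of Proposition \ref{PropOrbitsAreCountable}, extract an infinite partial translation $f$ with $\dom(f)\subseteq A$, $\mathrm{im}(f)\subseteq B$, and displacement at most $r$. Take a nonprincipal $\xi\ni\dom(f)$. Then $\xi\in\overline A$, while $\overline f(\xi)\in\overline B$, so $\overline A\cap\partial X$ is not invariant.

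For the converse, suppose $N_r(A)\cap B$ is finite for all $r$, and let $\xi\in\overline A\cap\partial X$ and $f\in\PT(X)_\xi$ with displacement at most $r$. Then $f(\dom(f)\cap A)\cap B\subseteq N_r(A)\cap B$ is finite. Since $\xi$ is nonprincipal, removing the finitely many preimages of these points gives a set $A'\in\xi$ with $f(A')\subseteq A$. Hence $\overline f(\xi)\in\overline A$. Note that the condition is symmetric in $A$ and $B$: $N_r(A)\cap B$ is infinite if and only if $N_r(B)\cap A$ is infinite, because u.l.f.\ lets us pass between infinitely many close pairs in either direction.

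\emph{Conclusion and main obstacle.} Combining the two steps: $\partial X$ has one pseudo-orbit if and only if no partition $X=A\sqcup B$ into infinite sets gives invariant clopen sets, that is, if and only if for every such partition some $r$ makes $N_r(A)\cap B$ infinite. The only delicate point is the reduction at the start, namely that connectedness of $\calOrb(\partial X)$ is equivalent to the absence of invariant clopen partitions of $\partial X$. This uses the fact that preimages under $q$ of open sets are exactly the open invariant sets. Beyond that, the proof is mainly bookkeeping on finite exceptional sets.
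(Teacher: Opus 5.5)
Your proof is correct. It uses the same two ingredients as the paper's proof, but arranges them differently: by compactness, clopen subsets of $\partial X$ have the form $\overline A\cap\partial X$; and a bounded-displacement partial translation can carry infinitely many points of $A$ into $B=X\setminus A$ exactly when some $N_r(A)\cap B$ is infinite.

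The paper never passes through connectedness of $\calOrb(\partial X)$; it works directly with chains.
\begin{itemize}
\item For the forward direction, it feeds the two-set cover $\{\overline A,\overline B\}$ into the chain definition. The chain must step from $\overline A$ to $\overline B$, so $\Orb(\overline A)\cap\Orb(\overline B)\neq\emptyset$, and a partial translation can be read off from this.
\item For the converse, it starts from a cover admitting no chain. It forms the open set of points reachable by chains and its open complement, and writes the complement (which is clopen) as $\overline A$. It then shows $N_r(A)\setminus A$ is finite, using that chain-reachability is preserved by partial translations.
\end{itemize}

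You instead factor the argument through two steps: ``one pseudo-orbit $\Leftrightarrow$ no nontrivial clopen invariant partition of $\partial X$'', and the standalone lemma that $\overline A\cap\partial X$ is invariant iff $N_r(A)\cap B$ is finite for all $r$. This is more modular and makes the symmetry between $A$ and $B$ visible. In effect it describes all clopen subsets of $\calOrb(\partial X)$. The paper's version is self-contained from the definition of chains; its reachable-set construction is exactly the proof of your first reduction, specialized.

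One caution about that first step. The chain condition quoted in Remark~\ref{RemarkConnectedComponentsTop} really characterizes quasi-components, which in general can be strictly larger than connected components. So that remark should not be relied on point by point. For the global statement you need, however, the distinction is irrelevant: all points are chain-equivalent iff there is no nontrivial clopen set, i.e.\ iff the space is connected. Your reduction therefore stands.
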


\begin{proof}
For the forward direction, consider a partition    $X=A\sqcup B$ into infinite subsets and pick  $\xi,\eta\in \partial X$  such that $A\in \xi$ and $B\in \eta$. Consider the open cover  $\cU=\{\overline A,\overline{B}\}$ of $X$. As $\partial  X$ has only one pseudo-orbit,
there is an $\cU$-chain from $\xi$ to $\eta$. By the definition of $\cU$, $\Orb(\overline A)\cap \Orb(\overline B)\neq \emptyset$. So, there are nonprincipal ultrafilters $\zeta\in \overline A$,  $\zeta'\in \overline B$, and a partial translation $f$ of $X$ with $\dom(f)\in \zeta$ such that $\overline f(\zeta)=\zeta'$. Letting $r=\sup_{x\in \dom(f)}d(x,f(x))$, it follows that $N_r(A)\cap B $ is infinite.

Suppose now $\partial X$ has more than one pseudo-orbit. So, there are   $\xi,\eta \in \partial X$ and an open cover $\cU$ of $\partial X$ for which   there are no $\cU$-chains from $\xi$ to $\eta$.
 Let
\[\cU_0=\{U\in \cU\colon \exists \text{ a }\cU\text{-chain from }\xi\text{ to some } \zeta\in U\} \ \text{ and } \ \cU_1=\cU\setminus \cU_0.\]
Notice that if $U'\in\cU_0$, then there are $\cU$-chains from $\xi$ to any $\zeta'\in U'$. Therefore, letting
\[U=\bigcup \cU_0\ \text{ and } \ V=\bigcup \cU_1,\]
it follows that $U$ and $V$ are disjoint. So, $X=U\sqcup V$ is a partition of $X$ into open subsets so that $\xi\in U$ and $\eta\in V$. Since $V$ is both closed and open, it follows from compactness that there are $A_1,\ldots, A_n\subseteq X$ such that $V=\bigcup_{i=1}^n\overline{A_i}$. Letting  $A=\bigcup_{i=1}^nA_i$, we have $\overline A=\bigcup_{i=1}^n\overline{A_i}$. So,
\[U=\overline{X\setminus A} \ \text{ and }\ V=\overline A.\] As $\eta\in \overline A$ and $\eta$ is nonprincipal, $A$ is infinite. Also, as $\xi\not\in \overline A$ and $\xi$ is nonprincipal, $X\setminus A$ is infinite.

We are left to notice that $N_r(A)\setminus A$ is finite  for all $r>0$. Indeed, if this were not the case, then there would be a partial translation $f$ of $X$ with infinite domain and such that  $\dom(f)\subseteq X\setminus A$ and $\mathrm{im}(f)\subseteq A$. Suppose  $\eta\in \partial X$ is such that $\dom(f)\in \eta$. As $\eta\in \overline{X\setminus A}=U$, there is an $\cU$-chain from $\xi$ to $\eta$. Therefore, there must also be an $\cU$-chain from $\xi$ to $\overline f(\eta)$. However, since $\mathrm{im}(f)\subseteq A$, $\overline f(\eta)\in \overline A=V$. So, there cannot be an $\cU$-chain from $\xi$ to $f(\eta)$; contradiction.
\end{proof}

\begin{corollary}\label{CorPseudoOrbN}
Let $X$ be either  $\N^n$, for $n\in\N$, or $\Z^n$, for $n\in\N$ with $n>1$.  Then, $\partial X$ has only one pseudo-orbit.\footnote{When $\partial X$ has only one pseudo-orbit, $X$ is called \emph{chain transitive}, see \cite{Akin1993GSM}.} \qed
\end{corollary}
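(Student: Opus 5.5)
We will derive Corollary \ref{CorPseudoOrbN} from the criterion in Theorem \ref{ThmChainTransitiveSpaces}. So fix a partition $X=A\sqcup B$ of $X\in\{\N^n,\Z^n\}$ into infinite sets. We must find $r>0$ such that $N_r(A)\cap B$ is infinite. We will show that $r=1$ already works. Here $X$ carries the $\ell_1$ (graph) metric, which is coarsely equivalent to the Euclidean one. This suffices, since any partition witnessing the failure of the criterion for one metric also witnesses it for the other, up to changing $r$.

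The key step is to argue by contradiction. Suppose $N_1(A)\cap B$ is finite. Then the \emph{edge boundary} between $A$ and $B$ in the grid graph is finite: only finitely many pairs of adjacent points lie in different parts. Choose $R$ so large that every such boundary edge lies inside the ball $N_R(0)$. It then suffices to show that $X\setminus N_R(0)$ is connected in the grid graph. Granting this, $X\setminus N_R(0)$ contains no boundary edge, so it lies entirely in $A$ or entirely in $B$. The other part would then be contained in the finite ball $N_R(0)$, contradicting that both $A$ and $B$ are infinite.

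It remains to check that $X\setminus N_R(0)$ is connected, which is the only real content. For $\N^n$ (including $n=1$), a point $x$ outside the ball can be joined by increasing coordinates monotonically. First raise the first coordinate until it is large. A monotone increasing path never re-enters the ball. Then move within the region where the first coordinate exceeds $R$, which is a translate of $\N^n$ and hence connected, to reach a fixed base point such as $(R+1,0,\ldots,0)$. For $\Z^n$ with $n\ge2$ we argue similarly. From $x$, push the coordinate of largest absolute value further away from $0$ until it exceeds $R$ in absolute value. Then use a second coordinate to travel around between the half-spaces $\{\pm x_i>R\}$: the slabs $\{x_1>R\}$, $\{x_2>R\}$, $\{x_1<-R\}$, $\{x_2<-R\}$ are each connected and consecutive ones intersect. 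This is exactly where $n>1$ is needed. For $n=1$ the complement of a ball in $\Z$ has two components, and indeed $\Z=\N\sqcup(-\N)$ violates the criterion.

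I expect no serious obstacle. The main care needed is the combinatorial connectivity of the complement of a ball, and in particular making the $\Z^n$ path around the ball explicit.
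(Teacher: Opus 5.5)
Your proposal is correct and follows the intended route: the paper gives no separate proof and treats the corollary as an immediate consequence of Theorem~\ref{ThmChainTransitiveSpaces}, and you supply the elementary check of its criterion (with $r=1$), using finiteness of the edge boundary and connectedness of the complement of a ball in the grid graph. One small addition for $n\ge 3$: the coordinate of largest absolute value may be some $x_i$ with $i\ge 3$, so observe that each of the $2n$ half-spaces $\{\pm x_i>R\}$ meets $\{x_1>R\}$ (or $\{x_2>R\}$ when $i=1$), which makes their union connected.
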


\begin{corollary}\label{CorPseudoOrbF}
    Let $n\in\N$ and $\mathbb F_n$ be the free group with $n$ generators. Then $\partial \mathbb F_n$ has more than one pseudo-orbit.  \qed
\end{corollary}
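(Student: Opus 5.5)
The plan is to apply Theorem \ref{ThmChainTransitiveSpaces} in its contrapositive form. By that theorem, it suffices to exhibit a partition $\mathbb F_n=A\sqcup B$ into infinite subsets such that $N_r(A)\cap B$ is finite for every $r>0$. Since the coarse type of $\mathbb F_n$ does not depend on the generating set, we work with the word metric for a free basis $a_1,\ldots,a_n$ and its inverses. The Cayley graph is then a $2n$-regular tree $T$.

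For the partition, let $A$ be the set of reduced words whose first letter is $a_1$, and let $B=\mathbb F_n\setminus A$; note that $B$ contains the identity $e$. Both sets are infinite: $A\ni a_1^k$ for all $k\geq 1$ and $B\ni a_1^{-k}$ for all $k\geq 0$. This works for every $n\geq 1$, and for $n=1$ it is just the split of $\Z$ into positive and nonpositive integers. Geometrically, $A$ is the vertex set of the component of $T$ containing $a_1$ after the edge $\{e,a_1\}$ is deleted, and $B$ is the vertex set of the other component.

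The key step is a distance computation. In a tree, the unique geodesic between a vertex of $B$ and a vertex of $A$ must traverse the single edge $\{e,a_1\}$ joining the two components. Hence for $w\in B$ and $v\in A$ we get
\[d(w,v)=d(w,e)+1+d(a_1,v)\geq |w|+1,\]
so $d(w,A)=|w|+1$. Consequently, if $w\in N_r(A)\cap B$ then $|w|\leq r-1$. This means $N_r(A)\cap B$ is contained in a ball of radius $r$, which is finite because $\mathbb F_n$ is u.l.f.

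Theorem \ref{ThmChainTransitiveSpaces} then shows that $\partial\mathbb F_n$ has more than one pseudo-orbit. There is no real obstacle; the only point needing care is the tree-geodesic argument, namely that every path between the two sides passes through the edge $\{e,a_1\}$. This holds because removing any edge disconnects a tree.
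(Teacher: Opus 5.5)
Your proposal is correct and is essentially the paper's approach. The paper states the corollary with \qed as an immediate consequence of Theorem~\ref{ThmChainTransitiveSpaces}, and your partition of the Cayley tree along the edge $\{e,a_1\}$ supplies the omitted verification: the bound $d(w,A)=|w|+1$ for $w\in B$ makes every $N_r(A)\cap B$ finite. You only need the forward direction of that theorem, whose proof is unproblematic.
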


\section{GNS representations and the dynamical system $(\beta X,\PT(X))$}\label{SectionGNSRep}

 Let $X$ be a u.l.f.\ metric space and $\xi\in \beta X$. Throughout this section,  $\varphi_\xi$ and $\pi_\xi$ will be  as in Definition \ref{DefiGNSRep}. Let us define them precisely for the readers convenience:
 $\varphi_\xi$ is the state on $\cstu(X)$ given by
 \[\varphi_\xi(a)=E(a)(\xi)\]
for all $a\in \cstu(X)$; where $E\colon \cstu(X)\to \ell_\infty(X)$ denotes the canonical conditional expectation. In order to define the representation $\pi_\xi$, we define a pseudo-inner product $\langle \cdot,\cdot \rangle_\xi$ on $\cstu(X)$ by letting
\[
  \langle a,b\rangle_\xi  = \varphi _\xi (b^*a).
  \]
for all $a,b\in \cstu(X)$. Then, letting
\[N_\xi=\{a\in \cstu(X)\colon \langle a,a\rangle =0\},\]
$\cstu(X)/N_\xi$ becomes an inner product space and we  denote  by $H_\xi$ the Hilbert space obtained by completing $\cstu(X)/N_\xi$. By abuse of notation, we still denote the inner product on $H_\xi$ by $\langle \cdot,\cdot \rangle_\xi$ and we let $\|\cdot\|_\xi$ denote the norm on $H_\xi$ induced by this inner product.

  \begin{definition}
  Let $X$ be a u.l.f.\ metric space and $\xi\in \beta X$.   The \emph{GNS representation  $\pi _\xi $} is the $^*$-homomorphism $\pi_\xi\colon \cstu(X)\to \cB(\ell_2(H_\xi))$ given by
\[\pi_\xi(a)(b+N_\xi)=ab+N_\xi\]
for all $a,b\in \cstu(X)$.
  \end{definition}

We shall now identify a canonical orthonormal basis for $H_\xi$. We start noticing that, since $\cstu(X)$ is the closed linear span of  $\{v_f\colon f\in \PT(X)\}$, we  have
  \begin{equation*}
  H_\xi=\overline{\text{span}}\{v_f+N_\xi\colon f\in \mathrm{PT}(X)\}.
  \end{equation*}
Moreover, we only need to consider partial translations whose domain are in $\xi$ to describe $H_\xi$. Precisely, if   $f\in \PT(X)$,  then
  \begin{equation}
  \|v_f+N_\xi\|^2_\xi=
  \langle v_f+N_\xi,v_f+N_\xi\rangle_\xi  = \varphi _\xi (v_f^*v_f) = \varphi _\xi (1_{\dom(f)}),
\end{equation}
where for a given $A\subseteq X$ we let $1_A$ denote the projection in $\ell_\infty(A)\subseteq \cB(\ell_2(X))$ with $1$'s in the diagonal coordinates indexed by elements in $A$ and zero elsewhere.
So, by the definition of $\varphi_\xi$,
\begin{equation}
  \label{norm}\|v_f+N_\xi\|^2_\xi=\left\{\begin{array}{ll}
 1  ,  & \ \text{ if  } \ \dom(f)\in \xi,\\
0,      &\ \text{ if }\ \dom(f)\not\in \xi.
\end{array}\right.
\end{equation}
  Therefore, we have a better description for $H_\xi$:
  \begin{equation}
  \label{Gene}
  H_\xi=\overline{\text{span}}\{v_f+N_\xi\colon f\in \mathrm{PT}(X)_\xi\}.
  \end{equation}

\begin{proposition} \label{Basis}
Let $X$ be a u.l.f.\ metric space, $\xi\in \beta X$, and   $f, g\in \mathrm{PT}(X)_\xi$. Let
  \[
  A=\{x\in \dom(f)\cap \dom(g)\colon f(x)=g(x)\}.
  \]
  \begin{enumerate}
  \item\label{BasisItem1} If $A\notin \xi $, then $v_f+N_\xi\perp v_g+N_\xi$.
    \item\label{BasisItem2} If $A\in \xi $, then $v_f+N_\xi=v_g+N_\xi$.
  \end{enumerate}
\end{proposition}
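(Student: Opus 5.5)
The plan is to compute the inner product $\langle v_f+N_\xi, v_g+N_\xi\rangle_\xi=\varphi_\xi(v_g^*v_f)$ explicitly and read off both items from it. Since $v_g^*v_f\delta_x=v_g^*\delta_{f(x)}$ for $x\in\dom(f)$, the diagonal coefficient $\langle v_g^*v_f\delta_x,\delta_x\rangle=\langle \delta_{f(x)},\delta_{g(x)}\rangle$ equals $1$ exactly when $x\in\dom(f)\cap\dom(g)$ and $f(x)=g(x)$, and $0$ otherwise. Hence
\[E(v_g^*v_f)=1_A,\]
so that $\langle v_f+N_\xi, v_g+N_\xi\rangle_\xi=1_A(\xi)$. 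Here $1_A(\xi)$ is $1$ if $A\in\xi$ and $0$ otherwise, by the identification $\ell_\infty(X)\cong C(\beta X)$: the continuous extension of $1_A$ is the indicator of the clopen set $\overline A$.

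For \eqref{BasisItem1}, if $A\notin\xi$ this inner product is $0$, which is exactly orthogonality.

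For \eqref{BasisItem2}, suppose $A\in\xi$. Expand the squared distance:
\[\|v_f-v_g+N_\xi\|_\xi^2=\|v_f+N_\xi\|^2_\xi+\|v_g+N_\xi\|^2_\xi-2\,\mathrm{Re}\langle v_f+N_\xi,v_g+N_\xi\rangle_\xi.\]
By \eqref{norm}, since $f,g\in\PT(X)_\xi$, both norms equal $1$. By the computation above, the cross term equals $1$. Thus the squared distance is $1+1-2=0$, so $v_f-v_g\in N_\xi$ and $v_f+N_\xi=v_g+N_\xi$.

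The only point needing care, and it is minor, is the diagonal computation of $v_g^*v_f$. One checks that $v_g^*\delta_y=\delta_{g^{-1}(y)}$ if $y\in\mathrm{im}(g)$ and $0$ otherwise. Therefore $\langle v_g^*v_f\delta_x,\delta_x\rangle=\langle v_f\delta_x,v_g\delta_x\rangle$, which gives the indicator of $A$ as claimed. I do not expect any genuine obstacle.
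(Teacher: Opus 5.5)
Your proof is correct and follows essentially the paper's argument. Both compute $\langle v_f+N_\xi,v_g+N_\xi\rangle_\xi=\varphi_\xi(v_g^*v_f)=1_A(\xi)$, then expand $\|v_f-v_g+N_\xi\|_\xi^2=2-2\,\mathrm{Re}\langle v_f+N_\xi,v_g+N_\xi\rangle_\xi=0$. The only cosmetic difference is that the paper rewrites $v_g^*v_f$ as $v_{g^{-1}f}$ and reads off its fixed-point set, whereas you compute the diagonal entries $\langle v_f\delta_x,v_g\delta_x\rangle$ directly.
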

\begin{proof}
By the definition of $\varphi_\xi$, we have
  \[
  \langle v_f+N_\xi, v_g+N_\xi\rangle_\xi  =
  \varphi_\xi (v_g^*v_f) =
  \varphi _\xi (v_{g^{-1}f})=1_A(\xi).
  \]
So,    $ \langle v_f+N_\xi, v_g+N_\xi\rangle_\xi$ equals $1$ if $A\in \xi$ and zero otherwise. This  proves  \eqref{BasisItem1}.
On the other hand, when $A\in \xi $,
we have
  \[
  \|v_f-v_g+N_\xi\|_\xi^2 =
  2 - 2\Re\langle v_f+N_\xi, v_g+N_\xi\rangle_\xi  = 0.
  \]
  So,  $v_f+N_\xi=v_g+N_\xi$ and \eqref{BasisItem2} is proved.
\end{proof}

In light of  \eqref{norm}, \eqref{Gene}, and Proposition \ref{Basis},  \[\{v_f+N_\xi\colon f\in \PT(X)_\xi\}\] is a maximal orthonormal subset of $H_\xi$. We shall now describe how to canonically identify partial translations given rise to the same elements of $H_\xi$.

\begin{proposition}\label{Propeetawelldef}
    Let $X$ be a u.l.f.\ metric space and $\xi\in \beta X$. Let  $\eta\in \Orb(\xi)$ and suppose $f,g\in \PT(X)_\xi$ are such that $\eta=\overline f (\xi)=\overline g(\xi)$. Then,
    $v_f+N_\xi=v_g+N_\xi$.
\end{proposition}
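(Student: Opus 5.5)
The plan is to reduce the statement to Proposition \ref{Basis}\eqref{BasisItem2}. That item says $v_f+N_\xi=v_g+N_\xi$ as soon as the agreement set
\[A=\{x\in \dom(f)\cap\dom(g)\colon f(x)=g(x)\}\]
belongs to $\xi$. So it suffices to prove that $\overline f(\xi)=\overline g(\xi)$ forces $A\in\xi$.

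I would argue by contradiction: suppose $A\notin\xi$. Since $\dom(f),\dom(g)\in\xi$, the set $D=(\dom(f)\cap\dom(g))\setminus A$ lies in $\xi$, and $f(x)\neq g(x)$ for every $x\in D$. The main step is to partition $D$ into finitely many pieces $D_1,\ldots,D_m$ such that, on each piece, the images under $f$ and under $g$ are disjoint, i.e.\ $f(D_i)\cap g(D_i)=\emptyset$.

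To build this partition I would use u.l.f.-ness. Let $r$ bound the displacements of $f$ and $g$. Consider the graph on $D$ in which $x\sim y$ when $f(x)=g(y)$ or $g(x)=f(y)$. Since $f$ and $g$ are injective, each vertex has degree at most $2$. A graph of bounded degree admits a finite proper colouring, here with $3$ colours. Taking $D_i$ to be the colour classes gives the required pieces: if $f(x)=g(y)$ with $x,y\in D_i$, then either $x=y$, which contradicts $x\in D$, or $x\sim y$, which contradicts the colouring. This is the only step requiring care, and it is elementary.

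Since $\xi$ is an ultrafilter, some $D_i\in\xi$. Then $f(D_i)\in\overline f(\xi)$ and $g(D_i)\in\overline g(\xi)$ by the definition of $\overline f$. These are disjoint sets lying in the same ultrafilter $\eta$, which is impossible. Hence $A\in\xi$, and Proposition \ref{Basis}\eqref{BasisItem2} gives $v_f+N_\xi=v_g+N_\xi$.
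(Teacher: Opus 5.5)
Your proof is correct. It has the same overall structure as the paper's: show that the agreement set $A=\{x\in\dom(f)\cap\dom(g)\colon f(x)=g(x)\}$ lies in $\xi$, then apply Proposition~\ref{Basis}\eqref{BasisItem2}.

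The difference is the key step, the implication $\overline f(\xi)=\overline g(\xi)\Rightarrow A\in\xi$. The paper does not prove it and simply cites \cite[Lemma 3.4]{SpakulaWillett2017}. You prove it directly with a Kat\v{e}tov-style three-colouring of the graph on $D$. Each vertex has degree at most $2$ because $f$ and $g$ are injective. The colour class lying in $\xi$ then has disjoint images under $f$ and $g$, and both images would have to belong to $\eta$, which is impossible.

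The paper's route is shorter. Yours is self-contained and shows the fact is purely set-theoretic. Contrary to your remark, the colouring step uses neither u.l.f.-ness nor the displacement bound $r$, only the injectivity of $f$ and $g$. So the same argument works for arbitrary injective partial maps on a set, and you can drop the sentence introducing $r$.

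One small point: for an infinite graph, say briefly why a $3$-colouring exists. For example, $D$ is countable because $X$ is u.l.f., so greedy colouring along an enumeration works. Alternatively, a graph of maximum degree $2$ is a disjoint union of paths and cycles, each of which is $3$-colourable by hand.
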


\begin{proof}
By \cite[Lemma 3.4]{SpakulaWillett2017}, the hypothesis of the proposition  implies that
\[\{x\in \dom(f)\cap \dom(g)\colon f(x)=g(x)\}\in \xi. \]
It then follows from  Proposition \ref{Basis} that $v_f+N_\xi=v_g+N_\xi$.
\end{proof}

\begin{definition}
    Let $X$ be a u.l.f.\ metric space and $\xi\in \beta X$. If $\eta\in \Orb(\xi)$, we define $e_\eta\in H_\xi$ by letting
    \[e_\eta=v_f+N_\xi,\]
    where $f\in \PT(X)_\xi$ is such that $\overline f(\xi)=\eta$. By Proposition \ref{Propeetawelldef}, this is well defined.
\end{definition}

\begin{proposition}\label{DescrRep}
Let $X$ be a u.l.f.\ metric space and $\xi\in \Orb(\xi)$.   The set
  \[
  \{e_\eta \in H_\xi\colon \eta \in \Orb(\xi )\}
  \]
  is an orthonormal basis for  $H_\xi$.  Moreover, if $f\in \PT(X)$ and  $\eta \in \Orb(\xi )$, we have:
  \begin{enumerate}
  \item \label{DescrRepItem1} \label{ActionOfPtrans} If $\dom(f)\in \eta$,   then  $\pi _\xi (v_f) e_\eta  = e_{\overline f(\eta )}$.
  \item  \label{DescrRepItem2} If $\dom(f)\not\in\eta $,  then  $\pi _\xi (v_f) e_\eta  = 0$.
\end{enumerate}
 Moreover, if $a\in \ell _\infty (X)$, then
\[\pi _\xi (a)e_\eta  =  a(\eta )e_\eta ,\]
 where here we are seeing $a$ as an element in $C(\beta X)$ under the canonical identification $\ell_\infty(X)\cong C(\beta X)$.\end{proposition}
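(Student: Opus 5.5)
The plan is to use the facts already established about $H_\xi$: the vectors $v_f+N_\xi$, $f\in\PT(X)_\xi$, span a dense subspace by \eqref{Gene}; by \eqref{norm} and Proposition \ref{Basis} any two of them are either equal or orthogonal; and by Proposition \ref{Propeetawelldef} the vector depends only on $\overline f(\xi)$. Hence $\{e_\eta\colon\eta\in\Orb(\xi)\}$ is an orthonormal set whose span is dense, so it is an orthonormal basis, provided distinct $\eta$ give distinct vectors. To check this, let $\overline f(\xi)=\eta\neq\eta'=\overline g(\xi)$. If $A=\{x\colon f(x)=g(x)\}$ were in $\xi$, then $\overline f(\xi)=\overline{f\restriction A}(\xi)=\overline g(\xi)$, which is impossible. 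So $A\notin\xi$, and Proposition \ref{Basis}\eqref{BasisItem1} gives $e_\eta\perp e_{\eta'}$.

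For the action, write $e_\eta=v_g+N_\xi$ with $g\in\PT(X)_\xi$ and $\overline g(\xi)=\eta$. Then $\pi_\xi(v_f)e_\eta=v_fv_g+N_\xi=v_{f\circ g}+N_\xi$, where $f\circ g$ is the partial translation with domain $g^{-1}(\dom(f))$. This identity $v_fv_g=v_{fg}$ is immediate on basis vectors. Now $g^{-1}(\dom f)\in\xi$ if and only if $\dom(f)\in\overline g(\xi)=\eta$, which follows from the definition of $\overline g$ together with the fact that $\overline g(\xi)$ is an ultrafilter. If $\dom(f)\in\eta$, then $f\circ g\in\PT(X)_\xi$, and $\overline{f\circ g}(\xi)=\overline f(\overline g(\xi))=\overline f(\eta)$ by functoriality of Stone--\v{C}ech extensions, which can be checked directly from the defining property of $\overline f$. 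This gives item \eqref{DescrRepItem1}. If $\dom(f)\notin\eta$, then $\dom(f\circ g)\notin\xi$, so by \eqref{norm} the vector $v_{f\circ g}+N_\xi$ has norm $0$, which gives item \eqref{DescrRepItem2}.

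For $a\in\ell_\infty(X)$, I would first treat $a=1_B$. Here $1_Bv_g=v_{g\restriction g^{-1}(B)}$. If $B\in\eta$, then $g^{-1}(B)\in\xi$, and this restricted map agrees with $g$ on a set in $\xi$, so by Proposition \ref{Basis}\eqref{BasisItem2} it yields $e_\eta$. If $B\notin\eta$, the vector is zero by \eqref{norm}. In both cases $\pi_\xi(1_B)e_\eta=1_B(\eta)e_\eta$. Finite linear combinations of indicator functions are norm dense in $\ell_\infty(X)$, and both sides of the identity are continuous in $a$ (the left side because $\pi_\xi$ is contractive, the right side because evaluation at $\eta$ on $C(\beta X)$ is continuous), so the identity extends to all $a\in\ell_\infty(X)$.

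The only delicate point is the bookkeeping showing that $\dom(f\circ g)\in\xi$ exactly when $\dom(f)\in\eta$, and that $\overline{f\circ g}=\overline f\circ\overline g$ on $\xi$. I expect both to follow directly from the ultrafilter definition of $\overline g$.
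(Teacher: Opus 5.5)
Your proposal is correct and follows essentially the same route as the paper. Orthonormality and density come from \eqref{norm}, \eqref{Gene} and Proposition~\ref{Basis}; the two items come from $v_fv_g=v_{fg}$ together with $\dom(fg)\in\xi \Leftrightarrow \dom(f)\in\eta$ and $\overline{fg}(\xi)=\overline f(\eta)$; and for $\ell_\infty(X)$ you treat indicator functions first (the paper does this by viewing $1_B$ as $v_{\mathrm{id}_B}$) and then pass to general $a\in\ell_\infty(X)$ by density, while also making explicit the check, left implicit in the paper, that distinct points of $\Orb(\xi)$ give orthogonal vectors.
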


\begin{proof}
  The fact that this set forms an orthonormal basis of $H_\xi$ follows from \eqref{norm} and Proposition \ref{Basis}. Let now  $f\in \PT(X)$,   $\eta \in \Orb(\xi )$,  and  pick $g$ in $\mathrm{PT}(X)_\xi$ such that $\eta =\overline g(\xi )$.

  Suppose $\dom(f)\in \eta$. Then
    there is some $A\in \xi $, such that $A\subseteq \dom (g)$ and $g(A) \subseteq  \dom(f)$.
Consider the composition $fg$ defined where possible. Then,   $A\subseteq \dom(fg)$ and we have that  $fg\in \mathrm{PT}_\xi$.
Therefore,
  \[
  \pi _\xi (v_f) e_\eta  =   \pi _\xi (v_f) (v_g+N_\xi) = v_{fg}+N_\xi= e_{\overline {fg}(\xi )} = e_{\overline f(\overline g(\xi ))} = e_{\overline f(\eta )}
  \]
and  \label{DescrRepItem1} follows.

Suppose now that   $\dom(f)\not\in \eta$. Then  $\dom(fg) \notin \xi $. Indeed, otherwise we would have that $g(\dom(fg))\in \overline g(\xi)=\eta$ and, as $g(\dom(fg))\subseteq \dom(f)$, this would imply that $\dom(f)\in \eta$. Finally, as  $\dom(fg)\notin \xi$,
\[
  \pi _\xi (v_f) e_\eta  =   \pi _\xi (v_f) (v_g+N_\xi) = v_{fg}+N_\xi=0.\]
So, we obtain \eqref{DescrRepItem2}.

For the last statement, assume first that $a$ is idempotent.  Then $a$ is necessarily  the characteristic function of some  subset
$A\subseteq X$, in which case $a$ coincides with some  $v_f$ and the result follows from the previous items.  The general case is then a
consequence of the fact that the idempotent elements of $\ell _\infty (X)$ span a dense subspace.
\end{proof}

\begin{remark}\label{RemarkGNSRepEquivOrb}
Proposition  \ref{DescrRep} implies that, up to equivalence of representations, we can assume   $\pi _\xi $ is  the representation of
  $\cstu(X)$ on $\ell ^2 (\Orb(\xi ))$ determined by
  \begin{enumerate}
      \item $\pi_\xi(v_f)e_\eta=e_{\overline f(\eta)}$ for all  $\eta\in \Orb(\xi)$ and all $f\in \PT(X)_\eta$, and
\item      $\pi_\xi(v_f)e_\eta= 0$ for all  $\eta\in \Orb(\xi)$ and all $f\in \PT(X)\setminus \PT(X)_\eta$.
  \end{enumerate}\end{remark}

For the next proposition, we recall some basic $\mathrm C^*$-algebras terminology.

\begin{definition}\label{DefiIrreducibleRepPrimitive}
Let $A$ be a $\mathrm C^*$-algebra and $\varphi\colon A\to \cB(H)$ be a representation of it on a Hilbert space $H$. We say that $\varphi$ is irreducible if there is no nontrivial proper subspace $H'\subseteq H$  such that $\varphi(a)[H']\subseteq H'$ for all $a\in A$. An ideal of $A$ is called \emph{primitive} if it is the kernel of an irreducible representation.
\end{definition}

\begin{proposition}\label{PropGNSIrreducible}
Let $X$ be a u.l.f.\ metric space and $\xi\in \beta X$. Then, the GNS representation $\pi_\xi$ is irreducible. In particular, the ideal $\ker(\pi_\xi)$ is primitive.
\end{proposition}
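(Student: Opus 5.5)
The plan is to work with the concrete model of Remark \ref{RemarkGNSRepEquivOrb}: $\pi_\xi$ acts on $\ell_2(\Orb(\xi))$ with orthonormal basis $(e_\eta)_{\eta\in\Orb(\xi)}$. I will show that the commutant $\pi_\xi(\cstu(X))'$ consists only of scalars. Irreducibility then follows: if $H'$ were a nontrivial proper closed invariant subspace, then, since $\pi_\xi(\cstu(X))$ is self-adjoint, the orthogonal projection onto $H'$ would be a nonscalar element of the commutant. The statement about $\ker(\pi_\xi)$ is then just the definition of a primitive ideal (Definition \ref{DefiIrreducibleRepPrimitive}).

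The first step is to show that each rank-one projection $p_\eta$ onto $\mathbb{C}e_\eta$ is a strong limit of diagonal operators $\pi_\xi(1_A)$. By Proposition \ref{DescrRep}, $\pi_\xi(1_A)e_\zeta=1_A(\zeta)e_\zeta$. Given finitely many distinct $\zeta_1,\dots,\zeta_n\in\Orb(\xi)\setminus\{\eta\}$, the ultrafilters $\eta,\zeta_i$ are distinct. So there is $A\in\eta$ with $A\notin\zeta_i$ for every $i$. Then $\pi_\xi(1_A)$ fixes $e_\eta$ and kills $e_{\zeta_1},\dots,e_{\zeta_n}$. Taking the net of such $A$ directed by finite subsets of the (countable) orbit, and using that the operators $\pi_\xi(1_A)$ are uniformly bounded diagonal projections, we get $\pi_\xi(1_A)\to p_\eta$ strongly. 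Any $T$ in the commutant commutes with each $\pi_\xi(1_A)$, hence with every $p_\eta$. Therefore $T$ is diagonal: $Te_\eta=\lambda_\eta e_\eta$.

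The second step uses transitivity of the orbit. For $\eta\in\Orb(\xi)$, pick $f\in\PT(X)_\xi$ with $\overline f(\xi)=\eta$. Then $\pi_\xi(v_f)e_\xi=e_\eta$ by Proposition \ref{DescrRep}\eqref{DescrRepItem1}. Applying $T$ to both sides gives
\[\lambda_\eta e_\eta=T\pi_\xi(v_f)e_\xi=\pi_\xi(v_f)Te_\xi=\lambda_\xi e_\eta,\]
so $\lambda_\eta=\lambda_\xi$ for every $\eta$. Hence $T=\lambda_\xi\cdot 1$.

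The only point requiring care is the separation claim in the first step, namely that distinct points of the orbit can be separated by a single clopen set $\overline A$. This is immediate because $\beta X$ is Hausdorff with a basis of clopen sets of the form $\overline A$. I therefore expect no real obstacle. As an alternative route, one could note that $\varphi_\xi$ restricted to $\ell_\infty(X)$ is a pure state (a point evaluation) with the unique extension property. That would also yield purity of $\varphi_\xi$, but the commutant argument above is more direct.
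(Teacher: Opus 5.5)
Your proof is correct and follows the paper's argument. Via Schur's lemma, you show that the commutant of $\pi_\xi(\cstu(X))$ on $\ell_2(\Orb(\xi))$ is trivial: first diagonality, using elements of $\ell_\infty(X)$ that separate points of the orbit, then constancy of the diagonal, using $\pi_\xi(v_f)e_\xi=e_{\overline f(\xi)}$. The one difference is cosmetic: you pass to strong limits of $\pi_\xi(1_A)$ to obtain the projections $p_\eta$, whereas the paper just notes $\langle be_\zeta,e_\eta\rangle=\langle b\pi_\xi(a^*)e_\zeta,e_\eta\rangle=0$ for a single $a\in\ell_\infty(X)$ with $a(\eta)=1$ and $a(\zeta)=0$, so the net argument is unnecessary (though valid).
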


\begin{proof}
Throughout the proof, we assume $\pi_\xi$ is given as in Remark \ref{RemarkGNSRepEquivOrb}. Using Schur's lemma it suffices to prove that any bounded operator on $\ell ^2(\Orb(\xi ))$, which commutes with the
range of $\pi _\xi $, is necessarily a multiple of the identity.

Fix $b\in \cB(\ell_2(\Orb(\xi)))$. Let $\eta $ and $\zeta $ be distinct points in $\Orb(\xi )$ and choose $a\in \ell_\infty (X)\cong C(\beta X)$  such that $ a(\eta )=1$ and $ a(\zeta )=0$.
By Proposition \ref{DescrRep},
  $$
  \big \langle b e_\zeta ,e_\eta \big \rangle_\xi   =
  \big \langle b e_\zeta ,\pi _\xi (a)e_\eta \big \rangle_\xi  =
  \big \langle \pi _\xi (a^*)b e_\zeta ,e_\eta \big \rangle_\xi  =
  \big \langle b\pi _\xi (a^*)e_\zeta ,e_\eta \big \rangle_\xi  = 0,
  $$
  so it follows that $b$ is a diagonal operator. We claim that all of the diagonal entries of $b$ coincide with
$\big\langle be_\xi ,e_\xi \big \rangle_\xi $.  To see this, pick any $\zeta \in\Orb(\xi )$, so there exists   $f\in \PT(X)_\xi$ such that   $\overline f(\xi )=\zeta $.  By Proposition \ref{DescrRep}, we have  $\pi _\xi (v_f) e_\xi =e_\zeta $ and it follows that
  \begin{align*}
  \big\langle b e_\zeta ,e_\zeta \big \rangle_\xi & =
  \big\langle  b\pi _\xi (v_f)e_\xi ,\pi _\xi (v_f)e_\xi \big \rangle_\xi \\
  &=
  \big\langle \pi _\xi (v_f)be_\xi ,\pi _\xi (v_f)e_\xi \big \rangle_\xi \\
  &=
  \big\langle be_\xi ,\pi _\xi (v_f^*v_f)e_\xi \big \rangle_\xi \\
  &=
  \big\langle b e_\xi ,e_\xi \big \rangle_\xi .
  \end{align*}
  This shows that $b$ is a scalar multiple of the identity operator and hence that $\pi _\xi $ is irreducible.
\end{proof}

\subsection{Characterizing orbits and quasi-orbits}\label{SubsectionCharacterizingOrbits}

\begin{theorem}
  Let $X$ be a u.l.f.\ metric space and   $\xi,\eta\in \beta X$. Let $\pi_\xi,\pi_\eta$ be the GNS representations given in Definition \ref{DefiGNSRep}. \begin{enumerate}
      \item\label{ThmOrbitEquivIFFGNSEquiv.Item1}   $\Orb(\xi )=\Orb(\eta )$ if and only if    $\pi _\xi $ and $\pi _\eta $ are unitarily equivalent.
  \end{enumerate} \label{ThmOrbitEquivIFFGNSEquiv.Inthetext}
  \end{theorem}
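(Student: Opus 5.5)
We prove both directions using the concrete model of Remark \ref{RemarkGNSRepEquivOrb}, where $\pi_\xi$ acts on $\ell_2(\Orb(\xi))$ with orthonormal basis $(e_\zeta)_{\zeta\in\Orb(\xi)}$, partial translations act by $e_\zeta\mapsto e_{\overline f(\zeta)}$ (or $0$), and $a\in\ell_\infty(X)$ acts diagonally by $a(\zeta)$.

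\emph{Forward direction.} Suppose $\Orb(\xi)=\Orb(\eta)$. In the model above, $\pi_\xi$ and $\pi_\eta$ are then defined on the same Hilbert space $\ell_2(\Orb(\xi))=\ell_2(\Orb(\eta))$. They are given by the same formulas on the generators $v_f$, and these formulas depend only on the orbit, not on the base point. Hence the two representations agree on a generating set, so they are equal and in particular unitarily equivalent. To make this airtight, we write down the unitary $H_\xi\to H_\eta$ explicitly: it sends $e_\zeta$ (defined relative to $\xi$) to $e_\zeta$ (defined relative to $\eta$). By Proposition \ref{DescrRep} it intertwines each $\pi(v_f)$, and hence all of $\cstu(X)$ by density and continuity.

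\emph{Backward direction.} Suppose $u\colon H_\xi\to H_\eta$ is a unitary with $u\pi_\xi(a)u^*=\pi_\eta(a)$ for all $a$. The key step is to recover the orbit spectrally from the restriction to $\ell_\infty(X)\cong C(\beta X)$. By Proposition \ref{DescrRep}, $\pi_\xi|_{\ell_\infty(X)}$ is diagonal with eigenvectors $e_\zeta$ and joint eigenvalue character $a\mapsto a(\zeta)$. Distinct points of $\beta X$ give distinct characters, since $\beta X$ is the Gelfand spectrum of $\ell_\infty(X)$. Therefore the set of characters $\chi$ of $\ell_\infty(X)$ admitting a nonzero joint eigenvector, i.e.\ a vector $v$ with $\pi_\xi(a)v=\chi(a)v$ for all $a$, is exactly $\{\mathrm{ev}_\zeta\colon \zeta\in\Orb(\xi)\}$.

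The main point to check is that no other characters appear. Suppose $v=\sum_\zeta c_\zeta e_\zeta$ is a joint eigenvector for a character $\chi$. Applying $\pi_\xi(a)$ gives $a(\zeta)c_\zeta=\chi(a)c_\zeta$ for all $a$ and all $\zeta$. Pick $\zeta$ with $c_\zeta\neq0$; then $\chi=\mathrm{ev}_\zeta$. This set of joint eigencharacters is invariant under unitary equivalence, because $u$ carries joint eigenvectors of $\pi_\xi|_{\ell_\infty(X)}$ to joint eigenvectors of $\pi_\eta|_{\ell_\infty(X)}$ with the same character. Hence $\Orb(\xi)=\Orb(\eta)$ as subsets of $\beta X$.

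An alternative route to the same conclusion is the following. Since $\eta\in\Orb(\eta)$, the vector $ue_\xi$ is a joint eigenvector of $\pi_\eta|_{\ell_\infty(X)}$ with character $\mathrm{ev}_\xi$. By the computation above, this forces $\xi\in\Orb(\eta)$, so the orbits coincide.
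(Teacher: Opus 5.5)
Your proof is correct and follows essentially the paper's route. The forward direction uses the concrete model of Remark~\ref{RemarkGNSRepEquivOrb} and Proposition~\ref{DescrRep}, and the backward direction identifies $\Orb(\xi)$ as the set of $\zeta\in\beta X$ whose evaluation character $a\mapsto a(\zeta)$ is a joint eigencharacter of $\pi_\xi\restriction\ell_\infty(X)$, which is a unitary invariant. Your expansion $v=\sum_\zeta c_\zeta e_\zeta$ in the diagonalizing basis is a clean, rigorous version of what the paper calls ``straightforward approximating arguments'' after its claim about the vectors $v_f+N_\xi$.
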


\begin{proof}
If $\Orb(\xi)=\Orb(\eta)$, it follows from Remark \ref{RemarkGNSRepEquivOrb} that $\pi_\xi$ and $\pi_\eta$ are equivalent.

Suppose now $\pi_\xi$ and $\pi_\eta$ are equivalent and let us show that  $\Orb(\xi )= \Orb(\eta )$.

\begin{claim}
We have
\begin{align*}
    \Orb(\xi)=\big\{\zeta\in \beta X\colon &\exists f\in \PT(X)_\xi,\ \forall a\in \ell_\infty(X)\  \\
    &\pi_\xi(a)(v_f+N_\xi)=  a(\zeta)(v_f+N_\xi)\big\}.
    \end{align*}
\end{claim}

\begin{proof}
    If $\zeta\in \Orb(\xi)$, Proposition \ref{Basis} gives that
    \[\pi_\xi(a)e_\zeta= a(\zeta)e_\zeta\]
for all $a\in \ell_\infty(X)$. As $e_\zeta=v_f+N_\xi$ for some $f\in \PT(X)_\xi$,  $\zeta$ belongs to the subset on  the right-hand side above.

Suppose now $\zeta\not\in \Orb(\xi)$ and pick $f\in \PT(X)_\xi$. Since $\zeta$ is not in the orbit of $\xi$, $\overline f(\xi)\neq \zeta$. So, there is $A\subseteq X$ such that $A\in \zeta$ and $X\setminus A\in \overline f(\xi)$.   In particular,      the projection $1_A\in \ell_\infty(X)$ satisfies $1_A(\zeta)=1$. On the other hand, $X\setminus A\in \overline f(\xi)$, we have that ${f^{-1}(X\setminus A)}\in \xi$, so, ${f^{-1}(A)}\not\in \xi$. Therefore, as  $v^*_f1_Av_f=1_{f^{-1}(A)}$, we have
\[\|1_Av_f+N_\xi\|^2=\varphi_\xi(v_f^*1_Av_f)=E(v^*_f1_Av_f)(\xi)=0.\]
We conclude that $\pi_\xi(a)(v_f+N_\xi)=0$, so it cannot equal $  a(\zeta)(v_f+N_\xi)=v_f+N_\xi$.
\end{proof}

Since $H_\xi$ is the closed span of $\{v_f+N_\xi\colon f\in \PT(X)_\xi\}$ (see \eqref{Gene}), the previous claim together with  straightforward approximating arguments gives that
\begin{align*}   \Orb(\xi)=\big\{\zeta\in \beta X\colon &\exists w\in H_\xi\setminus\{0\},\ \forall a\in \ell_\infty(X)\  \\
    &\pi_\xi(a)w=a(\zeta)w\big\}.
    \end{align*}
Analogously, the same holds for any element in $\beta X$, so
\begin{align*}   \Orb(\eta)=\big\{\zeta\in \beta X\colon &\exists w\in H_\eta\setminus\{0\},\ \forall a\in \ell_\infty(X)\  \\
    &\pi_\xi(a)w=  a(\zeta)w\big\}.
    \end{align*}
Therefore, since by hypothesis there is a unitary $u\colon H_\xi\to H_\eta$ such that $\pi_\xi(a)=u^*\pi_\eta(a)u$ for all $a\in \cstu(X)$, this shows that $\Orb(\xi)=\Orb(\eta)$.
\end{proof}

Here we will characterize the kernel of the GNS representation $\pi _\xi $ in terms of the quasi-orbits of the dynamical system $(\beta X,\PT(X))$. For that, we recall that $E\colon \cstu(X)\to \ell_\infty(X)$ denotes the canonical conditional expectation. Similarly, given $\xi\in \beta X$, we let $E_\xi\colon \cB(\ell_2(\Orb(\xi)))\to \ell_\infty(\Orb(\xi))$ denote the canonical conditional expectation. \

\begin{lemma} \label{CovarCondexps}
Let $X$ be a u.l.f.\ metric space and $\xi\in\beta X$. Consider the representation $\pi_\xi$ as a representation on $\ell_2(\Orb(\xi))$ as in   Remark \ref{RemarkGNSRepEquivOrb}. Then, for every $a\in  \cstu(X)$, we have
  $$
  E_\xi (\pi _\xi (a)) = E(a)\restriction_{\Orb(\xi )},
  $$
  where $E(a)$ is being regarded as a  continuous function on $\beta X$ via canonical isomorphism $\ell_\infty (X)\cong C(\beta X)$.
\end{lemma}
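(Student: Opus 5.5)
The plan is to reduce, by linearity and continuity, to the generators $v_f$, and then compute the diagonal entries of $\pi_\xi(a)$ in the basis $\{e_\eta\colon\eta\in\Orb(\xi)\}$ from Remark \ref{RemarkGNSRepEquivOrb}. Both sides of the identity are linear and norm-continuous in $a$. On the left, $\pi_\xi$ and $E_\xi$ are contractive. On the right, $E$ is contractive, and restricting a continuous function on $\beta X$ to $\Orb(\xi)$ does not increase the sup norm. Since $\cstu(X)$ is the closed linear span of $\{v_f\colon f\in\PT(X)\}$, it will suffice to prove $E_\xi(\pi_\xi(v_f))=E(v_f)\restriction_{\Orb(\xi)}$ for each partial translation $f$.

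Fix $f\in\PT(X)$ and let $A=\{x\in\dom(f)\colon f(x)=x\}$. Then $E(v_f)=1_A$, which corresponds to the characteristic function of $\overline A$ in $C(\beta X)$. For $\eta\in\Orb(\xi)$ the $\eta$-th diagonal entry of $\pi_\xi(v_f)$ is $\langle\pi_\xi(v_f)e_\eta,e_\eta\rangle$. By Proposition \ref{DescrRep}, this is $0$ if $\dom(f)\notin\eta$, and it equals $\langle e_{\overline f(\eta)},e_\eta\rangle$ if $\dom(f)\in\eta$. By orthonormality of the basis, the latter is $1$ exactly when $\overline f(\eta)=\eta$. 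So the whole matter reduces to the claim that, for $\eta\in\beta X$ with $\dom(f)\in\eta$, we have $\overline f(\eta)=\eta$ if and only if $A\in\eta$. If $\dom(f)\notin\eta$, then $A\notin\eta$ since $A\subseteq\dom(f)$, so both sides vanish.

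For this claim, the direction $A\in\eta\Rightarrow\overline f(\eta)=\eta$ is immediate: $f$ restricted to $A$ is the identity, and $A\in\eta$. The converse, that a partial translation fixing an ultrafilter must agree with the identity on a set in that ultrafilter, is the expected main obstacle, though it is standard. My first attempt would be to apply \cite[Lemma 3.4]{SpakulaWillett2017} exactly as in the proof of Proposition \ref{Propeetawelldef}, with $g=\mathrm{id}_{\dom(f)}$. Since $\overline f(\eta)=\overline g(\eta)$, that lemma gives $\{x\in\dom(f)\colon f(x)=x\}\in\eta$, i.e.\ $A\in\eta$.

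A self-contained alternative would go as follows. Partition $\dom(f)\setminus A$ into finitely many pieces $D_1,\dots,D_k$ such that $f(D_i)\cap D_i=\emptyset$ for each $i$. This is possible because the graph of $f$ without fixed points has bounded degree, so one can use a finite colouring, for instance via the $r$-separated partition of $X$ with $r=\sup_x d(x,f(x))$ refined by orbit colouring. If $\dom(f)\setminus A\in\eta$, some $D_i\in\eta$. Then $f(D_i)\in\overline f(\eta)=\eta$ while $D_i\in\eta$, which is a contradiction because the two sets are disjoint. With the claim established, the diagonal entries of $\pi_\xi(v_f)$ are $1_{\overline A}(\eta)=E(v_f)(\eta)$ for all $\eta\in\Orb(\xi)$, which completes the plan.
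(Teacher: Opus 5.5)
Your proof is correct, but it computes the diagonal entries along a different route from the paper. The paper does not use the action $\pi_\xi(v_f)e_\eta=e_{\overline f(\eta)}$ together with orthonormality. Instead it writes $e_\eta=v_g+N_\xi$ with $\overline g(\xi)=\eta$ and computes directly in the GNS space:
\[
\langle \pi_\xi(v_f)e_\eta,e_\eta\rangle_\xi=\varphi_\xi(v_g^*v_{fg})=E(v_{g^{-1}fg})(\xi).
\]
It then observes that the fixed-point set of $g^{-1}fg$ is $g^{-1}(A)$, and that $g^{-1}(A)\in\xi$ if and only if $A\in\overline g(\xi)=\eta$. Conjugating by $g$ moves the question back to $\xi$. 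The case $\dom(f)\notin\eta$ is handled automatically, since then $v_{fg}+N_\xi=0$. So the paper needs no case split and no fixed-point lemma.

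Your route reads the entry as $\delta_{\overline f(\eta),\eta}$. You therefore have to prove separately that $\overline f(\eta)=\eta$ forces $A\in\eta$. This costs one extra lemma, but it yields a byproduct: the action of $\PT(X)$ on $\beta X$ has trivial isotropy at the level of germs. Conversely, combining the paper's computation with orthonormality recovers your claim.

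Both of your arguments for that claim are sound: the \v{S}pakula--Willett citation with $g=\mathrm{id}_{\dom(f)}$, and the partition argument. The partition argument is simpler than you suggest. With $r=\sup_x d(x,f(x))$, the $r$-separated partition $X=X_1\sqcup\dots\sqcup X_k$ already works without any orbit colouring. For $x\in X_i\cap(\dom(f)\setminus A)$ we have $0<d(x,f(x))\le r$, so $f(x)\notin X_i$. Even without the u.l.f.\ hypothesis, injectivity of $f$ gives a fixed-point-free graph of degree at most $2$, so three colours would suffice.
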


\begin{proof}
  It   suffices to prove the result only for $a=v_f$, where $f$ is some partial translation of $X$.  In that case,
  given $\eta \in\Orb(\xi )$, we choose some partial translation $g$ in $\mathrm{PT}(X)_\xi$, such that $\eta =\overline g(\xi )$. So
  \begin{align}\label{Eq10Oct24}
  E_\xi (\pi _\xi (v_f))(\eta ) &=
  \langle \pi _\xi (v_f)e_\eta , e_\eta \rangle_\xi \\ & =
  \langle \pi _\xi (v_f)w_g, w_g\rangle_\xi  \notag \\ &=
  \langle w_{fg}, w_g\rangle_\xi \notag \\
  &=
  \varphi _\xi (v_g^*v_{fg}) \notag \\ &=
  \varphi _\xi (v_{g^{-1}fg})\notag \\ & =
  E(v_{g^{-1}fg})(\xi).\notag
  \end{align}
  Letting   $A$ be the fixed point set for $f$,  namely
  $$
  A=\{x\in \dom (f)\colon f(x)=x\},
  $$
  then the fixed point set for $g^{-1}fg$ is given by
  \begin{align*}
  B &=
  \{x\in \dom (g^{-1}fg)\colon g^{-1}fg(x)=x\}\\
  &=
  \{x\in \dom (g)\colon g(x)\in A\} \\
  &=
  g^{-1}(A).
  \end{align*}
  Therefore, $E(v_{g^{-1}fg}) =1_{g^{-1}(A)}$. As $A\in \overline g(\xi)$ if and only if $g^{-1}(A)\in \xi$,  \eqref{Eq10Oct24} implies
  \[ E_\xi (\pi _\xi (v_f))(\eta )=\left\{\begin{array}{ll}
1,       &\text{ if }\ A\in \overline g(\xi),  \\
     0,  & \text{ if }\ A\not\in \overline g(\xi).
  \end{array}\right.\]
  On the other hand, we have that $E(v_f)= 1_A$, so
  $$
  E(v_f)(\eta) = 1_A(\eta ) =
  \left\{\begin{array}{ll}
1,       &\text{ if }\ A\in \eta,  \\
     0,  & \text{ if }\ A\not\in \eta.
     \end{array}\right.
  $$
This shows that $ E_\xi (\pi _\xi (v_f))(\eta ) =   E(v_f)_\eta ,
  $ and we are done.
\end{proof}

\begin{lemma}\label{LemmaConvergingNetsOrbitEquiv}
    Let $X$ be a u.l.f.\ metric space, $\xi,\eta\in \beta X$, with $\eta\in \Orb(\xi)$,  and $(\xi_i)_{i\in I}$ be a net in $\beta X$ converging to $\xi$. Then, there is a net $(\eta_{j})_{j\in J}$ converging to $\eta$ such that each $\eta_j$ is in the orbit of some $\xi_i$.
\end{lemma}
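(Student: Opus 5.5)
The natural choice is to push the net $(\xi_i)$ forward by a single partial translation that carries $\xi$ to $\eta$. Since $\eta\in\Orb(\xi)$, fix $f\in\PT(X)_\xi$ with $\overline f(\xi)=\eta$. The set $\overline{\dom(f)}$ is an open neighbourhood of $\xi$ in $\beta X$, because $\dom(f)\in\xi$. Convergence $\xi_i\to\xi$ therefore gives $i_0\in I$ with $\dom(f)\in\xi_i$ for all $i\geq i_0$.

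Let $J=\{i\in I\colon i\geq i_0\}$, which is cofinal in $I$ and hence a directed set. For $j\in J$ put $\eta_j=\overline f(\xi_j)$. This makes sense because $f\in\PT(X)_{\xi_j}$, and by definition $\eta_j\in\Orb(\xi_j)$. The subnet $(\xi_j)_{j\in J}$ still converges to $\xi$.

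It remains to show $\eta_j\to\eta$. The key point is that $\overline f\colon\overline{\dom(f)}\to\overline{\mathrm{im}(f)}$ is the Stone--\v{C}ech extension of $f\colon\dom(f)\to\mathrm{im}(f)$, hence continuous. One can also check this directly with basic open sets. Given $B\subseteq X$ with $\eta\in\overline B$, the definition of $\overline f$ yields $A\in\xi$ with $A\subseteq\dom(f)$ and $f(A)\subseteq B$. Then $\xi\in\overline A$, so $A\in\xi_j$ for all large $j$, and consequently $B\in\overline f(\xi_j)=\eta_j$ for all such $j$. Since the sets $\overline B$ form a basis of $\beta X$, this proves $\eta_j\to\eta$.

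The argument has no real obstacle. The only points needing care are passing to the tail of the net so that $\overline f(\xi_i)$ is defined, and the routine continuity of $\overline f$ on the clopen set $\overline{\dom(f)}$.
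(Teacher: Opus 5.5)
Your proof is correct and follows the same idea as the paper: fix one partial translation $f$ with $\overline f(\xi)=\eta$, note that each basic neighbourhood $\overline C$ of $\eta$ pulls back to a set in $\xi$, hence eventually in $\xi_i$, and conclude that $\overline f(\xi_i)\in\overline C$. The only cosmetic difference is that you index the new net by a tail of $I$, which gives $\eta_j\in\Orb(\xi_j)$ with matching indices, whereas the paper picks one suitable $\xi_i$ for each neighbourhood of $\eta$.
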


\begin{proof}
    As $\eta\in \Orb(\xi)$, there is a partial translation $f\colon A\to B$ such that $A\in \xi$, $B\in \eta$ and $\overline f(\xi)=\eta$. Let now $C\in \eta$ and let us find $\eta'\in U_C$ which is in the orbit of some $\xi_i$. As $C\in \eta$, we can assume $C\subseteq B$. Then, as $\overline f(\xi)=\eta$, $f^{-1}(C)\in \xi$. Hence, as $\xi=\lim_{i,I}\xi_i$, we can pick $i\in I$ with $\xi_i\in U_{f^{-1}(C)}$. But then $\overline f(\xi_i)\in U_C$ as desired.
\end{proof}

\begin{corollary}
    \label{Kernels}
Let $X$ be a u.l.f.\ metric space and $\xi\in \beta X$. Then,
  $$
  \ker (\pi _\xi ) = \{a\in \cstu(X)
\colon E(a^*a)\restriction \Orb(\xi ) = 0\}.
  $$
In particular, if $F\subseteq  \beta X$, then
\[
 \bigcap_{\eta\in F} \ker(\pi _\eta )\subseteq \ker(\pi _\xi )
 \ \Longleftrightarrow\
   \xi\in  \COrb(F ).\]
\end{corollary}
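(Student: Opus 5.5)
The plan is to prove the kernel formula first, using faithfulness of the conditional expectation on the diagonal, and then deduce the second statement from it via Lemma \ref{LemmaConvergingNetsOrbitEquiv}.

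\textbf{The kernel formula.} Realize $\pi_\xi$ on $\ell_2(\Orb(\xi))$ as in Remark \ref{RemarkGNSRepEquivOrb}. For $a\in\cstu(X)$ we have $\pi_\xi(a)=0$ if and only if $\pi_\xi(a^*a)=0$. For the positive operator $\pi_\xi(a^*a)$ this happens if and only if all its diagonal entries vanish, because $\langle \pi_\xi(a^*a)e_\eta,e_\eta\rangle=\|\pi_\xi(a)e_\eta\|^2$ and $\{e_\eta\}$ is a basis. So $\pi_\xi(a)=0$ if and only if $E_\xi(\pi_\xi(a^*a))=0$. Lemma \ref{CovarCondexps} identifies $E_\xi(\pi_\xi(a^*a))$ with $E(a^*a)\restriction\Orb(\xi)$, which gives the first claim. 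Since $E(a^*a)$ is continuous on $\beta X$, we may equivalently require $E(a^*a)$ to vanish on $\COrb(\xi)$.

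\textbf{Backward direction of the second statement.} Assume $\xi\in\COrb(F)$, the closure of $\Orb(F)$, and let $a\in\bigcap_{\eta\in F}\ker\pi_\eta$. Then the continuous function $E(a^*a)$ vanishes on $\Orb(\eta)$ for every $\eta\in F$, hence on $\Orb(F)$, hence on its closure. Take $\zeta\in\Orb(\xi)$ and a net $\xi_i\to\xi$ with $\xi_i\in\Orb(F)$. By Lemma \ref{LemmaConvergingNetsOrbitEquiv} there is a net converging to $\zeta$ whose elements lie in orbits of the $\xi_i$, and hence in $\Orb(F)$. So $E(a^*a)(\zeta)=0$, and therefore $a\in\ker\pi_\xi$.

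\textbf{Forward direction.} Suppose $\xi\notin\overline{\Orb(F)}$ and pick $A\in\xi$ with $\overline A\cap\Orb(F)=\emptyset$. Set $a=1_A\in\ell_\infty(X)\subseteq\cstu(X)$, so that $E(a^*a)=1_A$. This vanishes on $\Orb(\eta)$ for all $\eta\in F$, so $a\in\bigcap_{\eta\in F}\ker\pi_\eta$. But $1_A(\xi)=1$, so $a\notin\ker\pi_\xi$, and the inclusion of kernels fails.

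The only delicate point is the identification of the diagonal of $\pi_\xi(a^*a)$ with $E(a^*a)$ on the orbit, and Lemma \ref{CovarCondexps} supplies exactly that. The rest consists of continuity and net arguments.
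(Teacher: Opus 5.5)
Your proposal is correct and essentially matches the paper's proof. For the kernel formula you use faithfulness of $E_\xi$ on positive elements together with Lemma \ref{CovarCondexps}, and for the implication from $\xi\in\COrb(F)$ to the inclusion of kernels you use Lemma \ref{LemmaConvergingNetsOrbitEquiv} plus continuity of $E(a^*a)$, as the paper does. The only difference is cosmetic: for the converse you use the projection $1_A$ of a basic clopen set $\overline{A}\ni\xi$ disjoint from $\Orb(F)$, whereas the paper uses a Urysohn function $a$ and its square root $a^{1/2}$; your choice is slightly cleaner because $\beta X$ is zero-dimensional.
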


\begin{proof}
Given $a$ in $\cstu(X)$, Proposition  \ref{CovarCondexps} gives
  \begin{align*}
  \pi _\xi (a)=0 &\Leftrightarrow
  \pi _\xi (a^*a)=0 \\
  & \Leftrightarrow
  E_\xi (\pi _\xi (a^*a))=0 \\
  &\Leftrightarrow
  E(a^*a)\restriction {\Orb(\xi )}=0
  \end{align*}
  and the first statement follows.

 Let now $F\subseteq \beta X$. Suppose $\xi\in \COrb(F)$ and let $a\in \cap_{\eta\in F} \ker(\pi _\eta )$. By the above, in order to show that $a\in \ker(\pi_\xi)$,  we must show  $E(a^*a)\restriction \Orb(\xi)=0$. Pick $\eta\in \Orb(\xi)$. As $\xi \in \COrb(F)$,  Lemma \ref{LemmaConvergingNetsOrbitEquiv} implies that $\eta$ is the limit of a net in $\Orb(F)$. As $a$ is in  $\cap_{\eta\in F} \ker(\pi _\eta )$, the above implies that    $E(a^*a)$ vanishes on $\Orb(F)$. By continuity, we conclude that $E(a^*a)(\eta)=0$.

 Suppose now $\xi\not\in \COrb(F)$. As $\beta X$ is a compact Hausdorff space, Urysohn's lemma gives us that there is a continuous function $a\colon \beta X\to [0,1]$ such that $a(\xi)=1$ and $a\restriction \COrb(F)=0$. Under the identification $\ell_\infty(X)\cong C(\beta X)$, we see $a$ as an element  in $\ell_\infty(X)$. Since $E(a)=a$, the above implies that   $a^{1/2}\in  \bigcap_{\eta\in F} \ker(\pi _\eta )$ and $a\not\in \ker(\pi_\xi)$.
\end{proof}

\begin{theorem}
  Let $X$ be a u.l.f.\ metric space and   $\xi,\eta\in \beta X$. Let $\pi_\xi,\pi_\eta$ be the GNS representations given in Definition \ref{DefiGNSRep}. \begin{enumerate}\setcounter{enumi}{1}      \item\label{ThmOrbitEquivIFFGNSEquiv.Item2} $\qOrb(\xi )=\qOrb(\eta )$ if and only if   $\ker(\pi _\xi)=\ker(\pi _\eta)$.
  \end{enumerate} \label{ThmOrbitEquivIFFGNSEquiv.Inthetext2}
  \end{theorem}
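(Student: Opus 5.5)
The plan is to reduce everything to Corollary \ref{Kernels}, applied with $F$ a singleton. Taking $F=\{\eta\}$, that corollary reads
\[
\ker(\pi_\eta)\subseteq \ker(\pi_\xi)\ \Longleftrightarrow\ \xi\in\COrb(\eta).
\]
This equivalence already packages the analytic input, namely Lemma \ref{CovarCondexps}, Lemma \ref{LemmaConvergingNetsOrbitEquiv} and the Urysohn argument on $\beta X$. The theorem should therefore follow by symmetrizing it.

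For the forward direction, suppose $\qOrb(\xi)=\qOrb(\eta)$, that is, $\COrb(\xi)=\COrb(\eta)$. Since $\xi\in\COrb(\xi)=\COrb(\eta)$, the displayed equivalence gives $\ker(\pi_\eta)\subseteq\ker(\pi_\xi)$. Exchanging the roles of $\xi$ and $\eta$ gives the reverse inclusion, so the two kernels coincide.

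For the converse, suppose $\ker(\pi_\xi)=\ker(\pi_\eta)$. The two inclusions yield $\xi\in\COrb(\eta)$ and $\eta\in\COrb(\xi)$. It remains to upgrade this to $\COrb(\xi)=\COrb(\eta)$. For this I would show that $\COrb(\eta)$ is invariant and closed, so that it contains $\Orb(\xi)$ and hence $\COrb(\xi)$.

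Closedness is by definition. Invariance is the one point needing justification. If $\zeta\in\COrb(\eta)$ and $f\in\PT(X)_\zeta$, then $\overline f(\zeta)\in\COrb(\eta)$ by Lemma \ref{LemmaConvergingNetsOrbitEquiv}: take a net in $\Orb(\eta)$ converging to $\zeta$, and that lemma produces a net converging to $\overline f(\zeta)$ whose terms lie in $\Orb(\Orb(\eta))=\Orb(\eta)$. Alternatively, one can argue directly that $\overline{\dom f}$ is open and $\overline f$ is continuous on it.

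The symmetric inclusion $\COrb(\eta)\subseteq\COrb(\xi)$ follows the same way. Hence $\COrb(\xi)=\COrb(\eta)$, and $\qOrb(\xi)=\qOrb(\eta)$.

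I expect no real obstacle; the only care needed is the invariance of orbit closures. That property was used implicitly elsewhere in the paper, for instance in the proof of Theorem \ref{ThmCharacterizationqOrbClosed}, so it can also simply be cited.
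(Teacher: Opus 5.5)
Your proposal is correct and follows the paper's route: the paper's proof is just the remark that the theorem follows immediately from Corollary~\ref{Kernels}, and your argument unpacks exactly that, via the singleton case $F=\{\eta\}$, symmetrization, and invariance of orbit closures. You could even skip the invariance step by applying the corollary twice: for $\zeta\in\COrb(\xi)$ it gives $\ker(\pi_\eta)=\ker(\pi_\xi)\subseteq\ker(\pi_\zeta)$, hence $\zeta\in\COrb(\eta)$.
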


\begin{proof}
    This follows immediately from Corollary \ref{Kernels}.
\end{proof}

\subsection{Invariant closed subsets of $\beta X$ and the ideal structure of $\cstu(X)$}\label{SubSubSectionIdeals}  We shall now explore some consequences of Corollary \ref{Kernels}.  Precisely, let
\[\cF(\beta X)=\{F\subseteq \beta X\colon F\text{ is closed and invariant}\}\] and
\[\cI(\cstu(X))=\{I\subseteq \cstu(X)\colon I \text{ is an ideal of }\cstu(X)\}.\]
We shall notice   that the map
\[\xi\in \beta X\mapsto \ker(\pi_\xi)\in \cI(\cstu(X))\]
can be thought of as   the restriction of a well-studied canonical map relating the invariant closed subsets of $\beta X$ to the ideals of $\cstu(X)$ (see \eqref{Eq.IOrbRelation} below). Moreover, this map is a bijection between these objects if $X$ has property A (Proposition \ref{PropBijPropA}). We should point out however that, except for the connections with Corollary \ref{Kernels} and, perhaps, Proposition \ref{Prop.Irrd.Gives.Prime}, the results presented now   are known to experts and even appear in the literature in some form or another (see \cite{ChenWang2004JFA,ChenWang2005,WangZhang:Ghostly}). We chose however to present proofs here in details for a couple of reasons: (1) we believe it serve a   didactic purpose; (2) results in the literature usually appear in the language of groupoids and use of ideals of coarse structures, making necessary to introduce   a fair amount of terminology in order to properly use them here; and (3) these results will be  important in Section \ref{SubsectionIrreducibleSet} below and, consequently, to Theorem \ref{ThmPrimoContainsPrimitive.InTheText}.

  Given $F\in \cF(\beta X)$,   define
\begin{equation}\label{Eq.IFromF}
I_F=\{a\in \cstu(X)\colon E(a^*a)\restriction F=0\},\end{equation}
where $E\colon \cstu(X)\to \ell_\infty(X)$ denotes the canonical conditional expectation.

\begin{proposition}
    Let $X$ be a u.l.f.\ metric space and $F\in \cF(\beta X)$.  Then $I_F$ is an ideal of $\cstu(X)$.
\end{proposition}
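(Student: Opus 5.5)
The most economical route is to express $I_F$ as an intersection of kernels of the GNS representations from Section~\ref{SectionGNSRep}; an intersection of kernels of $^*$-homomorphisms is automatically a closed two-sided ideal. By Corollary~\ref{Kernels}, for every $\xi\in\beta X$ we have
\[\ker(\pi_\xi)=\{a\in\cstu(X)\colon E(a^*a)\restriction\Orb(\xi)=0\}.\]
Since $F$ is invariant, $\Orb(\xi)\subseteq F$ for each $\xi\in F$, and clearly $F=\bigcup_{\xi\in F}\Orb(\xi)$. Therefore
\[I_F=\{a\colon E(a^*a)\restriction F=0\}=\bigcap_{\xi\in F}\{a\colon E(a^*a)\restriction \Orb(\xi)=0\}=\bigcap_{\xi\in F}\ker(\pi_\xi).\]

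The order of steps is as follows. First, recall Corollary~\ref{Kernels}. Second, use the invariance of $F$ to write $F$ as the union of the orbits of its points. Third, conclude the displayed identity. Each $\ker(\pi_\xi)$ is a norm-closed two-sided self-adjoint ideal, and so is any intersection of such ideals. If $F=\emptyset$, the intersection is interpreted as $\cstu(X)$, which agrees with the definition of $I_F$, since the restriction condition is then vacuous.

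The only point needing care is the first step, i.e.\ that Corollary~\ref{Kernels} genuinely applies with $E$ the conditional expectation and $a^*a$ evaluated on the orbit. This is exactly its statement, so no further obstacle is expected. Closedness of $F$ is not needed for this argument, only invariance.

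For a self-contained direct check as backup, one would use $\pi_\xi$ realized on $\ell_2(\Orb(\xi))$ as in Remark~\ref{RemarkGNSRepEquivOrb}. One has $E(a^*a)(\eta)=\|\pi_\xi(a)e_\eta\|^2$ for $\eta\in\Orb(\xi)$ by Lemma~\ref{CovarCondexps}. Then $a\in I_F$ if and only if $\pi_\xi(a)e_\eta=0$ for all $\eta\in\Orb(\xi)$ and all $\xi\in F$, from which the ideal properties are read off.
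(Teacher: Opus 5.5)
Your proof is correct, but it takes a different route from the paper's.

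\textbf{The paper's route.} The paper verifies absorption directly on generators.
\begin{itemize}
\item Using $E(a^*a)(\xi)=\lim_{x,\xi}\|a\delta_x\|^2$, it shows $v_fba\in I_F$. This step needs no invariance.
\item Using $E(v_{f^{-1}}a^*av_f)(\xi)=E(a^*a)(\overline f(\xi))$ for $\dom(f)\in\xi$, together with the invariance of $F$, it shows $av_fb\in I_F$.
\end{itemize}
These hold for all $a\in I_F$, $f\in\PT(X)$ and $b\in\ell_\infty(X)$. The paper then appeals to the density of the span of the $v_fb$ in $\cstu(X)$.

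\textbf{Your route.} You instead use $F=\bigcup_{\xi\in F}\Orb(\xi)$ (by invariance) and the first part of Corollary~\ref{Kernels} to get $I_F=\bigcap_{\xi\in F}\ker(\pi_\xi)$. There is no circularity, since Corollary~\ref{Kernels} and Lemma~\ref{CovarCondexps} are established beforehand without using that $I_F$ is an ideal.

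\textbf{What your route buys.} Given the GNS machinery, it is shorter. It also gives for free that $I_F$ is a norm-closed, linear, self-adjoint subspace. The paper's generator-by-generator check tacitly needs this to pass from the generators $v_fb$ to all of $\cstu(X)$; it follows, e.g., from $a\mapsto\lim_{x,\xi}\|a\delta_x\|$ being a seminorm dominated by $\|a\|$. Your identity also describes $I_F$ concretely as an intersection of the primitive ideals $\ker(\pi_\xi)$, $\xi\in F$. This is consistent with the relation $I_{\COrb(\xi)}=\ker(\pi_\xi)$ stated right afterwards, because $\ker(\pi_\xi)\subseteq\ker(\pi_\eta)$ for $\eta\in\COrb(\xi)$.

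\textbf{What the paper's route buys.} It is elementary and independent of the GNS representations. It uses invariance explicitly rather than through the orbit structure built into Lemma~\ref{CovarCondexps}. Its key computation $E(v_{f^{-1}}a^*av_f)(\xi)=E(a^*a)(\overline f(\xi))$ is reused immediately afterwards to prove that $F_I$ is invariant.
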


\begin{proof}
  As $\cstu(X)$  is the norm closure of
\[\{v_fb\colon f\in \PT(X),\ b\in \ell_\infty(X)\},\]
it is enough to show that $v_fba,av_fb\in I_F$ for all   $a\in I_F$, $f\in \PT(X)$,   and $b\in \ell_\infty(X)$. Fix such $a$, $f$, and $b$, and let $\xi\in F$. Firstly, as
\[0=E(a^*a)(\xi)=\lim_{x,\xi}\langle a^*a\delta_x,\delta_x\rangle=\lim_{x,\xi}\|a\delta_x\|^2,\] we have that
\begin{align*}E((v_fba)^*v_fba)(\xi)&=\lim_{x,\xi}\langle(v_fba)^*v_fba\delta_x,\delta_x\rangle\\
&=\lim_{x,\xi}\| v_fba\delta_x\|^2\\
&\leq \|b\|^2\lim_{x,\xi}\|a\delta_x\|^2\\
&=0.
\end{align*}
So, $v_fba\in I_F$.

As $E$ is a conditional expectation, we have
\begin{align*}
    E((av_fb)^*av_fb)= E(b^*v_{f^-1}a^*av_fb)=b^*E(v_{f^{-1}}a^*av_f)b.
\end{align*}
The invariance of $F$ under $\PT(X)$ implies that
\begin{equation}\label{Eq.FI,invariant} E(v_{f^{-1}}a^*av_f)(\xi)=E(a^*a)(\overline f(\xi))=0\end{equation}
if $\dom(f)\in \xi$; otherwise the left-hand side clearly equals and the invariance of $F$ plays no role.
Hence,
\[E((av_fb)^*av_fb)(\xi)=b^*(\xi)E(v_{f^{-1}}a^*av_f)(\xi)b(\xi)=0\] and $av_fb\in I_F$.
\end{proof}

By Corollary \ref{Kernels}, the restriction of the assignment
\begin{equation}\label{Eq.Map.F.to.IF}
F\in \cF(\beta X)\mapsto I_F\in \cI(\cstu(X))\end{equation}
to the closed invariant subsets of the form $\COrb(\xi)$ is realized   by
\begin{equation}\label{Eq.IOrbRelation}
I_{\COrb(\xi)}=\ker(\pi_\xi)\end{equation}
for all $\xi\in \beta X$.

Besides the canonical map in \eqref{Eq.Map.F.to.IF} from $\cF(\beta X)$ to $\cI(\cstu(X))$, there are also canonical maps going in the opposite direction. We consider the following: given $I\in \cI(\cstu(X))$, let
\begin{equation}\label{Eq.FFromI}
F_I=\{\xi\in \beta X\colon E(a^*a)(\xi)=0\text{ for all }a\in I\}.
\end{equation}
So, $F_I$ is closed and invariant, i.e., it is an element in $\cF(\beta X)$. Indeed, closeness of $F_I$ is completely immediate and the invariance of it under $\PT(X)$ follows from \eqref{Eq.FI,invariant} above.

Versions of the   next proposition have appeared in the literature in the language of groupoids and using the machinery of ideals of coarse structures. We refer to \cite[Page 208 and Theorem 6.3]{ChenWang2004JFA} for further details.

\begin{proposition}\label{PropBijPropA}
    Let $X$ be a u.l.f.\ metric space with property A. The maps
\[F\in \cF(\beta X)\mapsto I_F\in \cI(\cstu(X))\]
and
\[  I\in \cI(\cstu(X))\mapsto F_I\in \cF(\beta X)\]
    are inverse of each other.
\end{proposition}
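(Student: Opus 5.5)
We need two identities: $F_{I_F}=F$ for every $F\in\cF(\beta X)$, and $I_{F_I}=I$ for every ideal $I$. The first uses only that $X$ is u.l.f.; property A is needed only for the second.

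\emph{Step 1: $F_{I_F}=F$.} The inclusion $F\subseteq F_{I_F}$ is immediate from \eqref{Eq.IFromF} and \eqref{Eq.FFromI}. For the converse, take $\xi\notin F$. By Urysohn's lemma on $\beta X$ choose $a\in C(\beta X)\cong\ell_\infty(X)$ with $0\le a\le 1$, $a\restriction F=0$ and $a(\xi)=1$. Since $a$ is diagonal, $E(a^*a)=a^2$. Hence $a\in I_F$, while $E(a^*a)(\xi)=1\neq 0$, so $\xi\notin F_{I_F}$.

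\emph{Step 2: $I\subseteq I_{F_I}$.} This is a tautology. If $a\in I$, then by definition $E(a^*a)$ vanishes on $F_I$, so $a\in I_{F_I}$.

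\emph{Step 3: $I_{F_I}\subseteq I$.} This is the main obstacle and the place where property A enters. The strategy has three parts.

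First, show that $I\cap\ell_\infty(X)$ is exactly $\{b\in\ell_\infty(X): b\restriction F_I=0\}$. One inclusion is clear. For the other, observe that $I_0=\overline{\{E(a^*a): a\in I\}}$ generates a closed ideal of $\ell_\infty(X)$ whose zero set is $F_I$. Then show $E(a^*a)\in I$ for $a\in I$. For this, write $a$ as a norm limit of finite sums $\sum_f v_f b_f$ (using the partition into graphs of partial translations), and note that $1_{\{x\}}$-type compressions, namely $E(c)=\sum_j p_j c p_j$ for finitely many projections $p_j$ when $c$ has finite propagation, keep us inside $I$. Hence the diagonal ideal $I\cap\ell_\infty(X)$ is the ideal of functions vanishing on $F_I$, using the Gelfand correspondence between closed ideals of $C(\beta X)$ and closed subsets.

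Second, let $a\in I_{F_I}$. Then $b=E(a^*a)$ vanishes on $F_I$, and so $b\in I$ by the first part.

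Third, conclude $a\in I$. Here property A is used via its equivalent formulation that every ideal is determined by its diagonal part (the groupoid $G(X)$ is amenable, so reduced $=$ full and ideals correspond to invariant open sets; cf. \cite{ChenWang2004JFA}). Concretely, I would argue as follows. By nuclearity or amenability there are positive-type finite-propagation Schur multipliers $\theta_k$ (coming from property A) with $\theta_k(c)\to c$ for all $c\in\cstu(X)$. Each $\theta_k(a^*a)$ is a finite sum of the form $\sum_j v_{f_j}^*\,d_j\,(\cdots)$ controlled by diagonal pieces of $a^*a$ translated by partial translations, and each such piece lies in $I$ because it vanishes on the invariant set $F_I$. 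Hence $\theta_k(a^*a)\in I$, so $a^*a\in I$, and therefore $a\in I$.

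The technical core is the third part. Specifically, it must be verified that $\theta_k(c)$ lies in the ideal of $\cstu(X)$ generated by $\{E(v_f^* c)\}_f$. This uses the factorization of positive-definite kernels $k(x,y)=\langle\zeta_x,\zeta_y\rangle$ with finitely supported $\zeta$ given by property A, so that $\theta_k(c)=\sum_z D_z^* c D_z$ with diagonal $D_z$. This places $\theta_k(c)$ in the ideal generated by $c$ localized to supports on which the diagonal coefficients $E(v_f^*c)$ are controlled by $E(c^*c)$.
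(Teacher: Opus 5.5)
Your Steps 1 and 2 are exactly the paper's. For $I_{F_I}\subseteq I$ you take a different route, and it works. The paper cites Chen--Wang's theorem that, under property A, every ideal of $\cstu(X)$ is the closure of its finite-propagation elements. It uses this twice: first to reduce to elements $av_f\in I_{F_I}$, and then to show that the hull of $J=I\cap\ell_\infty(X)$ lies inside $F_I$. It finally recovers $av_f$ from $(av_f)^*av_f\in J$.

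You proceed in two stages.
\begin{itemize}
\item Without property A, you show $E(I)\subseteq I$. This gives $J=\{b\in\ell_\infty(X)\colon b\restriction F_I=0\}$ for every u.l.f.\ space $X$.
\item You use property A exactly once, through finite-propagation positive-definite Schur multipliers $\theta_k\to\mathrm{id}$ applied to $a^*a$ with $a\in I_{F_I}$. Here $\theta_k(a^*a)=\sum_i d_iv_{f_i}$ is a finite sum. The Cauchy--Schwarz bound $|(a^*a)_{yx}|\le\|a\delta_x\|\,\|a\delta_y\|\le\|a\|E(a^*a)(y)^{1/2}$ gives $|d_i|\le\|a\|E(a^*a)^{1/2}$. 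Hence $d_i\restriction F_I=0$, so $d_i\in J$, $\theta_k(a^*a)\in I$, then $a^*a\in I$ and $a\in I$.
\end{itemize}
This is essentially an inline version of the Chen--Wang argument, applied to the element you actually need. It is self-contained. It isolates that property A is needed only for the implication $E(a^*a)\in I\Rightarrow a\in I$, which genuinely fails without it (take $I=\cK(\ell_2(X))$ and a non-compact ghost projection on an expander). Its last step also avoids the paper's multiplication by $a'$, which need not be bounded. The paper's route is shorter, given the citation.

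Three places should be tightened.

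\emph{The proof of $E(a^*a)\in I$.} As written, you compress finite-propagation approximants of $a$, but these are not in $I$, so this proves nothing. Apply the compression to $a^*a$ itself instead. Take $c$ of propagation $r$ with $\|a^*a-c\|<\varepsilon$, a partition $X=\bigsqcup_jX_j$ into $r$-separated sets, and $p_j=1_{X_j}$. Then $\sum_jp_ja^*ap_j\in I$, and since $E(c)=\sum_jp_jcp_j$,
\[\Big\|E(a^*a)-\sum_jp_ja^*ap_j\Big\|\le\|E(a^*a-c)\|+\Big\|\sum_jp_j(c-a^*a)p_j\Big\|\le2\varepsilon.\]

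\emph{The factorization in your last paragraph.} The identity $\theta_k(c)=\sum_zD_z^*cD_z$ is beside the point. It would place $\theta_k(c)$ in the ideal generated by $c=a^*a$, which is not yet known to lie in $I$. What you need is only the finite-propagation expansion together with the Cauchy--Schwarz bound above. Positivity of the kernel serves only to give $\|\theta_k\|\le1$, and hence $\theta_k(c)\to c$ on all of $\cstu(X)$.

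\emph{The groupoid parenthetical.} The appeal to ``ideals correspond to invariant sets'' is essentially the statement being proved. The Schur-multiplier argument is what should carry Step 3.
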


\begin{proof}
    Given $F\in \cF(\beta X)$ and $I\in \cI(\cstu(X))$ the inclusions
\[F\subseteq F_{I_F}\ \text{ and }\ I\subseteq I_{F_I}\]
    are completely straightforward. Let us prove the converse inclusions also hold.

    Let $\xi\in F_{I_F}$ and suppose towards a contradiction that $\xi\not\in F$. Then there is a positive $a\in C(\beta X)$ such that $a\restriction F=0$ and $a(\xi)\neq 0$. Hence, $a^{1/2}$ satisfies the same properties and, considering the identification $\ell_\infty(X)\cong C(\beta X)$, we have $a^{1/2}\in I_F$. Since $\xi \in F_{I_F}$, this implies that
    \[a(\xi)=E(a)(\xi)=0;\]
    contradiction.

    We are left to show that $I_{F_I}\subseteq I$.   As $X$ has property A, the ideals of $\cstu(X)$ satisfy the following property: they are the norm closure of their elements with   finite propagation  (see \cite[Theorem 4.4]{ChenWang2005Arch}); such ideals of $\cstu(X)$ are called \emph{geometric ideals}. Therefore, $I_{F_I}$ must satisfy this property and we only need to show that if an operator in  $ I_{F_I}$ has finite propagation, then it belongs to $I$. As $X$ is u.l.f., finite propagation operators are the linear combination of operators of the form $av_f$ for $f\in \PT(X)$ and $a\in \ell_\infty(X)$. Moreover, as $I_{F_I}$ is an ideal, we can also assume each of these $av_f$ belong to  $I_{F_I}$. Indeed, say $b\in I_{F_I}$ has finite propagation $r$. As $X$ is u.l.f., we can write 
    \[X=X_1\sqcup \ldots\sqcup X_k,\]
    where each $X_i$ above is such that $d(x,y)>2r$ for all distinct $x,y\in X_i$. Then, 
    \[b=\sum_{i,j=1}^k1_{X_i}b1_{X_j}\]
and, as $b$ has propagation $r$,  each $1_{X_i}b1_{X_j}$ is of the form $av_f$ for some $a\in \ell_\infty(X)$ and some $f\in \PT(X)$ with $\sup_{x\in \dom(f)}d(x,f(x))\leq r$.

 We have   reduced the problem to show that if $f\in \PT(X)$ and $a\in \ell_\infty(X)$ are such that $av_f\in I_{F_I}$, then $av_f\in I$. Fix such $a$ and $f$. Then \[b=(av_f)^*av_f\in  \ell_\infty(X).\] Consider the canonical identification $\ell_\infty(X)\cong C(\beta X)$ and let  \[J=I\cap \ell_\infty(X),\] so, $J$ is an ideal of   $C(\beta X)$. Ideals of $C(\beta X)$ are well understood and we have that
 \[J= \left\{c\in C(\beta X)\colon c(\xi)=0 \text{ for all }  \xi\in \bigcap_{h\in J}\ker(h)\right\}.\]

Suppose towards a contradiction that   $b\not\in J$. So, there is $\xi\in \bigcap_{h\in J}\ker(h)$ such that $b(\xi)\neq 0$. Let us notice that  $\xi\in F_I$. For that we use property A once again: proceeding as above, since the ideal $I$ is also geometric, in order to show that $E(c^*c)(\xi)=0$ for all $c\in I$, it is enough to show that $E((dv_{g})^*dv_{g})(\xi)=0$ for all $g\in \PT(X)$ and all $d\in \ell_\infty(X)$ with $dv_g\in I$. For such $g$ and $d$,
$(dv_{g})^*dv_{g}\in \ell_\infty(X)$ and therefore  $(dv_{g})^*dv_{g}\in J$. Hence, as $\xi\in \bigcap_{h\in J}\ker(h)$, we  have   \[E((dv_{g})^*dv_{g})(\xi)=(dv_{g})^*dv_{g}(\xi)=0
\]
as desired.

Finally, as $\xi\in F_I$ and $b\in I_{F_I}$, it follows that $b(\xi)=0$. This contradicts our choice of $b$. So, $b\in J$ and, in particular, $b\in I$. By the definition of $b$, we have    $a^*av_f=v_fb\in I$. Letting $a'$ be the element of $\ell_\infty(X)$ whose coordinates are the inverses of the ones of $a^*$ when possible and $0$ otherwise, we have that $av_f=a'v_fb\in I$ as desired.
\end{proof}

As of now, we have been focusing our study on    elements of  $\cF(\partial X)$ of the
form $\COrb(\xi)$   for some $\xi\in \partial X$. We now introduce another class of subsets of $\cF(\beta X)$ which will be fundamental for our analysis of the prime and primitive ideals of uniform Roe algebras in Section \ref{SubsectionIrreducibleSet}.

\begin{definition}\label{Definition.Irreducible.in.Orb}
    Let $X$ be a u.l.f.\ metric space. A nonemptry closed subset of $\calOrb(\beta X)$ is called \emph{irreducible} if it cannot be written as a union  of two nontrivial closed subsets.\footnote{This definition works for topological spaces in general.} As an abuse of notation, we will say that a closed (orbit) invariant  $F\subseteq \beta X$ is \emph{irreducible} if its image in $\calOrb(\beta X)$ under the quotient map is irreducible. In other words, if every time $F=F_0\cup F_1$ for some  closed invariant subsets $F_0,F_1\subseteq \partial X$, then   either $F=F_0$ or $F=F_1$.
\end{definition}

 \begin{example}\label{Example.COrb.Irr}
     Given a u.l.f.\ metric space $X$,   $\COrb(\xi)$ is irreducible for every $\xi\in \beta X$. Indeed, that $\COrb(\xi)$ is closed and invariant it is clear. Now suppose $\COrb(\xi)=F_0\cup F_1$ for closed invariant subsets $F_0$ and $F_1$. Then either $F_0$ or $F_1$ must contain $\xi$. Being closed and invariant, the set containing $\xi$ must be the whole $\COrb(\xi)$.
 \end{example}

The problem of whether every irreducible subset is of the form $\COrb(\xi)$ for some $\xi\in \beta X$ is open and it is the topic of Section \ref{SectionIrred} below (see Problem \ref{Prob.Irred.FORB}). As we see now, in terms of ideals of $\cstu(X)$, irreducible subsets of $\beta X$ correspond to prime  ideals under the map \eqref{Eq.Map.F.to.IF}.

\begin{definition}
    Let $A$ be a $\mathrm C^*$-algebra and $I\subseteq A$ be an ideal. We say that $I$ is \emph{prime} if every time $I_0I_1\subseteq I$ for ideals $I_0,I_1\subseteq A$, we have that either $I_0\subseteq I$ or $I_1\subseteq I$.\footnote{Notice that, as $I_0$ and $I_1$ are ideals, $I_0I_1=I_0\cap I_1$.}
\end{definition}

\begin{proposition}\label{Prop.Irrd.Gives.Prime}
Let $X$ be a u.l.f.\ metric space with property A. Then $F\in \cF(\beta X)$ is irreducible if and only if $I_F$ is prime.
\end{proposition}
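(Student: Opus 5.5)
The plan is to use Proposition \ref{PropBijPropA}. Under property A, the maps $F\mapsto I_F$ and $I\mapsto F_I$ are mutually inverse bijections between $\cF(\beta X)$ and $\cI(\cstu(X))$. Once we show that both maps reverse inclusions and that unions of closed invariant sets correspond to intersections of ideals, the two notions translate into each other directly.

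First I would record the order properties. If $F\subseteq F'$, then clearly $I_{F'}\subseteq I_F$. If $I\subseteq I'$, then $F_{I'}\subseteq F_I$. Since the maps are inverse bijections, we get $F\subseteq F'$ if and only if $I_{F'}\subseteq I_F$.

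Next I would verify the identity
\[I_{F_0\cup F_1}=I_{F_0}\cap I_{F_1},\]
which is immediate from \eqref{Eq.IFromF}, because $E(a^*a)$ vanishes on $F_0\cup F_1$ exactly when it vanishes on both sets. Recall also that for ideals of a $\mathrm C^*$-algebra $I_0I_1=I_0\cap I_1$. Applying $F_{(\cdot)}$ and using bijectivity, we get $F_{I_0\cap I_1}=F_{I_0}\cup F_{I_1}$.

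For ($\Rightarrow$), suppose $F$ is irreducible and $I_0\cap I_1\subseteq I_F$. Put $F_j=F_{I_j}$; these are closed and invariant. Then $F=F_{I_F}\subseteq F_{I_0\cap I_1}=F_0\cup F_1$. Hence $F=(F\cap F_0)\cup(F\cap F_1)$, a union of closed invariant sets. Irreducibility gives, say, $F\subseteq F_0$, and therefore $I_0=I_{F_0}\subseteq I_F$.

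For ($\Leftarrow$), suppose $I_F$ is prime and $F=F_0\cup F_1$ with $F_j$ closed and invariant. Then $I_{F_0}\cap I_{F_1}=I_F$. Primeness gives, say, $I_{F_0}\subseteq I_F$, so $F\subseteq F_0$ and hence $F=F_0$.

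One subtlety must be checked: nonemptiness. A prime ideal is conventionally proper. Also $F=\emptyset$ corresponds to $I_\emptyset=\cstu(X)$, so irreducible (nonempty) sets correspond to proper ideals, and I would note this explicitly. The irreducibility convention in Definition \ref{Definition.Irreducible.in.Orb} is stated in $\partial X$ versus $\beta X$, and I would just work in $\cF(\beta X)$ throughout.

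The main obstacle is already handled by Proposition \ref{PropBijPropA}, namely $I_{F_I}=I$, which is where property A enters. Everything else is lattice bookkeeping. The only delicate point is using bijectivity to pass from $I_0\cap I_1\subseteq I_F$ to the set containment $F\subseteq F_{I_0}\cup F_{I_1}$.
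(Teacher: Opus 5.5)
Your proposal is correct and follows the paper's route: both use Proposition~\ref{PropBijPropA} to make $F\mapsto I_F$ an inclusion-reversing bijection that turns unions $F_0\cup F_1$ into intersections $I_{F_0}\cap I_{F_1}=I_{F_0}I_{F_1}$, after which irreducibility and primeness translate directly into each other. You write out the two directions that the paper compresses into ``the result follows,'' and you add a useful point the paper glosses over: nonemptiness of $F$ matches properness of $I_F$, since $I_\emptyset=\cstu(X)$.
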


\begin{proof}
    This is a straightforward consequence of Proposition \ref{PropBijPropA}. Indeed, it is immediate from the formulas of the map $F\mapsto I_F$ and $I\mapsto F_I$ that \[F\subseteq F' \Rightarrow \ I_{F'}\subseteq I_F\  \text{ and }\ I\subseteq I' \Rightarrow \ F_{I'}\subseteq F_I.\]
    By Proposition \ref{PropBijPropA} this maps are inverses of each other. Hence,
    \[F\subseteq F' \Leftrightarrow \ I_{F'}\subseteq I_F.\]
 Therefore, as $I_{F_0\cup F_1}=I_{F_0}\cap I_{F_1}$ and $F_{I_0\cap I_1}=F_{I_0}\cup F_{ I_{1}}$ for all $F_0,F_1\in \cF(\beta X)$ and all $I_0,I_1\in \cI(\cstu(X))$, the result follows.
\end{proof}

\subsection{Restricting  GNS representations to the Higson functions }\label{SubsectionRestrictionGNSHigFunc}
In this subsection, we revisit Theorem \ref{ThmHigsonRel} and  add yet another characterization of the Higson equivalence relation but now in terms of the GNS representations $\pi_\xi$ of $\cstu(X)$. We restate Theorem \ref{ThmHigsonRel}  below with the extra equivalence, item \eqref{ThmHigsonRel.Item5}, which is proved in this subsection.

\begin{theorem}\label{ThmHigsonRel.Cont}
    Let $X$ be a u.l.f.\ metric space and $\xi,\eta\in \partial X$. The following are equivalent:
    \begin{enumerate}
        \item  $\HOrb(\xi)=\HOrb(\eta)$.
\item
There are no disjoint  open subsets of $\calOrb(\partial X)$ which separate $\Orb(\xi)$ and $\Orb(\eta)$.
\item $\xi \sim_{R}\eta$, where $R\subseteq \partial X\times \partial X$ is the topological closure of the orbit equivalence relation.
\item  $\xi \sim_{R'}\eta$, where $R'\subseteq \partial X\times \partial X$ is the smallest closed equivalence relation which is orbit-invariant.
\item\label{ThmHigsonRel.Item5} $\pi_\xi\restriction C_h(X)$ and $\pi_\eta\restriction C_h(X)$ are unitarily equivalent.
\end{enumerate}
\end{theorem}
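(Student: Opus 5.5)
Since items (1)--(4) are already known to be equivalent by Theorem \ref{ThmHigsonRel}, it remains to prove (1)$\Leftrightarrow$(5). The plan is to compute $\pi_\xi\restriction C_h(X)$ explicitly using Remark \ref{RemarkGNSRepEquivOrb} and Proposition \ref{DescrRep}. Realize $\pi_\xi$ on $\ell_2(\Orb(\xi))$ with orthonormal basis $(e_\zeta)_{\zeta\in\Orb(\xi)}$. For $h\in C_h(X)\subseteq \ell_\infty(X)$, Proposition \ref{DescrRep} gives $\pi_\xi(h)e_\zeta=h(\zeta)e_\zeta$. By Proposition \ref{PropHigsonFunctionsAreConstantOnOrbits}, and since $\xi\in\partial X$ forces $\Orb(\xi)\subseteq\partial X$, we get $h(\zeta)=h(\xi)$ for every $\zeta\in\Orb(\xi)$. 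Hence
\[\pi_\xi(h)=h(\xi)\cdot 1_{\ell_2(\Orb(\xi))}\quad\text{for all } h\in C_h(X).\]
So $\pi_\xi\restriction C_h(X)$ is the character $\chi_\xi\colon h\mapsto h(\xi)$ amplified by the Hilbert space dimension $|\Orb(\xi)|$.

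For (1)$\Rightarrow$(5), suppose $\HOrb(\xi)=\HOrb(\eta)$, so $\chi_\xi=\chi_\eta$. Both restricted representations are then the same scalar character, acting on $\ell_2(\Orb(\xi))$ and $\ell_2(\Orb(\eta))$ respectively. Any unitary between these spaces intertwines them, so it suffices that such a unitary exists, i.e.\ that the two orbits have equal cardinality. Both orbits are countable by Proposition \ref{PropOrbitsAreCountable}(\ref{PropOrbitsAreCountable.Item2}) and nonempty, but they could be finite of different sizes (Theorem \ref{ThmCharacterizationOrbClosed.IntheText}). This dimension matching is the main obstacle. I would handle it as follows.

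\emph{Option (a).} Reinterpret ``unitarily equivalent'' up to multiplicity: both restrictions are multiples of the same character, hence quasi-equivalent and with equal kernel. This is presumably the intended meaning, given that the item is meant to line up with (1).

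\emph{Option (b).} Check whether the paper's convention lets the dimension condition be absorbed. If, say, $\Orb(\xi)$ is a singleton while $\Orb(\eta)$ has two points, then literal unitary equivalence fails even though $\HOrb(\xi)=\HOrb(\eta)$ may hold. In that case I would state precisely that (5) should be read as ``equivalent up to multiplicity,'' or add the hypothesis $|\Orb(\xi)|=|\Orb(\eta)|$.

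For (5)$\Rightarrow$(1), the argument is immediate. If $u\pi_\xi(h)u^*=\pi_\eta(h)$ for all $h\in C_h(X)$, then $h(\xi)\,1=h(\eta)\,1$, so $h(\xi)=h(\eta)$ for every Higson function $h$. This is exactly $\HOrb(\xi)=\HOrb(\eta)$.
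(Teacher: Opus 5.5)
Your reduction to (1)$\Leftrightarrow$(5), the computation $\pi_\xi(h)=h(\xi)\,1_{H_\xi}$ for $h\in C_h(X)$, and your proof of (5)$\Rightarrow$(1) are exactly what the paper does. The difference is at the point you flagged. The paper closes the dimension gap in (1)$\Rightarrow$(5) by citing Proposition~\ref{PropCardOrbStableHigsonEquiv}, which asserts that Higson-equivalent points have orbits of equal cardinality. Your proposal does not establish this, so it does not prove the literal statement. However, that proposition is false, and so is (1)$\Rightarrow$(5) as literally stated. Your Option~(b) is therefore the right diagnosis, and you should make it definitive with a counterexample rather than leave it as a hedge.

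\emph{Counterexample.} Take $X=\{(m^2,n^2)\in\N^2\colon n\geq m\}$ as in Proposition~\ref{prop:uncountably_many_disjoint_orbits}, and let $\zeta\in\partial X$ contain $\{(1,n^2)\colon n\in\N\}$. Put $\xi_m=\overline{f_m}(\zeta)$, where $f_m(1,n^2)=(m^2,n^2)$ for $n\geq m$. The $\xi_m$ live on disjoint columns, so $\Orb(\zeta)$ is infinite. For a nonprincipal ultrafilter $\mathcal V$ on $\N$, let $\xi=\lim_{m,\mathcal V}\xi_m$. Then $\xi\in\COrb(\zeta)$, and since Higson functions are continuous on $\beta X$ and constant on $\Orb(\zeta)$, we get $\HOrb(\xi)=\HOrb(\zeta)$.

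On the other hand, $d(x,X\setminus\{x\})\geq 2m-1$ for $x=(m^2,n^2)$, and $\{(m^2,n^2)\colon m>k\}\in\xi$ for every $k$. Hence every $g\in\PT(X)_\xi$ is the identity on a set in $\xi$, i.e.\ $\Orb(\xi)=\{\xi\}$. So $\pi_\xi\restriction C_h(X)$ acts on a one-dimensional space and $\pi_\zeta\restriction C_h(X)$ on an infinite-dimensional one, and they are not unitarily equivalent. Attaching a twin $(m^2,n^2+1)$ to every point of the even columns gives, by the same construction, Higson-equivalent points with orbits of sizes $1$ and $2$, which is exactly your scenario.

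The paper's proof of Proposition~\ref{PropCardOrbStableHigsonEquiv} breaks at the claim $B\in\eta$. From $X\setminus B\in\eta$ it infers $\lim_{x,\eta}d(N_r(x),X\setminus N_r(x))=\infty$. But an ultrafilter limit along $\xi$ gives no control along other ultrafilters concentrated on $\bigcup_{x\in A}N_r(x)$; in the example, $A=X$ and $B=\emptyset$.

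What is true is your Option~(a). By your own computation, (1) is equivalent to quasi-equivalence of the two restrictions, equivalently to equality of their kernels. It is equivalent to unitary equivalence exactly when, in addition, $|\Orb(\xi)|=|\Orb(\eta)|$.
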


The addition of  \eqref{ThmHigsonRel.Item5} to the theorem above will be a consequence of the following proposition and together with previous results.

\begin{proposition}\label{PropCardOrbStableHigsonEquiv}
    Let $X$ be a u.l.f.\ metric space and $\xi,\eta\in \beta X$. If $h(\xi)=h(\eta)$ for all Higson functions $h\colon X\to \C$, then $|\Orb(\xi)|=|\Orb(\eta)|$.
\end{proposition}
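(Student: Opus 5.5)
The plan is to separate, for each fixed $N\in\N$, the set $\{\zeta\in\beta X\colon |\Orb(\zeta)|=N\}$ from its complement by a single Higson function. Then Higson equivalent points must have orbits of the same finite size, or both have infinite orbits; since orbits are countable (Proposition~\ref{PropOrbitsAreCountable}), infinite means $\aleph_0$, and the statement follows. If $\xi\in X$ (or $\eta\in X$), a Higson function vanishing on $\partial X$ (any $c_0$ function) already separates $X$ from $\partial X$, and points of $X$ all share the orbit $X$. So the real content is for $\xi,\eta\in\partial X$.

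\textbf{Step 1: intrinsic description of finite orbit size.} I would use connected components at a scale. For $t>0$, let $C_t(x)$ be the $t$-connected component of $x$, i.e.\ the points reachable from $x$ by $t$-paths. I expect the following reformulation of Theorem~\ref{ThmCharacterizationOrbClosed.IntheText}, extracted from its proof (where the orbit was $\{\overline{g_j^{-1}}(\xi)\}_{j\le N}$, with the $A_j$ disjoint so these are distinct):
\[
|\Orb(\xi)|=N<\infty \iff \text{there is } r \text{ with } \lim_{x,\xi}d\big(C_r(x),X\setminus C_r(x)\big)=\infty \text{ and } |C_r(x)|=N \text{ for $\xi$-almost all } x.
\]
Working with components rather than balls $N_r(x)$ is deliberate: components partition $X$ and are stable as $r$ grows once isolation holds. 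Concretely, $C_r(x)=C_t(x)$ whenever $d(C_r(x),X\setminus C_r(x))>t$.

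\textbf{Step 2: constructing the Higson function.} Fix $N$. Define the set of \emph{$N$-good} points
\[
G=\{x\in X\colon \exists\, r\le \rho(x) \text{ with } |C_r(x)|=N \text{ and } d(C_r(x),X\setminus C_r(x))\ge w(x)\},
\]
where $\rho,w\colon X\to[0,\infty)$ tend to infinity slowly, with $w\gg\rho$. I would choose them by a diagonal argument over an exhaustion of $X$ by balls about a base point. If $|\Orb(\xi)|=N$, then $G\in\xi$: the witnessing $r$ is eventually below $\rho$, and the isolation tends to infinity along $\xi$. By the stability in Step 1, $G$ is a union of components $C$ of size $N$, each isolated at distance $\ge w$. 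Next I would let $h$ be $1$ on $G$ and decay linearly to $0$ over a distance comparable to $w/2$ away from each such component, exactly as the functions $h_n$ in Lemma~\ref{LemmaExistenceOfHigsonFunctionSeparationSubsetsFromFarAwaySequences}. Since the good components are pairwise far apart relative to the decay length, $h$ is well defined. Because the decay scale tends to infinity, $h$ is Higson, and $h(\xi)=1$.

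\textbf{Step 3: $h(\eta)<1$ whenever $|\Orb(\eta)|\neq N$, and the main obstacle.} Suppose $h(\eta)$ were close to $1$. Then $\eta$ concentrates on points within $o(w)$ of good components. I would then use a partial translation (Proposition~\ref{PropOrbitsAreCountable}\eqref{PropOrbitsAreCountable.Item1.5}, combined with the u.l.f.\ partition trick) to move $\eta$ into $G$. This would give an $r$ as in Step 1 with $|C_r|=N$ on an $\eta$-large set, so $|\Orb(\eta)|=N$. The difficulty is that $\eta$ need not lie in $G$ exactly, only near it at distances that may grow. Since $h$ is constant on orbits (Proposition~\ref{PropHigsonFunctionsAreConstantOnOrbits}), the cleanest fix is to use the value threshold $h(\eta)>1/2$ instead of $h(\eta)=1$, and to check that points where $h>1/2$ lie in components that are themselves isolated of size $N$, with isolation still tending to infinity. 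This reduces to arranging $w$ so that the halfway region around a good component contains no other points at scale $\rho$.

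Calibrating $\rho$ and $w$ is the step I expect to be the main obstacle. They must be slow enough that every finite-orbit-size-$N$ ultrafilter eventually qualifies, yet chosen so that $h$ is genuinely Higson and the region $\{h>1/2\}$ does not pick up ultrafilters with infinite orbits or with different $N$. Once this is settled, Higson equivalence forces $h_N(\xi)=h_N(\eta)$ for every $N$, hence $|\Orb(\xi)|=|\Orb(\eta)|$.
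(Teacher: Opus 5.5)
\paragraph{There is a genuine gap, and in fact the statement is false as written.}

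Your Step~2 fails at the claim that $G\in\xi$ whenever $|\Orb(\xi)|=N$. The hypothesis $\lim_{x,\xi}d(C_r(x),X\setminus C_r(x))=\infty$ only says that each superlevel set $\{x\colon d(C_r(x),X\setminus C_r(x))>M\}$ lies in $\xi$. It does not give a single set in $\xi$ on which the isolation dominates a function $w$ fixed in advance with $w(x)\to\infty$. That is a P-point-type property, and it fails for product-type ultrafilters such as the ones built in the proof of Theorem~\ref{ThmCharacterizationOrbClosed.IntheText}.

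No recalibration of $\rho$ and $w$ can repair this. Here is a counterexample to the statement.
\begin{itemize}
\item Let $X=\{(m^2,n^2)\in\N^2\colon n\ge m\}$, a copy of $\cM_2$.
\item Let $\xi$ be a nonprincipal ultrafilter containing the column $\{(1,n^2)\colon n\in\N\}$, and let $f_j(1,n^2)=(j^2,n^2)$ for $n\ge j$.
\item Let $\eta=\lim_{j,\zeta}\overline{f_j}(\xi)$ for a nonprincipal ultrafilter $\zeta$ on $\N$.
\end{itemize}
The $\overline{f_j}(\xi)$ live on disjoint columns, so $|\Orb(\xi)|=\aleph_0$. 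On the other hand, $\{(m^2,n^2)\in X\colon m>k\}\in\eta$ for every $k$, and $(m^2,n^2)$ is at distance at least $2m-1$ from every other point of $X$. So a partial translation of displacement $s$ fixes every point of its domain with $2m-1>s$, and $\Orb(\eta)=\{\eta\}$; this is the argument of Proposition~\ref{prop:uncountably_many_disjoint_orbits}. Now $\eta\in\COrb(\xi)$, and Higson functions are continuous on $\beta X$ and constant on orbits (Proposition~\ref{PropHigsonFunctionsAreConstantOnOrbits}). Hence $h(\xi)=h(\eta)$ for every Higson $h$, while $|\Orb(\xi)|\neq|\Orb(\eta)|$.

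In your construction with $N=1$, this shows up concretely. The isolation of $(m^2,n^2)$ is about $2m$, hence bounded along each column, while $w\to\infty$. So $G$ meets each column in a finite set and $G\notin\eta$. More decisively, no Higson $h_1$ can equal $1$ on singleton orbits and be $<1$ elsewhere, since every Higson function is constant on $\COrb(\xi)$.

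\paragraph{The paper's proof has the same defect at the analogous step.}

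It sets $A=\{x\colon |N_r(x)|=n\}$ and claims $B=X\setminus\bigcup_{x\in A}N_r(x)\in\eta$. But $A$ may contain points of bounded isolation, so $\bigcup_{x\in A}N_r(x)\in\eta$ does not force $\lim_{x,\eta}d(N_r(x),X\setminus N_r(x))=\infty$. In the example above, let the singleton-orbit ultrafilter play the role of the paper's $\xi$, and take $n=1$ and $r<3$. Then $A=X$ and $B=\emptyset$.

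Either argument can only work under an additional hypothesis, for instance that there is a single $A\in\xi$ along which $d(N_r(x),X\setminus N_r(x))\to\infty$. The same example also breaks the implication from Higson equivalence to item~\eqref{ThmHigsonRel.Item5} of Theorem~\ref{ThmHigsonRel.Cont}. There $\pi_\xi\restriction C_h(X)$ and $\pi_\eta\restriction C_h(X)$ are scalar on Hilbert spaces of dimensions $\aleph_0$ and $1$, so they are not unitarily equivalent.
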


\begin{proof}
Suppose  $|\Orb(\xi)|\neq|\Orb(\eta)|$ and let us find a Higson function separating $\xi$ and $\eta$. Without loss of generality, assume $|\Orb(\xi)|<|\Orb(\eta)|$. Since the orbits of elements in $\beta X$ by the partial action of the inverse semigroup of partial translation are at   countable (Proposition \ref{PropOrbitsAreCountable}), $\Orb(\xi)$  must be finite; set $n=|\Orb(\xi)|$. By Theorem \ref{ThmCharacterizationOrbClosed.IntheText}, there is $r>0$ such that
\begin{equation}\label{Eq.PropCardOrbStableHigsonEquiv}
\lim_{x,\xi}d(N_r(x),X\setminus N_r(x))=\infty
\end{equation}
Notice that
\[A=\{x\in X\colon |N_r(x)|=n\}\in \xi.\]
Indeed, if the $\{x\in X\colon |N_r(x)|>n\}$ is in $\xi$, then it is straightforward that $\Orb(\xi)$ has more than $n$ elements. On the other hand, \eqref{Eq.PropCardOrbStableHigsonEquiv} implies that if $f$ is a partial translation of $X$ with $\dom(f)\in \xi$ then, replacing $\dom(f)$ by a smaller subset if necessary, we can assume that   $f(x)\in N_r(x)$ for all $x\in \dom(f)$. Therefore, if $\{x\in X\colon |N_r(x)|<n\}$ is in $\xi$, then $\{x\in X\colon |N_r(x)|=m\}$  is in $\xi$ for some $m<n$ and the above implies that  $\Orb(\xi)$ has at most $m$ elements.

Notice that
\[B=X\setminus \bigcup_{x\in A}N_r(x)\in \eta.\]
Suppose otherwise that $X\setminus B\in \eta$. Then, as $A\in \xi$, \eqref{Eq.PropCardOrbStableHigsonEquiv} implies that
\[\lim_{x,\eta}d(N_r(x),X\setminus N_r(x))=\infty.\]
Then,  the arguments in the previous paragraph  give   that $\Orb(\eta)$ has at most $n$ elements. This contradicts the assumption that $|\Orb(\xi)|<|\Orb(\eta)|$.

 It is clear from  the definition of $B$, $A$, and \eqref{Eq.PropCardOrbStableHigsonEquiv} that
   $d(x,B)\to \infty$ as  $ x{\to} \infty$ in $A$.
 By
Lemma \ref{LemmaExistenceOfHigsonFunctionSeparationSubsetsFromFarAwaySequences}, replacing $A$ by a smaller subset in $\xi$ if necessary, we can find  a Higson function $h\colon X\to [0,1]$ such that $h\restriction \overline A=1$   and $h\restriction \overline B=0$. As $A\in \xi$ and $B\in \eta$,  $ h$ separates the ultrafilters $\xi$ and $\eta$.
\end{proof}

\begin{proof}
    [Proof of Theorem \ref{ThmHigsonRel.Cont},  \eqref{ItemThmHigsonRel1}$\Leftrightarrow$\eqref{ThmHigsonRel.Item5}] As $C_h(X)\subseteq \ell_\infty(X)$,  Proposition \ref{Basis}
shows that
\[\pi_\xi(h)=h(\xi)\mathrm{Id}_{H_\xi}\]
for all $h\in C_h(X)$. Therefore, since $\Orb(\xi)$ and $\Orb(\eta)$ have the same cardinality if $\HOrb(\xi)=\HOrb(\eta)$ (Proposition \ref{PropCardOrbStableHigsonEquiv}), this implies that $\pi_\xi\restriction C_h(X)$ and $\pi_\eta\restriction C_h(X)$ are unitarily equivalent if and only if $h(\xi)=h(\eta)$ for all Higson functions $h\colon X\to \C$. This finishes the proof.
\end{proof}

\subsection{The topologies of $\calOrb(\beta X)$ and $\qcalOrb(\beta X)$}\label{SubsectionTopOrbSpacePrimitive}

Given a u.l.f.\ metric space $X$ and $\xi\in \beta X$, Proposition \ref{PropGNSIrreducible} says that the GNS representation $\pi_\xi$ is irreducible. Therefore, each $\ker(\pi_\xi)$ is a primitive ideal of $\cstu(X)$. Our next results will relate the dynamics of $(\beta X,\PT(X))$ with the topological space of primitive ideals of $\cstu(X)$. Let
\[\mathrm{Prim}(\cstu(X))=\{I\subseteq \cstu(X)\colon I \text{ is an primitive ideal of } \cstu(X)\}\]
and let $\mathrm{Irr}(\cstu(X))$ denote the space of all irreducible representations of $\cstu(X)$ modulo unitary equivalence. As usual, we endow $\mathrm{Prim}(\cstu(X))$ with the hull-kernel topology (or Jacobson topology), i.e, the topology generated by sets of the form
\[\mathrm{Prim}(\cstu(X))\setminus \left\{ J\in \mathrm{Prim}(\cstu(X))\colon \bigcap_{J'\in R}J '\subseteq J\right\}
,\]
where $R$ runs over all subsets of $\mathrm{Prim}(\cstu(X))$. We then endow $\mathrm{Irr}(\cstu(X))$ with the smallest topology which making the surjection
\begin{equation*}\label{Eq.Quotient.Irr.Prim}
 [\pi]\in \mathrm{Irr}(\cstu(X))\mapsto \ker(\pi)\in \mathrm{Prim}(\cstu(X))
\end{equation*}
continuous.

 \begin{remark}\label{RemarkTopLattice}
     As explained in Section \ref{SubSubSectionIdeals} above, the maps \[\xi\in \beta X\mapsto \ker(\pi_\xi)\in \mathrm{Prim}(\cstu(X))\] can be interpreted as restrictions of maps between $\cF(\beta X)$ and $\cI(\cstu(X))$ (see \eqref{Eq.IOrbRelation}) and such maps have been extensively studied before (see \cite{ChenWang2004JFA,ChenWang2005}). Our approach here has however a fundamental difference: since the image of our maps lies in $\mathrm{Prim}(\cstu(X))$ (and not only in $\cI(\cstu(X))$), we deal with topological properties of these maps and not only lattice properties as done in \cite{ChenWang2004JFA,ChenWang2005}.
     \end{remark}

\begin{proposition}\label{PropMapFromBetaXToPrim}
    Let $X$ be a u.l.f.\ metric space. The map
    \[\psi\colon \xi\in \beta X\mapsto \ker(\pi_\xi)\in \mathrm{Prim}(\cstu(X))
    \]
    is  continuous.
    \end{proposition}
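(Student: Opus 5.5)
To prove continuity of $\psi$, I will show that the preimage of every closed subset of $\mathrm{Prim}(\cstu(X))$ is closed in $\beta X$. In the hull-kernel topology, a closed set has the form
\[
\mathrm{hull}(I)=\{J\in \mathrm{Prim}(\cstu(X))\colon I\subseteq J\},
\]
where $I=\bigcap_{J'\in R}J'$ for some $R\subseteq \mathrm{Prim}(\cstu(X))$, and $I$ is then an ideal of $\cstu(X)$. So it suffices to show that, for an arbitrary ideal $I$,
\[
\psi^{-1}(\mathrm{hull}(I))=\{\xi\in \beta X\colon I\subseteq \ker(\pi_\xi)\}
\]
is closed in $\beta X$.

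The main tool is the description of $\ker(\pi_\xi)$ in Corollary \ref{Kernels}:
\[
\ker(\pi_\xi)=\{a\in \cstu(X)\colon E(a^*a)\restriction \Orb(\xi)=0\}.
\]
Hence $I\subseteq \ker(\pi_\xi)$ holds exactly when $E(a^*a)$ vanishes on $\Orb(\xi)$ for every $a\in I$.

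The key observation is that this condition only needs to be checked at $\xi$ itself. Suppose $a\in I$, $\eta\in\Orb(\xi)$, and $\eta=\overline f(\xi)$ with $f\in \PT(X)_\xi$. Then $av_f\in I$, and by the computation in \eqref{Eq.FI,invariant},
\[
E((av_f)^*av_f)(\xi)=E(a^*a)(\overline f(\xi))=E(a^*a)(\eta).
\]
So if $E(b^*b)(\xi)=0$ for all $b\in I$, then $E(a^*a)(\eta)=0$ for all $a\in I$ and all $\eta\in\Orb(\xi)$; the converse is trivial. This gives
\[
\psi^{-1}(\mathrm{hull}(I))=\{\xi\in\beta X\colon E(a^*a)(\xi)=0 \text{ for all } a\in I\}=F_I,
\]
using the set $F_I$ from \eqref{Eq.FFromI}.

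Finally, $F_I$ is an intersection of zero sets of the continuous functions $E(a^*a)\in \ell_\infty(X)\cong C(\beta X)$, so it is closed. This proves continuity of $\psi$.

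The only step needing care is the reduction from the whole orbit $\Orb(\xi)$ to the single point $\xi$, that is, the identity $E(v_f^*a^*av_f)(\xi)=E(a^*a)(\overline f(\xi))$. I would verify it directly from the definition of $E$: for $x\in\dom(f)$, the diagonal entry of $v_f^*a^*av_f$ at $x$ is $\langle a^*a\,\delta_{f(x)},\delta_{f(x)}\rangle$, and taking the limit along $\xi$ gives the value of $E(a^*a)$ at $\overline f(\xi)$. No property A assumption is needed anywhere in the argument.
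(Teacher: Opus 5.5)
Your proof is correct, but it takes a different route from the paper.

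\emph{The paper's argument.} It uses nets. Assuming $\ker(\pi_{\xi_i})\not\to\ker(\pi_\xi)$ for some net $\xi_i\to\xi$, it picks $a\in\bigcap_{J'\in R}J'$ with $a\notin\ker(\pi_\xi)$. Corollary~\ref{Kernels} then gives $\eta\in\Orb(\xi)$ with $E(a^*a)(\eta)\neq 0$. Finally, Lemma~\ref{LemmaConvergingNetsOrbitEquiv} approximates $\eta$ by points of $\bigcup_i\Orb(\xi_i)$, where $E(a^*a)$ vanishes, which is a contradiction.

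\emph{Your argument.} You use that $I=\bigcap_{J'\in R}J'$ is an ideal, so $av_f\in I$. The identity in \eqref{Eq.FI,invariant} then moves the vanishing condition from $\eta=\overline f(\xi)$ back to $\xi$ itself. This avoids nets and the lifting lemma entirely. It also produces the explicit formula $\psi^{-1}(\{J\colon I\subseteq J\})=F_I$, which ties the proposition directly to the map $I\mapsto F_I$ of Section~\ref{SubSubSectionIdeals}.

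\emph{What each buys.} Yours is shorter and gives the explicit preimage. The paper's works one element at a time: it shows that $\{\xi\in\beta X\colon a\in\ker(\pi_\xi)\}$ is closed for every single $a\in\cstu(X)$, without using that $a$ lies in an ideal. That pointwise closedness can also be read off from Proposition~\ref{PropOrbUOpenAndEqualqOrbU}, since the complement of that set is the orbit saturation of the open set $\{E(a^*a)\neq 0\}$.
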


\begin{proof}
Fix $\xi\in \beta X$ and let us show  that $\psi$ is continuous at $\xi$. For that, let $ (\xi_i)_{i\in I}$ be a net in $\beta X$ converging to $\xi$ and let us show $(\ker(\pi_{\xi_i}))_{i\in I}$ converges to $\ker(\pi_\xi)$. If not, we can fix an open neighborhood  $W\subseteq \mathrm{Prim}(\cstu(X))$ of $\ker(\pi_\xi)$  such that, going to a subnet if necessary, $\ker(\pi_{\xi_i})\not\in W$ for all $i\in I$. By the definition of the topology of $\mathrm{Prim}(\cstu(X))$, we can assume
\[W=\mathrm{Prim}(\cstu(X))\setminus \left\{ J\in \mathrm{Prim}(\cstu(X))\colon \bigcap_{J'\in R}J '\subseteq J\right\}\]
for some $R\subseteq \mathrm{Prim}(\cstu(X))$.

As $\ker(\pi_\xi)\not\in W$, $\ker(\pi_\xi)$ cannot contain $\cap_{J'\in R}J '$. Fix $a\in \cap_{J'\in R}J '$ such that $a\not\in \ker(\pi_{\xi})$. By Corollary \ref{Kernels}, there is $\eta\in \Orb(\xi)$ such that
\begin{equation}\label{EqE(a)(zezta)notzero}
E(a^*a)(\eta)\neq 0,
\end{equation}
where $E\colon \cstu(X)\to \ell_\infty(X)$ is the canonical conditional expectation and $E(a)$ is interpreted as a function in $C(\beta X)$. On the other hand, as each $\ker(\pi_{\xi_i})$ is not in  $W$, we must have that $a\in \ker(\pi_{\xi_i})$ for all $i\in I$. So,
\begin{equation}\label{EqE(a)(zezta)notzero2}
E(a^*a)\restriction \Orb(\xi_i)=0\ \text{ for all }\ i\in I.
\end{equation}
As $\eta\in \Orb(\xi)$ and $\xi=\lim_{i,I}\xi_i$, there is a net $(\eta_i)_{i \in I'}$ such that $\eta=\lim_{i,I'}\eta_i$ and each $\eta_i$ is in the orbit of some $\xi_j$, $j\in I$ (see Lemma \ref{LemmaConvergingNetsOrbitEquiv}). By the continuity of $E(a^*a)$ and  \eqref{EqE(a)(zezta)notzero2}, we have
\[E(a^*a)(\eta)=\lim_{i,I'}E(a)(\eta_i)=0,\]
which  contradicts \eqref{EqE(a)(zezta)notzero}.
\end{proof}

\iffalse
We now prove the part of Theorem \ref{ThmMapFromOrbToPrimIsNice.} with does not rely on property A and on the Continuum Hypothesis. Precisely:

\begin{theorem}\label{ThmMapFromOrbToPrimIsNiceWithoutAandCH}
    Let $X$ be a u.l.f.\ metric space. Both the maps \begin{equation*}\label{EqMapOrbToPrim} \Orb(\xi)\in \calOrb(\beta X) \mapsto [\pi_\xi]\in \mathrm{Irr}(\cstu(X))
    \end{equation*}
    and \begin{equation*}\label{EqMapOrbToPrim.2} \qOrb(\xi)\in \qcalOrb(\beta X) \mapsto \ker(\pi_\xi)\in \mathrm{Prim}(\cstu(X))
    \end{equation*}
    are homeomorphisms with their images.
\end{theorem}
\fi

\begin{theorem}
    Let $X$ be a u.l.f.\ metric space. Both the maps \begin{equation*}  \Orb(\xi)\in \calOrb(\beta X) \mapsto [\pi_\xi]\in \mathrm{Irr}(\cstu(X))
    \end{equation*}
    and \begin{equation}\label{EqMapOrbToPrim.2Intro} \qOrb(\xi)\in \qcalOrb(\beta X) \mapsto \ker(\pi_\xi)\in \mathrm{Prim}(\cstu(X))
    \end{equation}
    are homeomorphisms with their images. \label{ThmMapFromOrbToPrimIsNice.InTheText}
\end{theorem}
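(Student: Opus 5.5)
We will treat the two maps in parallel, organising the argument around the commutative square formed by the quotient maps $q\colon \beta X\to\calOrb(\beta X)$, $q'\colon\beta X\to\qcalOrb(\beta X)$, the continuous map $\psi\colon\xi\mapsto\ker(\pi_\xi)$ of Proposition \ref{PropMapFromBetaXToPrim}, and $\tilde\psi\colon\xi\mapsto[\pi_\xi]\in\mathrm{Irr}(\cstu(X))$.

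\textbf{Step 1: well-definedness and injectivity.} Both follow from results already proved. Theorem \ref{ThmOrbitEquivIFFGNSEquiv.Inthetext} says that $[\pi_\xi]=[\pi_\eta]$ exactly when $\Orb(\xi)=\Orb(\eta)$. Theorem \ref{ThmOrbitEquivIFFGNSEquiv.Inthetext2} says that $\ker(\pi_\xi)=\ker(\pi_\eta)$ exactly when $\qOrb(\xi)=\qOrb(\eta)$.

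\textbf{Step 2: continuity.} The map $\psi$ is continuous by Proposition \ref{PropMapFromBetaXToPrim}. Since $\mathrm{Irr}(\cstu(X))$ carries the initial topology induced by $[\pi]\mapsto\ker(\pi)$, the map $\tilde\psi$ is continuous as well, because its composition with that map is $\psi$. The maps $q$ and $q'$ are quotient maps, so both induced maps on the orbit and quasi-orbit spaces are continuous.

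\textbf{Step 3: openness onto the image.} Here we need a description of the closed sets. Corollary \ref{Kernels} gives, for $F\subseteq\beta X$,
\[\bigcap_{\eta\in F}\ker(\pi_\eta)\subseteq\ker(\pi_\xi)\iff \xi\in\COrb(F).\]
Let $C\subseteq\qcalOrb(\beta X)$ be closed, and let $F=q'^{-1}(C)$, a closed invariant set. Taking $R=\psi(F)$, the hull-kernel closure of $R$ intersected with the image of $\psi$ is
\[\Big\{\ker(\pi_\xi)\colon \bigcap_{\eta\in F}\ker(\pi_\eta)\subseteq\ker(\pi_\xi)\Big\}=\{\ker(\pi_\xi)\colon \xi\in\COrb(F)=F\}=\psi(F).\]
Hence the image of $C$ is relatively closed, and the map on $\qcalOrb(\beta X)$ is a homeomorphism onto its image.

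For $\mathrm{Irr}$, the open sets are exactly the preimages of open sets of $\mathrm{Prim}$. A closed set $C\subseteq\calOrb(\beta X)$ is therefore sent onto a relatively closed set only if $C$ is saturated for quasi-orbit equivalence. This holds: if $\xi\in F=q^{-1}(C)$ and $\qOrb(\eta)=\qOrb(\xi)$, then $\eta\in\COrb(\xi)\subseteq F$. The same computation as above, now with $\tilde\psi(F)$, then shows that the image of $C$ is the preimage of the relatively closed set $\psi(F)$.

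\textbf{Main obstacle.} The delicate point is exactly this last one. It rests on the fact that closed sets in $\calOrb(\beta X)$ are automatically $\qOrb$-saturated, which is the statement that the topology of the orbit space is pulled back from the quasi-orbit space. We would verify this carefully using that closed invariant sets contain the orbit closures of their points.
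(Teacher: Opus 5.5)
Your proposal is correct and follows essentially the paper's route. Continuity comes from Proposition \ref{PropMapFromBetaXToPrim} together with the quotient maps, relative openness (equivalently, closedness) onto the image comes from Corollary \ref{Kernels}, and the $\mathrm{Irr}(\cstu(X))$ case is transferred through the initial topology induced by $[\pi]\mapsto\ker(\pi)$. The only differences are presentational: you compute the hull-kernel closure of $\psi(F)$ directly instead of arguing by contradiction with nets, and you make explicit the $\qOrb$-saturation of closed invariant sets, which the paper's diagram argument for $\mathrm{Irr}(\cstu(X))$ uses tacitly.
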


\begin{proof}
Let us first show that  map
\begin{equation}\label{eq.Psi.24}\Psi\colon \Orb(\xi)\in \calOrb(\beta X)\mapsto \ker(\pi_\xi)\in \Psi(\calOrb(\beta X))\subseteq \mathrm{Prim}(\cstu(X))
\end{equation}
is continuous and open.
The continuity of $\Psi$ follows from Proposition \ref{PropMapFromBetaXToPrim} and the fact that the quotient map  $q\colon \beta X\to \calOrb(\beta X)$ is open (see Corollary \ref{CorQuotientOpen}). Indeed, letting $\psi$ be as in Proposition \ref{PropMapFromBetaXToPrim}, the diagram if Figure \ref{Diagram1}. So, $\Psi^{-1}(W)=q(\psi^{-1}(W))$ is open for all open $W\subseteq \mathrm{Prim}(\cstu(X))$.
\begin{figure}[h] \[\xymatrix{\beta X \ar[rr]^\psi\ar[dr]_q& & \mathrm{Prim}(\cstu(X))\\
& \calOrb(\beta X)\ar[ru]_\Psi&
}\]
\caption{}\label{Diagram1}
\end{figure}

We us   notice $\Psi\colon \calOrb(\beta X)\to \Psi(\calOrb(\beta X))$ is an open map. Suppose not and pick $U\subseteq \calOrb(\beta X)$ such that $\Psi(U)$ is not open in $\Psi(\calOrb(\beta X))$. Then,  we can find $\xi\in \beta X$ and a net $(\xi_i)_{i\in I}$ in $\beta X$ such that
\[\ker(\pi_\xi)=\lim_{i,I}\ker(\pi_{\xi_i}), \ \Orb(\xi)\in U, \text{ and }\ \Orb(\xi_i)\not\in U\ \text{ for all }\ i\in I.\]
From the last two conditions above and the definition of the quotient topology of $\calOrb(\beta X)$, there is an open orbit-invariant subset $V\subseteq \beta X$ such that $\xi \in V$ and $\xi_i\not\in V$ for all $i\in I$. In particular, this implies that $\xi$ does not belong to $\COrb(\{\xi_i\colon i\in I\})$. Hence, by Corollary \ref{Kernels}, we have that
\[\bigcap \ker(\pi_{\xi_i})\not\subseteq \ker(\pi_\xi).\]

Set
\[W=\mathrm{Prim}(\cstu(X))\setminus \left\{J\in \mathrm{Prim}(\cstu(X))\colon \bigcap \ker(\pi_{\xi_i})\subseteq J\right\}.\]
By the previous paragraph, $\ker(\pi_\xi)\in W$. So, $W$ is an open neighborhood of $\ker(\pi_\xi)$ and, as $\ker(\pi_\xi)=\lim_{i,I}\ker(\pi_i)$, there must be some $i\in I$ such that $\ker(\pi_{\xi_i})\in W$. This clearly conflicts with the definition of $W$, so we reached a contradiction.

We now show
\[\Psi_1\colon \Orb(\xi)\in \calOrb(\beta X)\mapsto [\pi_\xi]\in \mathrm{Irr}(\cstu(X))\]
 is a homeomophism with its image. Indeed, this follows since the map in \eqref{eq.Psi.24} is continuous and open, the quotient map
\[\theta\colon \mathrm{Irr}(\cstu(X))\to \mathrm{Prim}(\cstu(X))\]
is open, and the  diagram in Figure \ref{Diagram2} commutes.
\begin{figure}[h] \[\xymatrix{\calOrb(\beta X) \ar[rr]^{\Psi}\ar[dr]_{\Psi_1}& & \mathrm{Prim}(\cstu(X))\\
& \mathrm{Irr}(\cstu(X))\ar[ru]_\theta&
}\]\caption{}\label{Diagram2}
\end{figure}

We are left to show
\[\Phi_1\colon \qOrb(\xi)\in \qcalOrb(\beta X)\mapsto \ker(\pi_\xi)\in \mathrm{Prim}(\cstu(X))\]
is a homeomorphism with its image. This will follow analogously to what we did for  $\Psi_1$ but considering the map \[\Phi\colon \qOrb(\xi)\in \qcalOrb(\beta X)\mapsto \ker(\pi_\xi)\in \Phi(\qcalOrb(\beta X))\subseteq \mathrm{Prim}(\cstu(X))\] instead of $\Psi$ and using that the  quotient map $\colon\beta X\to \qcalOrb(\beta X)$ is open (Corollary \ref{CorQuotientOpen}). We leave the details to the reader.
\end{proof}

\section{Irreducible subsets of $\partial X$ and prime versus primitive ideals}

\subsection{Primitive and prime ideals of $\cstu(X)$}\label{SubsectionIrreducibleSet} We shall now relate problem of whether prime ideals in $\cstu(X)$ are primitive to the problem of whether irreducible subsets of $\beta X$ are of the form $\COrb(\xi)$ for some $\xi\in \beta X$. Applications of the results in this section will be obtained in Section \ref{SectionIrred}.

In this section, we use the maps \[F\in \cF(\beta X)\mapsto I_F\in \cI(\cstu(X))\]
and
\[  I\in \cI(\cstu(X))\mapsto F_I\in \cF(\beta X)\] introduced in Section \ref{SubSubSectionIdeals}  (see Proposition \ref{PropBijPropA}).

The next result shows that, if irreducible sets are orbit closures, then primes of $\cstu(X)$ are primitive under property A.

\begin{theorem}\label{Thm.IrredCLosureofOrbImplisPrimeIsPrimitive}
    Let $X$ be a u.l.f.\ metric space with property A. Suppose that every irreducible subset $F\subseteq \beta X$ is of the form $\COrb(\xi)$. Then every prime ideal of $\cstu(X)$ is primitive.
\end{theorem}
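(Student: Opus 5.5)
Let $I\subseteq \cstu(X)$ be a prime ideal. The plan is to pass to the closed invariant set $F_I\in \cF(\beta X)$, show it is irreducible, write it as an orbit closure using the hypothesis, and then identify $I$ with a kernel $\ker(\pi_\xi)$. That kernel is primitive by Proposition \ref{PropGNSIrreducible}.

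\textbf{Step 1: $I=I_{F_I}$ and $F_I$ is irreducible.} By Proposition \ref{PropBijPropA}, which is where property A enters, the maps $F\mapsto I_F$ and $I\mapsto F_I$ are mutually inverse, order-reversing bijections between $\cF(\beta X)$ and $\cI(\cstu(X))$. In particular $I=I_{F_I}$, so Proposition \ref{Prop.Irrd.Gives.Prime}, applied to $F=F_I$, shows that $F_I$ is irreducible.

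One must check that $F_I$ is nonempty, since irreducible sets are nonempty by definition. If $F_I=\emptyset$, then $I=I_\emptyset=\cstu(X)$. Prime ideals are conventionally proper, so this case does not occur. If one allows the whole algebra as a degenerate case, it is excluded by convention.

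It is also worth verifying directly the irreducibility argument hidden in Proposition \ref{Prop.Irrd.Gives.Prime}. Suppose $F_I=F_0\cup F_1$ with $F_0,F_1$ closed and invariant. Then $I_{F_0}\cap I_{F_1}=I_{F_0\cup F_1}=I$, so $I_{F_0}I_{F_1}\subseteq I$. Primeness gives, say, $I_{F_0}\subseteq I$. Applying the inverse bijection yields $F_I\subseteq F_{I_{F_0}}=F_0$, hence $F_0=F_I$.

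\textbf{Step 2: write $F_I$ as an orbit closure.} By hypothesis there is $\xi\in\beta X$ with $F_I=\COrb(\xi)$.

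\textbf{Step 3: conclude.} Using \eqref{Eq.IOrbRelation}, or equivalently Corollary \ref{Kernels}, we get
\[I=I_{F_I}=I_{\COrb(\xi)}=\ker(\pi_\xi).\]
Since $\pi_\xi$ is irreducible by Proposition \ref{PropGNSIrreducible}, $I$ is primitive.

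\textbf{Main obstacle.} There is essentially none, since all the work is contained in Proposition \ref{PropBijPropA}. The only point to watch is that Corollary \ref{Kernels} and \eqref{Eq.IOrbRelation} really identify $I_{\COrb(\xi)}$ with $\ker(\pi_\xi)$. This is immediate, because $E(a^*a)$ is continuous, so it vanishes on $\Orb(\xi)$ if and only if it vanishes on $\COrb(\xi)$.
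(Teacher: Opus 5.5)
Your proposal is correct and follows the paper's proof essentially step by step. You get $I=I_{F_I}$ and the irreducibility of $F_I$ from Propositions \ref{PropBijPropA} and \ref{Prop.Irrd.Gives.Prime}, then use the hypothesis to write $F_I=\COrb(\xi)$, and conclude $I=I_{\COrb(\xi)}=\ker(\pi_\xi)$ via Corollary \ref{Kernels} and \eqref{Eq.IOrbRelation}; your remark that prime ideals must be taken proper so that $F_I\neq\emptyset$ makes explicit a convention the paper uses silently.
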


\begin{proof}
 Let $I\subseteq \cstu(X)$ be a prime ideal. By Propositions \ref{PropBijPropA} and  \ref{Prop.Irrd.Gives.Prime}, $F_I$ is a irreducible  subset of $\beta X$. By the hypothesis, there is $\xi\in \beta X$ such that   $F_I= \COrb(\xi)$.  Since $X$ has property A, Proposition \ref{PropBijPropA} gives that $I=I_{F_I}$, i.e.,
     \[I=\{a\in \cstu(X)\colon E(a^*a)\restriction F_I=0\}.\] On the other hand, by
     Corollary \ref{Kernels},
     \[\ker(\pi_\xi)=\{a\in \cstu(X)\colon E(a^*a)\restriction \Orb(\xi) =0\}.\]
Therefore, using  \eqref{Eq.IOrbRelation}, we have
\[\ker(\pi_\xi)=I_{\COrb(\xi)}=I.\]  By Proposition \ref{PropGNSIrreducible}, $\pi_\xi$ is an irreducible representation, so, $I$ is primitive   and we are done.
\end{proof}

In the next section, we provide examples of spaces for which every irreducible subset of $\beta X$ is the closure of an orbit. For the remainder of this section, we obtain some results without this strong assumption of irreducible sets. Our next goal is to obtain Theorem \ref{ThmPrimoContainsPrimitive.InTheText} but first we need some preliminary propositions.

\begin{proposition}
    Let $X$ be a u.l.f.\ metric space and $I$ be a nontrivial
 primitive ideal of $\cstu(X)$. Then  $\HOrb(\xi)=\HOrb(\eta)$ for all $\xi,\eta\in F_I$.
\end{proposition}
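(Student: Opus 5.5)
The plan is to show that an irreducible representation whose kernel is $I$ acts on the Higson functions by scalars, so that every point of $F_I$ sees the same character of $C_h(X)$.

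Write $I=\ker(\pi)$ with $\pi\colon \cstu(X)\to\cB(H)$ irreducible. The argument then runs in three steps.

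\textbf{Step 1: $I$ contains the compacts.} Since $I$ is nontrivial, pick $0\neq a\in I$ and points $x,y\in X$ with $\langle a\delta_y,\delta_x\rangle\neq 0$. The rank-one matrix units $e_{zw}$ all lie in $\cstu(X)$, since each is $v_f$ for the partial translation $f=\{w\mapsto z\}$. Compressing, $e_{xx}ae_{yy}$ is a nonzero multiple of $e_{xy}$, and it lies in $I$. Multiplying by matrix units on both sides then gives $e_{zw}\in I$ for all $z,w$. Hence $\cK(\ell_2(X))\subseteq I$. In particular $1_{\{x\}}\in I$ for every $x\in X$, and $E(1_{\{x\}})(x)=1$, so $F_I\subseteq\partial X$.

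\textbf{Step 2: Higson functions are central modulo $I$, so $\pi$ is scalar on them.} Let $h\in C_h(X)$ and $f\in\PT(X)$. A direct computation gives
\[ v_f h-hv_f=(h\circ f^{-1}-h)\,1_{\mathrm{im}(f)}\,v_f. \]
The function $(h\circ f^{-1}-h)1_{\mathrm{im}(f)}$ lies in $c_0(X)$: by Definition \ref{DefiHigson}, applied with $r=\sup_{x\in\dom(f)} d(x,f(x))$, it is small outside a finite set. So $v_fh-hv_f$ is compact. Commutators of $h$ with $\ell_\infty(X)$ vanish outright. Since $\cstu(X)$ is generated by the $v_f$, the commutator $[h,b]$ is compact for every $b\in\cstu(X)$, and hence $[h,b]\in I$ by Step 1. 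Therefore $\pi(h)$ commutes with $\pi(\cstu(X))$. By irreducibility and Schur's lemma, $\pi(h)=\chi(h)\mathrm{Id}_H$ for a scalar $\chi(h)$; clearly $\chi$ is a character of $C_h(X)$.

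\textbf{Step 3: each $\xi\in F_I$ evaluates Higson functions by $\chi$.} For $h\in C_h(X)$ we have $\pi(h-\chi(h)1)=0$, that is, $c:=h-\chi(h)1\in I$. By the definition \eqref{Eq.FFromI} of $F_I$, for every $\xi\in F_I$,
\[ 0=E(c^*c)(\xi)=|h(\xi)-\chi(h)|^2, \]
using $E(c^*c)=|c|^2$ since $c\in\ell_\infty(X)$. Thus $h(\xi)=\chi(h)$ for all $\xi\in F_I$ and all Higson $h$. Any two $\xi,\eta\in F_I$ therefore agree on all Higson functions, i.e.\ $\HOrb(\xi)=\HOrb(\eta)$.

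The main obstacle is Step 2, specifically checking carefully that the commutators of Higson functions with every generator $v_f$ are compact. Once that holds, Schur's lemma does the rest. Step 1 is where nontriviality of $I$ is used, so that the compacts, and hence these commutators, are killed by $\pi$; without it the statement fails (e.g.\ $I=0$).
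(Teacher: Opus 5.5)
Your proof is correct and follows the paper's argument: nontriviality gives $\cK(\ell_2(X))\subseteq I$, Higson functions are central modulo the compacts, Schur's lemma makes $\pi(h)$ a scalar $\chi(h)$, and applying the definition of $F_I$ to $h-\chi(h)1\in I$ finishes. The differences are minor. You prove the two facts the paper only asserts or cites, namely that a nonzero ideal contains the compacts and that Higson functions commute with $\cstu(X)$ modulo compacts. You also argue directly rather than by contradiction, and you check explicitly that $F_I\subseteq\partial X$, so that $\HOrb$ is defined.
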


\begin{proof}
    As $I$ is primitive, we may pick an irreducible representation $\pi\colon
\cstu(X)\to \cB(H)$ on some Hilbert space $H$ such that $I=\ker(\pi)$. Since $I$ is
nontrivial, it contains $\cK(\ell_2(X))$. So $\pi$ factors through the quotient map
  $$
  q : \cstu(X) \to  \cstu(X)/\cK(\ell_2(X))
  $$
  leading up to an irreducible representation $\pi'$ of $\cstu(X)/\cK(\ell_2(X))$ on
$H$, such that $\pi =\pi '\circ q$.

    Fix $\xi,\eta\in F_I$ and suppose towards a contradiction that there is a Higson function $h\colon X\to \C$ such that $ h(\xi)\neq   h(\eta)$. Recalling that Higson functions map to the center of $\cstu(X)/\cK(\ell_2(X))$ under
the quotient map (see \cite[Proposition
3.6]{BaudierBragaFarahVignatiWillett2022vNa}),   Schur's Lemma implies that
$\pi '(q(h))\in {\mathbb C}$.  So, upon replacing $h$ with $h+\lambda $, for a suitable constant $\lambda $,  we may assume that $\pi '(q(h))=0$. Consequently $h\in I$, whence for every $\zeta $ in $F_I$ we have that
  \[
  0 = E(h^*h)(\zeta ) = |h(\zeta )|^2,
  \]
  so
  $h(\xi )=0=h(\eta )$. \end{proof}

 \begin{proposition}\label{PropIrreducibleOpenInv}
     Let $X$ be a u.l.f.\ metric space and $F\subseteq \partial  X$ be irreducible. Then, $\HOrb(\xi)=\HOrb(\eta)$ for all $\xi,\eta\in F$.
 \end{proposition}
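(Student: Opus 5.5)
We argue by contradiction, using the Higson function separation machinery of Theorem \ref{ThmHigsonRel.Closed.Sets}. Suppose $F\subseteq\partial X$ is irreducible, that is, closed, invariant and not a union of two proper closed invariant subsets. Suppose also that there are $\xi,\eta\in F$ and a Higson function $h\colon X\to\C$ with $h(\xi)\neq h(\eta)$. Replacing $h$ by a suitable real-valued affine modification (real or imaginary part, then rescaling), we may assume $h$ is real-valued with $h(\xi)=0$ and $h(\eta)=1$. Moreover $C_h(X)$ is a \cstar-algebra and is closed under the lattice operations of Lemma \ref{LemmaHigsonLattice}, so composing with a continuous function $[0,1]\to[0,1]$ keeps $h$ Higson.

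The key step is to define
\[F_0=\{\zeta\in F\colon h(\zeta)\leq 1/3\}\quad\text{and}\quad F_1=\{\zeta\in F\colon h(\zeta)\geq 1/3\},\]
where $h$ is viewed as a continuous function on $\beta X$. Both sets are closed, being intersections of the closed set $F$ with closed sublevel and superlevel sets of the continuous function $h$. Both are invariant: $F$ is invariant, and by Proposition \ref{PropHigsonFunctionsAreConstantOnOrbits} the function $h$ is constant on orbits in $\partial X$, so $\zeta\in F_i$ implies $\Orb(\zeta)\subseteq F_i$. Clearly $F=F_0\cup F_1$.

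It remains to see that both pieces are proper. We have $\xi\in F_0\setminus F_1$ since $h(\xi)=0<1/3$, and $\eta\in F_1\setminus F_0$ since $h(\eta)=1>1/3$. Hence $F_0\neq F$ and $F_1\neq F$, which contradicts the irreducibility of $F$ as in Definition \ref{Definition.Irreducible.in.Orb}. Therefore every Higson function is constant on $F$, i.e.\ $\HOrb(\xi)=\HOrb(\eta)$ for all $\xi,\eta\in F$.

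The only point needing care is the invariance of $F_0$ and $F_1$, which rests entirely on Proposition \ref{PropHigsonFunctionsAreConstantOnOrbits}. This is exactly why $F$ is assumed to lie in $\partial X$ rather than in $\beta X$: Higson functions need not be constant on the orbit of a principal ultrafilter. Note also that the definition of irreducibility is phrased with closed invariant subsets of $\partial X$, which matches the sets $F_0,F_1$ constructed here, so no passage through the quotient topology is needed. I expect no real obstacle.
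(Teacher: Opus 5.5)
Your proof is correct and is essentially the paper's argument. The paper writes $F=(F\setminus U)\cup(F\setminus V)$ for disjoint open invariant $U\ni\xi$, $V\ni\eta$, and cites Theorem \ref{ThmHigsonRel} to pass from non-separability to Higson equivalence; your $F_1$ and $F_0$ are exactly $F\setminus U$ and $F\setminus V$ for $U=\{h<1/3\}$ and $V=\{h>1/3\}$, so you have inlined the easy direction of that theorem. The citations of Theorem \ref{ThmHigsonRel.Closed.Sets} and Lemma \ref{LemmaHigsonLattice} are unnecessary: you only use Proposition \ref{PropHigsonFunctionsAreConstantOnOrbits} and the continuity of $h$ on $\beta X$.
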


\begin{proof}
    By Theorem \ref{ThmHigsonRel}, it is enough to notice that if $U$ and $V$ are open orbit invariant subsets of $\partial X$ containing $\xi$ and $\eta$, respectively, then $U\cap V\neq \emptyset$. Let $U$ and $V$ be such sets and suppose towards a contradiction that $U\cap V=\emptyset$. Then $F=(F\setminus U)\cup (F\setminus V)$, and $F\setminus U$ and $F\setminus V$ are nonempty, closed,  orbit invariant subsets of $F$ which are not the whole $F$.  This contradicts the irreducibility of $F$.
\end{proof}

\begin{proposition}\label{PropPropAPrimIdealsIrrSubsets}
Let $X$ be a u.l.f.\ metric space with property A and $I\subseteq \cstu(X)$ be a prime ideal. Then
$F_I$   is   an irreducible subset of $\beta X$.  In particular, if $I$ is nontrivial, then, $F_I\subseteq \partial X$ and  $\HOrb(\xi)=\HOrb(\eta)$ for all $\xi,\eta\in F_I$.
\end{proposition}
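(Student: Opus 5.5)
The plan is to push primeness of $I$ across the correspondence of Proposition \ref{PropBijPropA}. This gives irreducibility of $F_I$. The Higson statement then follows from Proposition \ref{PropIrreducibleOpenInv}, once we check that $F_I$ lies in the corona.

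\emph{Irreducibility.} By Proposition \ref{PropBijPropA}, property A gives $I=I_{F_I}$, so $I_{F_I}$ is prime. Proposition \ref{Prop.Irrd.Gives.Prime} then says $F_I$ is irreducible. For completeness we would also note $F_I\neq\emptyset$. If $F_I$ were empty, then $I=I_{\emptyset}=\cstu(X)$, and a prime ideal is proper by convention.

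If one prefers to see the mechanism directly, write $F_I=F_0\cup F_1$ with $F_0,F_1$ closed and invariant. Then
\[
I_{F_0}I_{F_1}\subseteq I_{F_0}\cap I_{F_1}=I_{F_0\cup F_1}=I_{F_I}=I.
\]
Primeness gives, say, $I_{F_0}\subseteq I$. Applying $F_{(\cdot)}$, which reverses inclusions, yields
\[
F_I\subseteq F_{I_{F_0}}=F_0,
\]
so $F_I=F_0$.

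\emph{$F_I\subseteq\partial X$ when $I$ is nontrivial.} I read ``nontrivial'' as $I\neq 0$. Suppose some $x\in X$ lies in $F_I$. Because $F_I$ is invariant and $\PT(X)$ acts transitively on $X$, all of $X$ lies in $F_I$. Since $F_I$ is closed and $X$ is dense in $\beta X$, this forces $F_I=\beta X$. Then
\[
I=I_{\beta X}=\{a\colon E(a^*a)=0\}=0,
\]
because $E$ is faithful ($E(a^*a)(x)=\|a\delta_x\|^2$). This contradicts $I\neq 0$. Hence $F_I\cap X=\emptyset$, that is, $F_I\subseteq\partial X$.

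\emph{Higson conclusion.} Now $F_I$ is a closed invariant subset of $\partial X$. It is irreducible there as well: any decomposition into closed invariant subsets of $\partial X$ is also one in $\beta X$, since $\partial X$ is closed and invariant in $\beta X$. Proposition \ref{PropIrreducibleOpenInv} therefore gives $\HOrb(\xi)=\HOrb(\eta)$ for all $\xi,\eta\in F_I$.

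The only delicate point I expect is bookkeeping of conventions. One issue is that irreducibility is defined in $\calOrb(\beta X)$ for the first claim but used in $\partial X$ for Proposition \ref{PropIrreducibleOpenInv}. The other is the meaning of ``nontrivial'' (nonzero and proper). Both are handled as indicated above.
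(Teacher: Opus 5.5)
Your proof is correct and follows the paper's route. Irreducibility of $F_I$ comes from $I=I_{F_I}$, primeness and $F_{I_{F_0}}=F_0$, which is exactly the paper's argument, and the Higson claim comes from Proposition \ref{PropIrreducibleOpenInv}. The only local difference is the step $F_I\subseteq\partial X$. The paper uses that a nonzero ideal contains $\cK(\ell_2(X))$, while you use invariance, density of $X$ and faithfulness of $E$. Your version only needs the automatic inclusion $I\subseteq I_{F_I}$, not property A.
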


\begin{proof} We have already seen $F_I$ is closed and invariant before. Suppose $F_I=E\cup W$, where both $E$ and $W$ are nonempty closed invariant subsets and let us show either $E$ or $W$ must equal $F_I$. Let
\[ K=I_{E} \text{ and } L=I_{W}.\]
Then, by how these ideals are defined (see \eqref{Eq.IFromF}), we must have $KL\subseteq I_{F_I}$.   Since $A$ has property $A$, $I$ equals $I_{F_I}$. So, \[KL\subseteq I.\]  As $I$ is prime, we must have that either $K\subseteq I$ or $L\subseteq I$. Without loss of generality, suppose the former happens.
Then,
\[F_I\subseteq F_{K}.\]
By Proposition \ref{PropBijPropA}, $F=F_I$ and $E=F_{K}$. So, $F\subseteq E$ and we are done.

If $I$ is non trivial, then it contains $\cK(\ell_2(X))$ and therefore $F_I$ must be in $\partial X$. The last claim then follows immediately from Proposition \ref{PropIrreducibleOpenInv}.
\end{proof}

\begin{theorem}
    Let $X$ be a u.l.f.\ metric space which coarsely embeds into a  finitely generated group with property~A. Then every nonzero
    prime ideal of $\cstu(X)$ contains a nonzero primitive ideal.\label{ThmPrimoContainsPrimitive.InTheText}
\end{theorem}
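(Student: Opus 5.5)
Let $I$ be a nonzero prime ideal of $\cstu(X)$. The plan is to produce $\zeta\in\partial X$ whose orbit closure contains $F_I$, and then show that $\ker(\pi_\zeta)$ is a nonzero primitive ideal contained in $I$.

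\textbf{Step 1: locating $F_I$.} Property A passes to subspaces and is a coarse invariant, so $X$ has property A. Proposition \ref{PropPropAPrimIdealsIrrSubsets} then gives that $F_I$ is an irreducible closed invariant set. If $I$ is proper and nonzero, it contains $\cK(\ell_2(X))$, so $F_I\subseteq\partial X$; the case $I=\cstu(X)$ is trivial once a single nonzero primitive ideal is exhibited. Moreover, all points of $F_I$ are Higson equivalent.

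\textbf{Step 2: choosing $\zeta$.} Since $I\neq \cstu(X)$, Proposition \ref{PropBijPropA} gives $F_I\neq\emptyset$; fix $\xi\in F_I$. By Corollary \ref{Corollary.ThmHigRel.Ze}, which relies on Zelenyuk's theorem through Corollary \ref{Cor..Zelenyuk}, there is $\zeta\in\partial X$ with $\HOrb(\xi)=\COrb(\zeta)$. Every $\eta\in F_I$ satisfies $\HOrb(\eta)=\HOrb(\xi)$, hence
\[F_I\subseteq \HOrb(\xi)=\COrb(\zeta).\]

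\textbf{Step 3: comparing ideals.} Since $F\mapsto I_F$ reverses inclusions and $I=I_{F_I}$ by Proposition \ref{PropBijPropA},
\[\ker(\pi_\zeta)=I_{\COrb(\zeta)}\subseteq I_{F_I}=I,\]
using \eqref{Eq.IOrbRelation}. The ideal $\ker(\pi_\zeta)$ is primitive by Proposition \ref{PropGNSIrreducible}.

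\textbf{Step 4: nonvanishing.} Because $\zeta\in\partial X$, we have $\COrb(\zeta)\subseteq\partial X$. For any $x\in X$, the rank-one projection $1_{\{x\}}$ then satisfies $E(1_{\{x\}})\restriction\COrb(\zeta)=0$. Hence $\cK(\ell_2(X))\subseteq\ker(\pi_\zeta)$, and this primitive ideal is nonzero.

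\textbf{Main obstacle.} The key step is Step 2: replacing the possibly non-singly-generated irreducible set $F_I$ by a single orbit closure that contains it. This needs two things: the Higson-class collapse of irreducible sets, and Zelenyuk's maximal orbit closures identifying Higson classes with orbit closures. I would double-check that Corollary \ref{Cor..Zelenyuk} legitimately applies to subspaces that only coarsely embed into $G$. I would also confirm that containment goes in the right direction: a larger closed set gives a smaller ideal, which is exactly what places the primitive ideal inside $I$ rather than containing it.
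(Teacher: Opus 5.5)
\textbf{There is a genuine gap, inherited from the paper.} Your proposal reproduces the paper's proof step for step. Steps 1, 3 and 4 are sound, and Step 4 supplies the nonzeroness check that the paper leaves implicit.

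The failing step is Step 2, but not the sub-step you flagged. Passing Zelenyuk's theorem to coarsely embedded subspaces is harmless. What fails is the identity $\HOrb(\xi)=\COrb(\zeta)$ from Corollary \ref{Corollary.ThmHigRel.Ze}, which is false in general. Here is a counterexample.
\begin{itemize}
\item Take $X=\{(1,k^2),(n,k^2)\colon n\in\N,\ k\in\N_n\}\subseteq\Z^2$, the model of $\cM_{3/2}$. Fix nonprincipal ultrafilters $w$ on $\N$ and $\zeta_n$ on $\N_n$. Let $\eta_1,\eta_2$ be the iterated limits along $w$ and $\zeta_n$ of $(1,k^2)$ and $(n,k^2)$, respectively, as in the proof of Theorem \ref{Thm.Hausdorfness.Charac.InTheText}.
\item The rows drift apart, so a partial translation eventually moves points only within their own row. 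Hence every orbit in $\partial X$ has at most two elements, and $\COrb(\zeta)=\Orb(\zeta)$ for all $\zeta\in\partial X$.
\item The points $\eta_1,\eta_2$ lie in different orbits.
\item Let $h$ be a Higson function and fix $n$. The points $(1,k^2)$ and $(n,k^2)$ are at distance $n-1$, so $h(1,k^2)-h(n,k^2)\to 0$ as $k\to\infty$ in $\N_n$. Therefore $h(\eta_1)=\lim_{n,w}\lim_{k,\zeta_n}h(1,k^2)=\lim_{n,w}\lim_{k,\zeta_n}h(n,k^2)=h(\eta_2)$.
\item So $\HOrb(\eta_1)$ contains two distinct closed orbits and is not an orbit closure.
\end{itemize}
The error in the paper lies in the implication (3)$\Rightarrow$(5) of Theorem \ref{ThmHigsonRel}. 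The relation ``same maximal orbit closure'' has closed classes, but that does not make it a closed subset of $\partial X\times\partial X$, so $R\subseteq E$ does not follow. In this example $E=\Orb$, and $(\eta_1,\eta_2)\in R\setminus E$.

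As a result, Step 2 only shows that $F_I$ lies in a single Higson class, and such a class may contain several disjoint maximal orbit closures. The inclusion $F_I\subseteq\COrb(\zeta)$ that Step 3 needs is therefore unproved, in your proposal and in the paper alike. In the example above, irreducible sets do happen to be single orbits, so the theorem itself is not refuted; only the route is.

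What is missing is an argument that an irreducible closed invariant set, rather than a whole Higson class, sits inside one maximal orbit closure. The natural attempt uses irreducibility: $F_I\setminus M$ is dense in $F_I$ for every maximal orbit closure $M$ not containing $F_I$, and one then applies Baire. This works only if $F_I$ meets countably many maximal orbit closures, which need not happen (Proposition \ref{prop:uncountably_many_disjoint_orbits}, Remark \ref{RemarkIrreducibleCORB}). Without a new idea at this step, the argument does not establish the theorem.
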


 \begin{proof}
     Let $I\subseteq \cstu(X)$ be a prime ideal. If $I=\{0\}$, $I$ is primitive since the canonical representation of $\cstu(X)$ in $\cB(\ell_2)$ is irreducible and faithful. Suppose then that $I$ is no trivial.  By Proposition \ref{PropPropAPrimIdealsIrrSubsets},
     $F_I$ is
     irreducible and it is contained in a single Higson orbit. Hence, by Corollary \ref{Corollary.ThmHigRel.Ze}, there is $\xi\in \partial X$ such that $F_I\subseteq \COrb(\xi)$.
 The proof now proceeds as the one of Theorem \ref{Thm.IrredCLosureofOrbImplisPrimeIsPrimitive}. Precisely, as  $X$ has property A, Proposition \ref{PropBijPropA} gives that
     \[I=\{a\in \cstu(X)\colon E(a^*a)\restriction F_I=0\}.\] Hence, as
     \[\ker(\pi_\xi)=\{a\in \cstu(X)\colon E(a^*a)\restriction \Orb(\xi) =0\}\]
     (see
     Corollary \ref{Kernels}),
      the ideal $\ker(\pi_\xi)$ is contained in $I$. By Proposition \ref{PropGNSIrreducible}, $\ker(\pi_\xi)$ is primitive   and we are done.
 \end{proof}

 \subsection{Spaces where irreducible sets are closures of orbits}\label{SectionIrred}

We shall now provide (nontrivial) examples of u.l.f.\ metric spaces such that the irreducible subsets of $\partial X$ are of the form $\COrb(\xi)$ for some $\xi\in \partial X$. Recall that, if $\calOrb(\partial X)$ is Hausdorff, then this is automatically. So, we must look for examples outside this class of spaces.  We start showing that the orbit space being $T_1$ is already enough  for this.

 \begin{theorem}
      Let $X$ be a u.l.f.\ metric space for which $\calOrb(\partial X)$ is $T_1$. Then every  irreducible subset of $\calOrb(\partial X)$ is the closure of a singleton.  In particular,  the prime and primitive ideals of $\cstu(X)$ coincide.\label{Cor.T1.Prime=Primitive.InTheText}
\end{theorem}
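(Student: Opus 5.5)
Let $F\subseteq\partial X$ be closed, invariant and irreducible. The plan is to show that $F$ has a single maximal orbit closure and that this closure is all of $F$. By Theorem \ref{CorClosedQuasiOrbitsCoarseDisjointUnionSing.Inthetext}, $T_1$-ness of $\calOrb(\partial X)$ means that every $\Orb(\xi)$, $\xi\in\partial X$, is closed. Equivalently, $\COrb(\xi)=\Orb(\xi)$, and these sets are finite by Theorem \ref{ThmCharacterizationOrbClosed.IntheText}. Consequently, for $\xi,\eta\in F$, Proposition \ref{PropClosureOrbDisjointOrComparable} says that either $\Orb(\xi)\cap\Orb(\eta)=\emptyset$ or one orbit contains the other. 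Since the orbits form a partition, this forces either $\Orb(\xi)=\Orb(\eta)$ or disjointness; so far this is consistent. The real work is to show that $F$ consists of a single orbit.

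Suppose $F$ contains two distinct orbits, $\Orb(\xi)\neq\Orb(\eta)$. I will produce closed invariant sets $F_0,F_1\subsetneq F$ with $F=F_0\cup F_1$, contradicting irreducibility. Here is the key step. Since $\Orb(\xi)$ is closed and finite, I would like an open invariant $U\ni\xi$ with $\eta\notin\overline U$, and an open invariant $V\ni\eta$ with $\xi\notin\overline V$. Then $F_0=F\setminus U$ and $F_1=F\setminus V$ are closed and invariant. Their union is $F$ provided $U\cap V\cap F=\emptyset$, and each is proper, since $\xi\notin F_0$ and $\eta\notin F_1$. So what I really need is disjoint open invariant sets separating $\xi$ and $\eta$. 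Equivalently, by Theorem \ref{ThmHigsonRel}, I need to know that $\HOrb(\xi)\neq\HOrb(\eta)$.

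However, Proposition \ref{PropIrreducibleOpenInv} already tells us that all points of an irreducible $F$ are Higson equivalent, so this route must instead be used to show that Higson classes of $F$ are single orbits. I therefore expect the main obstacle to be this: under the $T_1$ hypothesis, a Higson class meeting $F$ need not be a single orbit, since $\calOrb(\partial X)$ may fail to be Hausdorff, for instance when $X$ contains $\cM_{3/2}$. So I would instead argue directly with the finite-orbit structure. By Theorem \ref{ThmCharacterizationOrbClosed.IntheText}, for $\xi\in F$ there is $r>0$ with $\lim_{x,\xi}d(N_r(x),X\setminus N_r(x))=\infty$. The first thing I would try is to use this to construct an invariant open set $U=\Orb(\overline A)$ for a suitable $A\in\xi$, chosen so that its complement in $F$ is closed and invariant and omits $\xi$. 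Combined with irreducibility, applied to $F=\overline{\Orb(\xi)}\cup(F\setminus U)$, this forces $F=\COrb(\xi)$ whenever $F\setminus U\neq F$; and $F\setminus U\neq F$ holds because $\xi\in U$. Note that $\COrb(\xi)=\Orb(\xi)$ is closed and invariant, so the decomposition $F=\Orb(\xi)\cup(F\setminus U)$ is a legitimate union of closed invariant sets as soon as $F\setminus U\supseteq F\setminus\Orb(\xi)$, that is, as soon as $U\cap F=\Orb(\xi)$.

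It therefore remains to find an open invariant $U$ with $U\cap F=\Orb(\xi)$; in other words, $\Orb(\xi)$ must be relatively open in $F$. If no such $U$ exists for any $\xi\in F$, then every orbit in $F$ is non-isolated. I would then try to contradict this using irreducibility: every nonempty relatively open invariant subset of $F$ is dense in $F$. Pick $\xi$, and use the separation $d(N_r(x),X\setminus N_r(x))\to\infty$ to decompose sets into $\xi$-small pieces. The aim is to write $F$ as a union of two proper closed invariant sets by splitting an $A\in\xi$ into two halves $A_0,A_1$ and setting $F_i=F\setminus\Orb(\overline{A_i})$. These sets cover $F$ exactly when no point of $F$ lies in both orbit saturations; this is where $T_1$-ness, that is, the non-embeddability of $\cM_2$, has to be used. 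I expect this step to be the crux.

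Once $F=\COrb(\xi)$ is established, the primitive-ideal statement follows as in Theorem \ref{Thm.IrredCLosureofOrbImplisPrimeIsPrimitive}. For this I would note that $T_1$-ness forces asymptotic dimension zero (Theorem \ref{CorAsymDomQuaseLargerOrb.Inthetext}), hence property A, which is the hypothesis needed there.
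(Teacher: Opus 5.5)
There is a genuine gap: the decisive step is never carried out. Reducing to ``find an open invariant $U$ with $U\cap F=\Orb(\xi)$'' only restates the goal, since for irreducible $F$ this is equivalent to $F=\Orb(\xi)$. Your final plan is to split some $A\in\xi$ into $A_0,A_1$ and hope that $F\setminus\Orb(\overline{A_0})$ and $F\setminus\Orb(\overline{A_1})$ cover $F$; it ends with ``I expect this step to be the crux''. Nothing in the sketch controls the points of $F$ other than $\xi$. $T_1$-ness gives each $\eta\in F$ its own scale beyond which $\Orb(\eta)$ stops growing (Theorem \ref{ThmCharacterizationOrbClosed.IntheText}), and a point of $F$ that stabilizes only at a larger scale could a priori lie in both saturations.

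The missing idea is to make this scale uniform on $F$ by Baire category. Since $X$ has asymptotic dimension $0$ (Lemma \ref{lemeta.tarde.cansado}), let $C_t(x)$ be the $t$-chain component of $x$; these are uniformly bounded for each $t$. Put $K_r=\partial X\cap\bigcap_{t\geq r}\overline{\{x\colon C_t(x)=C_r(x)\}}$.
\begin{itemize}
\item Each $K_r$ is closed and invariant.
\item $\partial X=\bigcup_{r\in\N}K_r$, because the $\eta$-limit of $|C_t(x)|$ is nondecreasing in $t$ and bounded by $|\Orb(\eta)|<\infty$.
\item By Baire's theorem, some $F\cap K_r$ has nonempty interior $W$ relative to $F$. $W$ is invariant, since partial translations act by homeomorphisms between clopen sets. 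Irreducibility applied to $F=\overline W\cup(F\setminus W)$ then gives $F\subseteq K_r$.
\item Hence, for every $\eta\in F$, each partial translation maps $\eta$-almost every point into its own $r$-component. So for any family $S$ of $r$-components, $\overline{\bigcup S}\cap F$ and $F\setminus\overline{\bigcup S}$ are closed, invariant, and cover $F$.
\item Irreducibility therefore forces all points of $F$ to induce the same ultrafilter on the set of $r$-components. All such points lie in one orbit by Proposition \ref{PropOrbitsAreCountable}(1), as $r$-components have bounded diameter. Hence $F$ is a single orbit.
\end{itemize}

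Your reason for abandoning the Higson route is correct, and it bears on the paper's own proof. The paper argues that $F\subseteq\HOrb(\xi)$ by Proposition \ref{PropIrreducibleOpenInv}, that $X$ embeds into $\Z$, and so Corollary \ref{Corollary.ThmHigRel.Ze} (via Zelenyuk) gives $\HOrb(\xi)=\COrb(\eta)=\Orb(\eta)$. Now take $X=\cM_{3/2}$. All orbits in $\partial X$ are finite, so $\calOrb(\partial X)$ is $T_1$, and $X$ embeds into $\Z$. The ultrafilters $\eta_1,\eta_2$ from the proof of Theorem \ref{Thm.Hausdorfness.Charac.InTheText} cannot be separated by disjoint open invariant sets, so they are Higson equivalent by Theorem \ref{ThmHigsonRel}. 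Yet they lie in distinct closed orbits, so $\HOrb(\eta_1)$ is not an orbit closure and Corollary \ref{Corollary.ThmHigRel.Ze} fails there. The faulty step is ``$E$ has closed classes, hence $R\subseteq E$'' in the proof of Theorem \ref{ThmHigsonRel}: closed classes do not make $E$ closed in $\partial X\times\partial X$.

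So you diagnosed the obstruction correctly, but your proposal stops exactly where a substitute argument, such as the Baire argument above, is needed. Your final deduction about prime ideals is fine: asymptotic dimension zero gives property A, and then Theorem \ref{Thm.IrredCLosureofOrbImplisPrimeIsPrimitive} applies.
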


Before proving Theorem \ref{Cor.T1.Prime=Primitive.InTheText}, we need a simple lemma.
\begin{lemma}\label{lemeta.tarde.cansado}
    Let $X$ be a u.l.f.\ metric space which does not contain a coarse copy of $\cM_2$. Then $X$ has asymptotic dimension $0$. In particular, $X$ coarsely embeds into $\Z$.
\end{lemma}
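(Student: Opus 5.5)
We prove the contrapositive: if $X$ has positive asymptotic dimension, then $X$ contains a coarse copy of $\cM_2$. The second statement then follows from the first by the standard fact that a u.l.f.\ space of asymptotic dimension zero coarsely embeds into $\cM$, hence into $\N\subseteq\Z$ (recall \cite[Theorem~3.11]{DranishnikovZarichnyi2004TopAndAppl}). Alternatively, one can build the embedding directly from a uniformly bounded, increasingly separated decomposition.

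For the main step, suppose the asymptotic dimension of $X$ is not zero. Then there is $r>0$ such that the equivalence classes of the relation ``joined by an $r$-chain'' (an $r$-path, i.e.\ consecutive points at distance at most $r$) have unbounded diameter. Otherwise these classes would form, for every $r$, a uniformly bounded $r$-separated cover, and $X$ would have asymptotic dimension zero. So we fix such an $r$ and choose $r$-paths $p_n=(y^n_0,\dots,y^n_{L_n})$ with $d(y^n_0,y^n_{L_n})\to\infty$. By passing to subsequences and using u.l.f.-ness, we may assume the paths are pairwise far apart, i.e.\ $d(p_n,p_m)\to\infty$. For this we use that $X$ is infinite and that, for each finite set, the paths eventually leave it, since their diameters grow.

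Next we extract from the paths data as in Lemma~\ref{Lemma.Coarse.Embedding.of.M2}. Along each path we select points $x_n(1)=y^n_0$ and, for $j\le k_n$ with $k_n\to\infty$, points $x_n(j)$ on $p_n$ such that $d(x_n(1),x_n(j))$ lies in the window $[j^2 s,\, j^2 s+r]$ for a fixed scale $s$. Such points exist by an intermediate-value argument along an $r$-path: distances to $y^n_0$ change by at most $r$ per step. Then
\[
\sup_n d(x_n(1),x_n(j))<\infty \quad\text{and}\quad \inf_n d(x_n(1),x_n(j))\to\infty \text{ as } j\to\infty .
\]
We set $\N_j=\{n\colon k_n\ge j\}$, which forms a decreasing family of infinite sets. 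The points are distinct because the paths are disjoint and the distances within a path are distinct. Lemma~\ref{Lemma.Coarse.Embedding.of.M2} then yields a coarse embedding of $\cM_2$ into $X$, contradicting the hypothesis.

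The main obstacle is the first reduction, from the failure of asymptotic dimension zero to the existence of long $r$-chains of unbounded diameter. We expect to settle it by noting that, for each $r$, the $r$-chain components are always $r$-separated. Hence, if every such component family is uniformly bounded, the definition of asymptotic dimension $0$ is satisfied verbatim with $\cU_0$ equal to the set of components. The other care needed is in the separation and subsequence bookkeeping required by the hypotheses of Lemma~\ref{Lemma.Coarse.Embedding.of.M2}; that lemma already performs further thinning internally.
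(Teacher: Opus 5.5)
Your argument is correct in substance, but it takes a different route from the paper for the first assertion. The paper cites \cite[Lemma 2.4]{LiWillett2017} to get a coarse copy of the coarse disjoint union of the intervals $\{1,\ldots,n\}$ inside $X$. It then notes that $\cM_2$, a coarse disjoint union of blocks with growing gaps, embeds into that union. The second assertion is handled exactly as you do it, via Dranishnikov--Zarichnyi and $\cM\subseteq\N$.

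You instead prove the elementary chain characterization directly: $r$-chain components are automatically $r$-separated, so if they are uniformly bounded for every $r$, the definition of asymptotic dimension $0$ holds verbatim. You then feed first-passage points along long chains straight into Lemma~\ref{Lemma.Coarse.Embedding.of.M2}. This route is self-contained and uses less. The triangle inequality from the starting point gives $d(x_n(1),x_n(j))\in[j^2s,j^2s+r]$, and these are exactly the hypotheses of that lemma. Producing genuine coarse intervals would also require bounding the distance between consecutive sample points, which first-passage points do not control. The paper's route is shorter because it outsources that step.

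Two points need repair.
\begin{enumerate}
\item Passing to a subsequence cannot make the paths pairwise far apart, or even disjoint. In $X=\N$ with $p_n=(0,1,\ldots,n)$, every path contains $0$, all your $x_n(1)$ coincide, and distinctness fails. Fix this by re-choosing rather than thinning. Fix $x_0\in X$ and let $D_n=d(y^n_0,y^n_{L_n})$. Orient $p_n$ so that $d(y^n_0,x_0)\ge D_n/2$, and select only indices with $j^2s+r\le D_n/4$. All selected points then lie outside the ball of radius $D_n/4$ about $x_0$. Choosing the $D_n$ to grow fast enough makes the finite selected sets pairwise disjoint. Distinctness is all Lemma~\ref{Lemma.Coarse.Embedding.of.M2} needs, since it performs the mutual separation internally, using that each $F_n$ is finite.
\item There is an indexing slip. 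Since $x_n(1)=y^n_0$ is at distance $0$ from itself, impose the window condition only for $j\ge 2$, or use $(j-1)^2s$. Also take $s> r/3$, say $s\ge r$, so that the windows are disjoint and the points chosen on a single path are distinct.
\end{enumerate}

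Your ``alternatively'' remark is not right in general. A single uniformly bounded, increasingly separated decomposition exists only for special asymptotic-dimension-zero spaces; it already fails for $\cM$, whose far-out ``cubes'' are chain-connected at scale $3^R$ but have diameter larger than any fixed bound. A direct construction needs nested decompositions at all scales, so the citation you give is the right justification.
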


\begin{proof}
 If a  u.l.f.\ metric space $X$ has asymptotic dimension at least $1$, then it contains arbitrarily large line segments. Precisely, it contains a coarse copy of the coarse disjoint union of $\{1,\ldots, n\}$, for $n\in\N$ (see \cite[Lemma 2.4]{LiWillett2017}).  As  $\cM_2$ clearly coarsely embeds into such a coarse disjoint union, the first assertion follows. For the last, recall that $\cM$ is universal for all u.l.f.\ metric spaces with asymptotic dimension $0$ and that it coarsely embeds into $\Z$ (see Subsection \ref{SubsectionSpaceOfOrbits}).\end{proof}

 \begin{proof}[Proof of Theorem \ref{Cor.T1.Prime=Primitive.InTheText}]
 Suppose   $\calOrb(\partial X)$ is $T_1$ and let   $F\subseteq \partial X$ be an irreducible subset. 
 Fix  $\xi\in F$.  By Proposition \ref{PropIrreducibleOpenInv}, $F\subseteq \HOrb(\xi)$. As $\calOrb(\partial X)$ is $T_1$,
 $\cM_2$ does not coarsely embed into $X$  (Theorem \ref{CorClosedQuasiOrbitsCoarseDisjointUnionSing.Inthetext}). In particular, $X$ coarsely embeds into $\Z$, a finitely generated group (see Lemma \ref{lemeta.tarde.cansado}). Therefore, by  Corollary \ref{Corollary.ThmHigRel.Ze}, there is $\eta\in \partial X$ such that $\HOrb(\xi)= \COrb(\eta)$.
However, using that $\calOrb(\partial X)$ is $T_1$ once again, $\Orb(\eta)$ is closed  (Theorem \ref{CorClosedQuasiOrbitsCoarseDisjointUnionSing.Inthetext}) and  we conclude that 
\[\xi\in F\subseteq \HOrb(\xi)= \Orb(\eta).\] As $F$ is invariant, this shows that $F=\Orb(\xi)$. 
 By Theorem \ref{Thm.IrredCLosureofOrbImplisPrimeIsPrimitive}, every prime ideal of $\cstu(X)$ is primitive and we are done.\end{proof}

We now show that any
u.l.f.\ metric space which coarsely embeds into
\[\cM_{\leq k}=\bigsqcup_{i=1}^k\cM_i,\]
 for some $k\in\N$, also has the property that the prime ideals of its uniform Roe algebra are primitive. Notice that, for $k\geq 2$, the orbit space of $\cM_{\leq k}$ is not $T_1$  (Theorem \ref{CorClosedQuasiOrbitsCoarseDisjointUnionSing.Inthetext}), so Theorem \ref{Cor.T1.Prime=Primitive.InTheText} cannot be applied here.

\begin{theorem}\label{theo:embinmk}
Let $X$ be a u.l.f.\ metric space which coarsely embeds into $\bigsqcup_{i=1}^k\cM_{i}\subseteq \cM$ for some $k\in\N$. Then every irreducible subset of $\calOrb(\partial X)$ is the closure of a singleton. In particular, the prime and primitive ideals of $\cstu(X)$ coincide.   \label{Thm.M_k.Irreducible.Inthetext}
\end{theorem}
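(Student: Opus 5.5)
Since $\cM_{\leq k}$ has asymptotic dimension zero, it coarsely embeds into $\Z$. It also has property A. Hence $X$ embeds into a finitely generated group with property A. So once the first assertion is proved, the second follows from Theorem \ref{Thm.IrredCLosureofOrbImplisPrimeIsPrimitive}. Property A passes to subspaces, and by Proposition \ref{PropCoarseEmbOrbSpaceHomeo} we may assume $X\subseteq \cM_{\leq k}$, with the orbit space of $\partial X$ an open subspace of that of $\partial\cM_{\leq k}$. I would then argue directly in $\cM_{\leq k}$, or rather in $X$, by induction on $k$. Let $F\subseteq\partial X$ be irreducible and closed. By Proposition \ref{PropIrreducibleOpenInv} and Corollary \ref{Corollary.ThmHigRel.Ze}, there is $\zeta$ with $F\subseteq \HOrb(\xi)=\COrb(\zeta)$ for any $\xi\in F$. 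The goal is to find $\xi\in F$ with $F=\COrb(\xi)$.

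The key structural fact I would establish is a \emph{level} function. Since $\cM_i$ and $\cM_j$ are coarsely far apart, every $\xi\in\partial\cM_{\leq k}$ concentrates on some $\cM_i$; call this $i$ the level. Inside $\cM_i$, write points as $n_1<\dots<n_i$ (coordinates $3^{n_j}$). An ultrafilter then determines how many of its lowest coordinates stay bounded along $\xi$. Call $\ell(\xi)$ the number of \emph{unbounded} coordinates: the largest $m$ such that $\{x: n_{i-m+1}(x)>N\}\in\xi$ for all $N$. The bounded coordinates take constant values $\xi$-almost surely.

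Using the explicit metric together with Proposition \ref{PropOrbitsAreCountable}, I would show two things. First, a partial translation can change only the bounded low-order digits, since changing digit $n$ costs about $3^n$. Second, $\eta\in\COrb(\xi)$ implies that $\eta$ has the same level and that $\ell(\eta)\le \ell(\xi)$. Moreover, $\ell(\eta)=\ell(\xi)$ together with $\eta\in\COrb(\xi)$ should force $\eta\in\Orb(\xi)$. The reason is that the unbounded digits of $\eta$ are then coarsely the same ``top parts'' as those of $\xi$, and only finitely many configurations of bounded digits occur. This is the analogue of Theorem \ref{ThmCharacterizationOrbClosed.IntheText}, refined by level.

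Given this, the irreducibility argument goes as follows. Let $m=\max_{\eta\in F}\ell(\eta)$, which is at most $k$, and pick $\xi\in F$ attaining it. I claim $F=\COrb(\xi)$. Suppose not. Define $F_0=\COrb(\xi')$ where $\xi'$ runs over the points of $F$ with $\ell=m$, and let $F_1$ be the closure of the set of points of $F$ with $\ell<m$. The hard part is to show that $F_0$ is closed and that $F=F_0\cup F_1$ with both pieces proper. Closedness of $F_0$ needs care. I would use that the points with $\ell(\eta)=m$ and with $\ell$-top-part lying in a fixed set form a clopen-ish family, and that limits of points with $\ell=m$ either keep $\ell=m$ or drop. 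For dropped limits, I would use Proposition \ref{PropClosureOrbDisjointOrComparable} to compare closures.

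The main obstacle is controlling several $\xi'$ with $\ell=m$ whose orbit closures are disjoint. Here I expect to use that their top parts differ on a set. That set splits $F$ into two closed invariant pieces, contradicting irreducibility, so all of them are orbit equivalent and $F_0=\COrb(\xi)$. Then $F_1\subseteq\COrb(\xi)$ would follow from the comparability of Proposition \ref{PropClosureOrbDisjointOrComparable} applied within $\COrb(\zeta)$. I would first try this at $k=2$ to calibrate it.
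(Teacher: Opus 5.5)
There is a genuine gap: the monotonicity that your whole argument rests on runs the wrong way. If $\eta\in\COrb(\xi)$, then $\ell(\eta)\ge\ell(\xi)$, not $\le$. To see this, note that two points of $\cM$ at distance at most $r$ agree at every position $p$ with $3^p\ge 2r$. This is your correct observation that partial translations only move low digits. Hence the $r$-neighbourhood of the set of points with exactly $j$ ones beyond position $i$ consists of points with at most $j$ ones beyond $\max\{i,\log_3(2r)\}$. Combined with Proposition \ref{PropOrbitsAreCountable}\eqref{PropOrbitsAreCountable.Item1.5}, this gives $\ell(\xi)\le\ell(\eta)$. Taking closures can push bounded digits to infinity, which is exactly the product-ultrafilter construction in the proof of Theorem \ref{ThmCharacterizationOrbClosed.IntheText}, but it never brings unbounded digits back.

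Consequently, a point $\xi\in F$ of \emph{maximal} $\ell$ is the worst possible generator. Every $\eta\in\COrb(\xi)\subseteq F$ has $\ell(\eta)=\ell(\xi)$. Your other claim is correct: equal $\ell$ together with $\eta\in\COrb(\xi)$ forces $\eta\in\Orb(\xi)$. So $\COrb(\xi)=\Orb(\xi)$, and your assertion $F=\COrb(\xi)$ would make every irreducible set a single finite orbit. Concretely, in $\cM_2$ let $\xi_0$ contain the set of points with ones at positions $1$ and $n$ ($n\ge 2$), and let $F=\COrb(\xi_0)$, which is irreducible by Example \ref{Example.COrb.Irr}. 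The maximal value $\ell=2$ is attained, e.g., by the ultrafilters $\xi_i$ built in Proposition \ref{prop:uncountably_many_disjoint_orbits}, and these satisfy $\COrb(\xi_i)=\{\xi_i\}\ne F$.

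The same example shows that your later steps cannot be repaired within this scheme. First, $F$ contains continuum many points of maximal $\ell$ with pairwise disjoint orbit closures and is still irreducible, so no splitting argument will show they are all orbit equivalent. Second, points of smaller $\ell$ never lie in the closure of an orbit of larger $\ell$, so $F_1\subseteq\COrb(\xi)$ fails. Third, Proposition \ref{PropClosureOrbDisjointOrComparable} only compares orbit closures that intersect, and this is not implied by both lying inside $\COrb(\zeta)$; so the Higson/Zelenyuk step does not help. Also, your level is not an orbit invariant: inside $\cM$, the point with a single one at $n$ and the point with ones at $1$ and $n$ are at distance $3$.

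What is needed is to take $\xi\in F$ with \emph{minimal} $\ell$, so that $\COrb(\xi)\subseteq F$ is automatic, and then prove $F\subseteq\COrb(\xi)$. The paper argues as follows:
\begin{enumerate}
\item Choose $\zeta\in F\setminus\COrb(\xi)$ of minimal $\ell$, and $B\in\zeta$ with $\overline B\cap\Orb(\xi)=\emptyset$.
\item Split the set of points with exactly $\ell(\xi)$ ones beyond $i_0$ into the part $A_0$ whose tails beyond $i_0$ occur in $B$, and the remainder $A_1\in\xi$.
\item Use irreducibility to obtain $\eta\in F\cap\overline B$ together with a partial translation from a subset of $B$ into $A_1$.
\item Since this translation fixes all digits beyond some $i_1$, this forces $\ell(\eta)=\ell(\xi)$ and produces an image point in $A_0$, a contradiction.
\end{enumerate}
Your reduction to $X\subseteq\cM_{\le k}$, and the deduction about prime ideals via Theorem \ref{Thm.IrredCLosureofOrbImplisPrimeIsPrimitive}, are fine.
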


\begin{proof}  First notice that, by Proposition \ref{PropCoarseEmbOrbSpaceHomeo}, we can assume that $X\subseteq \cM_{\leq k}$.    We start setting up some notation.  For $i\in \N$ and $j\in \{1,\ldots, k\}$, let
    \[M_{j}(i)=\{(a_i)_i\in X\colon |\{  i'>i\colon a_{i'}=1\}|=j\}.\]
   For each $\xi\in \partial X$,  define
\[\ell(\xi)=\min\{j\in \{1,\ldots, k\}\colon \exists i\in \N,\ \forall i'\geq i,\  M_{j}(i)\in \xi\}.
\]
The fact such minimum exists follows  from $\xi$ being a nonprincipal ultrafilter and $X$ being a subset of $\cM_{\leq k}$. Indeed, for each $i\in\N$, we can partition $X$ into at most $k+1$ many subsets, say $X_0,\ldots, X_k$, such that each element in $X_j$ has $j$ coordinates with $1$ after the $i$th coordinate. As $\xi$ is an ultrafilter, it must contain an element in this partition. Moreover, as  $X_0$ is a finite set and $\xi$ is nonprincipal, there is $j(i)\in \{1,\ldots, k\}$ with $X_{j(i)}\in \xi$. As $i\to \infty$, $j(i)$ decreases, but it can never reach zero as $\xi$ does not contain finite sets. So, $\ell(\xi)=\lim_{i\to \infty}j(i)$.

 Let $F\subseteq \partial X$ be irreducible and pick $\xi\in F$ such that
\begin{equation}\label{Eq.ellxiDef}\ell(\xi)=\min_{\xi'\in F}\ell(\xi').\end{equation}
 Fix $i(\xi)\in\N$ such that $M_{\ell(\xi)}(i)\in \xi$ for all $i\geq i(\xi)$.  Let us show that $F=\COrb(\xi)$.

Towards a contradiction, suppose that $F\setminus\COrb(\xi)$ is nonempty and pick $\zeta\in F\setminus \COrb(\xi)$ with
\begin{equation}\label{Eq.ellzetaDef}\ell(\zeta)=\min_{\xi'\in F\setminus \COrb(\xi)}\ell(\xi').\end{equation}
  Fix $i(\zeta)\in\N$ such that $M_{\ell(\zeta)}(i)\in \zeta$ for all $i\geq i(\zeta)$ and set \[i_0=\max\{i(\xi),i(\zeta)\}.\]  As $\zeta\not\in \COrb(\xi)$, we can pick  $B\in \zeta$   such  that $\overline{B}$ does not intersect $\Orb(\xi)$.   Let $A= M_{\ell(\xi)}(i_0)$ and, shrinking  $B$   if necessary,  assume that  $  B\subseteq M_{\ell(\zeta)}(i_0) $.
 Define
\begin{align*}
A_0=\{(a_i)_i\in A\colon \exists &(a_1',\ldots, a_{i_0}')\in \{0,1\}^{i_0} \text{ such that }\\ &(a_1',\ldots, a_{i_0}',a_{i_0+1},a_{i_0+2},\ldots)\in B\}.
\end{align*}
Notice that, as $X$ is u.l.f.\ and $\overline B$ does not intersect $\Orb(\xi)$, $A_1=A\setminus A_0\in \xi$. Indeed, by the definition of $A_0$, there is a map $f\colon A_0\to B$ such that
\[\sup_{\bar a\in A_0}d(\bar a,f(\bar a))<\infty.\]
As $f$ is not necessarily injective, $f$ may not be a partial translation. However, there is a partition of $A_0$ into finitely many subsets such that the restriction of $f$ to each of this subsets is injective. If it was the case that $A_0\in \xi$, then, as $\xi$ is an ultrafilter, one of this subsets would belong  to $\xi$ and this would contradict the fact that $\overline B$ does not intersect $\Orb(\xi)$.

 As $F$ is irreducible, the closure of an invariant open subset intersecting $F$ must contain the whole $F$. In particular, $\zeta\in \COrb(\overline{A_1})$ which implies that    $\overline {B}\cap F$ must intersect $\Orb(\overline{A_1})$. So,  there is $\eta\in \overline{B}\cap F$ and a partial translation $f$ of $ X$ such that $\dom(f)\in \eta$, $\dom(f)\subseteq B$,  and $\mathrm{im}(f)\subseteq A_1$.    By the definition of the metric of $M$, there is $i_1\geq i_0$ such that $f$  does not change coordinates with index greater than  $i_1$. More precisely, if $\bar b=(b_i)_i\in \dom(f)$   then $f(\bar b)=(b'_0,\ldots, b_{i_1}', b_{i_1+1},b_{i_1+2} ,\ldots)$ for some $b_0',\ldots, b_{i_1}'\in \{0,1\}$.

We shall now reach a contradiction by looking at $\ell(\eta)$. Firstly, by \eqref{Eq.ellxiDef} and \eqref{Eq.ellzetaDef}, it is clear that
\begin{equation}\label{Eq.Inequalityells}\ell(\eta)\geq \ell(\zeta)\geq \ell(\xi).
\end{equation} Suppose $\ell(\eta)>\ell(\xi)$. As $\eta$ is a  filter and $M_{\ell(\eta)}(i)\in \eta$ for $i$ large enough, we can pick $\bar b\in \dom(f)$ with more than $\ell(\xi)$ coordinates with $1$'s after the  $i_1$th coordinate. As $f$ does not change coordinates indexed after $i_1$,   the same holds for $f(\bar b)$. However, since  $f(\bar b)\in A_1 \subseteq   M_{\ell(\xi)}(i_0)$ and $i_1\geq i_0$, the vector $f(\bar b)$ has at most $\ell(\xi)$ coordinates with $1$'s indexed after $i_1$; contradiction.

We are then left with $\ell(\eta)=\ell(\xi)$.
By \eqref{Eq.Inequalityells},    \[ \ell\coloneq\ell(\eta)=\ell(\zeta)=\ell(\xi).\]
So there is $i_2\geq i_1$ such that  $M_{\ell}(i_2)\in \eta$. As $\eta$ is a filter, we can pick $\bar b \in M_{\ell}(i_2)\cap \dom(f)$.  Since $\dom(f)\subseteq B$ and $\mathrm{im}(f)\subseteq A_1$, and since   $B$ and $A_1$ are contained in $M_\ell(i_0)$,  both $\bar b$ and $f(\bar b)$ have  $\ell$ coordinates indexed after $i_0$ with $1$'s. On the other hand, as $\bar b$ is in  $M_{\ell}(i_2)$  and as $f$ does not chance coordinates indexed after $i_1$, this implies that the coordinates of both  $\bar b$ and $f(\bar b)$ have  $\ell$ coordinates indexed after $i_2$ with $1$'s. Therefore, the coordinates of both $\bar b$ and  $f(\bar b)$ with index in $(i_0,i_2)$ must all be zero.  But then $f(\bar b)\in A_0$; contradiction.

The last sentence in the statement of the theorem claims that the prime ideals of $\cstu(X)$ are primitive. This is a consequence of the above and Theorem \ref{Thm.IrredCLosureofOrbImplisPrimeIsPrimitive}.
\end{proof}

\begin{remark}\label{rem:MkEquivalence}
Note that although Theorem \ref{theo:embinmk} is stated for spaces that coarsely embed into $\bigcup_{i \le k} \mathcal{M}_i$, this class of spaces coincides with those that coarsely embed into $\mathcal{M}_k$. Indeed, there exists a coarse embedding $f\colon  \bigcup_{i \le k} \mathcal{M}_i \to \mathcal{M}_k$ constructed as follows: for any $x \in \mathcal{M}_i$ (with $i \le k$), let $f(x) = y^\smallfrown x$ be the concatenation of a fixed-length prefix $y$ and the sequence $x$, where $y$ is chosen to have exactly $k-i$ ones. By choosing the length of $y$ sufficiently large (say, of length $k$), this map corresponds to shifting the support of $x$ to the right. This shift scales the metric by a factor of $3^k$ and the prefix $y$ contributes only a bounded error to the distance, making $f$ a coarse embedding. In particular, the property of having all prime ideals primitive is equivalent for both classes of spaces.
\end{remark}

 We finish the paper stating two main problems we left unsolved:
 \begin{problem}\label{Prob.Irred.FORB}
     Let $X$ be a u.l.f.\ metric space and $F\subseteq \partial X$ an irreducible subset. Is there $\xi\in \partial X$ for which $F=\COrb(\xi)$?
 \end{problem}

 \begin{remark}\label{RemarkIrreducibleCORB}
 Let $X$ be a u.l.f.\ metric space and $F\subseteq \partial X$ an irreducible subset. Then, if $\xi\in F$, either $F=\COrb(\xi)$ or $F\setminus \COrb(\xi)$ must be dense in $F$. Therefore, if $F$ could be partitioned into countably many  orbit closures, it would follow from Baire category theorem that the intersection of all their complements in $F$ is nonempty; contradicting that we started with a partition of $F$. Unfortunately, as shown in Proposition \ref{prop:uncountably_many_disjoint_orbits}, irreducible sets can contain a continuum of pairwise disjoint orbit closures. So, it is not clear how to use Baire category in order to solve Problem \ref{Prob.Irred.FORB}. \end{remark}
 \begin{problem}\label{Prob.PrimeArePrimitivaUra}
     Let $X$ be a u.l.f.\ metric space. If $I$ is a prime ideal of $\cstu(X)$, is $I$ primitive? What about under the extra assumption of $X$ having property A?
 \end{problem}

  From Theorem \ref{Thm.IrredCLosureofOrbImplisPrimeIsPrimitive}, a positive answer to Problem \ref{Prob.Irred.FORB} implies a positive answer to Problem \ref{Prob.PrimeArePrimitivaUra} under the presence of property A.

\begin{acknowledgments}
  The authors would like to thank Rufus Willett and Ján \v{S}pakula for discussions about this paper. B. M. Braga would also like to thank Ilijas Farah, Alejandro Kocsard, and Davi Obata.
 \end{acknowledgments}
 \bibliographystyle{amsalpha}
 \bibliography{bibliography}

\end{document}